\DeclareMathAlphabet{\mathsf}{OT1}{\sfdefault}{m}{n}
\newcommand{\nocontentsline}[3]{}
\newcommand{\tocless}[2]{\bgroup\let\addcontentsline=\nocontentsline#1{#2}\egroup}
\DeclareMathAlphabet{\amathbb}{U}{bbold}{m}{n}
\newcommand{\mfrak}[1]{\scaleobj{0.93}{\mathfrak{#1}}}
\newtheoremstyle{teoremas}
{12pt}
{11pt}
{\slshape}
{}
{\bfseries}
{}
{.5em}
{}
\theoremstyle{teoremas}
\newtheorem{teo}{Theorem}[section]
\newtheorem{coro}[teo]{Corollary}
\newtheorem{cor}[teo]{Corollary}
\newtheorem{lemma}[teo]{Lemma}
\newtheorem{prop}[teo]{Proposition}
\newtheoremstyle{definition}
{11pt}
{11pt}
{}
{}
{\bfseries}
{}
{.5em}
{}
\theoremstyle{definition}
\newtheorem{defi}[teo]{Definition}
\newtheorem{conj}[teo]{Conjecture}
\newtheorem{problem}[teo]{Problem}
\newtheorem{example}[teo]{Example}
\newtheorem{remark}[teo]{Remark}
\DeclareMathOperator{\ehr}{ehr}
\DeclareMathOperator{\rk}{rk}
\DeclareMathOperator{\vol}{vol}
\DeclareMathOperator{\cover}{cover}
\DeclareMathOperator{\Rel}{Rel}
\DeclareMathOperator{\ann}{ann}
\newcommand{\M}{\mathsf{M}}
\newcommand{\Mat}{\operatorname{Mat}}
\newcommand{\Val}{\operatorname{Val}}
\newcommand{\N}{\mathsf{N}}
\newcommand{\U}{\mathsf{U}}
\newcommand{\A}{\mathrm{A}}
\newcommand{\uCH}{\underline{\mathrm{CH}}}
\newcommand{\uH}{\underline{\mathrm{H}}}
\newcommand{\LL}{\mathsf{\Lambda}}
\newcommand{\rank}{\operatorname{rk}}
\newcommand{\cl}{\operatorname{cl}}
\newcommand{\IH}{\mathrm{IH}}
\newcommand\size[1]{|#1|}
   \def\MR#1{}
\title{Valuative invariants for large classes of matroids}
\author[L.~Ferroni]{Luis Ferroni}
\author[B.~Schr\"oter]{Benjamin Schr\"oter}
\address{
  Department of Mathematics, KTH Royal Institute of Technology, Stockholm, Sweden
}
\email{\{ferroni,schrot\}@kth.se}
\thanks{The first author is supported by the Swedish
Research Council grant 2018-03968. 
The second author is supported by the Swedish
Research Council grant 2022-04224 and  thanks the Knut and Alice Wallenberg Foundation for the support.}
\keywords{Matroid polytopes, geometric lattices, valuations, matroid subdivisions.}
\subjclass[2020]{52B40, 05B35, 52B45, 13D40, 05C31, (14T20)}
\begin{document}

\begin{abstract}
    We study an operation in matroid theory that allows one to transition a given matroid into another with more bases via relaxing a \emph{stressed subset}. 
    This framework provides a new combinatorial characterization of the class of (elementary) split matroids. Moreover, it permits to describe an explicit matroid subdivision of a hypersimplex, which in turn can be used to write down concrete formulas for the evaluations of any valuative invariant on these matroids. 
    This shows that evaluations on these matroids depend solely on the behavior of the invariant on a tractable subclass of Schubert matroids. We address systematically the consequences of our approach for several invariants. They include the volume and Ehrhart polynomial of base polytopes, the Tutte polynomial, Kazhdan--Lusztig polynomials, the Whitney numbers of the first and second kind, spectrum polynomials and a generalization of these by Denham, chain polynomials and Speyer's $g$-polynomials, as well as Chow rings of matroids and their Hilbert--Poincar\'e series. The flexibility of this setting allows us to give a unified explanation for several recent results regarding the listed invariants; furthermore, we emphasize it as a powerful computational tool to produce explicit data and concrete examples.
\end{abstract}


\maketitle
\setcounter{tocdepth}{1}
\tableofcontents


\section{Introduction}
\noindent

\subsection{Overview} Matroid theory has received increasing attention in the past few years, due to deep connections with other branches of mathematics unveiled in recent work. The successful introduction of the K\"ahler package for the Chow ring of a matroid by Adiprasito, Huh and Katz \cite{adiprasito-huh-katz}, the study of the conormal Chow ring by Ardila, Denham and Huh \cite{ardila-denham-huh}, and the development of the intersection cohomology module of a matroid by Braden, Huh, Matherne, Proudfoot and Wang \cite{bradenhuh-semismall,bradenhuh} are examples of major recent developments. As a consequence of the existence of structures with such remarkable properties, it was possible to resolve problems in matroid theory that had remained open for half a century. See \cite{ardila-icm22,huh-icm22,eur} for thorough up-to-date expositions about recent progress in this area.

There are many mathematical objects that can be associated to a matroid. Often, they provide a cryptomorphic list of axioms that may in turn be used to define matroids. A problem that arises recurrently in matroid theory is the study of invariants. Due to the variety of approaches to matroid theory in various combinatorial frameworks, the number of settings in which invariants can be defined is vast. For example, in quite diverse contexts, starting with a matroid $\M$ one may consider the following invariants:
    \begin{itemize}
        \item The Tutte polynomial of $\M$ and its specializations.
        \item The Hilbert--Poincar\'e series of the Chow ring of $\M$.
        \item The Poincar\'e polynomial of the intersection cohomology module of $\M$.
        \item The Kazhdan--Lusztig--Stanley polynomial arising from the lattice of flats $\mathcal{L}(\M)$.
        \item The spectrum of the independence complex of $\M$.
        \item The Ehrhart polynomial of the base polytope $\mathscr{P}(\M)$.
        \item The chain polynomial of the lattice of flats $\mathcal{L}(\M)$.
    \end{itemize}
A remarkable fact is that although they are defined in dissimilar combinatorial, algebraic and geometric settings, the invariants of the preceding list share the property of being \emph{valuative}. This means that these invariants behave well under \emph{matroid subdivisions} of the base polytope $\mathscr{P}(\M)$, i.e., the convex hull of the indicator vectors of all bases. A matroid subdivision of $\mathscr{P}(\M)$ is a polyhedral complex all of whose cells are base polytopes of matroids, and whose union is $\mathscr{P}(\M)$. 

Proving that a given invariant is valuative can be a problem of varying difficulty. Some of the available tools to accomplish this task include the $\mathcal{G}$-invariant of Derksen and Fink \cite{derksen-fink}, and a result by Ardila and Sanchez \cite{ardila-sanchez} which shows that ``convolutions'' of valuations are again valuations. However, once we know that an invariant is valuative, it is reasonable to ask how to \emph{use} this property in practice.

In \cite{joswig-schroter} Joswig and Schr\"oter introduced a class of matroids, called ``split matroids'', which arise naturally in the study of the rays of the tropical Grassmannian and  subdivisions of the hypersimplex.
This class of matroids suggests the following paradigm: although the graphic matroid $\mathsf{K}_4$ coming from a complete graph on $4$ vertices has a polytope that is ``indecomposable'', it happens to be a split matroid and hence the valuations on it can be understood by considering an adequate subdivision of a hypersimplex in which $\mathscr{P}(\mathsf{K}_4)$ appears. In other words, instead of subdividing the polytope we are given, we may think of the opposite operation: starting from our polytope, try to ``add'' pieces corresponding to matroids that can be easily described, and in such a way that at the end we obtain a hypersimplex. More precisely, we want to find a reasonable subdivision of a hypersimplex in which one of the cells corresponds to our matroid, and the remaining cells are as easy as possible to describe. We will see this is possible to achieve for all \emph{elementary split matroids}, a subclass of split matroids that comprises all connected split matroids, but excludes some disconnected split matroids.

Yet another reason for which the study of (elementary) split matroids comes natural is because they constitute a well-structured and ``large'' class of matroids.  A long-standing conjecture, posed informally by Crapo and Rota \cite{crapo-rota} half a century ago, and later formalized by Mayhew, Newman, Welsh, and Whittle \cite{mayhew} asserts that the class of paving matroids is ``predominant'' (see Conjecture~\ref{conj:mayhewetal} below). The class of (elementary) split matroids is strictly larger than the class of paving matroids.

In the literature one often finds conjectures regarding invariants of matroids. Some well-known examples are the following:
    \begin{itemize}
        \item The Kazhdan--Lusztig polynomial of a matroid is always real-rooted \cite{gedeonsurvey}.
        \item The Ehrhart polynomial of a matroid always has positive coefficients \cite{deloera}.
        \item The Whitney numbers of the second kind form a unimodal sequence \cite{rota-conjecture}.
        \item The Speyer $g$-polynomial of a matroid always has non-negative coefficients \cite{speyer}.
        \item The Tutte polynomial of a matroid $\M$ without loops and coloops satisfies the inequality \cite{merino-welsh}. 
        \[T_{\M}(2,0)\, T_{\M}(0,2)\geq T_{\M}(1,1)^2\enspace .\]
    \end{itemize}

We note that all these statements follow a common pattern. Often, assertions of this type are made on the basis of computational data coming from matroids of small cardinality or small rank. Therefore, we are motivated to provide a systematic way of producing a significant amount of examples that perhaps can be used to test these and other similar conjectures following this pattern. 
For some invariants, even the class of paving matroids might be too restrictive; for example the Whitney numbers of the second kind or the Tutte polynomials of paving matroids are simple to describe. Hence, it is reasonable to question whether there is a way to describe the evaluation of an arbitrary valuative invariant on an arbitrary (elementary) split matroid. Hopefully, using a tool of this type one can \emph{disprove} a conjecture, \emph{support} it by proving that it holds for all (elementary) split matroids, or perhaps sharpen the view to find a counterexample that does not belong to this class. 
Two previous instances in which this approach has proved successful are the recent counterexamples found to the second of the conjectures above in \cite{ferroni3}, and the proof of the fifth of the above conjectures for all split matroids in \cite{ferroni-schroter}. Let us point out that a counterexample to the fifth conjecture has been found very recently in \cite{merino-welsh-wrong}.

The ultimate purpose of this paper is to show that, indeed, it is possible to describe in a compact way what the evaluation of an arbitrary valuative invariant on an arbitrary (elementary) split matroid looks like, and show how this can be used in practice to deduce new theorems, examples, or properties of well-known invariants. We believe that after a systematic approach to several invariants, it will become more apparent to the reader that the class of (elementary) split matroids is a reasonable choice that stays at a sufficiently good level of generality, that makes computations feasible, and which captures useful data that can be used to either provide evidence or construct potential counterexamples to conjectures and open problems.

\subsection{Outline}

The first of our contributions is establishing a language that allows us to describe the class of split matroids in a new and purely combinatorial way. The gadget we utilize is the notion of ``cover'' of a subset of the ground set of a matroid. Put succinctly, for a subset $A$ of a matroid $\M$ on $E$, we define $\cover(A) = \{S\subseteq E : |S| = \rk(\M)\, \text{ and } |S\cap A|\geq \rk(A) + 1\}$. This allows us to extend the terminology and results of \cite[Section 3]{ferroni-nasr-vecchi}. We combine this with the new and central notion of ``stressed subsets'' of a matroid. A subset $A\subseteq E$ of the ground set of a matroid $\M$ is said to be \emph{stressed} if both the restriction $\M|_A$ and the contraction $\M/A$ are uniform matroids. One of the motivations for our exposition is to highlight that it indeed generalizes and extends the constructions and results of \cite{ferroni-nasr-vecchi} by applying some insights of \cite{joswig-schroter}.

The following theorem, although elementary, lays a robust groundwork for our later constructions.

\begin{teo}
    Let $\M$ be a matroid on $E$ having set of bases $\mathscr{B}$. If $A$ is a stressed subset, then $\widetilde{\mathscr{B}} = \mathscr{B}\sqcup \cover(A)$ is the set of bases of a matroid $\widetilde{\M}$ on $E$.
\end{teo}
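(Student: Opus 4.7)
The plan is to verify the symmetric basis exchange axiom for $\widetilde{\mathscr{B}}$. Setting $r=\rk(\M)$, $r_1=\rk_\M(A)$, and $r_2=r-r_1$, the stressedness assumption gives $\M|_A=U_{r_1,|A|}$ and $\M/A=U_{r_2,|E\sm A|}$. The natural interpretation (and the one consistent with the stressed-hyperplane construction of \cite{ferroni-nasr-vecchi}) is that $\cusp(A)$ consists of the $r$-subsets $B\subseteq E$ with $|B\cap A|>r_1$. Since any basis of $\M$ has $|B\cap A|\le r_1$, the disjointness $\mathscr{B}\cap\cusp(A)=\emptyset$ is automatic, so $\widetilde{\mathscr{B}}$ is a family of equicardinal $r$-subsets of $E$.

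Two consequences of stressedness do most of the work. \textbf{(F1)} For every $r_1$-subset $I\subseteq A$ and every $r_2$-subset $J\subseteq E\sm A$, the union $I\cup J$ is a basis of $\M$, since $\rk_\M(I\cup J)=\rk_\M(I)+\rk_{\M/A}(J)=r_1+r_2$. \textbf{(F2)} If $B\in\mathscr{B}$ has $|B\cap A|=r_1$, then $B\cap A$ is a basis of $\M|_A$ and hence spans $A$; by uniformity of $\M|_A$ the fundamental circuit of any $y\in A\sm B$ with respect to $B$ is exactly $(B\cap A)\cup\{y\}$, so $(B\sm x)\cup\{y\}$ is a basis of $\M$ for every $x\in B\cap A$.

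I would verify exchange by case analysis on where $B_1,B_2$ lie and on whether $x\in B_1\sm B_2$ belongs to $A$. The case $B_1,B_2\in\mathscr{B}$ is the matroid axiom of $\M$. When $B_1\in\cusp(A)$, the quantity $|B_1\cap A|-[x\in A]+[y\in A]$ typically stays above $r_1$, keeping the swap in $\cusp(A)$; in the borderline subcase $|B_1\cap A|=r_1+1$ with $x\in A$, one can alternatively use (F1) to land the swap in $\mathscr{B}$ via $y\in(B_2\sm A)\sm B_1$ when $(B_2\cap A)\sm B_1=\emptyset$, and a short counting argument shows that at least one of $(B_2\cap A)\sm B_1$ and $(B_2\sm A)\sm B_1$ is nonempty whenever $B_1\neq B_2$. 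In the cross case $B_1\in\mathscr{B}$, $B_2\in\cusp(A)$, when $|B_1\cap A|=r_1$ the element $y$ is produced by (F2) if $x\in A$ (landing in $\mathscr{B}$), or by choosing $y\in(B_2\cap A)\sm B_1$ if $x\notin A$ (landing in $\cusp(A)$); when $|B_1\cap A|<r_1$ a rank-counting argument is required.

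The main obstacle is this last subcase $|B_1\cap A|<r_1$. The key observation is that under this hypothesis $|B_1\sm A|>r_2$, hence $\rk_{\M/A}((B_1\sm x)\sm A)=r_2$ for any $x\in B_1$, whence $\rk_\M((B_1\sm x)\cup A)=r_1+r_2=r$. Since the flat $\cl_\M(B_1\sm x)$ has rank $r-1$, it cannot contain $A$. As $B_2\cap A$ has size $>r_1=\rk_\M A$, it spans $A$, so some $y\in B_2\cap A$ satisfies $y\notin\cl_\M(B_1\sm x)$. One checks that $y\notin B_1$ (indeed $y\neq x$ since $x\notin B_2$, and every other element of $B_1$ lies in $B_1\sm x\subseteq\cl_\M(B_1\sm x)$), so $y\in B_2\sm B_1$, and $(B_1\sm x)\cup\{y\}$ is a basis of $\M$, completing the exchange.
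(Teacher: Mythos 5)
Your proof is correct and follows the same global plan as the paper's (both verify basis exchange case-by-case on where $B_1,B_2$ live, both begin by isolating the observation, your (F1), that any $\rk(\M)$-subset meeting $A$ in exactly $\rk(A)$ elements is automatically a basis). The genuinely different part is your handling of the hardest subcase, where $B_1\in\mathscr{B}$ with $\size{B_1\cap A}<\rk(A)$ and $B_2\in\cusp(A)$. The paper builds an auxiliary basis $B'$ (choosing $\rk(A)$ elements from $B_2\cap A$ and $\rk(\M)-\rk(A)$ from $B_1\sm A$, which is a basis by (F1)), applies the exchange axiom for $\M$ to the pair $B_1,B'$, and then checks the produced element lies in $B_2$. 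You instead argue directly with closures: since $\size{B_1\sm A}>\rk(\M)-\rk(A)$, the set $(B_1\sm\{x\})\cup A$ has full rank, so the rank-$(\rk(\M)-1)$ flat $\cl_\M(B_1\sm\{x\})$ cannot contain $A$; as $B_2\cap A$ spans $A$ (having more than $\rk(A)$ elements in the uniform restriction), some $y\in B_2\cap A$ lies outside the closure, and a short check gives $y\notin B_1$, so $(B_1\sm\{x\})\cup\{y\}$ is a basis of $\M$. Both routes are correct; the paper's factors through a second invocation of the exchange axiom, which keeps the argument purely in terms of bases, while yours is more hands-on with rank and closure and avoids the intermediate object $B'$. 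Your (F2) is not actually needed: in the subcase $\size{B_1\cap A}=\rk(A)$, $x\in A$, picking $y\in(B_2\cap A)\sm B_1$ already yields $\size{B_3\cap A}=\rk(A)$, so (F1) applies directly, which is what the paper does. The small omission that you do not explicitly handle $\cusp(A)=\varnothing$ is harmless, since then $\widetilde{\mathscr{B}}=\mathscr{B}$ and all cusp cases are vacuous.
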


We call the resulting matroid $\widetilde{\M}$ a \emph{relaxation} of $\M$, and we denote it by $\Rel(\M,A)$. The above result is stated and proved as Theorem~\ref{thm:generalized-relaxation}. This operation generalizes the classical circuit-hyperplane relaxation, which explains the terminology we use. Moreover, unlike the relaxation of stressed hyperplanes studied in \cite{ferroni-nasr-vecchi}, it behaves well under dualization, and has reasonable counterpart operations in terms of the lattice of cyclic flats and, more importantly to our purposes, the base polytope. In Section \ref{sec:stressed-subsets-relaxations} we explore the interplay of this operation with other features of matroids. In a direct sum of two uniform matroids, the ground sets of each of the summands are stressed and therefore can be relaxed. For fixed integer numbers $r,k,h,n$, we define the matroid:
    \[ \LL_{r,k,h,n} = \Rel(\U_{k-r,n-h}\oplus \U_{r,h}, \U_{r,h}).\]
The indices $k$ and $n$ stand for the rank and the cardinality of $\LL_{r,k,h,n}$, whereas $r$ and $h$ are two additional parameters that correspond to the rank and the size of the subset we have relaxed to obtain $\LL_{r,k,h,n}$. We often refer to these matroids as ``cuspidal matroids''. They can be understood as the prototypical Schubert elementary split matroids (i.e., any matroid that is Schubert and elementary split is isomorphic to a cuspidal matroid). Being informal, these matroids are precisely the ``easy'' pieces that we add to build up a hypersimplex, starting from the base polytope $\mathscr{P}(\M)$ of an elementary split matroid~$\M$.

In Section \ref{sec:large-classes} we use the notions of stressed subsets and relaxations to prove the following characterization.

\begin{teo}
    A matroid $\M$ of rank $k$ and size $n$ is elementary split if and only if after relaxing all the stressed subsets of $\M$ one obtains the uniform matroid $\U_{k,n}$.
\end{teo}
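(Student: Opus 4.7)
The plan is to translate the relaxation operation into a geometric operation on the base polytope and invoke the polytopal description of split matroids. Concretely, if $A \subseteq E$ is a stressed subset of $\M$ with $\operatorname{rk}_\M(A) = r$ and $|A| = h$, then $\mathscr{P}(\M)$ sits entirely in the half-space $\{x \in \mathbb{R}^n : \sum_{i \in A} x_i \leq r\}$, while the cuspidal matroid polytope $\mathscr{P}(\LL_{r,k,h,n})$ coincides with $\Delta_{k,n} \cap \{x : \sum_{i \in A} x_i \geq r\}$. Relaxing $A$ amounts to gluing $\mathscr{P}(\LL_{r,k,h,n})$ onto $\mathscr{P}(\M)$ along the hyperplane $\sum_{i \in A} x_i = r$, so iterating relaxations corresponds to progressively filling in cuspidal regions of the hypersimplex.

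\textbf{Forward direction.} Suppose $\M$ is connected and split. I would invoke the characterization of split matroids via hyperplane splits of the hypersimplex: $\mathscr{P}(\M)$ arises as a maximal cell of a split subdivision of $\Delta_{k,n}$ determined by a compatible family of splitting hyperplanes. Each such hyperplane has the form $\{x : \sum_{i \in A} x_i = r_A\}$, and I would verify that every such $A$ is a stressed subset of $\M$ with $r_A = \operatorname{rk}_\M(A)$. This amounts to checking that the two cells into which a split divides $\Delta_{k,n}$ are matroid polytopes whose defining matroids restrict and contract to uniform matroids on $A$; the latter is forced by the matroidal constraints on the cells. Conversely, every stressed subset of $\M$ provides such a splitting hyperplane. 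Relaxing all stressed subsets then glues back every cuspidal region carved out by a split, producing the full hypersimplex, hence $\U_{k,n}$.

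\textbf{Backward direction.} Conversely, assume that successively relaxing all stressed subsets of $\M$ yields $\U_{k,n}$. Running the procedure in reverse, $\mathscr{P}(\M)$ is obtained from $\Delta_{k,n}$ by excising cuspidal regions bounded by hyperplanes of the form $\{x : \sum_{i \in A} x_i = \operatorname{rk}_\M(A)\}$, one for each stressed subset appearing along the way. The fact that these excised regions exhaust $\Delta_{k,n} \smallsetminus \mathscr{P}(\M)$ and leave a genuine matroid polytope at each intermediate stage forces the collection of hyperplanes to be pairwise compatible in the sense of split subdivisions, so $\mathscr{P}(\M)$ is a maximal cell of a split subdivision of $\Delta_{k,n}$. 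This is exactly the defining property of a split matroid.

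\textbf{Main obstacle.} The principal technical challenge is to make rigorous the matching between stressed subsets of $\M$ and the splitting hyperplanes of the associated subdivision. In the forward direction, one must show that each splitting hyperplane of $\mathscr{P}(\M)$ comes from a subset whose restriction and contraction are both uniform, which requires analyzing how the matroidal structure of the adjacent cell of the split propagates into a uniform structure on $A$. In the backward direction, the delicate point is to confirm compatibility of the family of splits produced by the stressed subsets and to check that the iterated relaxation process is order-independent and terminates correctly at $\U_{k,n}$. Connectedness of $\M$ is crucial throughout, since it makes $\mathscr{P}(\M)$ full-dimensional in $\Delta_{k,n}$ and excludes direct-sum decompositions where stressed subsets need not correspond to genuine hyperplane splits of the hypersimplex.
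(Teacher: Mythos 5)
Your approach is geometric, reading the relaxation as a hyperplane split and invoking the polytopal characterization of split matroids via compatible split subdivisions of $\Delta_{k,n}$. This is genuinely different from the paper's route, which works entirely in the lattice of cyclic flats: the paper uses the characterization (quoted from B\'erczi et al.) that a loopless, coloopless matroid is elementary split if and only if every two proper cyclic flats are incomparable, together with the facts that a stressed subset with non-empty cusp is precisely a cyclic flat covering $\hat 0$ and covered by $\hat 1$, and that relaxation deletes exactly that element from $\mathcal{Z}$. This makes the whole theorem a short piece of lattice bookkeeping: relaxing one such flat deletes it and leaves the others still covering $\hat 0$ and covered by $\hat 1$ (since they were pairwise incomparable), so the process is manifestly well-defined, order-independent, and terminates at $\mathcal{Z} = \{\varnothing, E\}$.

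The trouble with your proposal is that the two obstacles you name in the final paragraph are not technical details to be filled in later; they \emph{are} the theorem. In the forward direction you assert a bijection between split hyperplanes of $\mathscr{P}(\M)$ and stressed subsets of $\M$, then defer the verification; but establishing that each split hyperplane comes from a stressed cyclic flat, and that each stressed cyclic flat yields a compatible split, is essentially equivalent to the equivalence you are trying to prove. In the backward direction you write that leaving a matroid polytope at each intermediate stage ``forces the collection of hyperplanes to be pairwise compatible,'' but no argument is given, and it is not obvious: one must also address whether the iterated relaxation is even well-defined, since a subset stressed in $\M$ need not remain stressed after another relaxation. (The paper handles this automatically via Proposition \ref{prop:relaxation-equals-edelete-cyclic-flat} and the incomparability of proper cyclic flats; a purely geometric proof must either reproduce this or supply a different argument.) As written, the proposal correctly sets up the geometric translation (the half-space containment, the identification of $\mathscr{P}(\LL_{r,k,h,n})$ with the opposite half of the split, and the gluing interpretation of relaxation all match Proposition \ref{prop:polytope_ineq}, Corollary \ref{coro:cuspidal-polytope}, and Theorem \ref{thm:relaxation-induces-subdivision}), but then stops at exactly the point where the proof has to begin.
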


The preceding result, stated and proved later as Theorem~\ref{thm:elem-split-relaxation-yields-uniform}, can be used to show in a conceptual way that the class of elementary split matroids contains all paving matroids and co-paving matroids. We further discuss certain enumerative aspects of these ``large'' classes of matroids.

We prove that the relaxation of stressed subsets on connected matroids induces a subdivision at the level of matroid polytopes that is particularly simple as it comes from a split.

\begin{teo}
    Let $\M$ be a connected matroid of rank $k$ and cardinality $n$, and assume that $A$ is a stressed subset with non-empty cover such that $\rk(A) = r$ and $|A|=h$. Consider the relaxed matroid $\widetilde{\M}=\Rel(\M, A)$. Then, there is a subdivision of $\mathscr{P}(\widetilde{\M})$ consisting of three internal faces. These faces correspond to matroids isomorphic to $\M$, $\LL_{r,k,h,n}$ and $\U_{k-r,n-h}\oplus \U_{r,h}$, respectively.
\end{teo}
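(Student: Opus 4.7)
The plan is to exhibit a single affine hyperplane $H$ whose induced splitting realises the three-piece decomposition of $\mathscr{P}(\widetilde{\M})$. Because $A$ is stressed with $\rk_{\M}(A)=r$, every basis $B$ of $\M$ satisfies $|B\cap A|\leq r$; meanwhile, by construction the newly added bases $B\in\cusp(A)$ are precisely the $k$-subsets with $|B\cap A|=r+1$. Therefore the linear form $\varphi(x)=\sum_{i\in A}x_i$ takes the value $|B\cap A|$ at each vertex $e_B$ of $\mathscr{P}(\widetilde{\M})$, and I would take $H=\{\varphi=r\}$ as the splitting hyperplane.

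My next step would be to identify the two halves. On $\{\varphi\leq r\}$ the vertices of $\mathscr{P}(\widetilde{\M})$ lying in this closed halfspace are exactly the indicators of bases of $\M$; since every edge of the hypersimplex---and hence of any matroid polytope inside it---changes $\varphi$ by at most one, the cut produces no new vertices, so the lower half equals $\mathscr{P}(\M)$. On $\{\varphi\geq r\}$ the relevant vertices are those $B$ with $|B\cap A|\in\{r,r+1\}$. Here the stressedness assumption pays off: since $\M|_A=\U_{r,h}$ and $\M/A=\U_{k-r,n-h}$, every $k$-subset $B$ with $|B\cap A|=r$ is automatically a basis of $\M$, because $B\cap A$ is then a basis of $\M|_A$ and $B\setminus A$ is a basis of $\M/A$. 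Together with the cusp, these vertices are precisely the bases of $\LL_{r,k,h,n}=\Rel(\U_{k-r,n-h}\oplus\U_{r,h},\U_{r,h})$, so the upper half is $\mathscr{P}(\LL_{r,k,h,n})$. Slicing $\mathscr{P}(\widetilde{\M})$ directly by $H$ keeps only the vertices with $|B\cap A|=r$, which are the bases of $\U_{k-r,n-h}\oplus\U_{r,h}$, giving the interior face shared by the two halves.

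The step that will require the most care is verifying that $H$ genuinely splits $\mathscr{P}(\widetilde{\M})$ and that both halves are the matroid polytopes of the claimed matroids rather than merely polytopes containing the relevant vertices. Full-dimensionality of the upper half follows from the hypothesis $\cusp(A)\neq\emptyset$, which supplies vertices strictly above $H$. Full-dimensionality of the lower half uses connectedness of $\M$: if every basis satisfied $|B\cap A|=r$, then $\mathscr{P}(\M)\subseteq H$, forcing the decomposition $\M=\M|_A\oplus\M/A$ and contradicting connectedness. Combined with the discussion of hypersimplex splits from Section~\ref{sec:polytopes}, this should confirm that $H$ induces a regular matroid subdivision with exactly the three claimed internal faces $\mathscr{P}(\M)$, $\mathscr{P}(\LL_{r,k,h,n})$, and $\mathscr{P}(\U_{k-r,n-h}\oplus\U_{r,h})$.
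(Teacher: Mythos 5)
Your approach — split $\mathscr{P}(\widetilde{\M})$ by the hyperplane $H=\{\sum_{i\in A}x_i = r\}$ — is exactly the hyperplane used in the paper's proof of Theorem~\ref{thm:relaxation-induces-subdivision}, but you identify the two pieces by tracking vertices (and observing that edges change $\varphi$ by at most one, so the cut creates no new vertices), whereas the paper works with the inequality descriptions of $\mathscr{P}(\M)$, $\mathscr{P}(\LL_{r,k,h,n})$ and $\mathscr{P}(\U_{k-r,n-h}\oplus\U_{r,h})$ coming from Proposition~\ref{prop:polytope_ineq} and Corollary~\ref{coro:cuspidal-polytope}. Both routes are valid; the vertex argument is slightly more self-contained, while the inequality argument more directly exhibits each half as a matroid polytope. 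Your explicit discussion of why both pieces are full-dimensional (the cusp being non-empty for the upper half, connectedness of $\M$ for the lower half, which otherwise would force $\M=\M|_A\oplus\M/A$) is a sensible addition that clarifies why one genuinely gets three distinct internal faces; the paper's proof of the precise statement does not spell this out.

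One slip worth flagging: you state twice that $\cusp(A)$ consists of the $k$-subsets $B$ with $|B\cap A| = r+1$, and correspondingly that the vertices of $\mathscr{P}(\widetilde{\M})$ in $\{\varphi\geq r\}$ are those with $|B\cap A|\in\{r,r+1\}$. In fact $\cusp(A)=\{S\in\binom{E}{k}: |S\cap A|\geq r+1\}$, so the intersection sizes run up to $\min(h,k)$, not just $r+1$. This does not derail the argument — the set you actually need to identify is ``all $k$-subsets with $|B\cap A|\geq r$'', and the bases of $\M$ with $|B\cap A|=r$ together with the full cusp do give exactly that, which is the basis set of $\LL_{r,k,h,n}$ — but as written the intermediate characterisation of the upper-half vertex set is incorrect and would confuse a reader. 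The fix is simply to replace ``$=r+1$'' with ``$\geq r+1$'' in both places.
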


This statement is reformulated in a more precise way in Theorem~\ref{thm:relaxation-induces-subdivision}. Continuing with the informal discussion that we initiated above, this implies that the relaxation of stressed subsets indeed provides a way of ``adding pieces'' to the base polytope in a controlled way. In particular, for a connected split matroid (or, more generally, an elementary split matroid), we can construct a subdivision of the hypersimplex having the property that all the remaining maximal pieces correspond to Schubert elementary split matroids. We prove the following general result, which is a direct consequence of the previous statements. This, along with the matroidal subdivision that witnesses its validity, constitutes arguably the main result from which all the other particularizations are derived.

\begin{teo}\label{thm:main-intro}
    Let $\M$ be an elementary split matroid of rank $k$ and cardinality $n$, and let $f$ be a valuative invariant. Then,
    \[ f(\M) = f(\U_{k,n}) - \sum_{r,h} \uplambda_{r,h} \left(f(\LL_{r,k,h,n}) - f(\U_{k-r,n-h}\oplus \U_{r,h})\right),  \]
    where each $\uplambda_{r,h}$ denotes the number of stressed subsets with non-empty cover of size $h$ and rank~$r$.
\end{teo}

This theorem is stated and proved as Theorem~\ref{thm:main}. Its power can be fully appreciated when one deals with a particular invariant $f$. In Section \ref{sec:polytopes} we present several results regarding matroid valuative functions, and tools to prove that an invariant is valuative. These are used in subsequent sections of the paper. 
Some subsections are a thorough overview of notions and tools that already appeared in the literature under various names. Then, in Sections~\ref{sec:seven} -- \ref{sec:nine} we discuss applications of Theorem~\ref{thm:main-intro} for various invariants. In some cases, we perform the full calculation for cuspidal matroids, hence obtaining a counterpart formula for all elementary split matroids. Whereas, in some other cases we just indicate the consequence of the main result for paving or sparse paving matroids --- although we believe that computations for cuspidal matroids are feasible, we have given priority to illustrate the method, instead of undergoing long calculations here.

In Section~\ref{sec:seven} we discuss volumes and Ehrhart polynomials; although we do not include new results, we put the emphasis on this representing a more unified framework. This allows us to recover, for example, the construction of matroids attaining negative Ehrhart coefficients in \cite{ferroni3}. We also address explicit formulas for the Tutte polynomials of elementary split matroids. We believe this formula will be relevant for mathematicians interested in Tutte polynomials. As a quick application of our framework, we settle negatively a conjecture by Matt Larson.

In Section~\ref{sec:eight} we prove that the flag $f$-vector of the lattice of flats is a valuative invariant. This allows us to deduce that the Whitney numbers of the second kind, the chain polynomials of the lattice of flats, and the Hilbert series of the Chow ring (among many other matroid functions) are valuative too. We discuss computational aspects for these invariants, motivated by conjectures by Rota \cite{rota-conjecture} on Whitney numbers of the second kind, Athanasiadis and Kalampogia-Evangelinou \cite{athanasiadis-kalampogia} on chain polynomials, and our own Conjecture~\ref{conj:hilbert-real-rooted} on Hilbert series of Chow rings, respectively. 

In Section~\ref{sec:nine} we discuss three families of invariants: first, the Kazhdan--Lusztig (KL) and $Z$-polynomials; second, the $g$-polynomial of Speyer; and third, the Denham polynomial and the spectrum polynomial. We prove a formula for the KL and $Z$-polynomials of arbitrary corank $2$ matroids and settle a conjecture of Gedeon for these matroids. Furthermore, we prove a formula for $g$-polynomials of paving matroids that extends a prior formula by Speyer for rank $2$ matroids, and we use it to show that the $g$-polynomial of a sparse paving matroid has non-negative coefficients, which supports the well known $f$-vector conjecture by Speyer. Finally, we use our framework to show the valuativeness of the spectrum polynomial and construct a pair of matroids that answer a question raised by Kook, Reiner, and Stanton.

\section{Preliminaries}
\noindent Throughout this article we will assume that the reader is familiar with the main terminology in matroid theory. For undefined concepts we refer to the books \cite{oxley} and \cite{welsh}. We will review some basic notions related to cyclic flats, Schubert matroids, lattice path matroids, and matroid polytope subdivisions.

\subsection{Cyclic flats}

The family of all cyclic flats of a matroid will play an important role in several parts of this paper. Let us start by recalling the definition of a cyclic set.

\begin{defi}
    Let $\M$ be a matroid on the ground set $E$.
    A set $A\subseteq E$ is called \emph{cyclic} if $A$ can be written as a union of circuits of $\M$ or, equivalently, the restriction of $\M$ to $A$, denoted $\M|_A$, has no coloops.
\end{defi}

Note that $F$ is a flat of the matroid $\M$ if and only if the contraction $\M/F$ has no loops. Dualizing, this is equivalent to $\M^*|_{E\smallsetminus F}$ not having any coloop, i.e., the set $E\smallsetminus F$ is cyclic in the dual of $\M$. In other words, the flats of $\M$ and the cyclic sets of $\M^*$ are in correspondence via taking complements. Therefore, the family of all the cyclic sets when ordered by inclusion is a lattice. If $F$ is a cyclic set in $\M$, then $\M|_F$ does not have coloops, so that its dual $\M^*/(E\smallsetminus F)$ is loopless, and hence $E\smallsetminus F$ is a flat of $\M^*$. In summary, this shows the following.

\begin{lemma}\label{lemma:cyclic-flat-dual}
    Let $\M$ be a matroid on $E$. The subset $F\subseteq E$ is a cyclic flat of the matroid $\M$ if and only if its complement $E\smallsetminus F$ is a cyclic flat of the dual matroid $\M^*$.
\end{lemma}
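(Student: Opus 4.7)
The plan is to derive the lemma by combining the two correspondences already set up in the paragraphs immediately preceding its statement, together with the involutivity of duality $\M^{**} = \M$. Concretely, the text has already observed that $F$ is a flat of $\M$ if and only if $E\smallsetminus F$ is a cyclic set of $\M^*$, because $\M/F$ is loopless exactly when $\M^*|_{E\smallsetminus F}$ is coloopless. Dually, $F$ is a cyclic set of $\M$ if and only if $\M|_F$ has no coloops, if and only if $\M^*/(E\smallsetminus F)$ has no loops, if and only if $E\smallsetminus F$ is a flat of $\M^*$.

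First I would state these two equivalences cleanly as a preliminary observation, either as two bullet points or as two displayed biconditionals, since each is essentially a restatement of the definition of the dual after taking complements. This amounts to unwinding the definitions of flat and cyclic set and using the basic fact that loops of $\M$ are coloops of $\M^*$, plus the standard relations $(\M|_A)^* = \M^*/(E\smallsetminus A)$ and $(\M/A)^* = \M^*|_{E\smallsetminus A}$.

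Then the forward direction is immediate: if $F$ is simultaneously a flat and a cyclic set of $\M$, the first equivalence gives that $E\smallsetminus F$ is cyclic in $\M^*$, and the second gives that $E\smallsetminus F$ is a flat of $\M^*$, so $E\smallsetminus F$ is a cyclic flat of $\M^*$. For the converse, I would apply the forward direction to the matroid $\M^*$ in place of $\M$ and the subset $E\smallsetminus F$ in place of $F$, and then invoke $\M^{**}=\M$ together with the fact that the complement of $E\smallsetminus F$ in $E$ is $F$ itself.

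There is essentially no obstacle beyond careful bookkeeping. The only subtle point worth highlighting is making sure the two dual identities $(\M|_A)^* = \M^*/(E\smallsetminus A)$ and $(\M/A)^* = \M^*|_{E\smallsetminus A}$ are invoked in the right direction so that ``no loops'' on one side translates into ``no coloops'' on the other; once that is explicit, the lemma follows by symmetry.
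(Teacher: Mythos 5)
Your proposal follows exactly the paper's argument: both directions are obtained by pairing the two correspondences "$F$ flat of $\M \iff E\smallsetminus F$ cyclic in $\M^*$" and "$F$ cyclic in $\M \iff E\smallsetminus F$ flat of $\M^*$," each read off from the dual identities $(\M/A)^* = \M^*|_{E\smallsetminus A}$ and $(\M|_A)^* = \M^*/(E\smallsetminus A)$. Your explicit invocation of $\M^{**}=\M$ for the converse is only a slightly more careful bookkeeping of what the paper leaves implicit; the substance is identical and correct.
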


The data given by the ground set plus all the cyclic flats with their ranks suffice to determine the whole matroid. Sims \cite{sims:1980} and, independently, Bonin and De Mier \cite{bonin-demier-cyclic}, proved that this information provides indeed another cryptomorphic characterization of matroids. Let us recall the defining properties of the \emph{lattice of cyclic flats}.

\begin{teo}[{\cite[Theorem~3.2]{bonin-demier-cyclic}}]\label{thm:lattice-cyclic-flats}
    Let $\mathscr{Z}$ be a collection of subsets of a set $E$ and let $\rk:\mathscr{Z}\to \mathbb{Z}_{\geq 0}$. Then $\mathscr{Z}$ is the collection of cyclic flats of a matroid $\M$ on the ground set $E$ and the function $\rk$ is the restriction of the rank function of $\M$ to $\mathscr{Z}$ if and only if
    \begin{enumerate}
        \item[\normalfont(Z0)] $\mathscr{Z}$ is a lattice under inclusion.
        \item[\normalfont(Z1)] $\rk(0_{\mathscr{Z}}) = 0$ where $0_{\mathscr{Z}}$ is the bottom element of the lattice $\mathscr{Z}$.
        \item[\normalfont(Z2)] $0 < \rk(Y) - \rk(X) < |Y\smallsetminus X|$ for all sets $X,Y\in\mathscr{Z}$ such that $X\subsetneq Y$.
        \item[\normalfont(Z3)] For all sets $X,Y$ in $\mathscr{Z}$,
        \[ \rk(X) + \rk(Y) \geq \rk(X\vee_{\mathscr{Z}} Y) + \rk(X\wedge_{\mathscr{Z}} Y) + |(X\cap Y) \smallsetminus (X\wedge_{\mathscr{Z}} Y)|.\]
    \end{enumerate}
\end{teo}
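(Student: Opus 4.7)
The plan is to establish both directions of this cryptomorphic characterization, with the construction of a matroid from abstract data $(\mathcal{Z}, \rk)$ being the substantial half.

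\textbf{Necessity.} Assume $\mathcal{Z}$ is the collection of cyclic flats of a matroid $\M$ with rank function $\rk_\M$. For (Z0), the meet of $X, Y \in \mathcal{Z}$ is the largest cyclic subset of $X \cap Y$ (obtained by iteratively deleting coloops from $\M|_{X \cap Y}$), while the join is $\cl(X \cup Y)$, which is cyclic since the closure of a cyclic set is cyclic. Axiom (Z1) holds because the minimum cyclic flat is the set of loops of $\M$. For (Z2), $X \subsetneq Y$ forces $\rk_\M(X) < \rk_\M(Y)$ as $X$ is a flat; the inequality $\rk_\M(Y) - \rk_\M(X) < |Y \smallsetminus X|$ holds because equality would make $Y \smallsetminus X$ independent in $\M/X$, exhibiting a coloop of $\M|_Y$ and contradicting that $Y$ is cyclic. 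Axiom (Z3) follows by combining ordinary submodularity $\rk_\M(X) + \rk_\M(Y) \geq \rk_\M(X \cup Y) + \rk_\M(X \cap Y)$ with $\rk_\M(X \cup Y) = \rk_\M(X \vee_\mathcal{Z} Y)$ and the observation $\rk_\M(X \cap Y) \geq \rk_\M(X \wedge_\mathcal{Z} Y) + |(X \cap Y) \smallsetminus (X \wedge_\mathcal{Z} Y)|$; the last inequality comes from the fact that the elements one removes to pass from $X \cap Y$ to its largest cyclic subset are coloops of $\M|_{X \cap Y}$, each contributing one unit of rank.

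\textbf{Sufficiency.} Given $(\mathcal{Z}, \rk)$ satisfying (Z0)--(Z3), define a candidate rank function on all subsets $A \subseteq E$ by
\begin{equation*}
\rho(A) \;=\; \min_{Z \in \mathcal{Z}} \bigl( \rk(Z) + |A \smallsetminus Z| \bigr).
\end{equation*}
The first task is to verify that $\rho$ is a matroid rank function, and the critical step is submodularity. Fix $A_1, A_2$ and let $Z_i \in \mathcal{Z}$ attain the minimum defining $\rho(A_i)$. The natural candidates are $Z_1 \vee_\mathcal{Z} Z_2$ for $\rho(A_1 \cup A_2)$ and $Z_1 \wedge_\mathcal{Z} Z_2$ for $\rho(A_1 \cap A_2)$; a direct count reduces the desired submodularity to exactly the inequality (Z3), with the slack term $|(Z_1 \cap Z_2) \smallsetminus (Z_1 \wedge_\mathcal{Z} Z_2)|$ compensating for the overcount between $|A_1 \smallsetminus Z_1| + |A_2 \smallsetminus Z_2|$ and $|(A_1 \cup A_2) \smallsetminus (Z_1 \vee Z_2)| + |(A_1 \cap A_2) \smallsetminus (Z_1 \wedge Z_2)|$. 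Normalization $\rho(\varnothing) = 0$ comes from (Z1), and unit-increment monotonicity is immediate from the defining formula.

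\textbf{Identifying the cyclic flats.} Let $\M$ denote the matroid built from $\rho$. Choosing $Z$ itself as witness gives $\rho(Z) \leq \rk(Z)$; for the reverse inequality, for any other candidate $Z' \in \mathcal{Z}$ one uses (Z2) along a saturated chain in $\mathcal{Z}$ from $Z \wedge_\mathcal{Z} Z'$ up to $Z$ to bound $\rk(Z) - \rk(Z')$ above by $|Z \smallsetminus Z'|$. Next, each $Z \in \mathcal{Z}$ is a flat because for every $e \notin Z$ the strict lower bound in (Z2) forces $\rho(Z \cup \{e\}) = \rho(Z) + 1$; and each $Z \in \mathcal{Z}$ is cyclic because the strict upper bound in (Z2) applied to the cover of $Z$ by the largest $Z' \in \mathcal{Z}$ properly contained in $Z$ and missing a prescribed $e \in Z$ guarantees $e$ lies in a circuit contained in $Z$. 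Conversely, any cyclic flat $F$ of $\M$ lies in $\mathcal{Z}$: taking a minimizer $Z \in \mathcal{Z}$ for $\rho(F)$, elements of $F \smallsetminus Z$ would be coloops in $\M|_F$ and elements of $Z \smallsetminus F$ would witness that $F$ fails to be a flat, so both differences are empty and $F = Z$.

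The main obstacle is the submodularity verification of $\rho$, where the three slack terms must be tracked simultaneously and (Z3) must be applied exactly at the right step; a second delicate point is ruling out cyclic flats of $\M$ outside $\mathcal{Z}$, which hinges on deploying both inequalities in (Z2) in tandem with the minimality of the chosen witness.
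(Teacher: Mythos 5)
The paper cites this result from Bonin and De~Mier and does not itself supply a proof, so there is no internal argument to compare against; I therefore assess your proposal on its own. Your plan --- constructing a candidate rank function $\rho(A)=\min_{Z\in\mathcal{Z}}\bigl(\rk(Z)+|A\smallsetminus Z|\bigr)$ and verifying the rank axioms --- is the natural dual of Bonin--De~Mier's independent-set description (note that $|I\cap Z|\leq\rk(Z)$ for all $Z\in\mathcal{Z}$ is equivalent to $\rho(I)=|I|$), and the necessity direction, the submodularity verification, and the final step showing every cyclic flat of $\M$ lies in $\mathcal{Z}$ are all correct.

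There is, however, a genuine gap in the step where you claim $\rho(Z)=\rk(Z)$ for $Z\in\mathcal{Z}$. You assert that ``(Z2) along a saturated chain from $Z\wedge_{\mathcal{Z}}Z'$ up to $Z$'' bounds $\rk(Z)-\rk(Z')$ by $|Z\smallsetminus Z'|$. But (Z2) applied to $Z\wedge_{\mathcal{Z}}Z'\subsetneq Z$ only yields $\rk(Z)-\rk(Z\wedge_{\mathcal{Z}}Z')<|Z\smallsetminus(Z\wedge_{\mathcal{Z}}Z')|$, and since $\rk(Z')\geq\rk(Z\wedge_{\mathcal{Z}}Z')$ while $|Z\smallsetminus(Z\wedge_{\mathcal{Z}}Z')|\geq|Z\smallsetminus Z'|$, the substitutions point in the wrong direction. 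The missing ingredient is (Z3): combining (Z3) with the (Z2)-consequence $\rk(Z\vee_{\mathcal{Z}}Z')\geq\rk(Z)$ gives
\[
\rk(Z')\;\geq\;\rk(Z\wedge_{\mathcal{Z}}Z')+\bigl|(Z\cap Z')\smallsetminus(Z\wedge_{\mathcal{Z}}Z')\bigr|,
\]
and only this, together with (Z2), yields $\rk(Z)-\rk(Z')<|Z\smallsetminus Z'|$ for incomparable $Z,Z'$. That (Z3) is really needed can be seen from the data $E=\{1,\dots,14\}$, $\mathcal{Z}=\{\varnothing,Z_1,Z_2,E\}$ with $Z_1=\{1,\dots,10\}$, $Z_2=\{1,\dots,9,11\}$ (a diamond lattice), $\rk$ equal to $0,5,3,6$ respectively: this satisfies (Z0)--(Z2) but not (Z3), and one has $\rho(Z_1)\leq\rk(Z_2)+|Z_1\smallsetminus Z_2|=4<5=\rk(Z_1)$. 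The same omission propagates to your arguments that each $Z\in\mathcal{Z}$ is a flat and is cyclic: in both cases the strict inequality $\rk(Z')+|Z\smallsetminus Z'|>\rk(Z)$ for incomparable $Z'$ again hinges on (Z3), not on (Z2) alone.
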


Given the set $E$, the collection $\mathscr{Z}$ and the map $\rk:\mathscr{Z}\to\mathbb{Z}_{\geq 0}$ as above, the collection of bases of the matroid $\M$ is:
    \begin{equation} \label{eq:bases-from-cyclic-flats}
    \mathscr{B}(\M) = \left\{ B\in \binom{E}{k} : |X\cap B| \leq \rk(X) \text{ for all $X\in \mathscr{Z}$}\right\},
    \end{equation}
where $k = \rk(1_{\mathscr{Z}})$ is precisely the rank of the matroid, and $1_{\mathscr{Z}}$ denotes the maximum element of $\mathscr{Z}$. For a geometric interpretation of \eqref{eq:bases-from-cyclic-flats}, we refer to Remark~\ref{rem:polytope-cyclic-flats} below.

We use repeatedly that a matroid $\M$ on $E$ is loopless if and only if $\mathscr{Z}$ has the empty set as its bottom element, i.e., $0_{\mathscr{Z}} = \varnothing$. Analogously, $\M$ is coloopless if and only if $1_\mathscr{Z}=E$. The cyclic flats other than the top and bottom elements of $\mathscr{Z}(\M)$ are called \emph{proper}. Notice that Lemma~\ref{lemma:cyclic-flat-dual} implies that $\mathscr{Z}(\M^*)$ is the dual poset of $\mathscr{Z}(\M)$. 

\begin{lemma}\label{lemma:cyclic-flats-restriction}
    Let $\M$ be a loopless and coloopless matroid, and let $F$ be a cyclic flat of $\M$. The cyclic flats of the restriction $\M|_F$ are precisely the cyclic flats of $\M$ contained in $F$.
\end{lemma}

We refer to \cite[Corollary~1 \& Theorem~6]{cyclic-flats-binary} for details about this statement and more general versions of it. 
Also the next useful lemma can be found in the same article. 

\begin{lemma}[{\cite[Proposition~4]{cyclic-flats-binary}}]\label{lemma:cyclic-flats-uniform}
    Let $\M$ be a loopless and coloopless matroid on $E$. Then $\M$ is a uniform matroid if and only if $\mathscr{Z}(\M)=\{\varnothing, E\}$.
\end{lemma}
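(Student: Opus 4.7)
The plan is to leverage the cryptomorphic description of matroids via their cyclic flats, specifically Theorem \ref{thm:lattice-cyclic-flats} together with the formula \eqref{eq:independent-sets-from-cyclic-flats} for the independent sets in terms of $\mathcal{Z}(\M)$ and the restriction of $\rk$ to $\mathcal{Z}(\M)$. This reduces the whole problem to a short direct computation in one direction and a one-line application of \eqref{eq:independent-sets-from-cyclic-flats} in the other.

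For the forward implication, suppose $\M \cong \U_{k,n}$; the loopless and coloopless hypotheses force $1 \leq k \leq n-1$ (the trivial case $n=0$ is vacuous). I would compute the flats and the cyclic sets of $\U_{k,n}$ separately. Since the rank of $A \subseteq E$ in $\U_{k,n}$ is $\min(|A|,k)$, one checks that the flats are exactly the subsets of $E$ of cardinality strictly less than $k$ together with $E$ itself, whereas the cyclic sets are $\varnothing$ together with the subsets of cardinality strictly greater than $k$ (the latter because $\M|_A \cong \U_{\min(|A|,k),|A|}$ has a coloop precisely when $|A|\leq k$). Intersecting these two collections gives $\mathcal{Z}(\M) = \{\varnothing, E\}$.

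For the converse, assume $\mathcal{Z}(\M) = \{\varnothing, E\}$ and set $k = \rk(\M)$. Since $\M$ is loopless and coloopless, $\varnothing$ is the bottom element of $\mathcal{Z}(\M)$ with $\rk(\varnothing) = 0$, and $E$ is the top element with $\rk(E) = k$. Applying \eqref{eq:independent-sets-from-cyclic-flats} directly,
\[ \mathscr{I}(\M) = \{I \subseteq E : |I \cap \varnothing| \leq 0 \text{ and } |I \cap E| \leq k\} = \{I \subseteq E : |I| \leq k\}, \]
which is precisely $\mathscr{I}(\U_{k,n})$. Since a matroid is determined by its family of independent sets, this forces $\M \cong \U_{k,n}$.

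There is no real obstacle in this proof; the only subtlety worth emphasizing is that the loopless and coloopless hypotheses are used twice, first to guarantee that $\varnothing$ and $E$ actually belong to $\mathcal{Z}(\M)$ and second to pin down the values $\rk(\varnothing)=0$ and $\rk(E)=k$, which is what makes the constraints coming from \eqref{eq:independent-sets-from-cyclic-flats} collapse into the single inequality $|I|\leq k$.
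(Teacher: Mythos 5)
Your proof is correct. The paper itself does not prove this lemma; it defers to an external reference (\cite[Proposition 4]{cyclic-flats-binary}), so there is no in-paper argument to compare against. Your self-contained argument is a natural one: the forward direction by directly computing the flats (sets of size $<k$, plus $E$) and the cyclic sets ($\varnothing$, plus sets of size $>k$) of $\U_{k,n}$ and intersecting, and the converse by plugging $\mathcal{Z}(\M)=\{\varnothing,E\}$ into the cryptomorphic description \eqref{eq:independent-sets-from-cyclic-flats} to recover $\mathscr{I}(\M)=\{I : |I|\leq k\}$. Two small remarks, neither a gap: first, the $n=0$ case is not vacuous (the empty matroid is loopless and coloopless and has $\mathcal{Z}=\{\varnothing\}=\{\varnothing,E\}$), but it is trivially consistent with the claim; second, in the converse direction the loopless/coloopless hypothesis is already implied by the assumption $\mathcal{Z}(\M)=\{\varnothing,E\}$ when $E\neq\varnothing$, so the role you ascribe to it there is really just the observation that $\rk(\varnothing)=0$ and $\rk(E)=k$, which holds for any matroid.
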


\subsection{Schubert matroids and lattice path matroids}

Schubert matroids are an important class of matroids that have appeared under various names in the literature.

\begin{defi}
    A matroid $\M$ is said to be a \emph{Schubert matroid} if its lattice of cyclic flats is a chain.
\end{defi}

See \cite[Section~4]{bonin-demier-structural} for a detailed discussion about appearances of this family of matroids in the literature. Let us emphasize our convention that the family of Schubert matroids is closed under isomorphisms.

Every Schubert matroid can be relabelled so that it becomes a lattice path matroid \cite{bonin-demier-structural}. Let us recall the basic properties of these matroids. Fix two non-negative integers $k \leq n$. We consider lattice paths in the plane $\mathbb{R}^2$, starting at the point $(0,0)$, ending at $(n-k,k)$, and consisting of precisely $n$ segments derived from steps of the form $+(1,0)$ or $+(0,1)$. For example, Figure~\ref{fig:lattice-path} illustrates two lattice paths for the parameters $k = 5$ and $n = 12$.

    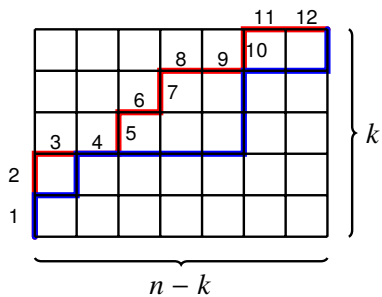
\begin{figure}[ht]
        \centering
        \begin{tikzpicture}[scale=0.55, line width=.9pt]
        \draw[line width=2.3pt,red,line cap=round] (0,0)--(0,2)--(2,2)--(2,2)--(2,3)--(3,3)--(3,4)--(5,4)--(5,5)--(6,5)--(7,5);
        \foreach \x/\y/\l in {-0.5/0.5/1, -0.5/1.5/2, 0.5/2.3/3, 1.5/2.3/4, 2.3/2.5/5, 2.5/3.3/6, 3.3/3.5/7, 3.5/4.3/8, 4.5/4.3/9, 5.3/4.5/10, 5.5/5.3/11, 6.5/5.3/12}
        	\node at (\x,\y) {\scriptsize\sf\l};
        \draw[line width=2.3pt,blue,line cap=round] (0,0)--(0,1)--(1,1)--(1,2)--(5,2)--(5,4)--(7,4)--(7,5);
        \draw (0,0) grid (7,5);
        \draw[decoration={brace,mirror, raise=8pt},decorate]
         (0,0) -- node[below=10pt] {$n-k$} (7,0);
        \draw[decoration={brace,mirror, raise=8pt},decorate]
         (7,0) -- node[right=10pt] {$k$} (7,5);
        \end{tikzpicture}
    \caption{Two lattice paths for $k=5$ and $n=12$. The blue path stays below the red path.}\label{fig:lattice-path}
    
    \end{figure}

It is common to denote each lattice path as a sequence of $n$ symbols, either N (``north'') or E (``east''), putting one of these two possibilities as the $i$-th symbol according to whether the $i$-th step is either $+(0,1)$ or $+(1,0)$ respectively. In Figure \ref{fig:lattice-path} the red path would be denoted as the sequence $P=\text{NNEENENEENEE}$. Another way of encoding a lattice path $P$ is by considering the set $s(P)$ of all the positions in which there is a N. For instance, for the path $P$ in Figure \ref{fig:lattice-path} is the set $s(P)=\{1,2,5,7,10\}$. 

For fixed $k$ and $n$ and consider two lattice paths $L$ and $U$. We say that $L$ \emph{is below} $U$ if for each $m=1,\ldots,n$ we have
    \begin{equation}\label{eq:ineq-lattice-paths}
        \left|s(L) \cap [m]\right| \leq \left| s(U) \cap [m]\right|\,.
    \end{equation}
This has a straightforward interpretation in the grid; see again Figure \ref{fig:lattice-path} where the path $L$ (colored blue) stays below the path $U$ (colored red). In this example $s(L) = \{1,3,8,9,12\}$ and $s(U) = \{1,2,5,7,10\}$.

Whenever the path $L$ is below $U$ we also say that $U$ is \emph{above} $L$. If furthermore the inequality in equation \eqref{eq:ineq-lattice-paths} is strict for all $m=1,\ldots,n-1$ then we say that $L$ is \emph{strictly below} $U$ or, equivalently, that $U$ is \emph{strictly above} $L$.

\begin{teo}[\cite{bonin-demier}]\label{thm:lattice_path_matroid}
    Let us fix $0\leq k\leq n$ and two lattice paths $L$ and $U$ such that $L$ is below $U$. Then the set
    \[ \mathscr{B}[L,U] := \{ s(P) : P\text{ is a lattice-path that is above $L$ and below $U$}\}\]
    is the set of bases of a matroid on $[n]$ of rank $k$. 
\end{teo}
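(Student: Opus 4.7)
My plan is to translate the condition that $P$ lies above $L$ and below $U$ into a pointwise interval constraint on the sorted elements of $s(P)$, and then recognize the resulting family as the bases of a transversal matroid. I would write $s(U)=\{a_1<a_2<\cdots<a_k\}$ and $s(L)=\{b_1<b_2<\cdots<b_k\}$. Evaluating inequality~\eqref{eq:ineq-lattice-paths} at $m=b_i$ forces $a_i\leq b_i$ for every $i\in[k]$, and conversely these inequalities imply \eqref{eq:ineq-lattice-paths} by comparing counts of elements up to any threshold $m$. The same reasoning shows that, for an arbitrary path $P$ with $s(P)=\{c_1<c_2<\cdots<c_k\}$, the path $P$ lies simultaneously above $L$ and below $U$ if and only if $a_i\leq c_i\leq b_i$ for every $i\in[k]$.

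Next I would introduce the intervals $N_i:=\{a_i,a_i+1,\ldots,b_i\}\subseteq[n]$ and argue that $\mathscr{B}[L,U]$ coincides with the collection of $k$-element systems of distinct representatives (SDRs) of the family $(N_1,\ldots,N_k)$, viewed as unordered subsets of $[n]$. One inclusion is immediate from the previous paragraph. For the other, given any SDR $(x_1,\ldots,x_k)$ with $x_i\in N_i$, let $c_1<\cdots<c_k$ be its sorted enumeration; if $j_1<j_2<\cdots<j_i$ are the indices corresponding to the $i$ smallest values among the $x_\ell$, then $c_i\geq x_{j_i}\geq a_{j_i}\geq a_i$, because $j_i\geq i$ and the sequence $(a_j)$ is strictly increasing, and symmetrically $c_i\leq b_i$. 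Hence the sorted SDR satisfies $a_i\leq c_i\leq b_i$, so it lies in $\mathscr{B}[L,U]$.

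The final step is to invoke the classical theorem of Edmonds and Fulkerson asserting that the maximum-size SDRs of a finite set system form the bases of a matroid on the ground set, namely the associated transversal matroid. Since $(a_1,\ldots,a_k)$ is itself an SDR of $(N_1,\ldots,N_k)$, this matroid has rank $k$, and by the previous paragraph its bases of cardinality $k$ are exactly the elements of $\mathscr{B}[L,U]$; in particular $\mathscr{B}[L,U]$ is the set of bases of a rank-$k$ matroid on $[n]$. The hard part will be the pigeonhole step in the middle paragraph, but strict monotonicity of both $(a_i)$ and $(b_i)$ reduces it to a one-line observation; the rest of the argument is a dictionary translation between paths, their N-position sets, and their sorted enumerations.
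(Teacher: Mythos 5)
Your argument is correct and is essentially the standard proof: the paper does not give its own proof here but cites Bonin--de~Mier, who define $\M[L,U]$ as the transversal matroid presented by the intervals $N_i=[a_i,b_i]$ and then identify its size-$k$ partial transversals with the lattice paths between $L$ and $U$ via exactly the sorted-coordinate characterization you use. Your pigeonhole step (sorting the SDR and comparing $c_i$ with $a_{j_i}$ and $b_{j'_1}$) is the right way to close the converse inclusion, and the reduction to Edmonds--Fulkerson together with the observation that $s(U)$ itself is an SDR pins down the rank as $k$; the only thing left unstated is the trivial boundary cases $k\in\{0,n\}$.
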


The matroid of Theorem~\ref{thm:lattice_path_matroid} is denoted by 
$\M[L,U]$. We call such a matroid a lattice path matroid with \emph{lower path} $L$ and \emph{upper path} $U$. The fundamentals of the theory of lattice path matroids are discussed in \cite{bonin-demier} and \cite{bonin-demier-structural}.

As mentioned before, Schubert matroids are (up to isomorphism) a subclass of lattice path matroids. The next theorem characterizes Schubert matroids as lattice path matroids. 
For this purpose we use the notation $\M[U]:=\M[L,U]$ for lattice path matroids of rank $k$ on $n$ elements whose lower path $L=\text{\normalfont E\dots EN\dots N}$ is the lower-right border of the rectangular grid with vertices $(0,0)$, $(0,k)$, $(n-k,0)$ and $(n-k,k)$.

\begin{teo}\label{teo:schubert-is-lpm}
    A matroid $\M$ is a Schubert matroid if and only if it is isomorphic to a lattice path matroid $\M[U]$. 
\end{teo}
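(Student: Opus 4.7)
The plan is to prove both directions using Theorem~\ref{thm:lattice-cyclic-flats}: a matroid is uniquely determined by its ground set together with the lattice of cyclic flats and the restriction of the rank function to this lattice. It therefore suffices to match lattice paths $U$ up (via the matroids $\M[U]$) with chains of subsets satisfying conditions (Z0)--(Z3) in that theorem.

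For the direction ``$\M[U]$ is Schubert'', I would analyze the cyclic flats of $\M[U]$ directly. Writing $U = \text{E}^{b_0}\text{N}^{a_1}\text{E}^{b_1}\text{N}^{a_2}\cdots\text{N}^{a_s}\text{E}^{b_s}$, with $a_i, b_i \geq 1$ for the interior indices and $b_0, a_s \geq 0$ at the boundary, set $n_i := b_0 + a_1 + b_1 + \cdots + a_i + b_i$ and $r_i := a_1 + \cdots + a_i$. The claim is that the proper cyclic flats of $\M[U]$ are precisely the initial segments $[n_i]$ coming from the EN-corners of $U$, with $\rk([n_i]) = r_i$ (plus the natural adjustments when there are loops from $b_0 > 0$ or coloops from $a_s > 0$). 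Since initial segments of $[n]$ are linearly ordered by inclusion, these cyclic flats form a chain, making $\M[U]$ a Schubert matroid by definition.

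For the converse, given a Schubert matroid $\M$ with chain of cyclic flats $F_0 \subsetneq F_1 \subsetneq \cdots \subsetneq F_s$, apply a suitable isomorphism so each $F_i$ becomes the initial segment $[n_i]$ with $n_i = |F_i|$, and set $r_i = \rk(F_i)$. Define the upper path
\[ U := \text{E}^{n_0}\text{N}^{r_1-r_0}\text{E}^{(n_1-n_0)-(r_1-r_0)}\text{N}^{r_2-r_1}\cdots \text{N}^{r_s-r_{s-1}}\text{E}^{(n_s-n_{s-1})-(r_s-r_{s-1})}\text{N}^{k-r_s}. \]
The strict inequalities in (Z2) applied to each consecutive pair $F_{i-1} \subsetneq F_i$ force the interior exponents $r_i - r_{i-1}$ and $(n_i - n_{i-1}) - (r_i - r_{i-1})$ to be positive, so $U$ is a genuine monotone lattice path from $(0,0)$ to $(n-k,k)$ whose EN-corners encode exactly the data $(n_i, r_i)$. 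By the forward direction, $\M[U]$ has the same chain of cyclic flats with identical ranks as $\M$, and Theorem~\ref{thm:lattice-cyclic-flats} then yields $\M \cong \M[U]$.

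The main technical obstacle is the explicit identification of cyclic flats of $\M[U]$ in the forward direction. For each candidate $[n_i]$ from an EN-corner, I would verify: (i) its rank equals $r_i$, which follows from the basis description by counting N-positions of valid paths inside $[n_i]$; (ii) it is a flat, since any element outside $[n_i]$ can be added to some spanning independent set in $[n_i]$ to form an independent set of strictly larger rank (this uses $b_i \geq 1$ immediately after the corner); and (iii) it is cyclic, since the N-step $a_i \geq 1$ immediately before the corner guarantees enough circuits to cover $\M[U]|_{[n_i]}$ without coloops. Finally, one must rule out every other subset as a cyclic flat by checking that candidates at non-corner positions of $U$ fail either the flat condition or the cyclic condition. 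Handling the boundary cases when $b_0 > 0$ (loops) or $a_s > 0$ (coloops) is a minor bookkeeping task.
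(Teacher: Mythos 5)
Your route is genuinely different from the paper's: the paper disposes of this theorem in a single line by citing \cite[Corollary 4.4]{bonin-demier-structural}, whereas you propose to re-derive the characterization from scratch via the axioms (Z0)--(Z3) of Theorem~\ref{thm:lattice-cyclic-flats}. The architecture is sound, and the converse direction as you describe it (read the data $(n_i, r_i)$ off the chain of cyclic flats, build $U$, use (Z2) to see that the interior exponents are positive, then invoke the forward direction and Theorem~\ref{thm:lattice-cyclic-flats}) is correct. What the paper's citation buys is brevity; what your approach buys is self-containedness, since \cite[Corollary 4.4]{bonin-demier-structural} is itself proved by essentially this sort of cyclic-flat bookkeeping.

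The gap sits exactly where you flag the ``main technical obstacle,'' and your plan for closing it is not sufficient. After verifying that each $[n_i]$ is a cyclic flat of $\M[U]$ of rank $r_i$, you still have to show that \emph{no other subset of $[n]$} is a cyclic flat, and your proposal --- ``checking that candidates at non-corner positions of $U$ fail either the flat condition or the cyclic condition'' --- only examines initial segments $[m]$ with $m$ not at a corner. There is no a priori reason a cyclic flat of $\M[U]$ must be an initial segment, and establishing that it must be one is itself a non-trivial argument (it requires the shifted-matroid structure of $\M[U]$), not the ``minor bookkeeping task'' of handling $b_0>0$ or trailing N-steps. As written, the forward direction does not close.

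A cleaner way to finish the forward direction is to run the same mechanism you already use in the converse. Check directly that the chain $\mathcal{Z}' = \{[n_0]\subsetneq [n_1]\subsetneq\cdots\subsetneq[n_s]\}$ together with the ranks $r_i$ satisfies (Z0)--(Z3): (Z0) and (Z1) are immediate, (Z2) is exactly the positivity of the interior exponents $a_{i+1}$ and $b_{i+1}$, and (Z3) holds trivially on any chain since $X\wedge Y = X\cap Y$ there. So by Theorem~\ref{thm:lattice-cyclic-flats}, $\mathcal{Z}'$ with these ranks is the lattice of cyclic flats of some matroid $\N$ on $[n]$, whose bases by~\eqref{eq:independent-sets-from-cyclic-flats} are the $k$-subsets $B$ with $|B\cap [n_i]|\leq r_i$ for all $i$. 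These corner inequalities are equivalent to the full family $|B\cap[m]|\leq |s(U)\cap[m]|$ for all $m\in[n]$ (for $m$ inside an E-block of $U$ the bound follows from the one at the corner $n_i$ that ends the block; for $m$ inside an N-block it follows from the one at the corner $n_{i-1}$ that precedes it). Hence $\N=\M[U]$, so $\mathcal{Z}(\M[U])=\mathcal{Z}'$ is a chain and $\M[U]$ is Schubert, with no case analysis on arbitrary subsets at all.
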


\begin{proof}
    This follows from \cite[Corollary 4.4]{bonin-demier-structural}.
\end{proof}

\subsection{Matroid polytopes and subdivisions}
Throughout the rest of this paper, the main polyhedral object that we associate to a matroid is its base polytope.

\begin{defi}
    Let $\M$ be a matroid on $E$. For each $i\in E$, denote by $e_i$ the $i$-th canonical vector in $\mathbb{R}^E$. The \emph{(base) polytope} of $\M$ is defined as
        \[ \mathscr{P}(\M):= \text{convex hull} \{e_B : B\in\mathscr{B}\} \subseteq \mathbb{R}^E,\]
    where $\mathscr{B}$ is the collection of bases of $\M$ and $e_B := \sum_{i\in B} e_i$ for each $B\in\mathscr{B}$.
\end{defi}

The dimension of $\mathscr{P}(\M)$ is always $n-c(\M)$ where $n$ is the size of the ground set of $\M$, and $c(\M)$ is the number of connected components of $\M$.

Consider the uniform matroid $\U_{k,n}$. Its base polytope is given by the convex hull of all $0/1$-vectors in $\mathbb{R}^n$ consisting of exactly $k$ ones and $n-k$ zeros. The resulting polytope is known as the \emph{hypersimplex}~$\Delta_{k,n}$. An inequality description of this polytope is given by the following slice of a cube:
        \[ \Delta_{k,n} = \mathscr{P}(\U_{k,n})=\left\{ x\in [0,1]^n : \sum_{i=1}^n x_i = k\right\}.\]

Matroid polytopes can be characterized as the polytopes whose vertices have coordinates $0$ and $1$, and whose edge directions are of the form $e_i-e_j$; see \cite{gelfand-goresky-macpherson-serganova}.

\begin{remark}\label{rem:polytope-cyclic-flats}
    An exterior description of $\mathscr{P}(\M)$ using the cyclic flats follows from equation~\eqref{eq:bases-from-cyclic-flats}:
    \[\mathscr{P}(\M) = \left\{x\in \Delta_{k,n} : \sum_{i\in F} x_i \leq \rk(F) \text{ for all $F\in\mathscr{Z}(\M)$}\right\}.\]
    If $\M$ is loopless, it is possible to further reduce the above inequality description by looking only at \emph{indecomposable} cyclic flats, i.e., those cyclic flats $F$ for which $\M|_F$ is a connected matroid. We refer to \cite[Theorem~40.5]{SchrijverB} for a more detailed discussion on the inequality description of both the independence polytope and base polytope of a matroid.
\end{remark}

Our next goal is to recall the fundamental notions on polyhedral subdivisions, and to establish the notation that we use subsequently throughout the paper. 

\begin{defi}
    Let $\mathscr{P}(\M)\subseteq \mathbb{R}^E$ be a matroid polytope. A \emph{matroid subdivision} of $\mathscr{P}$ is a finite collection $\mathcal{S}=\{\mathscr{P}_1,\ldots,\mathscr{P}_s\}$ of matroid polytopes $\mathscr{P}_i = \mathscr{P}(\M_i)\subseteq \mathbb{R}^E$ such that
    \begin{itemize}
        \item $\mathscr{P} = \bigcup_{i=1}^s \mathscr{P}_i$.
        \item Any face of a polytope $\mathscr{P}_i\in\mathcal{S}$ belongs to $\mathcal{S}$.
        \item For each $i\neq j$, the intersection $\mathscr{P}_i \cap \mathscr{P}_j$ is 
        a common (possibly empty) face of both $\mathscr{P}_i$ and $\mathscr{P}_j$.
    \end{itemize}
\end{defi}

Sometimes when we deal with a subdivision $\mathcal{S}$ of a matroid polytope $\mathscr{P}$ it is convenient to focus on the interior faces of the subdivision. More precisely, if we denote the boundary of $\mathscr{P}$ by $\partial\mathscr{P}$, we define
    \[ \mathcal{S}^{\operatorname{int}} := \left\{\mathscr{P}_i\in\mathcal{S}: \mathscr{P}_i\not\subseteq \partial\mathscr{P}\right\}.\]
Often we consider the set of all inclusion-wise maximal polytopes in $\mathcal{S}$. Therefore, we define
    \[ \mathcal{S}^{\max} := \left\{\mathscr{P}_i\in\mathcal{S}: \mathscr{P}_i\not\subseteq \mathscr{P}_j \text{ for $j\neq i$}\right\}.\]
We observe that the union of all the inclusion-wise maximal  faces of a subdivision is precisely $\mathscr{P}$, and moreover that all the interior faces are obtained as intersections of maximal faces. 

\begin{example}\label{ex:matroidal_subdivision}
    Consider the matroid $\U_{2,4}$. Its base polytope, i.e., the hypersimplex $\Delta_{2,4}$, is an octahedron in $\mathbb{R}^4$, lying in the hyperplane $x_1+x_2+x_3+x_4=2$. Consider also the two matroids $\M_1$ and $\M_2$ on $\{1,2,3,4\}$ with bases
        \begin{align*} 
            \mathscr{B}(\M_1) &= \{\{1,2\},\{1,3\},\{1,4\},\{2,3\},\{2,4\}\},\\
            \mathscr{B}(\M_2) &= \{\{1,3\},\{1,4\},\{2,3\},\{2,4\},\{3,4\}\},
        \end{align*}
    respectively. 
    The collection $\mathcal{S}$ consisting of all the faces of $\mathscr{P}(\M_1)$ and $\mathscr{P}(\M_2)$ is a matroid subdivision of the hypersimplex $\Delta_{2,4}$. Notice that the collection of interior faces $\mathcal{S}^{\operatorname{int}}$ consists of three polytopes. They are $\mathscr{P}(\M_1)$, $\mathscr{P}(\M_2)$ and their only common facet, $\mathscr{P}(\M_1)\cap \mathscr{P}(\M_2)$, as depicted in Figure \ref{fig:oct_subdivision}. Furthermore, the face $\mathscr{P}(\M_1)\cap \mathscr{P}(\M_2)$ is the base polytope of the matroid $\M_3$ whose bases are $\mathscr{B}(\M_3) = \{\{1,3\},\{1,4\},\{2,3\},\{2,4\}\}$.
    The matroid $\M_3$ is isomorphic to the direct sum $\U_{1,2}\oplus \U_{1,2}$.

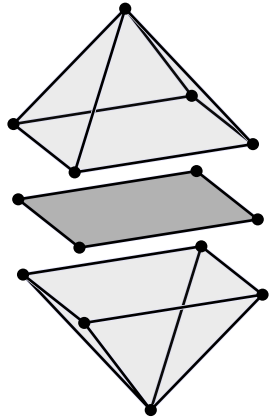
\begin{figure}[ht]
\centering
\begin{tikzpicture}[x  = {(0.9cm,-0.076cm)},
                    y  = {(-0.06cm,0.95cm)},
                    z  = {(-0.44cm,-0.29cm)},
                    scale = 1.75,
                    color = {black}]

  \coordinate (v3) at (1, 0, 0);
  \coordinate (v4) at (-1, 0, 0);
  \coordinate (v1) at (0, 1, 0);
  \coordinate (v2) at (0,-1, 0);
  \coordinate (v5) at (0, 0, 1);
  \coordinate (v6) at (0, 0,-1);
 
 \coordinate (t) at (0, -0.6, 0);
 \coordinate (p2) at ($(v2)+(t)$);
 \coordinate (p3) at ($(v3)+(t)$);
 \coordinate (p4) at ($(v4)+(t)$);
 \coordinate (p5) at ($(v5)+(t)$);
 \coordinate (p6) at ($(v6)+(t)$);

 \coordinate (q1) at ($(v1)-(t)$);
 \coordinate (q3) at ($(v3)-(t)$);
 \coordinate (q4) at ($(v4)-(t)$);
 \coordinate (q5) at ($(v5)-(t)$);
 \coordinate (q6) at ($(v6)-(t)$);

 \tikzstyle{linestyle} = [preaction={draw=blue!8, line cap=round, line join=round, line width=1.5 pt}, draw=black, line cap=round, line join=round, line width=1 pt, fill=gray, fill opacity=0.08];

  \draw[line width=1 pt, linestyle] (p2) -- (p6) -- (p3) -- cycle ;
  \draw[line width=1 pt, linestyle] (p2) -- (p6) -- (p4) -- cycle ;
  \draw[line width=1 pt, linestyle] (p2) -- (p5) -- (p3) -- cycle ;
  \draw[line width=1 pt, linestyle] (p2) -- (p5) -- (p4) -- cycle ;
  \draw[line width=1 pt, linestyle] (p3) -- (p5) -- (p4) -- (p6) -- cycle ;

  \draw[line width=1 pt, linestyle, fill opacity=0.6] ($.5*(p3)+0.5*(q3)$) -- ($.5*(p5)+0.5*(q5)$) --  ($.5*(p4)+0.5*(q4)$) --  ($.5*(p6)+0.5*(q6)$) -- cycle;

  \draw[line width=1 pt, linestyle] (q3) -- (q5) -- (q4) -- (q6) -- cycle ;
  \draw[line width=1 pt, linestyle] (q1) -- (q6) -- (q3) -- cycle ;
  \draw[line width=1 pt, linestyle] (q1) -- (q6) -- (q4) -- cycle ;
  \draw[line width=1 pt, linestyle] (q1) -- (q5) -- (q3) -- cycle ;
  \draw[line width=1 pt, linestyle] (q1) -- (q5) -- (q4) -- cycle ;

   \fill[fill=black] (p2) circle (1.3 pt);
   \fill[fill=black] (p3) circle (1.3 pt);
   \fill[fill=black] (p4) circle (1.3 pt);
   \fill[fill=black] (p5) circle (1.3 pt);
   \fill[fill=black] (p6) circle (1.3 pt);

   \fill[fill=black] ($.5*(p3)+0.5*(q3)$) circle (1.3 pt);
   \fill[fill=black] ($.5*(p4)+0.5*(q4)$) circle (1.3 pt);
   \fill[fill=black] ($.5*(p5)+0.5*(q5)$) circle (1.3 pt);
   \fill[fill=black] ($.5*(p6)+0.5*(q6)$) circle (1.3 pt);

   \fill[fill=black] (q1) circle (1.3 pt);
   \fill[fill=black] (q3) circle (1.3 pt);
   \fill[fill=black] (q4) circle (1.3 pt);
   \fill[fill=black] (q5) circle (1.3 pt);
   \fill[fill=black] (q6) circle (1.3 pt);
\end{tikzpicture}
        \caption{The (interior) faces in a matroid subdivision of $\Delta_{2,4}$.}\label{fig:oct_subdivision}
\end{figure}
\end{example}

Note that the subdivision $\mathcal{S}$ of $\U_{2,4}$ in Example \ref{ex:matroidal_subdivision} has the property that $|\mathcal{S}^{\max}| = 2$, i.e., there are exactly two maximal cells whose union yield our original matroid. Subdivisions of this type are called \emph{hyperplane splits} or just \emph{splits}. These subdivisions are obtained by cutting the polytope with two halfspaces that are separated by a single hyperplane.
The intersections of the two (closed) halfspaces with the polytope yield the two maximal cells of the split. It is automatic that these cells intersect each other along a common face. A combinatorial description of the cases in which splits do exist can be found in the work of Kim \cite{Kim}.

\subsection{Matroid valuations}

The notion of valuations on convex polytopes is central in discrete geometry; see for example \cite{volland,McMullen:1977,BetkeKneser:1985} for the foundations and \cite{JochemkoSanyal:2018} for newer developments. Many invariants of polytopes, such as the euclidean volume (cf. Example~\ref{ex-volume}), or the number of interior lattice points, are examples of valuations. Roughly speaking, a valuation is a map that respects  ``inclusion-exclusion'' under subdivisions, i.e., it can be computed from the value of the smaller pieces of the subdivision. 

From now on, we will consider that the ground set of any matroid is a subset of the positive integers. We introduce the following notations:
\[\begin{array}{lll}
    \mfrak{M}_{E,k} &:=& \,\left\{\mathscr{P}(\M) : \M \text{ is a matroid on $E$ of rank $k$}\right\},\\
    \mfrak{M}_E \;\; &:= &\,\left\{\mathscr{P}(\M) : \M \text{ is a matroid on $E$}\right\},\\
    \mathfrak{M} \;\; &:= &\,\displaystyle\bigsqcup_{\substack{E\subseteq \{1,2,\ldots\}\\ |E| < \infty}} \mathfrak{M}_E.
\end{array}    \]

\begin{defi}\label{def:weak-valuation}
    Let $\A$ be an abelian group. A \emph{weak valuation} is a map $f:\mfrak{M}\to \A$ such that for every matroid polytope $\mathscr{P}$ and every matroid subdivision $\mathcal{S}$ of $\mathscr{P}$,
    \begin{equation} \label{eq:weak-valuation}
    f(\mathscr{P}) = \sum_{\mathscr{Q}\in\mathcal{S}^{\operatorname{int}}} (-1)^{\dim(\mathscr{P}) - \dim(\mathscr{Q})} f(\mathscr{Q}).
    \end{equation}
\end{defi}

\begin{example}\label{ex-volume}
    Consider the map $f:\mfrak{M} \to \mathbb{R}$ defined by
        $f(\mathscr{P}) = \operatorname{vol}(\mathscr{P})$,
    where by $\operatorname{vol}$ we mean the euclidean volume. Observe that for any subdivision $\mathcal{S}=\{\mathscr{P}_1,\ldots,\mathscr{P}_s\}$ of a polytope $\mathscr{P}$, we will only need to consider the internal faces that are full-dimensional, because the volume of a lower dimensional polytope vanishes. Now, since the union of all the full dimensional internal faces covers the polytope~$\mathscr{P}$ up to some measure zero set exactly once, equation \eqref{eq:weak-valuation} holds.
\end{example}

Let us fix a matroid polytope $\mathscr{P}\in\mathfrak{M}$, a matroid subdivision $\mathcal{S}$ and a weak valuation $f:\mathfrak{M}\to \A$. For each non-empty subset $I\subseteq\mathcal{S}^{\max}$ denote by $\mathscr{P}_I$ the intersection of all the polytopes in $I$. Each polytope $\mathscr{P}_I$ belongs to $\mathcal{S}^{\operatorname{int}}$. In \cite[Theorem~3.5]{ardila-fink-rincon}, Ardila, Fink, and Rinc\'on proved that the map $f$ being a weak valuation is equivalent to the following condition on $f$:
    \begin{equation}\label{eq:weak-valuation-inclusion-exclusion}
    f(\mathscr{P}) = \sum_{\varnothing\neq I\subseteq \mathcal{S}^{\max}}(-1)^{|I|-1} f(\mathscr{P}_I).
    \end{equation}
In other words, equation \eqref{eq:weak-valuation-inclusion-exclusion} characterizes weak valuations. This explains more rigorously why we think of them as maps ``respecting'' the inclusion-exclusion principle. The proof of the equivalence with Definition~\ref{def:weak-valuation} follows from  a topological argument for computing Euler characteristics. For the details we refer the reader Ardila, Fink, and Rinc\'on's paper.

\begin{remark}\label{rem:constants-are-valuations}
    Constant functions are valuations. More precisely, the map $f:\mathfrak{M}\to \mathbb{Z}$ given by $f(\mathscr{P}(\M)) = 1$ for every matroid $\M$ is a valuation. This is a non-obvious fact, which was proved by Ardila, Fink, and Rinc\'on in \cite[Section~4]{ardila-fink-rincon}. The proof relies on the computation of the Euler characteristic of a suitable cell complex.
\end{remark}

The name ``weak'' valuation suggests that there is a notion of ``strong'' valuation, which should of course be more restrictive. Before providing the definition, let us set some notation. The \emph{indicator function} of a polytope $\mathscr{P}\subseteq\mathbb{R}^E$ is the map $\amathbb{1}_{\mathscr{P}}:\mathbb{R}^E \to \mathbb{Z}$ with
    \[\amathbb{1}_{\mathscr{P}}(x) := \begin{cases*}
        1 & \text{if $x\in\mathscr{P}$,}\\
        0 & \text{if $x\notin \mathscr{P}$}.
    \end{cases*}\]

\begin{defi}\label{def:strong-valuation}
    Let $\A$ be an abelian group. A \emph{strong valuation} is a map $f:\mfrak{M}\to \A$ satisfying:
        \[ \sum_{i=1}^s a_i f(\mathscr{P}_i)=0, \qquad \text{whenever} \qquad \sum_{i=1}^s a_i \amathbb{1}_{\mathscr{P}_i}(x) = 0, \qquad a_i\in \mathbb{Z}.\]
\end{defi}

Notice that, as opposed to the case of weak valuations, in the preceding definition we did not make any mentions of subdivisions. In \cite{derksen-fink} Derksen and Fink showed that strong and weak valuations on matroids are equivalent notions.

\begin{teo}[{\cite[Theorem~3.5]{derksen-fink}}]
    Let $\A$ be an abelian group. A map $f:\mfrak{M}\to \A$ is a weak valuation if and only if it is a strong valuation.
\end{teo}

It is because of this property that we will drop the words ``weak'' or ``strong'' in front of ``valuation''. For a useful source about other variations of the notion of valuation and a panoramic view of how they relate to one another, with a specific view on matroids, we refer the reader to \cite[Appendix~A]{eur-huh-larson}.

Let $\Mat_{k}(E)$ denote the free abelian group generated by all matroids $\M$ with ground set $E$ of rank~$k$. Consider the subgroup of $\Mat_k(E)$ given by all linear combinations $\sum a_i \M_i\in \Mat_{k}(E)$ satisfying that the function $\sum a_i \amathbb{1}_{\mathscr{P}(\M_i)}$ is identically zero. The quotient of $\Mat_{k}(E)$ by this subgroup is called the \emph{valuative group of matroids on $E$ of rank $k$}, and is denoted by $\Val_k(E)$.  We refer to \cite{eur-huh-larson} for more details about valuative groups.

The valuative group is canonically isomorphic to the integer span of all indicator functions of all the matroid polytopes for matroids with ground set $E$ and rank $k$, which we denote $\mathbb{I}_{\mathfrak{M}_{E,k}}$. The isomorphism $\Val_k(E)\cong \mathbb{I}_{\mathfrak{M}_{E,k}}$ is given by the map $\M \mapsto \amathbb{1}_{\mathscr{P}(\M)}$.

These groups are freely generated. In fact, the following is an appealing result by Derksen and Fink, proved in a different way in \cite[Corollary~7.9]{eur-huh-larson}. 

\begin{teo}[{\cite[Theorem~5.4]{derksen-fink}}]\label{thm:schubert-basis}
    The set $\{ \amathbb{1}_{\mathscr{P}(\M)} : \M \text{ is a Schubert matroid on $E$ of rank $k$}\}$ is a basis of the free abelian group $\mathbb{I}_{\mfrak{M}_{E,k}}$.
\end{teo}

This implies that any valuation $f:\mathfrak{M}\to \A$ is determined by its values on Schubert matroids uniquely. We prove a related result in the Appendix~\ref{appendix-series-parallel}, which to the best of our knowledge has not appeared before in the literature: we show that the direct sums of series-parallel matroids on $E$ of rank $k$ are a spanning set for  the valuative group $\Val_k(E)$. Equivalently, the indicator functions of matroid polytopes of matroids on $E$ that have rank $k$ and are direct sums of series-parallel matroids span the group $\mathbb{I}_{\mathfrak{M}_{E,k}}$.

\subsection{Valuative invariants}\label{subsec:valinv}

A map $f:\mathfrak{M}\to \A$ is an \emph{invariant} if $f(\mathscr{P}(\M))=f(\mathscr{P}(\N))$ whenever $\M\cong \N$, i.e., if they are isomorphic matroids. If in addition $\A$ is an abelian group and the function $f$ is a valuation, we say that $f$ is a \emph{valuative invariant}. To lighten our notation, we will write $f(\M)$ whenever we have a map $f:\mfrak{M}\to \A$, instead of $f(\mathscr{P}(\M))$.

\begin{example}\label{ex:rank-function}
    Consider the maps $f,g:\mathfrak{M}\to \mathbb{R}[t]$ given by $f(\M) = t^{\rk(\M)}$ and $g(\M) = t^{\rk(\M^*)}$. Clearly, they are invariants of matroids. Again, it is a non-obvious fact that they are valuations. Notice that if we fix a matroid $\M$ and a subdivision $\mathcal{S}$ of $\mathscr{P}(\M)$ in $\mathfrak{M}$, then all the matroids corresponding to the polytopes in $\mathcal{S}$ have the same ground set and rank. In particular, $f$ and $g$ are constant on all interior faces in $\mathcal{S}^{\operatorname{int}}$. The valuativeness follows then from the arguments mentioned in Remark~\ref{rem:constants-are-valuations}.
\end{example}

A reasonable question that might arise at this point is what techniques exist to prove that an invariant $f:\mathfrak{M}\to \A$ is valuative. An important tool is the following invariant of matroids, which was addressed first by Derksen in \cite{derksen} and later studied by Derksen and Fink in \cite{derksen-fink}. 

\begin{defi}\label{def:ginvar}
    Let $\M$ be a matroid on $E$, and $|E|=n$. Consider $\mathcal{C}_E$ the set of all the $n!$ complete chains of subsets of $E$ of the form $C=\{\varnothing = S_0 \subsetneq S_1 \subsetneq \cdots \subsetneq S_n = E\}$.  To each chain $C$ as before, let us associate the vector $s(C)\in \{0,1\}^n$ defined by $s(C)=(s_1,\ldots,s_n)$ where $s_i=\rank_\M(S_i)-\rank_\M(S_{i-1})$ for $1 \leq i\leq n$.
    Let $\mathbf{U}$ be the free abelian group spanned by the $2^n$ symbols $U_{s}$ for $s\in \{0,1\}^n$. We define the map $\mathcal{G}:\mfrak{M}_E\to \mathbf{U}$ by
        \[ \mathcal{G}(\M) = \sum_{C\in \mathcal{C}_E} U_{s(C)}.\]
\end{defi}

This map is called the \emph{$\mathcal{G}$-invariant}. A key feature of the $\mathcal{G}$-invariant is that it satisfies a universality property, in the sense that it is valuative and every other valuative invariant factors through $\mathcal{G}$. Let us state this precisely.

\begin{teo}[{\cite[Theorem~1.4]{derksen-fink}}]\label{thm:g-invariant-universal}
    The map $\mathcal{G} : \mathfrak{M}_E \to \mathbf{U}$ is a valuative invariant. Moreover, it is universal. In other words, if $f:\mathfrak{M}_E\to \A$ is a valuative invariant, then there exists a (necessarily unique) homomorphism of abelian groups $\widetilde{f}:\mathbf{U}\to \A$ such that
        \[ f = \widetilde{f}\circ \mathcal{G}.\]
\end{teo}

\begin{example}
    For the matroids of Example \ref{ex:matroidal_subdivision}, we have
    \begin{align*}
        \mathcal{G}(\U_{2,4}) &= 24\, U_{(1,1,0,0)},\\
        \mathcal{G}(\M_1) = \mathcal{G}(\M_2) &= 20\, U_{(1,1,0,0)} + 4\, U_{(1,0,1,0)}, \\
        \mathcal{G}(\M_3) &= 16\, U_{(1,1,0,0)} + 8\, U_{(1,0,1,0)}.
    \end{align*}
    Observe that, indeed, $\mathcal{G}(\U_{2,4}) = \mathcal{G}(\M_1)+\mathcal{G}(\M_2) - \mathcal{G}(\M_3)$, as one should expect.
\end{example}

The $\mathcal{G}$-invariant is often useful to conclude the valuativeness of other invariants, due to the fact that it is defined in a purely combinatorial way, without relying on the geometry of the polytope. As a quick example, notice that the number of bases of the matroid is encoded in $\mathcal{G}$ by looking at the coefficient of $U_{(1,\ldots,1,0,\ldots,0)}$, where in the subindex there are $k$ ones followed by $n-k$ zeros, and then dividing that number by $k!(n-k)!$. In particular, this says that the map $\M\mapsto |\mathscr{B}(\M)|$ is a valuative invariant---though, of course, this can be proved in many other ways.


\section{Stressed subsets and relaxations}\label{sec:stressed-subsets-relaxations}
\noindent The purpose of this section is to introduce a new operation in matroid theory that extends the classical circuit-hyperplane relaxation. We show how this operation interacts with the lattice of cyclic flats and, more importantly, with the base polytope of the original matroid.

\subsection{The cover of a subset}

The first construction associates to each set in a matroid a family of non-bases. Under the special circumstances explained in Theorem~\ref{thm:generalized-relaxation}, these non-bases will admit to be added to the set of bases of $\M$ giving place to a new matroid. 

\begin{defi}
    Let $\M$ be a matroid of rank $k$ on the ground set $E$ and let $A$ be a subset of $E$. We define the \emph{cover}\footnote{In a previous version of this manuscript we called the \emph{cover} the \emph{cusp of a set}.} of $A$ in $\M$ to be the set
        \[ \cover(A) := \left\{S\in \binom{E}{k} : |S\cap A| \geq \rk(A) + 1\right\}.\]
\end{defi}

When dealing with several matroids at the same time, we will use the more precise notation $\cover_{\M}(A)$. Notice that, indeed, for every set $A$, the family $\cover(A)$ and the collection of bases $\mathscr{B}$ of $\M$ are disjoint. This is because $\rk(A) = \max_{B\in\mathscr{B}} |B\cap A|$, and hence $|B\cap A| \leq \rk(A)$ whenever the set $B$ is a basis. Our terminology is borrowed from \cite{pendavingh-vanderpol-flatcover}, where a non-basis $S$ is said to be \emph{covered} by the flat $F$ in a matroid $\M$ if $|S\cap F|\geq \rk_{\M}(F) + 1$, i.e., if $S\in \cover(F)$. In other words, the cover of $F$ is exactly the set of non-bases covered by $F$. 

For a matroid $\M$ of rank $k$ and $n$ elements, we can describe $\cover(A)$ geometrically as follows. Consider the hypersimplex $\Delta_{k,n}$. The elements of $\cover(A)$ correspond to the vertices of $\Delta_{k,n}$ that lie ``beyond'' the hyperplane defined by the inequality $\sum_{i\in A} x_i \leq \rk(A)$. In other words, $\cover(A)$ is always obtained via splitting a hypersimplex with a suitable hyperplane. We will often rely on this geometric description as in some cases it leads to shorter and more conceptual proofs.

There is a useful expression that allows us to express the size of the cover of a set $A$ in a matroid~$\M$ in terms of the following numerical data: the size and rank of the matroid~$\M$ and the size and the rank of the subset $A$.

\begin{prop}\label{prop:size-of-cusp}
    Let $\M$ be a matroid of rank $k$ and cardinality $n$, and let $A$ be a subset of the ground set of $\M$ which has size $h$ and rank $r$. Then
        \[ \left|\cover(A)\right| = \sum_{i=r+1}^k \binom{h}{i}\binom{n-h}{k-i}.\]
\end{prop}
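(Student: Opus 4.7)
The plan is to prove the identity by a direct partition of $\cusp(A)$ according to the size of the intersection $|S \cap A|$, which reduces the problem to a basic binomial counting argument.

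First, I would unpack the definition: a set $S \in \cusp(A)$ is a $k$-subset of $E$ satisfying $|S \cap A| \geq r+1$. Since trivially $|S \cap A| \leq k$, I would partition the cusp as the disjoint union
\[ \cusp(A) = \bigsqcup_{i=r+1}^{k} \left\{ S \in \binom{E}{k} : |S \cap A| = i \right\}. \]
Then I would count each piece: a $k$-subset with exactly $i$ elements in $A$ is obtained by choosing $i$ elements from $A$ (which has $|A|=h$ elements) and the remaining $k-i$ elements from $E \smallsetminus A$ (which has $n-h$ elements), giving $\binom{h}{i}\binom{n-h}{k-i}$ possibilities. Summing over $i$ yields the claimed formula.

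A minor bookkeeping remark I would add: the upper limit $k$ of the sum is harmless even when $k > h$, since in that regime $\binom{h}{i} = 0$ for $i > h$; similarly $\binom{n-h}{k-i} = 0$ if $k-i > n-h$. Thus the formula is valid without further case distinctions on the relative sizes of $h$, $k$, $r$ and $n$.

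There is no real obstacle here: the statement is purely a consequence of the definition of $\cusp(A)$ together with standard binomial counting, and does not use the matroid axioms beyond the fact that $r = \rk(A)$ is simply the integer appearing in the defining inequality $|S\cap A| \geq r+1$. The content of the proposition is not a nontrivial theorem but rather a convenient closed form that will be used repeatedly in later sections, so the proof is a one-line computation via the partition above.
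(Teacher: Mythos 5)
Your proof is correct and takes essentially the same approach as the paper: partition $\cusp(A)$ by the size $i = |S \cap A|$ and count each block by a product of two binomial coefficients. The added remark about vanishing binomial coefficients when $i > h$ or $k - i > n - h$ is a harmless bit of extra care that the paper leaves implicit.
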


\begin{proof}
    By definition, the members in the cover of $A$ have size $k$. Moreover, they are precisely those sets which  contain $i\geq \rk(A) +1 = r + 1$ elements lying in $A$ and $k - i$ elements in the complement of $A$. In other words, we have
    \[
        \left|\cover(A)\right| = \sum_{i = r + 1}^k \binom{|A|}{i} \binom{|E\smallsetminus A|}{k-i}
        = \sum_{i=r+1}^k \binom{h}{i}\binom{n-h}{k-i}. \qedhere
    \]
\end{proof}

There are some instances in which the cover of a set is empty. For example, if $A$ is independent, we have $|A|=\rk(A)$, so the condition $|S\cap A|\geq \rk(A)+1$ becomes impossible to achieve, as $|S\cap A| \leq |A| = \rk(A)$. The following result characterizes the only scenarios in which the cover of a subset can be empty.

\begin{prop}\label{prop:cusp-empty}
    Let $\M$ be a matroid on $E$ with bases $\mathscr{B}$ and $A\subseteq E$. Then the following are equivalent:
    \begin{enumerate}[\normalfont(i)]
        \item $\cover(A) = \varnothing$.
        \item Either $A$ is independent in $\M$ or $E\smallsetminus A$ is independent in $\M^*$.
    \end{enumerate}
\end{prop}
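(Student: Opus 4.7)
The plan is to invoke Proposition \ref{prop:size-of-cusp} and analyze directly when the sum vanishes. Writing $k=\rk(\M)$, $n=|E|$, $h=|A|$ and $r=\rk(A)$, Proposition \ref{prop:size-of-cusp} gives
\[
|\cusp(A)| \;=\; \sum_{i=r+1}^{k}\binom{h}{i}\binom{n-h}{k-i},
\]
a sum of non-negative integers. Thus $\cusp(A)=\varnothing$ if and only if every summand vanishes, i.e., for each $i\in\{r+1,\dots,k\}$ one has $i>h$ or $k-i>n-h$.

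My first step would be to show that $\cusp(A)=\varnothing$ is equivalent to $r=\min(h,k)$. For the forward direction, assume $r<\min(h,k)$; then $i_0:=\min(h,k)$ lies in $\{r+1,\dots,k\}$, and one checks $\binom{h}{i_0}>0$ (since $i_0\le h$) and $\binom{n-h}{k-i_0}>0$ (if $h\le k$ then $k-i_0=k-h\le n-h$, and if $h>k$ then $k-i_0=0$), so $|\cusp(A)|>0$. For the backward direction, if $r=k$ the index set $\{r+1,\dots,k\}$ is empty; if instead $r=h\le k$, then for every $i\ge r+1=h+1$ we have $\binom{h}{i}=0$. In either case the sum is $0$.

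Next I would interpret the two cases of the equality $r=\min(h,k)$. If $h\le k$, then $r=h$ means $\rk(A)=|A|$, which is the definition of $A$ being independent in $\M$. If $h\ge k$, then $r=k$ means $\rk(A)=\rk(\M)$, i.e., $A$ is spanning in $\M$. By duality, $A$ spans $\M$ if and only if $A$ contains a basis $B$ of $\M$, equivalently $E\sm A\subseteq E\sm B$ with $E\sm B$ a basis of $\M^{*}$, which is precisely the statement that $E\sm A$ is independent in $\M^{*}$. (The borderline case $h=k$ yields $A$ a basis, in which case both clauses of (ii) hold simultaneously, consistent with the ``or''.)

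The argument is short, and the only subtlety I anticipate is correctly handling the edge cases $r=k$ (empty summation range) and the verification that $\binom{n-h}{k-\min(h,k)}$ is indeed nonzero; both are immediate once one splits on whether $h\le k$ or $h\ge k$.
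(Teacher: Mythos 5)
Your proof is correct and follows essentially the same route as the paper: both start from Proposition \ref{prop:size-of-cusp} and analyze when the sum $\sum_{i=r+1}^k\binom{h}{i}\binom{n-h}{k-i}$ vanishes, arriving at the characterization $r=h$ or $r=k$ and then translating that into ``$A$ independent or $A$ spanning.'' Your treatment of the forward direction is a touch more careful than the paper's (you exhibit the explicit nonvanishing term $i_0=\min(h,k)$ rather than just asserting when the sum vanishes), and you reuse the sum formula for the converse where the paper argues directly from the definition of $\cusp$, but these are cosmetic differences.
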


\begin{proof}
    Assume that $\cover(A)=\varnothing$. Then the sum in Proposition~\ref{prop:size-of-cusp} is zero.
    Following the notation of that proposition, this sum vanishes if and only if either $\rk(A)+1 = r+1 > k = \rk(\M)$ or the first binomial factor is always zero, i.e., $\rk(A)+1=r+1 > h = \size{A}$.
    Hence, either $\rk(A) \geq \rk(\M)$ which is equivalent to $\rk(A) = \rk(\M)$; or $\rk(A) \geq |A|$ which is equivalent to $\rk(A) = |A|$. We conclude that either $A$ has full rank (thus $E\smallsetminus A$ is independent in $\M^*$), or it is an independent set.
    
    Conversely, if $A$ is independent, we have $\rk(A) = |A|$ and hence there is no set $S$ satisfying the inequality $|S\cap A| \geq \rk(A)+1 = |A|+1$ as this would contradict that $S\cap A\subseteq A$; it follows that the cover of $A$ is empty. Otherwise, if $E\smallsetminus A$ is an independent set in the dual matroid $\M^*$, we have that $\rk(A) = \rk(\M)$. Similar to the case above the condition $|S\cap A| \geq \rk(A) + 1 = \rk(\M) + 1$ implies that $S$ is at least of size $\rk(\M)+1 = k+1$ excluding that $S$ is an element in $\cover(A)$. Therefore it follows again that the cover must be empty.
\end{proof}

Recall that $E\smallsetminus A$ is independent in $\M^*$ if and only if $A$ is a spanning subset in $\M$, i.e., $\cl_{\M}(A) = E$. We often rely on the preceding statement. In such cases, we indicate explicitly that a cover is non-empty---which means that the set $A$ is neither independent nor spanning.

\begin{remark}\label{rem:cusp-corank1}
    If $A$ is a subset such that $\rk(A) = k - 1$ in a matroid $\M$ of rank $k$, then $\cover(A) = \binom{A}{k}$.
    By definition, we have $\cover(A) = \left\{S\in \binom{E}{k}: |S\cap A| \geq k\right\}$. Any set $S$ in this family satisfies that $k = |S| \geq |S\cap A| \geq k$. In particular, $S\cap A = S$, which implies that $S\subseteq A$ and hence we find $\cover(A) = \binom{A}{k}$, as  claimed.
    In particular, if $A$ is a hyperplane, its cover is exactly the family of all $k$-subsets of $A$.
\end{remark}

A convenient feature is that covers behave well when dualizing a matroid. Whenever $\M$ is a matroid on $E$, we will write $\cover^*(A)$ to denote the cover of $A\subseteq E$ in the dual matroid $\M^*$.

\begin{prop}\label{prop:cusp-dual}
    If $\M$ is a matroid on $E$, then, for each $A\subseteq E$, the following holds:
    \[ S\in \cover^*(A) \iff E\smallsetminus S \in \cover(E\smallsetminus A).\]
\end{prop}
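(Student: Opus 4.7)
The proof is a direct computation unpacking both sides of the equivalence. Let $n=|E|$ and $k=\rk(\M)$, so that $\rk(\M^*)=n-k$. The plan is to rewrite each of the two conditions in terms of the single number $|S\cap A|$ and then show they match via the classical rank formula for the dual matroid, namely
\[
   \rk_{\M^*}(A) \;=\; |A| - \rk(\M) + \rk_{\M}(E\smallsetminus A).
\]
Assuming this formula (it is standard, see e.g.\ Oxley \cite{oxley}), the rest is bookkeeping.

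First I would spell out the left-hand side. By definition, $S\in\cusp^*(A)$ means $S\in\binom{E}{n-k}$ and $|S\cap A|\geq \rk_{\M^*}(A)+1$. Equivalently, $|E\smallsetminus S|=k$ and $|S\cap A|\geq \rk_{\M^*}(A)+1$. So the set-size condition on $S$ is already the set-size condition on $E\smallsetminus S$ needed for membership in $\cusp(E\smallsetminus A)$.

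Next I would translate the inequality on the right-hand side. The set $E\smallsetminus S$ lies in $\cusp(E\smallsetminus A)$ iff $|(E\smallsetminus S)\cap(E\smallsetminus A)|\geq \rk_{\M}(E\smallsetminus A)+1$. Using inclusion--exclusion,
\[
  |(E\smallsetminus S)\cap(E\smallsetminus A)| \;=\; n-|S\cup A| \;=\; n-|S|-|A|+|S\cap A| \;=\; k - |A| + |S\cap A|,
\]
where the last equality uses $|S|=n-k$. Thus the right-hand inequality is equivalent to
\[
   |S\cap A| \;\geq\; \rk_{\M}(E\smallsetminus A) - k + |A| + 1.
\]
By the dual rank formula, the right side of this inequality is exactly $\rk_{\M^*}(A)+1$, matching the left-hand condition. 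Combining the cardinality and inequality checks gives the biconditional, completing the proof.

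The only nontrivial ingredient is the dual rank identity; everything else is a one-line inclusion--exclusion. If the paper prefers not to quote the identity, one could alternatively derive both sides from the base description $\mathscr{B}(\M^*)=\{E\smallsetminus B:B\in\mathscr{B}\}$: since $\rk_{\M^*}(A)=\max_{B\in\mathscr{B}}|(E\smallsetminus B)\cap A|=|A|-\min_{B\in\mathscr{B}}|A\cap B|$, a parallel manipulation for $\rk_{\M}(E\smallsetminus A)$ yields the same equivalence directly from the definition of cusps, with no external appeal.
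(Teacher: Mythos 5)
Your proof is correct and follows essentially the same route as the paper's: both invoke the dual rank identity $\rk_{\M^*}(A)=\rk_{\M}(E\smallsetminus A)+|A|-\rk(\M)$ and then compute $|(E\smallsetminus S)\cap(E\smallsetminus A)|=k-|A|+|S\cap A|$ via inclusion--exclusion to match the two defining inequalities.
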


\begin{proof}
    Assume that $\M$ is a matroid on $n$ elements and rank $k$. From the geometric description, we see that $\cover_{\M^*}(A)$ corresponds to the vertices of $\Delta_{n-k,n}$ which violate the inequality $\sum_{i\in A} x_i \leq \rk_{\M^*}(A)$. By considering the involution $x_i\mapsto 1 - x_i$ for all coordinates in $\mathbb{R}^n$, we have that $\Delta_{n-k,n}$ maps onto $\Delta_{k,n}$, whereas $\mathscr{P}(\M^*)$ maps onto $\mathscr{P}(\M)$. The vertices of $\Delta_{n-k,n}$ satisfying $\sum_{i\in A} x_i > \rk_{\M^*}(A)$ map onto the vertices of $\Delta_{k,n}$ satisfying $\sum_{i\notin A} x_i > \rk(E\smallsetminus A)$, i.e., precisely the elements of $\cover_{\M}(E\smallsetminus A)$.
\end{proof}

\subsection{Relaxation of stressed subsets}

Our next goal is to introduce a new operation that extends the established ``circuit-hyperplane relaxation''. The following statement can be found for example in \cite[Proposition~1.5.14]{oxley}.

\begin{prop}\label{prop:ch-relaxation}
    Let $\M$ be a matroid on $E$ with set of bases $\mathscr{B}$. If $\M$ has a circuit-hyperplane $H$, then the collection $\widetilde{\mathscr{B}} = \mathscr{B} \sqcup \{H\}$ is the set of bases of a matroid $\widetilde{\M}$ on $E$.
\end{prop}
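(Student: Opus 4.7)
The plan is to verify the two defining properties of the collection of bases of a matroid for the family $\widetilde{\mathscr{B}} = \mathscr{B} \sqcup \{H\}$. The first condition, non-emptiness, is immediate because $\mathscr{B}\neq\varnothing$. Before tackling the basis-exchange axiom I would record the key numerical consequence of $H$ being a circuit-hyperplane: since $\rk(H)=k-1$ and a circuit always has size one more than its rank, $|H|=k$, which is the common cardinality of the bases of $\M$. Hence every element of $\widetilde{\mathscr{B}}$ has size $k$. I would also recall the standard facts that $H=\operatorname{cl}(H)$ and that $H\smallsetminus\{a\}$ is independent for every $a\in H$ (so it has rank $k-1$ and spans $H$).

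The basis-exchange axiom will be verified by a case analysis depending on whether $B_1$ or $B_2$ equals $H$. When $B_1,B_2\in\mathscr{B}$ there is nothing to do: the exchange property in $\M$ delivers a suitable $b$, and the replaced set lies in $\mathscr{B}\subseteq\widetilde{\mathscr{B}}$. When $B_1=H$ and $B_2\in\mathscr{B}$, take any $a\in H\smallsetminus B_2$; since $B_2$ is a basis and $H$ is a hyperplane, $B_2\not\subseteq H$, so there is $b\in B_2\smallsetminus H$. Then $H\smallsetminus\{a\}$ is independent of rank $k-1$ and spans $H$, and $b\notin H=\operatorname{cl}(H\smallsetminus\{a\})$, so $(H\smallsetminus\{a\})\cup\{b\}$ is independent of size $k$, hence a basis in $\mathscr{B}$.

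The main obstacle is the remaining case $B_1\in\mathscr{B}$, $B_2=H$, where the replaced set might not be independent in $\M$, and we must instead ensure it is exactly $H$. Given $a\in B_1\smallsetminus H$, set $X:=B_1\smallsetminus\{a\}$, an independent set of size $k-1$. I would distinguish two subcases according to $|B_1\cap H|$. If $|B_1\cap H|<k-1$, then $X\not\subseteq H$, so the flat $\operatorname{cl}(X)$ (of rank $k-1$) is not equal to $H$, and hence does not contain $H$; choosing any $b\in H\smallsetminus\operatorname{cl}(X)$ yields $b\notin B_1$ (because $B_1\cap H=X\cap H\subseteq\operatorname{cl}(X)$) and $X\cup\{b\}$ is a basis of $\M$. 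If instead $|B_1\cap H|=k-1$, then $X\subseteq H$ and $|H\smallsetminus X|=1$; taking $b$ to be the unique element of $H\smallsetminus X$ gives $b\in H\smallsetminus B_1$ and $X\cup\{b\}=H\in\widetilde{\mathscr{B}}$. Combining the two subcases completes the verification and shows $\widetilde{\mathscr{B}}$ is the collection of bases of a matroid on $E$.
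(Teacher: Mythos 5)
Your proof is correct. The paper does not give its own argument for this classical fact but defers to \cite[Proposition~1.5.14]{oxley}, whose proof is exactly the kind of direct, case-by-case verification of the basis-exchange axiom that you carry out; your handling of the subtle case $B_1\in\mathscr{B}$, $B_2=H$ (splitting on whether $|B_1\cap H|<k-1$ or $=k-1$, and using $H=\operatorname{cl}(H\smallsetminus\{a\})$ and the incomparability of distinct rank-$(k-1)$ flats) is complete and airtight. It is also worth noting that the paper's Theorem~\ref{thm:generalized-relaxation} proves a strictly more general statement by an analogous but more elaborate case analysis, so your argument sits squarely in the same spirit as the paper's own treatment of these relaxations.
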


Our aim is to develop a framework in which the collection of bases of a matroid $\M$ can be enlarged by including non-bases of $\M$ in a controllable fashion. We begin with the introduction of the terminology.

\begin{defi}\label{def:stressed-subset}
    Let $\M$ be a matroid with ground set $E$. A subset $A\subseteq E$ is called \emph{stressed} if both the restriction $\M|_A$ and the contraction $\M/A$ are uniform matroids.
\end{defi}

Choosing the name ``stressed subset'' stems from the notion of ``stressed hyperplane'' introduced by Ferroni, Nasr and Vecchi in \cite{ferroni-nasr-vecchi}. In that paper, a hyperplane $H$ in a matroid $\M$ is said to be stressed if all the subsets of $H$ having size $\rk(\M)$ are circuits; in particular, a circuit-hyperplane is always a stressed hyperplane.
The definition that we gave above does in fact extend this concept, as we shall explain.
Observe that for every matroid $\M$ and every hyperplane $H$, the contraction $\M/H$ is isomorphic to a uniform matroid of the form $\U_{1,n-h}$ where $n$ is the cardinality of $\M$ and $h=|H|$. Therefore, according to Definition~\ref{def:stressed-subset} for a hyperplane to be stressed one only has to verify that $\M|_H$ is uniform; that is equivalent to requiring that $\M|_H\cong \U_{k-1,h}$ where $k=\rk(\M)$, i.e., that all the subsets of size $k-1$ of $H$ are independent in $\M$ or, more explicitly, that all the subsets of $H$ of size $k$ are circuits.

Clearly, $A$ is a stressed subset of the matroid $\M$ if and only if its complement is a stressed subset of the dual $\M^*$, because $\M^*|_{E\smallsetminus A} = (\M/A)^*$, $\M^*/(E\smallsetminus A) = (\M|_A)^*$ and the dual of a uniform matroid is a uniform matroid.

When we deal with a matroid $\M$ we usually denote by $k$ its rank and by $n$ the cardinality of its ground set. When $A$ is a stressed subset, we use the letters $r$ and $h$ to denote its rank $\rk(A)$ and size $|A|$, respectively. Notice that in this case we have
    \begin{equation}\label{eq:which-uniforms}
    \M|_A \cong \U_{r,h} \quad \text{and} \quad \M/A \cong \U_{k-r,n-h}\enspace.\end{equation}

\begin{example}\label{example:O7}
    Consider the matroid $\mathsf{O}_7$ depicted in Figure \ref{fig:matroidO7}; see \cite[p.~644]{oxley} for a list of properties this matroid possesses. It has rank $3$ and size $7$. If we consider the set $X = \{3,6,7\}$, we have that $\M|_X \cong \U_{2,3}$, whereas $\M/X \cong \U_{1,4}$. This says that $X$ is a stressed subset. In fact, $X$ is a circuit-hyperplane and therefore $\cover(X) = \{X\}$. On the other hand, if we consider $Y = \{1,2,3,4\}$ we obtain that $\M|_Y \cong \U_{2,4}$ and $\M/Y \cong \U_{1,3}$; in this case $\cover(Y) = \{\{1,2,3\}, \{1,3,4\},\{1,2,4\},\{2,3,4\}\}$, since $Y$ is a hyperplane (cf. Remark~\ref{rem:cusp-corank1}).
\end{example}

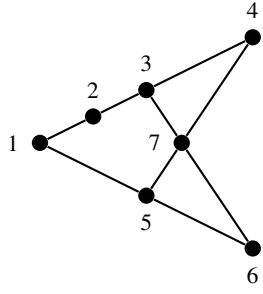
\begin{figure}[ht]
    \centering
	\begin{tikzpicture}  
	[scale=0.7,auto=center,every node/.style={circle,scale=0.8, fill=black, inner sep=2.7pt}] 
	\tikzstyle{edges} = [thick];
	
	\node[label=left:$1$] (a1) at (0,0) {};  
	\node[label=above:$2$] (a2) at (2/2,1/2)  {};
	\node[label=above:$3$] (a3) at (4/2,2/2)  {};  
	\node[label=above:$4$] (a4) at (8/2,4/2) {};
	\node[label=below:$5$] (a5) at (4/2,-2/2)  {};  
	\node[label=below:$6$] (a6) at (8/2,-4/2)  {};    
	\node[label=left:$7$] (a7) at (5.33/2,0) {};
	
	\draw[edges] (a1) -- (a2); 
	\draw[edges] (a2) -- (a3);  
	\draw[edges] (a3) -- (a4);  
	\draw[edges] (a1) -- (a5);  
	\draw[edges] (a5) -- (a6);
	\draw[edges] (a7) -- (a3);
	\draw[edges] (a7) -- (a4);
	\draw[edges] (a7) -- (a5);
	\draw[edges] (a7) -- (a6);
	\end{tikzpicture} \caption{A visualization of the matroid $\mathsf{O}_7$.}\label{fig:matroidO7}
\end{figure}

The next proposition establishes a useful characterization of the stressed subsets with non-empty cover, at least in the case in which our matroid $\M$ does not contain any loops and coloops. In particular, these stressed sets are cyclic flats that necessarily satisfy an additional property.

\begin{prop}\label{prop:cyclic-covering-iff-stressed}
    Let $\M$ be a matroid on $E$ without loops and coloops and let $\mathscr{Z}$ be its lattice of cyclic flats. For every subset $A$ of the matroid, the following are equivalent.
    \begin{enumerate}[\normalfont(a)]
        \item\label{it:cyclic-covering_a} $A$ is a stressed subset with non-empty cover.
        \item\label{it:cyclic-covering_b} The set $\{\varnothing, A, E\}$ forms a saturated chain in $\mathscr{Z}(\M)$.
    \end{enumerate}
\end{prop}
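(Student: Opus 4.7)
The plan is to prove both implications by systematically relating the stressed/cusp data of $A$ to the structure of the intervals $[\varnothing,A]$ and $[A,E]$ of $\mathcal{Z}$, invoking Lemmas~\ref{lemma:cyclic-flat-dual}, \ref{lemma:cyclic-flats-restriction}, \ref{lemma:cyclic-flats-uniform} and Proposition~\ref{prop:cusp-empty} as the main tools, together with duality of the cusp (Proposition~\ref{prop:cusp-dual}). Throughout one exploits that, since $\M$ has neither loops nor coloops, $\varnothing$ and $E$ are the bottom and top of $\mathcal{Z}$.

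\textbf{From (\ref{it:cyclic-covering_a}) to (\ref{it:cyclic-covering_b}).} Write $\rk(A)=r$ and $|A|=h$, so by \eqref{eq:which-uniforms} we have $\M|_A\cong\U_{r,h}$ and $\M/A\cong\U_{k-r,n-h}$. By Proposition~\ref{prop:cusp-empty}, non-emptiness of $\cusp(A)$ forces $A$ to be neither independent nor spanning, i.e.\ $r<h$ and $r<k$. The strict inequality $r<h$ means $\U_{r,h}$ is coloopless, hence $A$ is a cyclic set; the strict inequality $r<k$ means $\U_{k-r,n-h}$ is loopless, hence $A$ is a flat. Thus $A\in\mathcal{Z}$. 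Because $\M$ is loopless, so is $\M|_A$, which is then also coloopless and uniform; Lemma~\ref{lemma:cyclic-flats-uniform} gives $\mathcal{Z}(\M|_A)=\{\varnothing,A\}$, and Lemma~\ref{lemma:cyclic-flats-restriction} identifies this with the interval $[\varnothing,A]$ in $\mathcal{Z}$. Hence $A$ covers $\varnothing$ in $\mathcal{Z}$. To see that $A$ is covered by $E$, I would appeal to duality: Proposition~\ref{prop:cusp-dual} shows that $E\smallsetminus A$ is a stressed subset of $\M^*$ with non-empty cusp, so the argument just given (applied to $\M^*$ and $E\smallsetminus A$) shows that $E\smallsetminus A$ covers $\varnothing$ in $\mathcal{Z}(\M^*)$; by Lemma~\ref{lemma:cyclic-flat-dual}, $\mathcal{Z}(\M^*)$ is the opposite of $\mathcal{Z}(\M)$, so this translates into $A$ being covered by $E$ in $\mathcal{Z}$.

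\textbf{From (\ref{it:cyclic-covering_b}) to (\ref{it:cyclic-covering_a}).} Conversely, assume $A\in\mathcal{Z}$ with $\varnothing\lessdot A\lessdot E$. The interval $[\varnothing,A]$ is then $\{\varnothing,A\}$, and Lemma~\ref{lemma:cyclic-flats-restriction} identifies this with $\mathcal{Z}(\M|_A)$. Since $\M$ is loopless and $A$ is cyclic, $\M|_A$ is loopless and coloopless, so Lemma~\ref{lemma:cyclic-flats-uniform} yields $\M|_A$ uniform. For the contraction, I would again go through duality: the same hypothesis on $A$ implies, via Lemma~\ref{lemma:cyclic-flat-dual}, that $E\smallsetminus A$ is a cyclic flat of $\M^*$ covering $\varnothing$ in $\mathcal{Z}(\M^*)$, whence by the argument above $\M^*|_{E\smallsetminus A}=(\M/A)^*$ is uniform, so $\M/A$ is uniform. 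Therefore $A$ is stressed. Finally, $A\neq\varnothing$ combined with $A$ cyclic forces $\rk(A)<|A|$, so $A$ is not independent; $A\neq E$ combined with $A$ being a flat forces $\rk(A)<k$, so $A$ is not spanning; hence $\cusp(A)\neq\varnothing$ by Proposition~\ref{prop:cusp-empty}.

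No step looks genuinely hard; the only subtlety is remembering that ``$A$ covered by $E$'' is the dual condition to ``$A$ covers $\varnothing$'', and that this corresponds exactly to uniformity of $\M/A$ rather than of $\M|_A$. Keeping careful track of which side of the duality is being used is the only place where a careless argument could go wrong.
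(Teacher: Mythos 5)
Your proof is correct. It uses the same toolkit as the paper (Lemmas~\ref{lemma:cyclic-flat-dual}, \ref{lemma:cyclic-flats-restriction}, \ref{lemma:cyclic-flats-uniform}, Propositions~\ref{prop:cusp-empty}, \ref{prop:cusp-dual}) and reaches the same conclusions, but differs in one localized way that is worth noting. For the forward implication, the paper establishes ``$A$ covers $\varnothing$ in $\mathcal{Z}$'' by a direct contradiction argument: it assumes there is a cyclic flat $F$ with $\varnothing\subsetneq F\subsetneq A$, observes that $\M|_A\cong\U_{r,h}$ forces any set of rank $<r$ inside $A$ to be independent, and contradicts that $F$ is a non-empty cyclic set. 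You instead invoke the ``only if'' direction of Lemma~\ref{lemma:cyclic-flats-uniform} (uniform $\Rightarrow$ two-element lattice of cyclic flats) together with Lemma~\ref{lemma:cyclic-flats-restriction} to identify $[\varnothing,A]$ with $\{\varnothing,A\}$ outright. This makes your proof symmetric: the same pair of lemmas drives both directions, used once as ``uniform $\Rightarrow$ two-element interval'' and once as ``two-element interval $\Rightarrow$ uniform.'' The paper also defers the second covering relation to an informal ``analogous reasoning for the dual,'' whereas you spell out the duality via Proposition~\ref{prop:cusp-dual} and Lemma~\ref{lemma:cyclic-flat-dual}, which is more careful. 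Both proofs are valid; yours buys a bit more structural clarity at no extra cost, while the paper's contradiction argument is slightly more self-contained in that it does not rely on the ``only if'' direction of Lemma~\ref{lemma:cyclic-flats-uniform}.
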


\begin{proof}
    Let us prove first that \eqref{it:cyclic-covering_a} $\Rightarrow$ \eqref{it:cyclic-covering_b}. Assuming that $A$ is a stressed set implies $\M|_A\cong \U_{r,h}$ and $\M/A\cong \U_{k-r,n-h}$ where $r$, $h$, $k$ and $n$ are chosen as in \eqref{eq:which-uniforms}. Additionally we assume that $\cover(A)\neq\varnothing$, and thus $A$ is neither an independent nor a spanning set. In other words, this means that $r < h$ and $r < k$ respectively. 
    Since we have that $\M/A\cong \U_{k-r,n-h}$ and $r<k$, it follows that $\M/A$ does not have loops, i.e., $A$ is a flat. The set $A$ is a union of circuits since $\M|_A \cong \U_{r,h}$ does not contain coloops as $r < h$. Since $\M$ is loopless and coloopless we have that $0_{\mathscr{Z}}=\varnothing$ and that $1_{\mathscr{Z}}=E$. 
    Now suppose that additionally there is a cyclic flat $F$ such that $\varnothing \subsetneq F\subsetneq A$. 
    Then $0 <\rk(F) < \rk(A) = r$, and since $\M|_A\cong \U_{r,h}$ it follows that $\rk(F) = |F| > 0$, i.e., $F$ is an independent set, contradicting that $F$ is a non-empty cyclic flat.
    This proves that $A$ indeed covers $0_{\mathscr{Z}}$. The proof that $1_{\mathscr{Z}}$ covers $A$ is entirely analogous. This can be shown by applying  the same arguments to the dual matroid $\M^*$.
    
    Now let us prove that \eqref{it:cyclic-covering_b} $\Rightarrow$ \eqref{it:cyclic-covering_a}. 
    Assume that $A$ is a proper cyclic flat, i.e., $\varnothing \subsetneq A\subsetneq E$. Then $\cover(A)\neq \varnothing$, because $A$ is a (non-empty) union of circuits and hence dependent, and it is non-spanning as it is a closed set which is strictly contained in the ground set $E$. It remains to show that $A$ is stressed. 
    By Lemma~\ref{lemma:cyclic-flats-restriction}, the lattice of cyclic flats of the restriction $\M|_A$, denoted by $\mathscr{Z}(\M|_A)$, is exactly the restriction of $\mathscr{Z}$ to the interval $[\varnothing, A]$, which by assumption is equal to $\{\varnothing, A\}$. The matroid $\M|_A$ with ground set $A$ satisfies $\mathscr{Z}(\M|_A) = \{\varnothing, A\}$; it follows that it is uniform. An entirely analogous reasoning shows that $\M/A$ is uniform as well and thus $A$ is stressed.
\end{proof}

In the first part of the proof of \eqref{it:cyclic-covering_a} $\Rightarrow$ \eqref{it:cyclic-covering_b}, we did not use the assumption that $\M$ was loopless and coloopless. Therefore, we have the following result for arbitrary matroids.  

\begin{coro}\label{cor:stressed-nonemptycusp-implies-cyclic}
    Let $\M$ be a matroid on the ground set $E$ and let $A\subseteq E$ be a stressed set with non-empty cover. Then $A$ is a indecomposable cyclic flat.
\end{coro}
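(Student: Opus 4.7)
The plan is to extract the argument from the first half of the implication \eqref{it:cyclic-covering_a} $\Rightarrow$ \eqref{it:cyclic-covering_b} of Proposition \ref{prop:cyclic-covering-iff-stressed}, which, as the authors remark immediately after stating the corollary, never invoked the loopless and coloopless hypothesis on $\M$. So I would essentially isolate that portion and present it as a short standalone argument.

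First I would set notation: let $k = \rk(\M)$, $n = |E|$, $r = \rk(A)$ and $h = |A|$. Since $A$ is stressed, Definition \ref{def:stressed-subset} together with \eqref{eq:which-uniforms} gives $\M|_A \cong \U_{r,h}$ and $\M/A \cong \U_{k-r,n-h}$. The non-empty cusp hypothesis together with Proposition \ref{prop:cusp-empty} tells me that $A$ is neither independent nor spanning, which translates to the strict inequalities $r < h$ and $r < k$.

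The core of the argument is then to read off the two defining properties of a cyclic flat from these two isomorphisms:
\begin{itemize}
    \item Because $r < k$, the contraction $\M/A \cong \U_{k-r,n-h}$ has positive rank, hence contains no loops. By the standard equivalence (recalled in the paragraph before Lemma \ref{lemma:cyclic-flat-dual}), $\M/A$ being loopless is exactly the statement that $A$ is a flat of $\M$.
    \item Because $r < h$, the restriction $\M|_A \cong \U_{r,h}$ has rank strictly smaller than its cardinality, so it has no coloops. By Definition of a cyclic set, $A$ is therefore cyclic.
\end{itemize}
Combining these two facts yields that $A$ is a cyclic flat, as required.

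I do not anticipate any real obstacle here; the statement is genuinely a corollary in the literal sense, and the only care needed is to make explicit which implications from Proposition \ref{prop:cusp-empty}, the definition of stressed subsets, and the flat/cyclic dictionary are being used, so the reader sees that neither the assumption ``$\M$ loopless'' nor ``$\M$ coloopless'' is ever invoked.
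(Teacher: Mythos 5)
Your proposal is correct and follows exactly the same route the paper takes: the authors derive the corollary by pointing out that the first half of the $(a)\Rightarrow(b)$ direction in Proposition~\ref{prop:cyclic-covering-iff-stressed} (namely, the deduction that $r<h$ and $r<k$, whence $\M/A$ is loopless so $A$ is a flat, and $\M|_A$ is coloopless so $A$ is cyclic) never uses the loopless/coloopless hypothesis. You have simply written that portion out as a self-contained argument, which is what the corollary implicitly leaves to the reader.
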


One of the results in \cite[Theorem~3.4]{ferroni-nasr-vecchi} states that a stressed hyperplane $H$ is a witness that the set of bases of a matroid can be enlarged by a so-called ``stressed hyperplane relaxation''. This operation extends the set of bases $\mathscr{B}$ of the matroid $\M$ by declaring all the circuits contained in $H$ to be bases. In particular, it generalizes the classical circuit-hyperplane relaxation of Proposition~\ref{prop:ch-relaxation}.
In light of Remark~\ref{rem:cusp-corank1}, when $H$ is a hyperplane, we have that
    \[ \cover(H) = \binom{H}{k}.\]
In other words, whenever one relaxes a stressed hyperplane (or in particular a circuit-hyperplane), one includes precisely the cover of the stressed hyperplane to the set of bases of the matroid. The goal now is to prove that this extends to a general stressed subset. Before stating this appropriately, let us prove the following handy tool.

\begin{lemma}\label{lem:basis}
    Let $\M$ be a matroid on $E$ of rank $k$, and let $A$ be a stressed subset of $\M$. Every $S\in \binom{E}{k}$ satisfying that $\size{S\cap A}=\rk(A)$ is a basis of $\M$.
\end{lemma}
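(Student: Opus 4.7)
My plan is to decompose $S$ into its intersection with $A$ and its complement, show each piece realizes the maximal rank in the corresponding ``local'' uniform matroid, and then glue via submodularity.

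Concretely, set $r = \rk(A)$ and $h = |A|$, and write $I = S \cap A$ and $J = S \smallsetminus A$, so that $|I| = r$ and $|J| = k - r$ with $I \subseteq A$ and $J \subseteq E \smallsetminus A$. Using the hypothesis that $A$ is stressed, i.e., $\M|_A \cong \U_{r,h}$ and $\M/A \cong \U_{k-r,n-h}$, I will observe two things. First, because every $r$-subset of $A$ is a basis of $\U_{r,h}$, the set $I$ is independent in $\M|_A$, hence in $\M$; thus $\rk_{\M}(I) = r$. Second, because every $(k-r)$-subset of $E \smallsetminus A$ is a basis of $\U_{k-r,n-h}$, the set $J$ is a basis of $\M/A$. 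Unfolding the definition of the rank function of the contraction then yields
\[
\rk_{\M}(A \cup J) \;=\; \rk_{\M}(A) + \rk_{\M/A}(J) \;=\; r + (k-r) \;=\; k.
\]

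Next I will invoke submodularity of $\rk_{\M}$ applied to the pair $(A, I \cup J)$. Since $A \cap (I \cup J) = I$ and $A \cup (I \cup J) = A \cup J$, this gives
\[
\rk_{\M}(A) + \rk_{\M}(I \cup J) \;\geq\; \rk_{\M}(A \cup J) + \rk_{\M}(I),
\]
whence $\rk_{\M}(S) = \rk_{\M}(I \cup J) \geq k + r - r = k$. Combined with the trivial bound $\rk_{\M}(S) \leq \min(|S|, k) = k$, we conclude $\rk_{\M}(S) = k = |S|$, so $S$ is independent of size $k$ and therefore a basis of $\M$.

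I do not expect any real obstacle; the only mildly delicate point is checking that the ``uniform minor'' isomorphisms actually force both $\rk_{\M}(I) = r$ and $\rk_{\M}(A \cup J) = k$ for the specific $I$ and $J$ in question (rather than just for some abstract representatives), and this is handled transparently by the fact that \emph{every} set of the appropriate size in $\U_{r,h}$ or $\U_{k-r,n-h}$ is a basis. Submodularity then does the gluing in one line.
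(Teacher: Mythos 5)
Your proof is correct, and it reaches the same endpoint as the paper's but by a different route. Both arguments begin with the identical observation: since $A$ is stressed, $S\cap A$ is a basis of the uniform restriction $\M|_A$ and $S\smallsetminus A$ is a basis of the uniform contraction $\M/A$. At that point the paper simply invokes the standard fact (Oxley, Corollary 3.1.8) that the union of a basis of $\M|_A$ with a basis of $\M/A$ is a basis of $\M$, and stops. You instead re-derive that gluing step from first principles: you compute $\rk_\M(I)=r$ and $\rk_\M(A\cup J)=k$ from the uniform minors, then apply submodularity to the pair $(A,\,I\cup J)$ to squeeze $\rk_\M(S)\geq k$, which together with the trivial upper bound forces $\rk_\M(S)=|S|=k$. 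The trade-off is the usual one: the paper's version is shorter because it black-boxes the restriction--contraction gluing into a cited corollary, whereas your version is entirely self-contained and makes transparent exactly which inequality (submodularity) is doing the work. Either would be acceptable in the paper; the cited corollary is essentially your submodularity computation stated in general form.
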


\begin{proof}
    Let us pick a set $S$ as in the statement. Then the set $S\cap A$ is a basis of the matroid $\M|_A$ as this matroid is uniform and the set $S\cap A$ is of size $\rk \M|_A$. Furthermore, the set $S\cap (E\smallsetminus A)$ is a basis of the contraction $\M/A$ for an analogous reason. By \cite[Corollary~3.1.8]{oxley}, we obtain that $S=(S\cap A)\cup (S\cap (E\smallsetminus A))$ is a basis of $\M$.
\end{proof}

A stressed subset is not necessarily a flat. If its cover is non-empty, by Corollary~\ref{cor:stressed-nonemptycusp-implies-cyclic}, it indeed is automatically a cyclic flat. In the last paragraph of \cite[Section~3]{bonin-demier-cyclic} Bonin and De Mier speculated that there should be a generalization of circuit-hyperplane relaxations. The main result in this section is a proof of this: the presence of a stressed subset allows us to enlarge the set of bases in a controlled way. 

\begin{teo}\label{thm:generalized-relaxation}
    Let $\M$ be a matroid on $E$ with bases $\mathscr{B}$ and let $A$ be a stressed subset of $\M$. Then the set $\widetilde{\mathscr{B}}=\mathscr{B}\sqcup \cover(A)$ is the family of bases of a matroid on~$E$.
\end{teo}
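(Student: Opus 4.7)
The plan is to verify the basis-exchange axiom for $\widetilde{\mathscr{B}}$. If $\cusp(A)=\varnothing$ the statement is trivial, so I assume $\cusp(A)\neq\varnothing$; writing $k=\rk(\M)$, $n=|E|$, $r=\rk(A)$ and $h=|A|$, Proposition~\ref{prop:cusp-empty} gives $r<h$ and $r<k$, and by hypothesis $\M|_A\cong\U_{r,h}$ and $\M/A\cong\U_{k-r,n-h}$. All elements of $\widetilde{\mathscr{B}}$ have cardinality $k$. The cornerstone observation, combining the definition of $\cusp(A)$ with Lemma~\ref{lem:basis}, is that every $S\in\binom{E}{k}$ with $|S\cap A|\geq r$ belongs to $\widetilde{\mathscr{B}}$: either $|S\cap A|=r$, in which case $S\in\mathscr{B}$ by Lemma~\ref{lem:basis}, or $|S\cap A|\geq r+1$ and $S\in\cusp(A)$ by definition.

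Let $B_1,B_2\in\widetilde{\mathscr{B}}$ with $B_1\neq B_2$, pick $a\in B_1\smallsetminus B_2$, and set $s_i=|B_i\cap A|$ and $B'=(B_1\smallsetminus\{a\})\cup\{b\}$. The case $B_1,B_2\in\mathscr{B}$ follows directly from the basis-exchange property of $\M$. When $B_1\in\cusp(A)$, regardless of $B_2$, one computes $|B'\cap A|=s_1-[a\in A]+[b\in A]\geq s_1-1\geq r$ for any $b\in B_2\smallsetminus B_1$; the cornerstone observation shows that every such $b$ works. When $B_1\in\mathscr{B}$, $B_2\in\cusp(A)$ and $s_1=r$, I pick $b\in(B_2\cap A)\smallsetminus B_1$, which exists since $s_2>s_1=r$; then $|B'\cap A|\in\{r,r+1\}$ and the exchange lands in $\widetilde{\mathscr{B}}$.

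The genuine obstacle is the subcase $B_1\in\mathscr{B}$, $B_2\in\cusp(A)$, $s_1<r$, where $|B'\cap A|$ is bounded above by $r$ so we cannot land in $\cusp(A)$ and must arrange $B'\in\mathscr{B}$—equivalently, we need $b\in B_2\smallsetminus B_1$ outside the hyperplane $F:=\cl_\M(B_1\smallsetminus\{a\})$. Assume for contradiction $B_2\smallsetminus B_1\subseteq F$; since $B_1\smallsetminus\{a\}\subseteq F$ and $a\notin B_2$, this forces $B_2\subseteq F$, so $|F\cap A|\geq s_2\geq r+1$. Because $\M|_A\cong\U_{r,h}$, any subset of $A$ of size at least $r$ spans $A$ in $\M|_A$; together with the identity $\cl_{\M|_A}(S)=\cl_\M(S)\cap A$ for $S\subseteq A$, this yields $A\subseteq\cl_\M(F\cap A)\subseteq F$. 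To contradict $A\subseteq F$: if $a\in A$, then $a\in A\subseteq F=\cl_\M(B_1\smallsetminus\{a\})$, which is absurd since $B_1$ is independent; if $a\notin A$, I pass to the contraction $\M/A\cong\U_{k-r,n-h}$. Because $A\subseteq F$, the set $F\smallsetminus A$ is a flat of $\M/A$ of rank $\rk_\M(F)-\rk_\M(A)=(k-1)-r$, i.e., a hyperplane of $\U_{k-r,n-h}$, and in a uniform matroid every hyperplane has cardinality exactly $k-r-1$. However $(B_1\smallsetminus A)\smallsetminus\{a\}\subseteq F\smallsetminus A$ has cardinality $k-s_1-1$, strictly greater than $k-r-1$ because $s_1<r$—contradiction. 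Hence some $b\in(B_2\smallsetminus B_1)\smallsetminus F$ exists, and the resulting $B'$ is a basis of $\M$. The technical heart of the proof is this last contraction argument: pure submodularity in $\M$ is too weak to detect the conflict, and one genuinely needs to exploit the uniformity of the minor $\M/A$.
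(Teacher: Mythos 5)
Your proof is correct, and in the crucial case it takes a genuinely different route from the paper. The three easy cases ($B_1,B_2\in\mathscr{B}$; $B_1\in\cusp(A)$; and $B_1\in\mathscr{B}$ with $|B_1\cap A|=r$, $B_2\in\cusp(A)$) are handled by essentially the same counting as in the paper, though your ``cornerstone observation'' that any $k$-set $S$ with $|S\cap A|\geq r$ lies in $\widetilde{\mathscr{B}}$ packages them a bit more uniformly (the paper unnecessarily selects $y\in B_2\smallsetminus(B_1\cup A)$ in the second case, whereas you note that any $b\in B_2\smallsetminus B_1$ works). The real divergence is in the hard case $B_1\in\mathscr{B}$, $B_2\in\cusp(A)$, $s_1<r$. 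The paper proceeds \emph{constructively}: it builds an auxiliary basis $B'\in\mathscr{B}$ from $r$ elements of $B_2\cap A$ and $k-r$ elements of $B_1\smallsetminus A$ (using Lemma~\ref{lem:basis}), applies the exchange axiom of $\M$ between $B_1$ and $B'$, and observes that the returned element $y$ lies in $B_2$. You instead argue \emph{by contradiction}: assuming no valid exchange partner exists forces $B_2\subseteq F=\cl_\M(B_1\smallsetminus\{a\})$, and then you leverage the identity $\cl_{\M|_A}(S)=\cl_\M(S)\cap A$ to obtain $A\subseteq F$, after which the case $a\in A$ is immediately absurd and the case $a\notin A$ contradicts the fact that the hyperplane $F\smallsetminus A$ of the uniform matroid $\M/A$ must have size exactly $k-r-1$. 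Both approaches exploit the uniformity hypothesis, but through different facets: the paper through Lemma~\ref{lem:basis} (which encodes uniformity of both $\M|_A$ and $\M/A$ at once), and you through the rigid flat structure of the contraction $\M/A$. The paper's construction is shorter and purely combinatorial; your argument is slightly longer but makes transparent exactly where and why the uniformity of the minors is indispensable, which is a pleasant conceptual complement to the paper's proof.
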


\begin{defi}
    We denote the matroid with bases $\widetilde{\mathscr{B}}$ of Theorem~\ref{thm:generalized-relaxation} by $\operatorname{Rel}(\M,A)$ and call it the \emph{relaxation} of~$\M$ by $A$.
\end{defi}

\begin{proof}[Proof of Theorem~\ref{thm:generalized-relaxation}]
    If the cover of $A$ is empty, then there is nothing to prove, because $\widetilde{\mathscr{B}} = \mathscr{B}$. Let us assume therefore that $A$ is a stressed subset with non-empty cover (hence, a cyclic flat). We are going to show that the basis exchange property is satisfied for any pair of bases $B_1$, $B_2$ in $\widetilde{\mathscr{B}}$ and any $x\in B_1\smallsetminus B_2$. We consider all four cases separately according to whether $B_1$ and $B_2$ lie in $\mathscr{B}$ or $\cover(A)$.
    
    There is nothing to do if both bases $B_1$ and $B_2$ are in $\mathscr{B}$, as $\mathscr{B}$ is the collection of bases of the matroid~$\M$.
    
    If $B_1\in\cover(A)$ and $B_2\in\mathscr{B}$ then $\size{B_1\cap A} > \rk(A) \geq \size{B_2\cap A}$ or equivalently 
    \[
        \size{B_2\smallsetminus A} \geq k-\rk(A) > \size{B_1\smallsetminus A}\enspace .
    \]
    Thus, the set $B_2\smallsetminus (B_1\cup A)$ is non-empty and we may pick an element $y$ from it.
    Clearly, $B_3 := (B_1\smallsetminus\{x\}) \cup \{y\}$ intersects $A$ in at least $\rk(A)$ many elements as $\size{B_1\cap A}\geq \rk(A)+1$. There are two cases, either $\size{B_3\cap A} > \rk(A)$ and thus $B_3\in\cover(A)$; or $\size{B_3\cap A} = \rk(A)$ and since $A$ is stressed, we may apply Lemma~\ref{lem:basis} to conclude that $B_3$ is a basis of $\M$.
    
    The same argument is valid whenever $B_1\in\cover(A)$ and $B_2\in\cover(A)$. In this case, the intersection of $B_3 := (B_1\smallsetminus\{x\}) \cup \{y\}$ with $A$ is at least of size $\rk(A)$ for every $y\in B_2$.
    
    If $B_1\in\mathscr{B}$,  $\size{B_1\cap A} = \rk(A)$ and $B_2\in\cover(A)$ then
    \[
        \size{B_2\cap A} \geq \rk(A)+1 > \rk(A) = \size{B_1\cap A}\enspace .
    \]
    Thus there is an element $y\in (B_2\smallsetminus B_1)\cap A$. As before, the intersection of the set $B_3 := (B_1\smallsetminus\{x\}) \cup \{y\}$ with $A$ has cardinality at least $\rk(A)$ and therefore $B_3\in\widetilde{\mathscr{B}}$.
    
    We are left with the case $B_1\in\mathscr{B}$, $\size{B_1\cap A} < \rk(A)$, and $B_2\in\cover(A)$. In this case
    \[
        \size{B_2\cap A} \geq \rk(A)+1 \text{ and } \size{B_1\smallsetminus A}> k-\rk(A) \enspace .
    \]
    Let us construct a set $B'$ of size $k$ as follows: pick $\rk(A)$ elements of $B_2\cap A$ and add another $k-\rk(A)$ elements of $B_1\smallsetminus A$. By Lemma~\ref{lem:basis}, the set $B'$ is contained in the collection $\mathscr{B}$ because $A$ is stressed and $\size{B'\cap A} = \rk(A)$. Applying the basis exchange property for the distinct bases $B_1$ and $B'$ of the matroid $\M$ provides an element $y\in B'\smallsetminus B_1$ such that $B_3 := (B_1\smallsetminus\{x\}) \cup \{y\}\in\mathscr{B}\subseteq\widetilde{\mathscr{B}}$. Observe that $y\in B_2$ as $y\not\in B_1$. Hence $B_1$, $B_2$ satisfy the exchange property as well which completes the proof.
\end{proof}

\begin{remark}
    We offer the following geometric argument that leads to an alternative proof of Theorem~\ref{thm:generalized-relaxation}. For a stressed subset $A$, the inequality
    \[
    \sum_{i\in A} x_i \leq \rank(A)
    \]
    either defines a facet of the matroid polytope $\mathscr{P}(\M)\subseteq\mathscr{P}(\U_{k,n})=\Delta_{k,n}$ or it contains the entire polytope. 
    In both cases we obtain a supporting hyperplane of $\mathscr{P}(\M)$.
    Furthermore, by definition this hyperplane intersects $\mathscr{P}(\M)$ in a matroid polytope whose matroid is isomorphic to $\M|_A\oplus \M/A$, a direct sum of two uniform matroids. This implies that the hyperplane does not intersect any other facet of $\mathscr{P}(\M)$ lying in the interior of the hypersimplex $\Delta_{k,n}$. Hence the inequality defines the unique half-space that weakly separates the points $e_B$ for $B\in\cover(A)$ from $\mathscr{P}(\M)$ and has maximal support on $\mathscr{P}(\M)$.
    It follows that the relaxation of that inequality, i.e., dropping it, leads to a new polytope whose edges are contained in $\Delta_{k,n}$ and whose vertices are the indicator vectors of $\widetilde{\mathscr{B}}$. 
    In particular, this new polytope is a matroid polytope and $\widetilde{\mathscr{B}}$ is the collection of bases of a matroid. Below, in Theorem~\ref{thm:relaxation-induces-subdivision}, we explore a further geometric feature of relaxations: they yield a nice subdivision of the relaxed matroid.
\end{remark}

\begin{example}\label{ex:relaxation}
    Consider the lattice path matroid $\M$ depicted on the left hand side of Figure \ref{fig:example-of-relaxation}. This matroid has rank $4$ and cardinality $8$. The set $F=\{6,7,8\}$ has rank $2$ and cardinality $3$ in $\M$. The set $F$ is clearly dependent and non-spanning, so $\cover(F)\neq\varnothing$. Furthermore, it is stressed, as the restriction is $\M|_F\cong \U_{2,3}$ and the contraction on $F$ is $\M/F\cong \U_{2,5}$. 
    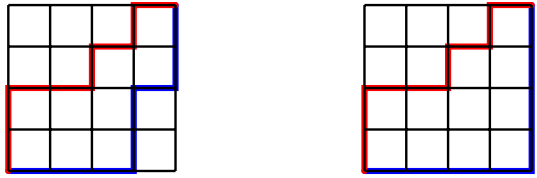
\begin{figure}[ht]
        \centering
        \begin{tikzpicture}[scale=0.55, line width=.9pt]

        \draw[line width=2.3pt,blue,line cap=round] (0,0)--(3,0) -- (3,2) -- (4,2) -- (4,4);
        \draw[line width=2.3pt,red,line cap=round] (0,0)--(0,2)--(2,2)--(2,2)--(2,3)--(3,3)--(3,4)--(4,4);
        \draw (0,0) grid (4,4);
        \end{tikzpicture}  \qquad\qquad\qquad
        \begin{tikzpicture}[scale=0.55, line width=.9pt]

        \draw[line width=2.3pt,blue,line cap=round] (0,0)--(4,0) -- (4,4);
        \draw[line width=2.3pt,red,line cap=round] (0,0)--(0,2)--(2,2)--(2,2)--(2,3)--(3,3)--(3,4)--(4,4);
        \draw (0,0) grid (4,4);
        \end{tikzpicture}
    \caption{On the left the matroid of Example~\ref{ex:relaxation} and on the right its relaxation.}\label{fig:example-of-relaxation}
    \end{figure}
    It can be proved that $\cover(F) = \left\{ \{1,6,7,8\}, \{2,6,7,8\}, \{3,6,7,8\}, \{4,6,7,8\}, \{5,6,7,8\} \right\}$, and $\widetilde{\M} := \Rel(\M, F)$ is precisely the Schubert matroid depicted on the right hand side of Figure~\ref{fig:example-of-relaxation}. Notice that the relaxation in this example is neither a circuit-hyperplane relaxation nor a stressed hyperplane relaxation. The set $F$ is not a hyperplane of $\M$ and its complement $E\smallsetminus F$ is not a hyperplane of $\M^*$.
\end{example}

An important feature of this new notion of relaxation is that, unlike the stressed hyperplane relaxation of \cite{ferroni-nasr-vecchi}, it is now compatible with dualizations. 

\begin{prop}\label{prop:relax-dual}
    The stressed subset relaxation commutes with taking duals of matroids. In other words, if $A$ is a stressed subset of $\M$, then $E\setminus A$ is a stressed subset of $\M^*$ and
        \[ \Rel(\M, A)^* = \Rel(\M^*, E\smallsetminus A).\]
\end{prop}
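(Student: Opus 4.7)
The plan is to verify the identity by checking that both matroids are defined on the same ground set $E$ and share the same collection of bases.

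First I would argue that the right-hand side makes sense: since $\M|_A\cong \U_{r,h}$ and $\M/A\cong \U_{k-r,n-h}$, and because $\M^*|_{E\smallsetminus A}=(\M/A)^*$ and $\M^*/(E\smallsetminus A)=(\M|_A)^*$ are duals of uniform matroids (hence again uniform), the complement $E\smallsetminus A$ is a stressed subset of $\M^*$. Thus $\Rel(\M^*, E\smallsetminus A)$ is a well-defined matroid by Theorem~\ref{thm:generalized-relaxation}.

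Next, I would unfold both sides explicitly at the level of bases. On the left,
\[
\mathscr{B}(\Rel(\M,A)^*) = \bigl\{E\smallsetminus B : B\in \mathscr{B}(\M)\bigr\}\;\sqcup\;\bigl\{E\smallsetminus S : S\in \cusp_{\M}(A)\bigr\}
= \mathscr{B}(\M^*) \cup \bigl\{E\smallsetminus S : S\in \cusp_{\M}(A)\bigr\},
\]
whereas on the right,
\[
\mathscr{B}(\Rel(\M^*, E\smallsetminus A)) = \mathscr{B}(\M^*)\;\sqcup\; \cusp_{\M^*}(E\smallsetminus A).
\]
So the statement reduces to the set equality $\{E\smallsetminus S : S\in \cusp_{\M}(A)\} = \cusp_{\M^*}(E\smallsetminus A)$.

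This is precisely what Proposition~\ref{prop:cusp-dual} delivers: applied to the matroid $\M$ with the set $E\smallsetminus A$ in the role of $A$ there (and using $E\smallsetminus(E\smallsetminus A)=A$), it yields the equivalence $T\in\cusp_{\M^*}(E\smallsetminus A)\iff E\smallsetminus T\in\cusp_{\M}(A)$, which is exactly the desired bijection. Since both unions are automatically disjoint (the cusp of a set is always disjoint from the collection of bases, as observed right after the definition of $\cusp$), the two matroids share the same bases and are therefore equal. No step is really an obstacle here; the whole argument is a straightforward translation, and the only nontrivial input is the cusp/dualization compatibility already established in Proposition~\ref{prop:cusp-dual}.
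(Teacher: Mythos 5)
Your proof is correct and uses essentially the same approach as the paper: both reduce the claim to the cusp/dualization compatibility of Proposition~\ref{prop:cusp-dual} after observing that bases of a dual are complements of bases. The paper states this in a single line, whereas you spell out the routine bookkeeping (stressedness of $E\smallsetminus A$ in $\M^*$ and the base-set unfolding), but there is no substantive difference.
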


\begin{proof}
    This follows directly from Proposition~\ref{prop:cusp-dual}: The elements in $\cover^*(E\smallsetminus A)$ are exactly the complements of the elements in $\cover(A)$.
\end{proof}

\subsection{Structural properties of stressed subset relaxations}

As we have mentioned repeatedly, if a stressed set has non-empty cover, then it is a cyclic flat. In order to emphasize this property, we use the letter $F$ for a stressed subset with non-empty cover. However, there are a few exceptions. For instance, the next result is such a case in which we use the letter $F$ although the only requirement we impose is that our set be stressed. 

Throughout this subsection we collect some basic features of the matroid $\Rel(\M,F)$ in terms of $\M$ and $F$. We start by characterizing the rank function of this new matroid.

\begin{prop}\label{prop:rank-relaxation}
    Let $\M$ be a matroid of rank $k$, and let $F$ be a stressed subset. Consider $\widetilde{\M}=\Rel(\M,F)$, the relaxation of $\M$ by $F$. For every subset $A\subseteq E$, the rank of $A$ in $\widetilde{\M}$ is given by
    \[ \rk_{\widetilde{\M}}(A) = \begin{cases} \min(|A|, k) & \text{if $|A\cap F|\geq \rk(F) + 1$,}\\ \rk_{\M}(A) & \text{otherwise.}\end{cases}\]
\end{prop}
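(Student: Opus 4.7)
The plan is to work directly from the definition $\rk_{\widetilde{\M}}(A) = \max_{B \in \widetilde{\mathscr{B}}} |B \cap A|$. Since $\widetilde{\mathscr{B}} = \mathscr{B} \sqcup \cusp(F)$, this rank is the larger of $\rk_{\M}(A)$ and $\max_{B \in \cusp(F)} |B \cap A|$. The two cases in the statement correspond exactly to whether $\cusp(F)$ can contribute something strictly bigger than $\rk_{\M}(A)$ or not, so I would split the proof accordingly. Throughout, write $r = \rk_\M(F)$, $h = |F|$, and use that $\M|_F \cong \U_{r,h}$ and $\M/F \cong \U_{k-r,n-h}$, which in particular gives $k - r \leq n - h$.

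For the first case, assume $|A\cap F| \geq r+1$. The upper bound $\rk_{\widetilde{\M}}(A) \leq \min(|A|,k)$ is automatic. For the lower bound I would explicitly produce $B \in \cusp(F)$ with $|B\cap A| = \min(|A|,k)$. If $|A| \leq k$, take $B$ to be any $k$-subset extending $A$; then $|B\cap F| \geq |A\cap F| \geq r+1$, so $B\in\cusp(F)$. If $|A| > k$, take $B$ to be a $k$-subset of $A$ that contains at least $\min(k, |A\cap F|) \geq r+1$ elements of $A \cap F$, which is possible by the pigeonhole arithmetic on $|A\cap F|$ and $|A\smallsetminus F|$.

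For the second case, assume $|A\cap F| \leq r$. Since $\widetilde{\mathscr{B}} \supseteq \mathscr{B}$, only the upper bound $\rk_{\widetilde{\M}}(A) \leq \rk_{\M}(A)$ requires work, so the task is to show that for any $B \in \cusp(F)$ there exists $B'\in\mathscr{B}$ with $|B'\cap A|\geq |B\cap A|$. The key tool here is Lemma~\ref{lem:basis}: any $k$-subset meeting $F$ in exactly $r$ elements is a basis of $\M$. So given $B\in\cusp(F)$ with $|B\cap F| = i \geq r+1$, I would construct $B'$ from $B$ by removing $i-r$ elements from $(B\cap F)\smallsetminus A$ and adding $i-r$ elements from $F^c \smallsetminus B$, preferring those in $A$. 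The removal is feasible because the constraint $|A\cap F|\leq r$ forces $|(B\cap F)\smallsetminus A| = i - |B\cap F\cap A| \geq i - r$, and the addition is feasible because $|F^c\smallsetminus B| = n-h-k+i \geq i-r$, which is exactly the inequality $k-r\leq n-h$ coming from $\M/F \cong \U_{k-r,n-h}$. By construction $|B'\cap F| = r$, so Lemma~\ref{lem:basis} yields $B'\in\mathscr{B}$, and since we kept all elements of $B\cap A$, we have $|B'\cap A| \geq |B\cap A|$.

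I expect the main obstacle to be Case 2, namely checking that the exchange can be carried out while \emph{simultaneously} preserving the intersection with $A$ and landing in the basis set $\mathscr{B}$ via Lemma~\ref{lem:basis}; the two numerical feasibility inequalities ($i-j \geq i - r$ on the $F$ side and $n-h-k+i \geq i-r$ on the $F^c$ side) are the crucial bookkeeping, and both turn out to be consequences of the stressed hypothesis. Case 1 is essentially a combinatorial construction, and the degenerate possibility $r = k$ (where $\cusp(F)=\varnothing$ and the relaxation is trivial) should be handled separately or absorbed by noting that the hypothesis $|A\cap F|\geq r+1 = k+1$ forces $A$ to span $\M$, so both sides of the claimed formula equal $k$.
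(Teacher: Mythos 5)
Your proof is correct. Case 1 follows essentially the same construction as the paper's proof. For Case 2 (where $|A\cap F|\leq\rk(F)$), you take a genuinely different, more constructive route. The paper proves the contrapositive: it picks $S\in\cusp(F)$ witnessing $\rk_{\M}(A)<\rk_{\widetilde{\M}}(A)$, observes via Lemma~\ref{lem:basis} that $S\smallsetminus X$ is independent in $\M$ for every $(|S\cap F|-\rk(F))$-subset $X$ of $S\cap F$, and concludes that $A$ must meet every such $X$, forcing $|A\cap F|\geq\rk(F)+1$ by a pigeonhole count on intersecting families. Your argument goes directly: for any $B\in\cusp(F)$ you exchange $|B\cap F|-\rk(F)$ elements of $(B\cap F)\smallsetminus A$ for elements of $E\smallsetminus(F\cup B)$, producing a $k$-set $B'$ with $|B'\cap F|=\rk(F)$ (so $B'\in\mathscr{B}$ by Lemma~\ref{lem:basis}) and $B\cap A\subseteq B'$, hence $|B'\cap A|\geq|B\cap A|$. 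The two feasibility inequalities you flag are precisely the right bookkeeping, and both are consequences of $F$ being stressed (in particular $k-\rk(F)\leq n-|F|$ because $\M/F\cong\U_{k-\rk(F),\,n-|F|}$). Both routes rest on Lemma~\ref{lem:basis}; yours avoids the intersecting-family count entirely, which makes the argument a bit more elementary, at the cost of carrying through the exchange-set arithmetic explicitly. The degenerate case $r=k$ (and $r=h$) is handled the same way in both.
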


\begin{proof}
    Let us denote by $\widetilde{\rk}$ the rank function of $\widetilde{\M}$. Since every basis of the matroid $\M$ is a basis in $\widetilde{\M}$ we have $\rk(A) \leq \widetilde{\rk}(A)$ for every $A\subseteq E$. 
    We will begin by dealing with the first case, that is $|A\cap F| \geq \rk(F) + 1$.
    In this case we consider two subcases according to whether $|A|\leq k$ or $|A|>k$. In the first case, we can add elements to $A$ until we obtain a set $S$ of cardinality $k$. This set will satisfy $|S\cap F| \geq |A\cap F| \geq \rk(F) + 1$ and thus $S\in\cover(F)$. In particular, we have that $S$ is a basis of $\widetilde{\M}$ and given that $A$ is contained in it, we have that $\widetilde{\rk}(A) = |A|$. 
    If $|A|>k$, since $|A\cap F|\geq \rk(F)+1$, we may select $\rk(F)+1$ elements of $A\cap F$ and discard elements of $A$ that are not in our previous selection until we obtain a set $S$ of size~$k$. The set $S$ satisfies $|S\cap F| \geq \rk(F)+1$, and hence again, $S$ is a basis of $\widetilde{\M}$ and the set $A$ contains the set $S$. We conclude that $\widetilde{\rk}(A) = k$ whenever $|A|>k$.
    
    It remains to show that the inequality $\rk(A) < \widetilde{\rk}(A)$ implies $|A\cap F|\geq \rk(F) + 1$. 
    Assume $\rk(A) < \widetilde{\rk}(A)$. By definition of the rank function in $\widetilde{\M}$, there is a set $S\in\cover(F)$ such that 
        \[ |A\cap S| > |A\cap I| \;\; \text{for all independent sets $I$ of $\M$}.\]
    Consider the family $\mathscr{X}$ of all subsets of $S\cap F$ that are of size $|S\cap F| - \rk(F)$. Observe that by construction
    $|(S\smallsetminus X)\cap F| = \rk(F)$
    for every $X\in \mathscr{X}$. In particular, by Lemma~\ref{lem:basis}, $S\smallsetminus X$ is an independent set in $\M$. Using the inequality above, we obtain that
        \[ |A\cap S| > |A\cap (S\smallsetminus X)| = |(A\smallsetminus X)\cap S|\]
    for every $X\in\mathscr{X}$. Observe that this implies that $A\cap X\neq \varnothing$ so it follows that $A$ intersects all the members of $\mathscr{X}$. Thus $A\cap F$ intersects all the elements in $\mathscr{X}$, too, as all elements of $\mathscr{X}$ are subsets of $F$. 
    Notice that if a set  intersects all the members of the collection $\binom{[a]}{b}$, then its size is at least $a-b+1$. It is clear that a set of size $a-b$ is not sufficient, because the complement of any $b$-subset of $[a]$ does not intersect this $b$-set. This gives us
        \[ |A\cap F| \geq |S\cap F| - (|S\cap F| - \rk(F)) + 1 = \rk(F) + 1,\]
    as desired.
\end{proof}

\begin{remark}\label{rem:rank}
   Using the notation of the last proposition, and denoting $r=\rk(F)$, we have that if $|A\cap F|\geq r + 1$, then $\widetilde{\rk}(A) = \min(|A|, k)$. We claim that in this case $\rk(A) = \min(k, r + |A\smallsetminus F|)$. To convince ourselves, let us call $X = A\cap F$ and $Y = A\smallsetminus F$. The fact that $|X|\geq \rk(F) + 1$ and that $\M|_F \cong \U_{r,h}$ implies that $\rk(X) = \rk(F)$. 
   Furthermore, since $\M/F \cong \U_{k-r,n-h}$, it follows that the rank of $Y$ in $\M/F$ is $\min(k-r, |Y|)$.  By \cite[Proposition 3.1.6]{oxley}:
   \[ \rk_{\M/F}(Y) = \rk(F\cup Y) - \rk(F),\]
   which implies
   \[ \rk(F\cup Y) = r + \min(k-r, |Y|).\]
  Now, since $X\subseteq F$ and $\rk(X) = \rk(F)$, it follows that $\rk(X\cup Y) = \rk(F\cup Y)$. Therefore,
  \[ \rk(A) = \rk(X\cup Y) = r + \min(k-r,|Y|) = \min(k, r+|A\smallsetminus F|),\]
  as claimed. This formula will be of use in some later proofs.
\end{remark}

\subsection{The lattice of cyclic flats}

As mentioned earlier, in \cite[Section~3]{bonin-demier-cyclic} it was hinted that there should be a generalization of circuit-hyperplane relaxations. They made the observation that whenever the lattice of cyclic flats $\mathscr{Z}$ of a matroid $\M$ has an element $F$ that covers the bottom element $0_{\mathscr{Z}}$ and is covered by the top element $1_{\mathscr{Z}}$, then it is possible to remove it and obtain a new matroid. As Proposition~\ref{prop:cyclic-covering-iff-stressed} suggests, the generalization they foreshadowed is exactly the notion of relaxation of stressed subsets that we introduced, at least for loopless and coloopless matroids.

\begin{prop}\label{prop:relaxation-equals-edelete-cyclic-flat}
    Let $\M$ be a matroid on $E$ without loops and coloops. Assume that $\M$ has a stressed subset $F$ with non-empty cover. Then the following two matroids coincide.
    \begin{enumerate}[\normalfont(a)]
        \item The relaxation $\Rel(\M,F)$.
        \item The matroid on $E$ whose lattice of cyclic flats is obtained from $\mathscr{Z}(\M)$ by deleting the element~$F$.
    \end{enumerate}
\end{prop}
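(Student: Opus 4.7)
The plan is to construct the matroid $\M'$ whose cyclic flats are $\mathcal{Z}' := \mathcal{Z}(\M)\smallsetminus\{F\}$ via Theorem \ref{thm:lattice-cyclic-flats}, and then identify its bases with those of $\Rel(\M,F) = \mathscr{B}(\M)\sqcup\cusp(F)$. The whole argument leans on the following structural observation: by Proposition \ref{prop:cyclic-covering-iff-stressed}, $F$ covers $\varnothing$ and is covered by $E$ in $\mathcal{Z}(\M)$, which forces every other element $X\in\mathcal{Z}(\M)\smallsetminus\{\varnothing,F,E\}$ to be incomparable with $F$. In particular $X\wedge_{\mathcal{Z}} F=\varnothing$ and $X\vee_{\mathcal{Z}} F=E$.

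First I would verify that $\mathcal{Z}'$ with the rank function inherited from $\M$ satisfies the axioms (Z0)--(Z3) of Theorem \ref{thm:lattice-cyclic-flats}. For (Z0), I would check that no pair $X,Y\in\mathcal{Z}'$ has $X\vee_{\mathcal{Z}}Y=F$ or $X\wedge_{\mathcal{Z}}Y=F$: since $F$ covers $\varnothing$, $X\vee_{\mathcal{Z}}Y=F$ would require $X,Y\in\{\varnothing,F\}$ which is impossible in $\mathcal{Z}'$; the meet case is dual. Hence all meets and joins in $\mathcal{Z}'$ agree with those in $\mathcal{Z}$, and (Z1)--(Z3) carry over immediately from $\mathcal{Z}(\M)$.

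Next, let $\M'$ denote the matroid provided by Theorem \ref{thm:lattice-cyclic-flats}. Using the description \eqref{eq:independent-sets-from-cyclic-flats}, the independent sets of $\M'$ are precisely those of $\M$ together with those $I$ violating the constraint $|I\cap F|\leq \rk(F)$ but no other constraint. Since the constraint at $X=E$ (still present in $\mathcal{Z}'$) forces $|I|\leq k$, one sees $\rk(\M')=k$, and the bases of $\M'$ split into two families according to whether $|B\cap F|\leq \rk(F)$ or $|B\cap F|\geq \rk(F)+1$. The first family is exactly $\mathscr{B}(\M)$; the second is contained in $\cusp(F)$ by definition.

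The main obstacle — and really the only genuinely non-trivial step — is to verify the reverse inclusion, namely that every $B\in\cusp(F)$ satisfies $|B\cap X|\leq\rk(X)$ for every other $X\in\mathcal{Z}'$. The idea is to exploit (Z3) for the pair $(X,F)$: incomparability gives $X\vee_{\mathcal{Z}} F=E$ and $X\wedge_{\mathcal{Z}} F=\varnothing$, so
\[
\rk(X)+\rk(F)\;\geq\;k+|X\cap F|.
\]
Combining this with the elementary bound
\[
|B\cap X|\;\leq\;|X\cap F|+|B\smallsetminus F|\;=\;|X\cap F|+k-|B\cap F|\;\leq\;|X\cap F|+k-\rk(F)-1,
\]
one obtains $|B\cap X|\leq \rk(X)-1<\rk(X)$, as required. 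This shows $\mathscr{B}(\M')=\mathscr{B}(\M)\sqcup\cusp(F)=\mathscr{B}(\Rel(\M,F))$, completing the proof.
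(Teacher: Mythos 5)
Your proof is correct and follows essentially the same route as the paper's: apply Theorem~\ref{thm:lattice-cyclic-flats} to build the candidate matroid $\M'$, observe that $\mathscr{B}(\M')\smallsetminus\mathscr{B}(\M)\subseteq\cusp(F)$ is automatic from the definitions, and then verify the nontrivial reverse inclusion $\cusp(F)\subseteq\mathscr{B}(\M')$ via axiom (Z3) applied to the pair $(X,F)$ with $X\vee_{\mathcal{Z}}F=E$, $X\wedge_{\mathcal{Z}}F=\varnothing$, combined with the elementary cardinality estimate $|B\cap X|\leq|X\cap F|+k-|B\cap F|$. The only difference is that you spell out the verification that $\mathcal{Z}'$ still satisfies (Z0)--(Z3), which the paper treats as immediate; the key inequality chain is identical.
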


\begin{proof}
    Notice that by Theorem~\ref{thm:lattice-cyclic-flats}, deleting an element that covers $\varnothing$ and is covered by $E$ yields the lattice of cyclic flats of a matroid $\N$ on $E$. Furthermore, by \eqref{eq:bases-from-cyclic-flats} it follows that 
    \[ \mathscr{B}(\N) = \left\{B \in \binom{E}{k}: |X\cap B| \leq \rk_{\M}(X) \text{ for all } X\in \mathscr{Z}(\M)\smallsetminus\{F\}\right\}.\]
    From this, we see that $\mathscr{B}(\M)\subseteq \mathscr{B}(\N)$. 
    Notice that $B\in \mathscr{B}(\N)\smallsetminus\mathscr{B}(\M)$ implies 
    \begin{center}
            $B\in\binom{E}{k}$ and $|F\cap B|\geq \rk_{\M}(F)+1$ or, equivalently, $B\in \cover(F)$.
    \end{center}
    In other words $\mathscr{B}(\N)\smallsetminus\mathscr{B}(\M)\subseteq\cover(F)$. We claim that this inclusion of sets is in fact an equality, and thus $\Rel(\M,F) = \N$.
    To see that the converse holds take a $k$-subset $B\subseteq E$ such that $B\in \cover(F)$. Consider any other proper cyclic flat $X\neq F$ of $\M$. Since $F$ is stressed, by Proposition~\ref{prop:cyclic-covering-iff-stressed} it follows that $X\vee F = E$ and that $X\wedge F = \varnothing$. Thus, by property (Z3) in Theorem~\ref{thm:lattice-cyclic-flats} we have
    \[ \rk_\M(X) + \rk_\M(F) \geq \rk_\M(E) + \rk_\M(\varnothing) + |X\cap F| = k + |X\cap F|.\]
    Now, since we assumed $B\in \cover(F)$ we have that $|B\cap F|\geq r + 1$ or, equivalently, $|B\cap (E\smallsetminus F)|\leq k - r - 1$ where $r=\rk_\M(F)$. In particular, 
    \begin{align*}
        |X\cap B| &= |X\cap B\cap F| + |X\cap B \cap (E\smallsetminus F)|\\
        &\leq |X\cap F| + |B\cap (E\smallsetminus F)|\\
        &\leq |X\cap F| + k - r - 1\\
        &\leq \rk_\M(X) + \rk_\M(F) - \rk_\M(F) - 1\\
        &= \rk_\M(X) - 1.
    \end{align*}
    This shows that $B\in\cover(F)$ implies $B\in\mathscr{B}(\N)$ and thus completes the proof as the cover of $F$ is clearly disjoint from $\mathscr{B}(\M)$.
\end{proof}

\subsection{A special kind of Schubert matroids}

Let us analyze a family of examples that will play a central role in the sequel. We start with the following preliminary result.

\begin{lemma}
    Let $\M = \U_{k-r,n-h}\oplus\U_{r,h}$ where $0 \leq r < h$ and $0< k - r \leq n - h$. Then, the ground set of the direct summand $\U_{r,h}$ is a stressed subset with non-empty cover and thus a cyclic flat.
\end{lemma}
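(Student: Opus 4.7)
The plan is straightforward since everything essentially falls out of the definitions once one checks carefully that the cusp is nonempty. Let $F$ denote the ground set of the second summand $\U_{r,h}$, so that $|F|=h$ and $E\smallsetminus F$ is the ground set of $\U_{k-r,n-h}$, with $|E|=n$ and $\rk(\M)=k$.

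First I would verify that $F$ is stressed by identifying the restriction and contraction directly from the direct sum decomposition. Since the bases of $\M$ are exactly the disjoint unions of a basis of $\U_{k-r,n-h}$ with a basis of $\U_{r,h}$, one reads off $\M|_F = \U_{r,h}$ and $\M/F = \U_{k-r,n-h}$, both uniform. In particular $\rk_{\M}(F)=r$. This handles the stressed condition of Definition \ref{def:stressed-subset}.

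Next I would show $\cusp(F)\neq\varnothing$. One route is to invoke Proposition \ref{prop:cusp-empty}: we must rule out that $F$ is independent in $\M$ or that $E\smallsetminus F$ is independent in $\M^*$ (equivalently, that $F$ is spanning). The hypothesis $r<h$ gives $\rk_{\M}(F)=r<h=|F|$, so $F$ is dependent; and the hypothesis $k-r>0$ gives $\rk_{\M}(F)=r<k=\rk(\M)$, so $F$ is not spanning. Alternatively, and perhaps more concretely, one can exhibit an explicit element of the cusp via Proposition \ref{prop:size-of-cusp}: the summand with $i=r+1$ contributes $\binom{h}{r+1}\binom{n-h}{k-r-1}$, and both binomial coefficients are positive thanks to the strict inequalities $r<h$ and $k-r\leq n-h$ (together with $k-r\geq 1$).

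Finally, the assertion that $F$ is a cyclic flat is immediate from Corollary \ref{cor:stressed-nonemptycusp-implies-cyclic}, which states precisely that any stressed subset with non-empty cusp is a cyclic flat. No step here is a serious obstacle; the only thing to be careful about is the bookkeeping of the inequalities $0\leq r<h$ and $0<k-r\leq n-h$ when invoking Proposition \ref{prop:cusp-empty} or Proposition \ref{prop:size-of-cusp}, to ensure a term with positive binomial coefficients is produced.
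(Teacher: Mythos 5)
Your proof is correct and follows essentially the same approach as the paper: read off the restriction and contraction directly from the direct sum structure to establish that $F$ is stressed, then use the inequalities $r<h$ and $k-r>0$ together with Proposition~\ref{prop:cusp-empty} to conclude the cusp is nonempty, and finally invoke Corollary~\ref{cor:stressed-nonemptycusp-implies-cyclic}. The alternative check via Proposition~\ref{prop:size-of-cusp} is a harmless extra sanity check but not needed.
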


\begin{proof}
    Let us call $F$ the ground set of the summand $\U_{r,h}$. Clearly, both the restriction $\M|_F \cong \U_{r,n}$ and contraction $\M/F \cong \U_{k-r,n-h}$ are uniform matroids, hence $F$ is stressed. Since $r < k$, it follows that $F$ is not spanning in $\M$, because the rank of $\M$ is $k = k-r + r > \rk(F)$. Since $r<h$, also $F$ is not independent in $\M$. It follows by Proposition~\ref{prop:cusp-empty} that $\cover(F)\neq\varnothing$.
\end{proof}

The disconnected matroid $\M$ of the preceding statement has rank $k$ and cardinality $n$ and admits a stressed cyclic flat with non-empty cover having rank $r$ and size $h$. Next we will take a closer look at the relaxation of that flat. 

\begin{defi}\label{def:cuspidal}
    For $0 \leq r \leq h$ and $0\leq k - r \leq n - h$, we define the matroid \[\LL_{r,k,h,n} := \Rel(\U_{k-r,n-h}\oplus\U_{r,h}, \U_{r,h}),\]
    i.e., we relax the subset corresponding to the ground set of the direct summand $\U_{r,h}$.
\end{defi}

\begin{remark}
    It is worth mentioning that relaxing the first summand of $\U_{k-r,n-h}\oplus\U_{r,h}$ leads to a matroid that is isomorphic to the cuspidal matroid $\LL_{k-r,k,n-h,n}$.
\end{remark}

The matroids $\LL_{r,k,h,n}$ and matroids isomorphic to them will be subsequently called \emph{cuspidal matroids}. After we have properly introduced the notion of elementary split matroid, it will be realized that cuspidal matroids are precisely the Schubert elementary split matroids.


Notice that our definition includes the cases $r=h$ and $k-r=0$. In those cases, $F$ is in fact stressed, but we have $\cover(F)=\varnothing$, and thus the relaxation is exactly the same matroid we started with. More precisely, we are defining
    \begin{equation}\label{eq:cuspidal-border-case}
        \LL_{h,k,h,n} := \U_{k-h,n-h}\oplus \U_{h,h}, \quad\text{and}\quad\LL_{k,k,h,n} := \U_{0,n-h}\oplus \U_{k,h}\enspace . 
    \end{equation}

The matroid $\LL_{r,k,h,n}$ always has rank $k$ and cardinality $n$. It follows from Theorem~\ref{thm:generalized-relaxation} that the set of bases of $\LL_{r,k,h,n}$ is given by
    \begin{equation}\label{eq:bases-of-cuspidal}
    \mathscr{B}(\LL_{r,k,h,n}) = \left\{B\in \binom{E}{k} : |F\cap B|\geq r\right\} = \left\{B\in \binom{E}{k} : |(E\smallsetminus F)\cap B|\leq k-r \right\},
    \end{equation}
where $F$ is the ground set of the direct summand $\U_{r,h}$ and thus of size $h$.
By Proposition~\ref{prop:size-of-cusp} the number of elements in $\mathscr{B}(\LL_{r,k,h,n})$ is  
    \begin{equation}\label{eq:num-bases-cuspidal}
    |\mathscr{B}(\LL_{r,k,h,n})| = \sum_{i=r}^k \binom{h}{i}\binom{n-h}{k-i}.
    \end{equation}
    
Some particular instances of these matroids have made appearances in the literature before. For example, the matroid $\LL_{k-1,k,h,n}$ appeared in \cite{ferroni-nasr-vecchi}, where it is denoted by $\widetilde{\mathsf{V}}_{k,h,n}$ and in \cite{hanely}, and called \emph{panhandle matroid}. Another relevant case is the further specialization $\LL_{k-1,k,k,n}$ which is known as the \emph{minimal matroid} $\mathsf{T}_{k,n}$ (denoted this way in \cite{ferroni2}). This matroid receives its name from being the unique connected matroid of rank $k$ and size $n$ up to isomorphism achieving the minimal number of bases; see \cite{dinolt, murty}.

\begin{remark}
    We have chosen the ordering of the subscripts $r, k, h, n$, because we always have the inequalities $r\leq k \leq n$ and $r\leq h \leq n$. However, it can either be $k < h$ or $k \geq h$. 
\end{remark}

\begin{remark}\label{remark:cases-cuspidal-is-uniform}
   If $r=0$, we have $\LL_{0,k,h,n} \cong \U_{k,n}$ for every $0\leq h\leq n-k$. In other words, when $r=0$, the value of $h$ is irrelevant. Conversely, when $k-r=n-h$, the matroid $\LL_{r,k,h,n} = \U_{k,n}$ is independent of the parameters $r$ and $h$. 
\end{remark}

\begin{prop}
    The matroid $\LL_{r,k,h,n}$ is the Schubert matroid presented as a lattice path matroid using the upper path $U = \text{\normalfont N}^{k-r}\text{\normalfont E}^{n-k-h+r}\text{\normalfont N}^r\text{\normalfont E}^{h-r}$, as depicted in Figure~\ref{fig:cuspidal}. 
    \begin{figure}[ht]
        \centering
        \begin{tikzpicture}[scale=0.50, line width=.9pt]

        \draw[line width=2.3pt,blue,line cap=round] (0,0)--(9,0) -- (9,7);
        \draw[line width=2.3pt,red,line cap=round] (0,0)--(0,3) -- (5,3)--(5,7)--(9,7);
        \draw (0,0) grid (9,7);
        \draw[decoration={brace,raise=7pt},decorate]
            (0,0) -- node[left=7pt] {$k-r$} (0,3);
            
        \draw (5,0) grid (9,7);
        \draw[decoration={brace,mirror, raise=4pt},decorate]
        (0,0) -- node[below=7pt] {$n-k$} (9,0);
        
        \draw[decoration={brace, raise=5pt},decorate]
        (5,7) -- node[above=7pt] {$h-r$} (9,7); 
        
        \draw[decoration={brace, raise=5pt},decorate]
        (9,7) -- node[right=7pt] {$k$} (9,0); 
        \end{tikzpicture}
        \caption{A presentation of $\LL_{4,7,8,16}$ as a lattice path matroid.}
        \label{fig:cuspidal}
    \end{figure}
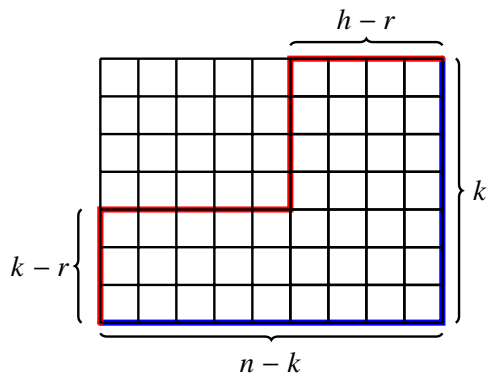
\end{prop}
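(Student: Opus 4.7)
The plan is to check directly that $\LL_{r,k,h,n}$ and $\M[U]$ have the same bases; combined with Theorem~\ref{teo:schubert-is-lpm}, this simultaneously certifies that $\LL_{r,k,h,n}$ is a Schubert matroid. I place $\U_{k-r,n-h}$ on $\{1,\ldots,n-h\}$ and $\U_{r,h}$ on $F := \{n-h+1,\ldots,n\}$, so by Definition~\ref{def:cuspidal} and Theorem~\ref{thm:generalized-relaxation},
\[
\mathscr{B}(\LL_{r,k,h,n}) = \left\{ B \in \binom{[n]}{k} : |B\cap F|\geq r \right\},
\]
the equality $|B\cap F|=r$ picking up the original direct-sum bases and $|B\cap F|\geq r+1$ picking up $\cusp(F)$.

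For the lattice path side, I would read off $|s(U)\cap[m]|$ block by block: the N-positions of $U$ are $\{1,\ldots,k-r\}\cup\{n-h+1,\ldots,n-h+r\}$, so the counting function is piecewise linear, increasing by one on the two N-blocks and constant on the two E-blocks. The lower-path inequalities from $L=\text{E}^{n-k}\text{N}^k$ are automatic for any $k$-subset, so the real content is the family of upper inequalities $|B\cap[m]|\leq |s(U)\cap[m]|$ for $m=1,\ldots,n$.

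The main step, and the only one that requires any thought, is showing that this family of inequalities collapses to the single condition $|B\cap F|\geq r$. Monotonicity of $m\mapsto|B\cap[m]|$ makes the strongest constraint across the first E-block the one at $m=n-h$, giving $|B\cap[n-h]|\leq k-r$, i.e., $|B\cap F|\geq r$; the constraints across the second N-block, where $m=n-h+s$ with $1\leq s\leq r$, follow at once by bounding $|B\cap\{n-h+1,\ldots,n-h+s\}|\leq s$ so that $|B\cap[m]|\leq (k-r)+s = |s(U)\cap[m]|$; and the constraints on the two outer blocks are trivial. Hence $\mathscr{B}(\M[U]) = \mathscr{B}(\LL_{r,k,h,n})$. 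The boundary cases in~\eqref{eq:cuspidal-border-case} where one of the blocks of $U$ has length zero are absorbed uniformly by this analysis, and I do not foresee any genuine obstacle beyond this block-by-block bookkeeping.
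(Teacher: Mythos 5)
Your proof is correct and follows essentially the same route as the paper: both sides verify that the bases of $\M[U]$ and of $\LL_{r,k,h,n}$ coincide with $\{B\in\binom{[n]}{k}:|B\cap[n-h]|\leq k-r\}$. The paper asserts the lattice-path basis description directly from the figure and then unpacks the two families (direct-sum bases plus cusp) on the cuspidal side, whereas you invoke the paper's earlier display for $\mathscr{B}(\LL_{r,k,h,n})$ and instead spell out the block-by-block reduction of the lattice-path inequalities; this is the same argument with the detail distributed differently.
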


\begin{proof}
    By definition, the lattice path matroid in Figure \ref{fig:cuspidal} has the following set of bases:
        \[ \mathscr{B} = \left\{ B\in \binom{[n]}{k} : \left|B\cap [n-h]\right| \leq k - r\right\}.\]
    Comparing this with equation~\eqref{eq:bases-of-cuspidal} yields the result.
\end{proof}

\begin{lemma}
    The dual of the matroid $\LL_{r,k,h,n}$ is isomorphic to another cuspidal matroid, namely
        \[ \LL_{r,k,h,n}^* \cong \LL_{n-h-k+r,\,n-k,\,n-h,\,n}.\]
\end{lemma}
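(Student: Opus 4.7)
The plan is to apply Proposition \ref{prop:relax-dual}, which states that stressed-subset relaxation commutes with duality, and then translate the resulting expression into the normal form of Definition \ref{def:cuspidal}.

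Concretely, write $\LL_{r,k,h,n} = \Rel(\M, F)$ where $\M = \U_{k-r,n-h}\oplus\U_{r,h}$ and $F$ is the ground set of the direct summand $\U_{r,h}$, so $|F|=h$ and $|E\smallsetminus F|=n-h$. By Proposition \ref{prop:relax-dual},
\[
    \LL_{r,k,h,n}^* \;=\; \Rel(\M^*,\, E\smallsetminus F).
\]
Since duality distributes over direct sums and the dual of a uniform matroid is uniform, we have
\[
    \M^* \;=\; \U_{n-h-k+r,\,n-h}\oplus\U_{h-r,\,h},
\]
and the complement $E\smallsetminus F$ is precisely the ground set of the first summand $\U_{n-h-k+r,\,n-h}$.

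Next I would swap the order of the two direct summands, using that the direct sum is commutative up to isomorphism. After the swap, $E\smallsetminus F$ becomes the ground set of the second summand $\U_{n-h-k+r,\,n-h}$, and we have an isomorphism
\[
    \LL_{r,k,h,n}^* \;\cong\; \Rel\bigl(\U_{h-r,\,h}\oplus \U_{n-h-k+r,\,n-h},\; E\smallsetminus F\bigr).
\]
Comparing with Definition \ref{def:cuspidal}, which reads $\LL_{r',k',h',n'} = \Rel(\U_{k'-r',n'-h'}\oplus\U_{r',h'},\U_{r',h'})$, we match parameters by setting $r' = n-h-k+r$, $h' = n-h$, $k' - r' = h - r$ (so $k' = n-k$), and $n'-h' = h$ (so $n' = n$). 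This yields $\LL_{r,k,h,n}^* \cong \LL_{n-h-k+r,\,n-k,\,n-h,\,n}$, as claimed.

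The only thing that requires attention is checking that the parameters on the right-hand side satisfy the admissibility conditions of Definition \ref{def:cuspidal}, namely $0\le r' \le h'$ and $0\le k'-r'\le n'-h'$; these translate to $0\le n-h-k+r$, $n-h-k+r\le n-h$, $0\le h-r$, and $h-r\le h$, all of which follow from the original inequalities $0\le r\le h$ and $0\le k-r\le n-h$. This verification is straightforward and is the only bookkeeping step; the real content of the proof is the single application of Proposition \ref{prop:relax-dual} combined with the fact that duality and direct sum commute.
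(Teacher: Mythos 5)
Your proof is correct and follows the same route as the paper's: apply Proposition \ref{prop:relax-dual}, dualize the direct sum of uniforms, and identify the resulting relaxation with a cuspidal matroid. The paper's version is more terse (it implicitly absorbs the swap of direct summands into the abuse-of-notation convention and omits the admissibility check), but the underlying argument is identical.
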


\begin{proof}
    Using Proposition~\ref{prop:relax-dual}, we obtain that
    \begin{align*}
        \LL_{r,k,h,n}^* &= \Rel(\U_{k-r,n-h}\oplus\U_{r,h} ,\U_{r,h})^*\\
        &= \Rel(\U_{n-h-k+r,n-h}\oplus\U_{h-r, h},  \U_{n-h-k+r,n-h})\\
        &\cong \LL_{n-h-k+r, n-k, n-h, n}.
    \end{align*}
    Notice that we abused the notation $\Rel(-, \U_{a,b})$ again to say that we are relaxing the ground set of the direct summand $\U_{a,b}$.
\end{proof}

Proposition~\ref{prop:rank-relaxation} tells us directly the rank function of the matroids $\LL_{r,k,h,n}$.

\begin{coro}\label{cor:rank_cusp}
    Let $X=\{n-h+1,\ldots,n\}$. The rank function of the matroid $\LL_{r,k,h,n}$ is given by
    \[
    \rank(A) =  
    \begin{cases} \min(|A|, k) & \text{if $|A\cap X|\geq r+1$,}\\ \min(|A|, |X\cap A|+k-r) & \text{otherwise.}\end{cases}
    \]
\end{coro}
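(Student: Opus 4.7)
The plan is to apply Proposition~\ref{prop:rank-relaxation} directly. By Definition~\ref{def:cuspidal}, we have $\LL_{r,k,h,n} = \Rel(\M, E_2)$ where $\M = \U_{k-r,n-h} \oplus \U_{r,h}$ and $E_2$ is the ground set of the second summand. Then $|E_2| = h$ and $\rk_{\M}(E_2) = r$, so Proposition~\ref{prop:rank-relaxation} yields the first case immediately: whenever $|A \cap E_2| \geq r+1$, the rank of $A$ in $\LL_{r,k,h,n}$ equals $\min(|A|, k)$.

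For the second case, when $|A \cap E_2| \leq r$, the same proposition gives $\rk_{\LL_{r,k,h,n}}(A) = \rk_{\M}(A)$, so it remains to compute the rank in $\M$. Writing $E_1 = [n] \smallsetminus E_2$, the standard fact that the rank function of a direct sum splits along the summands gives
\[
\rk_{\M}(A) = \rk_{\U_{k-r,n-h}}(A \cap E_1) + \rk_{\U_{r,h}}(A \cap E_2).
\]
Using the uniform matroid rank formula, the second term equals $\min(|A \cap E_2|, r) = |A \cap E_2|$ since we are in the case $|A \cap E_2| \leq r$, and the first equals $\min(|A \cap E_1|, k-r) = \min(|A| - |A \cap E_2|, k-r)$.

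Adding these and pulling the term $|A \cap E_2|$ inside the minimum yields
\[
\rk_{\M}(A) \;=\; \min\bigl(|A|,\; |A \cap E_2| + k - r\bigr),
\]
which is exactly the second case of the claimed formula. There is no real obstacle: the result is a direct specialization of Proposition~\ref{prop:rank-relaxation} to the matroid $\M$, together with the routine rank computation for a direct sum of two uniform matroids.
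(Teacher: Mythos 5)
Your proof is correct and follows exactly the route the paper has in mind: the paper derives the corollary with the single remark that Proposition~\ref{prop:rank-relaxation} ``tells us directly the rank function of the cuspidal matroids,'' and your write-up simply unpacks that directness by specializing the proposition to $\M = \U_{k-r,n-h}\oplus\U_{r,h}$ with stressed subset $E_2$ and then evaluating the rank in the direct sum via additivity over summands. No gap; the case split and the algebraic simplification $\min(|A|-|A\cap E_2|,\,k-r)+|A\cap E_2| = \min(|A|,\,|A\cap E_2|+k-r)$ are both right.
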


\subsection{Subdivisions from relaxations}

By combining the outer description of Remark~\ref{rem:polytope-cyclic-flats} and Proposition~\ref{prop:relaxation-equals-edelete-cyclic-flat}, we can describe the base polytope of $\Rel(\M,F)$ as follows.

\begin{prop}\label{prop:polytope_ineq}
    If $\M$ is loopless and coloopless and $F$ is a stressed subset with non-empty cover, then $\mathscr{P}(\Rel(\M,F))$ is obtained from $\mathscr{P}(\M)$ by relaxing the inequality $\sum_{i\in F}x_i\leq \rank(F)$.
\end{prop}

In particular, when specializing the preceding statement for the matroids $\LL_{r,k,h,n}$ we obtain the following.

\begin{coro}\label{coro:cuspidal-polytope}
    The base polytope of the cuspidal matroid $\LL_{r,k,h,n}$ is given by
    \begin{align*}
     \mathscr{P}(\LL_{r,k,h,n}) &=
    \left\{x\in [0,1]^n : \sum_{i=1}^n x_i = k, \text{ and } \sum_{i=1}^{n-h} x_i \leq k-r\right\}.
    \end{align*}
\end{coro}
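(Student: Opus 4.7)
The approach is to apply Proposition~\ref{prop:polytope_ineq} directly to the presentation $\LL_{r,k,h,n} = \Rel(\U_{k-r,n-h}\oplus\U_{r,h}, E_2)$, where $E_2 := \{n-h+1,\ldots,n\}$ is the ground set of the second summand. The plan is to write down the inequality description of $\mathscr{P}(\U_{k-r,n-h}\oplus\U_{r,h})$ from its lattice of cyclic flats and then invoke the relaxation clause of Proposition~\ref{prop:polytope_ineq} to read off $\mathscr{P}(\LL_{r,k,h,n})$.

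First I would handle the generic range $0<r<h$ and $0<k-r<n-h$. In this range the matroid $\U_{k-r,n-h}\oplus\U_{r,h}$ is loopless and coloopless, and a direct check (using Lemma~\ref{lemma:cyclic-flats-uniform} on each summand and the fact that cyclic flats in a direct sum are disjoint unions of cyclic flats from the summands) identifies its cyclic flats as $\varnothing$, $E_1:=\{1,\ldots,n-h\}$, $E_2$, and $E$, of ranks $0$, $k-r$, $r$, and $k$ respectively. Proposition~\ref{prop:polytope_ineq} then describes its base polytope as
\[
\left\{x\in[0,1]^n : \sum_{i=1}^n x_i = k,\ \sum_{i=1}^{n-h} x_i \leq k-r,\ \sum_{i=n-h+1}^n x_i \leq r\right\}.
\]
Since $E_2$ is a stressed subset with non-empty cusp (its restriction and contraction are $\U_{r,h}$ and $\U_{k-r,n-h}$, and non-emptiness of the cusp follows from Proposition~\ref{prop:size-of-cusp}), the last sentence of Proposition~\ref{prop:polytope_ineq} yields $\mathscr{P}(\LL_{r,k,h,n})$ by dropping the inequality $\sum_{i\in E_2} x_i \leq r$, producing exactly the claimed description.

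The remaining, and only, subtlety is the boundary range in which $r\in\{0,h,k\}$ or $k-r=n-h$. In all these cases the cusp of $E_2$ is empty by Proposition~\ref{prop:size-of-cusp}, so no relaxation takes place, and by equation~\eqref{eq:cuspidal-border-case} together with Remark~\ref{remark:cases-cuspidal-is-uniform} the matroid $\LL_{r,k,h,n}$ coincides either with the uniform matroid $\U_{k,n}$ or with one of the direct sums $\U_{0,n-h}\oplus\U_{k,h}$ or $\U_{k-h,n-h}\oplus\U_{h,h}$. In each of these degenerate cases one checks directly that the claimed inequality system either reduces to the defining system of $\Delta_{k,n}$ (the inequality $\sum_{i\leq n-h} x_i\leq k-r$ being implied by $x_i\geq 0$ when $r=0$ or by $x_i\leq 1$ when $k-r=n-h$) or, when loops or coloops are present, combines with $\sum_i x_i=k$ and $0\leq x_i\leq 1$ to force the correct coordinates to equal $0$ or $1$. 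These verifications are routine, and keeping track of them is the main, and genuinely mild, obstacle, since Proposition~\ref{prop:polytope_ineq}'s relaxation statement is stated only under a loopless-and-coloopless hypothesis.
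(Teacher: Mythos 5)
Your proposal is correct and uses essentially the same approach as the paper's one-sentence proof: both rest on Proposition~\ref{prop:polytope_ineq} and the observation that the only proper cyclic flat of $\LL_{r,k,h,n}$ is $\{1,\ldots,n-h\}$ of rank $k-r$. The paper asserts this directly, whereas you arrive at it by first writing the inequality description of the direct sum and then invoking the relaxation clause, also taking care of the degenerate parameter ranges that the paper's proof handles implicitly.
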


The following result explains why the relaxation of a stressed subset can be used to describe a subdivision of the relaxed matroid. 

\begin{teo}\label{thm:relaxation-induces-subdivision}
    Let $\M$ be a matroid of rank $k$ and cardinality $n$ with a stressed subset $F$ whose cover is non-empty. Further, let $\widetilde{\M}$ be the relaxation $\Rel(\M,F)$.
    Then, there exist matroids $\N_1$
    and $\N_2$ such that
    \begin{align*}
        \mathscr{P}(\M) \cup \mathscr{P}(\N_1) = \mathscr{P}(\widetilde{\M})\quad \text{ and }\quad
        \mathscr{P}(\M) \cap \mathscr{P}(\N_1) = \mathscr{P}(\N_2).
    \end{align*}
    Moreover, $\N_1\cong \LL_{r,k,h,n}$ and $\N_2\cong \U_{k-r,n-h}\oplus\U_{r,h}$ where $r =\rk(F)$ and $h=|F|$. 
\end{teo}

\begin{proof}
    The statement is a geometric reformulation of previous results and definitions.
    We may assume without loss of generality that $E=\{1,\ldots,n\}$ and $F=\{n-h+1,\ldots,n\}$.
    With these assumptions, let us set $\N_1=\LL_{r,k,h,n}$ and $\N_2=\U_{k-r,n-h} \oplus \U_{r,h}$.
    We show first that $\mathscr{P}(\M) \cap \mathscr{P}(\N_1) = \mathscr{P}(\N_2)$.
    Proposition~\ref{prop:polytope_ineq} implies that the points $x\in\mathscr{P}(\M)$ satisfy
    \[
        \sum_{i=n-h+1}^n x_i = \sum_{i\in F} x_i \leq \rank(F) = r,
    \]
    and Corollary~\ref{coro:cuspidal-polytope} shows that all $x\in\mathscr{P}(\N_1)=\mathscr{P}(\LL_{r,k,h,n})$ fulfill
    \[
    \sum_{i=1}^{n-h} x_i \leq k-r \text{ or equivalently }
    \sum_{i\in F} x_i \geq \rank(F).
    \]
    Thus 
    \[
    \mathscr{P}(\M) \cap \mathscr{P}(\LL_{r,k,h,n}) \subseteq
    \left\{x\in [0,1]^n : \sum_{i=1}^n x_i = k, \text{ and } \sum_{i\in F} x_i = \rank(F)\right\} = 
    \mathscr{P}(\U_{k-r,n-h}\oplus\U_{r,h}).
    \]
    The equality follows from the fact that all bases of $\U_{k-r,n-h}\oplus\U_{r,h}$ are bases of both $\M$ and $\LL_{r,k,h,n}$ see Lemma~\ref{lem:basis} and the discussion below Definition~\ref{def:cuspidal}.
    
    Now, we discuss the equality $\mathscr{P}(\M) \cup \mathscr{P}(\N_1) = \mathscr{P}(\widetilde{\M})$. 
    Every basis of $\M$ and $\N_1$ is a basis of $\widetilde{\M}$, thus $\mathscr{P}(\M)\cup \mathscr{P}(\N_1) \subseteq \mathscr{P}(\widetilde{\M})$.
    To prove the reversed inclusion $\mathscr{P}(\M) \cup \mathscr{P}(\N_1) \supseteq \mathscr{P}(\widetilde{\M})$,  take $x\in \mathscr{P}(\widetilde{\M})$. Now, either $\sum_{i \in F} x_i \leq r$ and thus $x \in \mathscr{P}(\M)$ by combining Proposition~\ref{prop:polytope_ineq} and Proposition~\ref{prop:relaxation-equals-edelete-cyclic-flat}, or $\sum_{i \in F} x_i \geq r$.
    In this case $\sum_{i \not\in F} x_i \leq k-r$. 
    Observe that our point $x$ lies in  $\mathscr{P}(\widetilde{\M})$ and hence satisfies the conditions $0\leq x_i\leq 1$ for each $i\in E$, $\sum_{i=1}^n x_i = k$ and $\sum_{i=1}^{n-h} x_i \leq k-r$. Hence, by Corollary~\ref{coro:cuspidal-polytope}, it follows that $x\in \mathscr{P}(\LL_{r,k,h,n})$. This completes the proof.
\end{proof}

By substituting $r+1 = k = h$ in the preceding statement one retrieves a main result of Ferroni \cite[Theorem~5.4]{ferroni2} namely, relaxing a circuit-hyperplane corresponds to gluing a copy of the base polytope of a minimal matroid $\mathsf{T}_{k,n}$ on a facet of the polytope.

\section{Large classes of matroids}\label{sec:large-classes}
\noindent
The aim of this section is to introduce the class of split matroids and its subclass of elementary split matroids. Split matroids were first studied by Joswig and Schr\"oter \cite{joswig-schroter} from both a polyhedral and tropical point of view. In this section we give a description of them using the notion of relaxation that we developed in Section \ref{sec:stressed-subsets-relaxations}. Also, we will emphasize some enumerative aspects in order to explain what we mean by a ``large'' class of matroids. We will also discuss some bounds on the number of stressed subsets in matroids belonging to these large classes.

\subsection{An overview of paving matroids} \label{subsect:paving-matroids}

\begin{defi}
    A matroid $\M$ is said to be \emph{paving} if all the dependent sets of $\M$ have size at least $\rk(\M)$. If both $\M$ and its dual $\M^*$ are paving, then $\M$ is said to be \emph{sparse paving}.
\end{defi}

It is straightforward to check with the above definition that, e.g., all loopless matroids of rank $2$ and all simple matroids of rank $3$ are paving.  Even though paving matroids are easier to approach than \emph{arbitrary} matroids, they still are far from well-understood. In terms of relaxations of stressed hyperplanes, the class of paving matroids can be characterized as follows.

\begin{prop}[{\cite[Proposition 3.16]{ferroni-nasr-vecchi}}]
    A matroid $\M$ is paving if and only if all of its hyperplanes are stressed.
\end{prop}


Observe that, by Remark~\ref{rem:cusp-corank1}, a hyperplane $H$ of the matroid $\M$ has non-empty cover if and only if $|H|\geq k=\rk(\M)$. This is clear geometrically, as it is impossible for a vertex of the hypersimplex $\Delta_{k,n}$ to violate an inequality of the form $\sum_{i\in H} x_i \leq \rk(H)$ unless $|H| > \rk(H)$. In particular, if we relax all the stressed hyperplanes of cardinality at least $k$, we obtain a paving matroid such that all of its hyperplanes have size $k-1$. In other words, we obtained the uniform matroid $\U_{k,n}$. It follows that all paving matroids can be relaxed until obtaining a uniform matroid.

\begin{coro}\label{cor:charact-paving}
    A matroid $\M$ is paving if and only if after relaxing all of its stressed hyperplanes with non-empty cover one obtains a uniform matroid.
\end{coro}

Similarly, one can characterize sparse paving matroids, by saying that all the non-bases are circuit-hyperplanes (cf.~\cite[Lemma~2.8]{ferroni3}). One of our goals in the next subsections will be to generalize the last statement to a larger, but still well-structured, family of matroids. We do this by characterizing all matroids that lead to a uniform matroid when one relaxes all of their stressed \emph{subsets} with non-empty cover (as opposed to only hyperplanes).

\subsection{Split matroids}

In \cite{joswig-schroter} Joswig and Schr\"oter initiated the study of a class of matroids that contains all paving matroids and their duals. This family of matroids arose in the context of studying the polyhedral structure of the tropical Grassmannian and Dressian. They focused on the splits of a hypersimplex. 

We begin with a definition of split matroids that relies on the terms ``flacet'' introduced in \cite{feichtner-sturmfels} and ``split flacet''. 

\begin{defi}
    Let $\M$ be a connected matroid. A \emph{flacet} of $\M$ is a flat $F$ such that $\M|_F$ and $\M/F$ are connected. A flacet $F$ of $\M$ is said to be a \emph{split flacet} if $0 < \rk(F) < |F|$ and $E\smallsetminus F$ contains at least one element that is not a coloop of $\M$.
\end{defi}

Note that a split flacet of a matroid $\M$ is by definition a cyclic flat of $\M$.
In our next definition we take advantage of the fact that we have introduced the term ``stressed subset'' to denote that a set~$A$ has the property that both its restriction and contraction are uniform. This allows us to paraphrase \cite[Theorem~11]{joswig-schroter}, which characterizes connected split matroids and use it in combination with \cite[Proposition 15]{joswig-schroter}  as our definition. 

\begin{defi}
    A \emph{connected split matroid} $\M$ is a connected matroid for which all the split flacets are stressed. 
    A \emph{split matroid} is a matroid 
    isomorphic to a direct sum of matroids of which at most one of the direct summands is a non-uniform connected split matroid and the remaining summands are uniform matroids.
\end{defi}

Recently, B\'erczi, Kir\'aly, Schwarcz, Yamaguchi, and Yokoi studied in \cite{berczi} a subclass of split matroids, called ``elementary split matroids''. 

\begin{defi}[\cite{berczi}]\label{def:elementary-split}
    A matroid $\M$ is an \emph{elementary split matroid} if it is either a connected split matroid or a direct sum of two uniform matroids.
\end{defi}

Throughout the rest of this paper, we write our statements for elementary split matroids as opposed to split matroids. The reason for this is that most of our statements for connected split matroids carry over verbatim to elementary split matroids, whereas they often require annoying adjustments for disconnected split matroids. Let us mention explicitly that the class of elementary split matroids contains all paving and copaving matroids as well.

The next theorem is a list of equivalent characterizations of elementary split matroids that can be found in \cite[Theorem~11]{berczi}.

\begin{teo}[{\cite[Theorem~11]{berczi}}]\label{thm:kristof-equivalences}
    Let $\M$ be a matroid. The following are equivalent.
    \begin{enumerate}[\normalfont(a)]
        \item $\M$ is an elementary split matroid.
        \item $\M$ has no minor isomorphic to $\U_{0,1}\oplus \U_{1,1}\oplus \U_{1,2}$.
        \item\label{it:car_el_split} $\M$ is loopless and coloopless and every two proper cyclic flats $F_1$ and $F_2$ are incomparable in~$\mathscr{Z}$; or $\M$ is isomorphic to a direct sum of the form $\U_{a,b}\oplus\U_{c,c}$ or $\U_{a,b}\oplus\U_{0,c}$, with $0 \leq a \leq b$ and $c\geq 1$.
    \end{enumerate}
\end{teo}

If we combine Proposition~\ref{prop:cyclic-covering-iff-stressed} with the characterization of elementary split matroids in Theorem~\ref{thm:kristof-equivalences}\eqref{it:car_el_split}, we obtain the following extension of Corollary~\ref{cor:charact-paving}, as we were aiming for.

\begin{teo}\label{thm:elem-split-relaxation-yields-uniform}
    A matroid $\M$ is elementary split if and only if after relaxing all of its stressed subsets with non-empty cover (which are thus cyclic flats) one obtains a uniform matroid.
\end{teo}

\begin{proof}
    Assume that $\M$ is an elementary split matroid. If $\M$ is loopless and coloopless, by the preceding list of equivalences we obtain that every pair of proper cyclic flats are incomparable, i.e., if $F_1$ and $F_2$ are cyclic flats and $F_1\subsetneq F_2$ then either $F_1=\varnothing$ or $F_2=E$; in particular, it follows that every proper cyclic flat of $\M$ covers $0_{\mathscr{Z}}$ and is covered by $1_{\mathscr{Z}}$. By Proposition~\ref{prop:cyclic-covering-iff-stressed}, this implies that all these cyclic flats are stressed and have non-empty cover. By Proposition~\ref{prop:relaxation-equals-edelete-cyclic-flat}, it follows that relaxing all of these cyclic flats amounts to erasing them from the lattice of cyclic flats. After all these relaxations, the only remaining elements are the bottom and the top elements, namely $\varnothing$ and $E$. Hence the resulting matroid is uniform by Lemma~\ref{lemma:cyclic-flats-uniform}. If $\M$ has loops or coloops, it is a direct sum of two uniform matroids of the form $\U_{a,b}\oplus \U_{c,c}$ (if it has coloops) or $\U_{a,b}\oplus \U_{0,c}$ (if it has loops) with $0 \leq a \leq b$ and $c\geq 1$. In the first case, the only stressed subset with non-empty cover is the ground set of $\U_{a,b}$, whereas in the second case the only stressed subset with non-empty cover is the ground set of $\U_{0,c}$. It is straightforward to verify that in both cases the cover of the stressed subset under consideration consists of all the non-bases. In particular, the relaxation yields a uniform matroid.
    
    Conversely, assume that $\M$ is a matroid such that relaxing all of its stressed subsets with non-empty cover yields a uniform matroid. If $\M$ is loopless and coloopless, by Proposition~\ref{prop:cyclic-covering-iff-stressed} and Proposition~\ref{prop:relaxation-equals-edelete-cyclic-flat}, the only possibility is that all the proper elements in the lattice of cyclic flats $\mathscr{Z}$ of $\M$ cover the bottom and are covered by the top element. This implies that all the proper cyclic flats are incomparable. The preceding theorem implies that $\M$ is elementary split. Now let us analyze the case in which $\M$ has loops or coloops. It suffices to look at the case in which $\M$ has loops, otherwise we apply 
    Proposition~\ref{prop:relax-dual} together with the fact that elementary split matroids are closed under dualization.
    Now, consider any stressed subset $A$ with non-empty cover. It automatically is a cyclic flat by Corollary~\ref{cor:stressed-nonemptycusp-implies-cyclic}, and thus contains the set $L\neq\varnothing$ of all loops of $\M$. 
    Furthermore, since $A$ is stressed the restriction $\M|_A$ is isomorphic to a uniform matroid $\U_{r,h}$.
    The elements in $L$ are loops of this matroid, thus it follows that $r = 0$, $A=L$, and $h=|L|$.
    Using once more that $A$ is a stressed subset shows that $\M/A\cong \U_{k,n-h}$. It follows that $\M \cong \U_{k,n-h}\oplus \U_{0,h}$ and hence is an elementary split matroid.
\end{proof}

\begin{example}\label{example:cuspidal-are-split}
    All cuspidal matroids $\LL_{r,k,h,n}$ are indeed elementary split matroids. The only matroids of this form with (co)loops are of the form $\U_{a,b}\oplus \U_{0,c}$ (respectively $\U_{a,b}\oplus \U_{c,c}$), thus by Theorem~\ref{thm:kristof-equivalences} it follows that these matroids are elementary split. 
    Furthermore, when $\LL_{r,k,h,n}$ matroid neither has loops nor coloops, then it has only one proper cyclic flat and again Theorem~\ref{thm:kristof-equivalences} implies the claim. 
    An alternative way of showing this is to look at the definition of the matroid  $\LL_{r,k,h,n}$. This matroid is obtained by relaxing the ground set of the second direct summand in the matroid $\U_{k-r,n-h}\oplus \U_{r,h}$; if one additionally relaxes the \emph{first} direct summand, the resulting matroid is precisely $\U_{k,n}$ which shows by Theorem~\ref{thm:elem-split-relaxation-yields-uniform} that $\LL_{r,k,h,n}$ is elementary split. Notice that in all cases the condition on the cyclic flats also guarantees that $\LL_{r,k,h,n}$ is Schubert.
\end{example}

\subsection{Large classes and predominance}

We include this short subsection to justify the use of the word ``large'' in the title of the present article, and to pose Conjecture~\ref{conj:split-predominates}. A major open problem in matroid theory consists of finding the right asymptotics for the number of labelled or unlabelled matroids on $n$ elements. More precisely, it is desirable to obtain asymptotic estimations for the quantities
    \begin{align*}
        \operatorname{mat}(n) &:= |\{\text{matroids on $[n]$}\}|,\\
        \operatorname{mat}'(n) &:= |\{\text{matroids on $[n]$ up to isomorphism}\}|.
    \end{align*}

More than half a century ago, Crapo and Rota postulated in \cite{crapo-rota} that ``paving matroids may predominate in any asymptotic enumeration of matroids''. More recently, Mayhew, Newman, Welsh, and Whittle \cite{mayhew} formalized this and posed the following conjecture.

\begin{conj}[\cite{mayhew}]\label{conj:mayhewetal}
    Let $\operatorname{sp}(n)$ denote the number of sparse paving matroids on $[n]$ and $\operatorname{sp}'(n)$ the number of sparse paving matroids on $[n]$ up to isomorphism. Then
    \[ \lim_{n\to \infty} \frac{\operatorname{sp}(n)}{\operatorname{mat}(n)} = 1 \;\;\;\;\; \text{and} \;\;\;\;\; \lim_{n\to \infty} \frac{\operatorname{sp}'(n)}{\operatorname{mat}'(n)} = 1.\]
\end{conj}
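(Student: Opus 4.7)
The plan is to attack this by sharply bounding $\operatorname{mat}(n)$ from above and showing that the dominant contribution comes from sparse paving matroids. The current state of the art, due to Bansal, Pendavingh and van der Pol, is that flat-cover techniques yield $\log_2 \operatorname{mat}(n)$ asymptotic to $\frac{1}{n}\binom{n}{\lfloor n/2\rfloor}$, matching Knuth's lower bound coming from sparse paving matroids built from equidistant codes. So the logarithmic version of the conjecture is known; what remains is to upgrade ``asymptotically equal logarithms'' to ``ratios tending to $1$''.

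First I would try to leverage the combinatorial framework of the present paper. Section \ref{sec:stressed-subsets-relaxations} shows that every matroid is obtained from the uniform matroid by selectively ``un-relaxing'' stressed cyclic flats; by Proposition \ref{prop:cyclic-covering-iff-stressed}, these are precisely the cyclic flats covering $\varnothing$ and covered by $E$ in the lattice of cyclic flats. A sparse paving matroid is exactly one in which every such stressed subset is a circuit-hyperplane, i.e., has rank $k-1$ and size $k$. The strategy would be twofold: (i) show that for typical parameters the overwhelming majority of matroids on $[n]$ are split, so that their structure is determined by an antichain of stressed cyclic flats in the uniform matroid $\U_{k,n}$; and (ii) show that, within such antichains, the contribution from flats which are not circuit-hyperplanes is a vanishing fraction of the total. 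Step (ii) should reduce to a codewise estimate: sparse paving matroids of rank $k$ on $[n]$ biject with $2$-partitions of $\binom{[n]}{k}$, or equivalently with binary constant-weight codes of minimum distance $4$, and the relevant extremal Johnson-scheme bounds suggest that circuit-hyperplanes account for the bulk of the entropy.

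The main obstacle is that the gap between ``logarithmic equality'' and ``ratio equality'' is enormous: since $\log \operatorname{mat}(n)$ is itself doubly exponential in $\log n$, even a sub-exponential multiplicative error in the exponent produces an unbounded ratio. Consequently, any successful approach must count $\operatorname{mat}(n)$ up to a $(1+o(1))$ multiplicative factor, not merely in logarithmic scale. I would expect the hardest part to be controlling the matroids whose lattice of cyclic flats has height at least $3$ (those not captured by the split picture of Theorem \ref{thm:elem-split-relaxation-yields-uniform}), since here the reduction to antichains breaks down, and one needs a structural decomposition that is finer than anything currently available.

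Honestly, this is a notorious open problem and the plan above describes the shape of what a solution would look like rather than a realistic route. Progress past the logarithmic regime will almost certainly require a new idea --- perhaps a probabilistic model of random matroids in which sparse paving occurs with probability tending to $1$, or an injective argument embedding each non-sparse-paving matroid into a much smaller ``coding'' data set so that the extra classes add only a $(1+o(1))$ factor. Any approach must also address the unlabelled version, where the automorphism groups of sparse paving matroids are generically trivial (giving $\operatorname{sp}'(n) \sim \operatorname{sp}(n)/n!$) and one has to verify that non-sparse-paving matroids cannot be created disproportionately often with small automorphism groups.
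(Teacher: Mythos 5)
This statement is not a theorem of the paper --- it is Conjecture~\ref{conj:mayhewetal}, stated and explicitly attributed to Mayhew, Newman, Welsh and Whittle, and the paper offers no proof of it (only the logarithmic weakening of Pendavingh and van der Pol is cited as supporting progress). You correctly recognize this and do not claim to have a proof; your ``proposal'' is an honest survey of the problem's landscape rather than an argument. As such there is nothing to compare against a proof of the paper, because none exists.

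Two small remarks on the content of your sketch. First, the reduction you suggest via the relaxation framework of this paper is unlikely to be the bottleneck you describe: Theorem~\ref{thm:elem-split-relaxation-yields-uniform} characterizes \emph{elementary split} matroids, and while the paper conjectures these predominate, that conjecture is itself at least as hard as the Mayhew et al.\ conjecture, so using it as a stepping stone is circular. Second, your observation about the size of the gap between logarithmic and ratio asymptotics is exactly the right caution --- $\log\operatorname{mat}(n)$ grows like $\frac{1}{n}\binom{n}{\lfloor n/2\rfloor}$, so a $(1+o(1))$ multiplicative estimate of $\operatorname{mat}(n)$ itself is currently out of reach, and no amount of repackaging the known bounds will close that gap. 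Your closing paragraph, which candidly states that a new idea is needed, is the correct assessment; a referee should flag that this submission should be labeled as a conjecture or discussion, not as a proof.
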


Some progress towards this conjecture has been made in the past decade by Pendavingh and van der Pol \cite{pendavingh-vanderpol}. This is one of the reasons for which paving matroids have received particular attention throughout the last decades.

Besides the question of whether asymptotically all matroids are sparse paving, paving, elementary split or split, it would be useful to understand better how the sizes of these classes of matroids compare to one another. For example, a question that arose in our study is whether the class of elementary split matroids captures asymptotically all the \emph{non-sparse paving} matroids. With the help of a computer we found that there are $156704$ isomorphism classes of non-sparse paving matroids on $9$ elements. Among these, $83628$ correspond to elementary split matroids. In other words, for $n=9$, more than $53\%$ of the matroids that are not sparse paving are elementary split. We propose the following conjecture.

\begin{conj}\label{conj:split-predominates}
    Let $\operatorname{sp}(n)$ and $\operatorname{es}(n)$ denote the number of sparse paving matroids and elementary split matroids on $[n]$, respectively. Let $\operatorname{sp}'(n)$ and $\operatorname{es}'(n)$ be the number of sparse paving matroids and elementary split matroids on $[n]$ up to isomorphism, respectively. Then
    \[ \lim_{n\to \infty} \frac{\operatorname{es}(n)-\operatorname{sp}(n)}{\operatorname{mat}(n)-\operatorname{sp}(n)} = 1 \;\;\;\;\; \text{and} \;\;\;\;\; \lim_{n\to \infty} \frac{\operatorname{es}'(n)-\operatorname{sp}'(n)}{\operatorname{mat}'(n)-\operatorname{sp}'(n)} = 1 .\]
\end{conj}

Essentially, our conjecture is asserting that the class of elementary split matroids is \emph{genuinely} larger than the class of sparse paving matroids. We expect elementary split matroids to predominate even if we restrict the enumeration to the class of non sparse paving matroids. Of course, this does not preclude the fact that there exist many other matroids featuring other properties. We include in Table~\ref{table:number-matroids} the number of isomorphism classes of matroids and split matroids on $n$ elements and fixed rank, for $n\leq 9$.

{\footnotesize
\begin{table}\label{table:}
    \begin{subtable}[t]{.5\textwidth}
        \caption{Number of isomorphism classes of matroids\\ on $[n]$ of rank $k$}
        \raggedright
        \begin{tabular}{l r r r r r r r r r r}\hline
        $k\backslash n$   & 1   & 2 & 3 & 4 & 5 & 6 & 7 & 8 & 9   \\ \hline
        0 &   1 & 1 & 1 & 1 & 1 & 1 & 1 & 1 & 1\\
        1 &   1 & 2 & 3 & 4 & 5 & 6 & 7 & 8 & 9\\
        2 &    & 1 & 3 & 7 & 13 & 23 & 37 & 58 & 87\\
        3 &    &  & 1 & 4 & 13 & 38 & 108 & 325 & 1275 \\
        4 &    &  &  & 1 & 5 & 23 & 108 & 940 & 190214 \\
        5 &    &  &  &  & 1 & 6 & 37 & 325 & 190214\\
        6 &    &  &  &  &  & 1 & 7 & 58 & 1275\\
        7 &    &  &  &  &  &  & 1  & 8 & 87 \\
        8 &    &  &  &  &  &  &  & 1 & 9 \\
        9 &    &  &  &  &  &  &  &  & 1 \\
        \end{tabular}
    \end{subtable}
    \begin{subtable}[t]{.5\textwidth}
        \caption{Number of isomorphism classes of elementary split matroids on $[n]$ of rank $k$}
        \begin{tabular}{l r r r r r r r r r r}\hline
         $k\backslash n$  & 1   & 2 & 3 & 4 & 5 & 6 & 7 & 8 & 9   \\ \hline
         0&   1 & 1 & 1 & 1 & 1 & 1 & 1 & 1 & 1\\
         1&   1 & 2 & 3 & 4 & 5 & 6 & 7 & 8 & 9\\
         2&    & 1 & 3 & 6 & 9 & 14 & 19 & 27 & 36\\
         3&    &  & 1 & 4 & 9 & 18 & 38 & 94 & 434 \\
         4&    &  &  & 1 & 5 & 14 & 38 & 407 & 154568 \\
         5&    &  &  &  & 1 & 6 & 19 & 94 & 154568\\
         6&    &  &  &  &  & 1 & 7 & 27 & 434\\
         7&    &  &  &  &  &  & 1  & 8 & 36 \\
         8&    &  &  &  &  &  &  & 1 & 9 \\
         9&    &  &  &  &  &  &  &  & 1 \\
        \end{tabular}
    \end{subtable}
    \caption{Number of isomorphism classes of matroids and elementary split matroids.}
    \label{table:number-matroids}
\end{table}
}

\subsection{Parameters and bounds}

The goal in this subsection is to establish some bounds on the number of stressed subsets or stressed hyperplanes that a sparse paving, paving, or elementary split matroid can possibly have. The technical results in this section will be needed only in specific parts of the sequel, so the reader can skip this part until necessary.

We denote by $\uplambda_{r,h}$ the number of stressed subsets of size $h$ and rank $r$ in a given (elementary split) matroid. The motivation is twofold. If one is interested in performing an exhaustive computer search, having upper bounds on the parameters $\uplambda_{r,h}$ provides a rough but useful criterion to terminate the search. Also, if one is interested in constructing ``complicated'' elementary split matroids, it is convenient to have larger values $\uplambda_{r,h}$, hence it is desirable to obtain lower bounds or estimations of how large they can actually be.

\begin{prop}\label{prop:bound-hyperplanes-paving}
    Let $\M$ be a paving matroid of rank $k$ and cardinality $n$. Let us denote by $\uplambda_h$ the number of hyperplanes of $\M$ of size $h$ and by $h_{\max}$ the size of the largest of those hyperplanes. Then,
    \[ \sum_{h\geq k} \uplambda_h \binom{h}{k} \leq \frac{h_{\max}-k+1}{n-k+1}\binom{n}{k}.\]
\end{prop}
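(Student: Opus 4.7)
The plan is to first rewrite the left-hand side as the total number $N$ of non-basis $k$-subsets of $\M$, and then bound the number of bases from below by a double-count.

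First I would observe that since $\M$ is paving, every $k$-element non-basis $S$ has rank exactly $k-1$: its rank is at most $k-1$ because $S$ is dependent, and at least $k-1$ because any $(k-1)$-subset of $S$ is independent by the paving property. Consequently $\cl(S)$ is a hyperplane of size $\geq k$. Conversely, every hyperplane $H$ of size $h\geq k$ contributes $\binom{h}{k}$ non-basis $k$-subsets. Moreover, two distinct hyperplanes $H_1\neq H_2$ of size $\geq k$ cannot share a $k$-subset: the intersection $H_1\cap H_2$ is a proper flat of each, hence of rank $\leq k-2$, and in a paving matroid a flat of rank $\leq k-2$ has size $\leq k-2$ (otherwise it would contain an independent $(k-1)$-subset, whose rank $k-1$ is strictly larger than the rank of the flat). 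Therefore the map $S\mapsto \cl(S)$ identifies
\[ N = \sum_{h\geq k} \uplambda_h \binom{h}{k}. \]

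Second, I would double-count the pairs $(R,e)$ with $R\in\binom{E}{k-1}$ and $e\in E\setminus\cl(R)$. Every such $R$ is independent by paving, so $R\cup\{e\}$ is a basis precisely when $e\notin\cl(R)$, and every basis $B$ is obtained this way in exactly $k$ ways (once for each $e\in B$, setting $R=B\smallsetminus\{e\}$). Hence
\[ k\,|\mathscr{B}(\M)| \;=\; \sum_{R\in\binom{E}{k-1}} \bigl|E\smallsetminus\cl(R)\bigr|. \]
Since each $\cl(R)$ is a hyperplane, it has size at most $h_{\max}$ (we may assume $h_{\max}\geq k$, otherwise $N=0$ and the claim is trivial; note that if $\cl(R)=R$ then $|\cl(R)|=k-1\leq h_{\max}$ as well). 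Thus $|E\smallsetminus\cl(R)|\geq n-h_{\max}$ for every $R$, and
\[ k\,|\mathscr{B}(\M)| \;\geq\; (n-h_{\max})\binom{n}{k-1}. \]

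Finally, using the elementary identity $(n-k+1)\binom{n}{k-1}=k\binom{n}{k}$, I would conclude that $|\mathscr{B}(\M)|\geq \frac{n-h_{\max}}{n-k+1}\binom{n}{k}$, and passing to the complement
\[ N \;=\; \binom{n}{k}-|\mathscr{B}(\M)| \;\leq\; \binom{n}{k}-\frac{n-h_{\max}}{n-k+1}\binom{n}{k} \;=\; \frac{h_{\max}-k+1}{n-k+1}\binom{n}{k}, \]
which is the desired inequality. The main thing to check carefully is the first step, namely the identification of $N$ with $\sum_{h\geq k}\uplambda_h\binom{h}{k}$, which crucially exploits both directions of the paving hypothesis (non-bases span hyperplanes, and distinct large hyperplanes meet in small sets); the rest is a clean double count combined with a binomial identity.
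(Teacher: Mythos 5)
Your proof is correct and takes essentially the same route as the paper's: the lower bound $|\mathscr{B}(\M)|\geq \frac{n-h_{\max}}{n-k+1}\binom{n}{k}$ comes from the same double count of basis/$(k-1)$-set incidences (you count pairs $(R,e)$; the paper counts edges of a bipartite graph on $k$-sets and $(k-1)$-sets), and the conclusion then follows from $\sum_{h\geq k}\uplambda_h\binom{h}{k}=\binom{n}{k}-|\mathscr{B}(\M)|$. The only deviation is in how you justify that last identity: you rederive it directly by showing the closure map identifies the non-basis $k$-sets with the disjoint union of $\binom{H}{k}$ over hyperplanes $H$ of size at least $k$ (using that two distinct large hyperplanes of a paving matroid meet in a flat of size at most $k-2$), whereas the paper simply invokes the already-established fact that relaxing all stressed hyperplanes yields $\U_{k,n}$ and that their cusps are pairwise disjoint.
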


\begin{proof}
    We apply a double counting argument on the bipartite graph whose nodes are the independent sets of size $k$ and $k-1$, and whose edges connect a $k$-set with a $(k-1)$-set whenever it is contained in the other.
    
    Given that the matroid $\M$ is paving, all the $\binom{n}{k-1}$ subsets of cardinality $k-1$ are independent. 
    Thus our bipartite graph has $\binom{n}{k-1}+|\mathscr{B}(\M)|$ nodes where $\mathscr{B}(\M)$ is the collection of bases of $\M$.
    
    Each independent set $I$ of rank $k-1$ is contained in a unique hyperplane $H$ (the flat spanned by $I$ itself), hence there are $n-|H|$ elements that we can add to the independent set $I$ and obtain a basis. This implies that the degree of the node $I$ in our graph is exactly $n-|H|$, and hence it is at least $n-h_{\max}$.
    
    The other nodes in our graph correspond to bases, each of these nodes has degree $k$. Thus the total number of edges of the bipartite graph is exactly $k|\mathscr{B}(\M)|$. We conclude
        \[ (n-h_{\max})\binom{n}{k-1} \leq k |\mathscr{B}(\M)|\enspace,\]
    which is equivalent to
        \[ \frac{n-h_{\max}}{n-k+1} \binom{n}{k} \leq |\mathscr{B}(\M)|\enspace .\]
    Since relaxing all the hyperplanes of the matroid $\M$ which are of size $h\geq k$ yields the uniform matroid $\U_{k,n}$, we have that
    \[ \binom{n}{k} = |\mathscr{B}(\U_{k,n})| = |\mathscr{B}(\M)| + \sum_{H\text{ hyperplane}} \left|\cover(H)\right| = |\mathscr{B}(\M)|+\sum_{h\geq k} \uplambda_h \binom{h}{k},\]
    where we used Remark~\ref{rem:cusp-corank1}. Hence we obtain the desired inequality
    \[ \sum_{h\geq k} \uplambda_h \binom{h}{k} = \binom{n}{k} - |\mathscr{B}(\M)| \leq \binom{n}{k} - \frac{n-h_{\max}}{n-k+1} \binom{n}{k}=\frac{h_{\max}-k+1}{n-k+1} \binom{n}{k}\enspace. \qedhere\]
\end{proof}

\begin{coro}\label{coro:bound-circuit-hyperplanes-sparse-paving}
    If $\M$ is a sparse paving matroid of rank $k$ and cardinality $n$ and with $\uplambda$ circuit-hyperplanes, then
    \[ \uplambda \leq \min\left\{\frac{1}{k+1}, \frac{1}{n-k+1}\right\} \binom{n}{k}.\]
\end{coro}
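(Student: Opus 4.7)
The plan is straightforward: apply Proposition \ref{prop:bound-hyperplanes-paving} to both $\M$ and its dual $\M^*$, and then take the minimum of the two resulting bounds. First I would set up the argument for $\M$. Since $\M$ is sparse paving, by definition both $\M$ and $\M^*$ are paving. I claim that every hyperplane of $\M$ has size at most $k$: the complement of a hyperplane of $\M$ is a cocircuit, which is a circuit of the paving matroid $\M^*$, hence has size at least $\rk(\M^*) = n-k$; this forces $|H| \leq k$. Thus the hyperplanes of $\M$ have size exactly $k-1$ or $k$, and a hyperplane of size $k$ is necessarily a circuit (it is dependent because $\M$ is paving and of the right size). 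Consequently, with the notation of Proposition \ref{prop:bound-hyperplanes-paving}, we get $\uplambda = \uplambda_k$ and $h_{\max} \leq k$.

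Feeding $h_{\max} = k$ into Proposition \ref{prop:bound-hyperplanes-paving} the left-hand sum collapses to $\uplambda_k \binom{k}{k} = \uplambda$, and the bound becomes
\[ \uplambda \leq \frac{1}{n-k+1}\binom{n}{k}.\]

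For the second estimate, I would repeat the argument verbatim with $\M^*$ in place of $\M$. The dual is again sparse paving, now of rank $n-k$ on $n$ elements, and by Remark \ref{rem:circuits-cohyperplanes} taking complements is a bijection between the circuit-hyperplanes of $\M$ and those of $\M^*$, so $\M^*$ has the same number $\uplambda$ of circuit-hyperplanes. The analogous bound, using $\binom{n}{n-k} = \binom{n}{k}$, reads
\[ \uplambda \leq \frac{1}{k+1}\binom{n}{k}.\]
Combining the two inequalities yields the claim.

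I do not anticipate any real obstacle: the corollary is essentially a one-step consequence of Proposition \ref{prop:bound-hyperplanes-paving} together with the self-dual character of sparse paving matroids. The only sanity check worth stating explicitly is the correspondence between circuit-hyperplanes of $\M$ and $\M^*$ via complementation, which is standard.
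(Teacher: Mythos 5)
Your proof is correct and follows essentially the same route as the paper: apply Proposition~\ref{prop:bound-hyperplanes-paving} to $\M$ using $h_{\max}=k$ to get the $\frac{1}{n-k+1}\binom{n}{k}$ bound, then repeat for $\M^*$ and take the minimum. Your justification that sparse paving hyperplanes have size $k-1$ or $k$ (via the dual circuit argument) is a welcome bit of extra care over the paper's terser "the size of all hyperplanes is the same and equal to $h_{\max}=k$," but the argument is the same.
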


\begin{proof}
    Since $\M$ is sparse paving, it is in particular paving and the size of all hyperplanes is the same and equal to $h_{\max}=k$. In particular, the preceding formula simplifies to the following inequality:
        \[ \uplambda = \uplambda_k \leq \frac{1}{n-k+1}\binom{n}{k}.\]
    Moreover, since $\M^*$ is sparse paving as well with the same number $\uplambda$ of circuit-hyperplanes, the same reasoning shows that
        \[ \uplambda\leq \frac{1}{k+1}\binom{n}{n-k}.\]
    The result follows by considering the minimum of these two expressions.
\end{proof}

A natural question that might arise at this point is whether there exists a reasonable estimation or lower bound for the maximum number of circuit-hyperplanes that a sparse paving matroid of fixed rank and cardinality can have. The following result, due to Graham and Sloane, when rephrased in terms of matroids, allows us to get such a bound.

\begin{lemma}[{\cite[Theorem~1]{graham-sloane}}]\label{lem:bound-ch-sparsepaving}
    For every $1\leq k\leq n$ there exists a sparse paving matroid of rank $k$, cardinality $n$ and having at least $\frac{1}{n}\binom{n}{k}$ circuit-hyperplanes.
\end{lemma}

The equivalence of the above statement with the precise formulation of \cite[Theorem~1]{graham-sloane} is explained in detail in \cite[Section~5]{ferroni3}.

Now, let us focus our attention to the case of elementary split matroids. We will state now some inequalities that the numbers of stressed subsets with non-empty cover of each size and rank in an elementary split matroid must satisfy.

\begin{lemma}\label{lemma:bound-lambdarhs}
    Let $\M$ be an elementary split matroid of rank $k$ and cardinality $n$. Let $\uplambda_{r,h}$ denote the number of stressed subsets with non-empty cover of rank $r$ and size $h$. Then:
        \[ \sum_{r,h}\uplambda_{r,h} \sum_{i=r+1}^k \binom{h}{i}\binom{n-h}{k-i} \leq \binom{n}{k}.\]
\end{lemma}

\begin{proof}
    If we relax a stressed subset $F$ with non-empty cover having rank $r$ and cardinality $h$, the number of bases of $\M$ increases by $|\cover(F)| = \sum_{i=r+1}^k \binom{h}{i}\binom{n-h}{k-i}$. Furthermore, if we relax all such subsets, we end up with the uniform matroid $\U_{k,n}$, which has $\binom{n}{k}$ bases. In other words,
        \[ |\mathscr{B}(\M)| + \sum_{r,h}\uplambda_{r,h} \sum_{i=r+1}^k \binom{h}{i}\binom{n-h}{k-i} = \binom{n}{k},\]
    from which the inequality in the lemma follows.
\end{proof}

We observe that if one particularizes the preceding bound to a paving matroid $\M$, one obtains a much weaker version of the one in Proposition~\ref{prop:bound-hyperplanes-paving}. 

The statement in Lemma~\ref{lemma:bound-lambdarhs} is particularly useful in the context of performing a computer search that considers all elementary split matroids of fixed rank and cardinality. Note that if a vector of parameters $(\uplambda_{r,h})_{r,h}$ comes from a certain elementary split matroid $\M$ of rank $k$ and cardinality $n$, then decreasing any individual entry of this vector yields a new vector that again comes from an elementary split matroid (which is of course obtained by relaxing a stressed subset).

\subsection{Remarks on the number of stressed subsets}

Before ending this section, let us digress on the difficulty of deciding if a given vector $(\uplambda_{r,h})_{r,h}$ counts the number of stressed subsets with non-empty cover of some elementary split matroid. As we will argue below, this problem is expected to be very difficult. Equivalently, it is hard to determine the entry-wise ``maximal'' vectors $(\uplambda_{r,h})_{r,h}$ for given values of $k$ and $n$. However, inequalities such as the one in Lemma~\ref{lemma:bound-lambdarhs} serve as a rough but useful criterion to discard vectors that certainly do not come from elementary split matroids. 

Let us be slightly more precise. We fix numbers $k$ and $n$, and restrict our attention exclusively to sparse paving matroids of rank $k$ and cardinality $n$. Assume that we are given a number $\uplambda$. Then, we want to know whether there exists a sparse paving matroid having exactly $\uplambda$ circuit-hyperplanes. To approach that question we provided two bounds:
\begin{itemize}
    \item Corollary~\ref{coro:bound-circuit-hyperplanes-sparse-paving}, which says that if $\uplambda>\min\left\{\frac{1}{k+1},\frac{1}{n-k+1}\right\}\binom{n}{k}$ then the answer is ``no''.
    \item Lemma~\ref{lem:bound-ch-sparsepaving}, which guarantees that if $\uplambda\leq \frac{1}{n}\binom{n}{k}$ then the answer is ``yes''.
\end{itemize}

We observe that there is a gap, i.e., $\frac{1}{n}\binom{n}{k} < \uplambda \leq \min\left\{\frac{1}{k+1},\frac{1}{n-k+1}\right\}\binom{n}{k}$ for which the answer is in general very difficult to obtain. Although both the upper and lower bound can be slightly improved, the computation of the maximum number of circuit-hyperplanes that a sparse paving matroid of rank $k$ and cardinality $n$ can have is a notoriously difficult problem. 
The reason is that this is equivalent to many other well-known problems; see \cite[Theorem~26]{joswig-schroter}.
One is the calculation of the size of the maximum stable subset of the Johnson graph $\mathrm{J}(n,k)$, which is the $1$-skeleton of the hypersimplex $\Delta_{k,n}$.
Computing the independence number of $\mathrm{J}(n,k)$, i.e., the size of the maximum stable subset, would have consequences on the enumeration of matroids; see \cite{bansal-pendavingh-vdp}. 
Furthermore, this number coincides with the maximum number of binary words of fixed length $n$ and constant weight $k$, under the additional assumption that the minimal Hamming distance between any two distinct words is at least $4$. This number is tabulated for small values of $n$ and $k$ (see for example \cite[Table~1]{brouwer-etzion}). As a small glimpse of how hard this is expected to be, we point out that its exact value is not known for values as small as $n=18$ and $k=9$.

\begin{problem}
    Let us fix numbers $k$ and $n$. Assume that we are given a list of non-negative integers $(\uplambda_{r,h})_{0\leq r\leq h\leq n}$. Does there exist an (elementary) split matroid of rank $k$ and cardinality $n$ with exactly $\uplambda_{r,h}$ stressed subsets with non-empty cover of size $h$ and rank $r$?
\end{problem}

As said before, the general answer to this question is expected to be exceedingly difficult as well, as this includes the case we discussed above. Nonetheless, we feel motivated to ask the following questions.

\begin{itemize}
    \item Is it possible to obtain a (significant) improvement of the upper bound of Lemma~\ref{lemma:bound-lambdarhs}?
    \item Do there exist any versions of Lemma~\ref{lem:bound-ch-sparsepaving} for elementary split matroids, i.e., lower bounds for the entry-wise maximal vectors $(\uplambda_{r,h})_{r,h}$?
\end{itemize}

\section{Valuations on split matroids}
\noindent  The main goal of this section is to prove Theorem~\ref{thm:main}. This result will be obtained as a consequence of the family of subdivisions arising from the relaxations of stressed subsets described in Theorem~\ref{thm:relaxation-induces-subdivision}. The results in this section lay the groundwork for the remainder of the paper, in which we will apply Theorem~\ref{thm:main} for some specific invariants. 

In this section we also study a subdivision using graphic matroids for certain matroids of corank $2$ that will be needed later in the paper. In addition, we include a short discussion about the problem of pursuing generalizations of Theorem~\ref{thm:main} to encompass more matroids.

\subsection{Valuative invariants for split matroids}

Our goal is to put together some of the ingredients that we developed so far. The short version of the story is that evaluating a valuative invariant on a split matroid is a task that can be achieved by knowing only certain (often tractable) data. Moreover, the next statement describes the change of a valuation when relaxing a stressed subset with non-empty cover in an arbitrary matroid. 

\begin{prop}\label{prop:change-of-invariant-with-relaxation}
    Let $\M$ be a matroid of rank $k$ and cardinality $n$ with a stressed subset $F$ whose cover is non-empty. Let $\widetilde{\M}$ be the relaxation $\Rel(\M,F)$, $|F|=h$ and $\rank_\M(F)=r$, then for every valuative invariant $f:\mathfrak{M}\to \A$ the following holds
        \[ f(\widetilde{\M}) = f(\M) + f(\LL_{r,k,h,n}) - f(\U_{k-r,n-h}\oplus \U_{k,h})\enspace .\]
\end{prop}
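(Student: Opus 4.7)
The plan is to deduce this directly from the geometric subdivision produced in Theorem~\ref{thm:relaxation-induces-subdivision} together with the inclusion--exclusion characterization of valuations discussed around equation~\eqref{eq:weak-valuation-inclusion-exclusion}.

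First I would invoke Theorem~\ref{thm:relaxation-induces-subdivision} with the stressed subset $F$ of rank $r$ and size $h$: this yields matroids $\N_1\cong \LL_{r,k,h,n}$ and $\N_2\cong \U_{k-r,n-h}\oplus\U_{r,h}$ such that
\[
\mathscr{P}(\widetilde{\M}) = \mathscr{P}(\M)\cup \mathscr{P}(\N_1)
\quad\text{and}\quad
\mathscr{P}(\M)\cap \mathscr{P}(\N_1) = \mathscr{P}(\N_2).
\]
Let $\mathcal{S}$ be the polyhedral subdivision of $\mathscr{P}(\widetilde{\M})$ whose maximal cells are $\mathscr{P}(\M)$ and $\mathscr{P}(\N_1)$; this is a $\mfrak{M}_E$-subdivision since all three occurring polytopes are matroid polytopes on the same ground set.

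Next, since $f$ is a valuation, equation~\eqref{eq:weak-valuation-inclusion-exclusion} applied to $\mathcal{S}$ (whose set of maximal cells has exactly two elements, whose unique non-empty proper intersection is $\mathscr{P}(\N_2)$) gives
\[
f(\widetilde{\M}) \;=\; f(\M) + f(\N_1) - f(\N_2).
\]
Finally, invariance of $f$ under matroid isomorphism lets me replace $f(\N_1)$ by $f(\LL_{r,k,h,n})$ and $f(\N_2)$ by $f(\U_{k-r,n-h}\oplus\U_{r,h})$, which yields the claimed identity.

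There is essentially no obstacle: all the heavy lifting has already been carried out in Theorem~\ref{thm:relaxation-induces-subdivision}, and the inclusion--exclusion form \eqref{eq:weak-valuation-inclusion-exclusion} of the valuation axiom is exactly tailored to a subdivision with two maximal cells meeting in a single common face. The only subtlety worth spelling out in the write-up is that one must be careful to identify $\mathscr{P}(\M)$ as a maximal cell of a genuine matroid subdivision of $\mathscr{P}(\widetilde{\M})$ (so that equation~\eqref{eq:weak-valuation-inclusion-exclusion} is applicable), which is precisely what Theorem~\ref{thm:relaxation-induces-subdivision} provides.
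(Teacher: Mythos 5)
Your approach is essentially the paper's approach: apply Theorem~\ref{thm:relaxation-induces-subdivision} and then use the inclusion--exclusion formulation~\eqref{eq:weak-valuation-inclusion-exclusion} of valuativeness on the resulting two-cell matroid subdivision. However, there is a genuine gap: you assume without justification that there \emph{is} a subdivision of $\mathscr{P}(\widetilde{\M})$ with $\mathscr{P}(\M)$ and $\mathscr{P}(\N_1)$ as its two maximal cells. That requires $\mathscr{P}(\M)$ to be full-dimensional inside $\mathscr{P}(\widetilde{\M})$, i.e., $\dim\mathscr{P}(\M)=\dim\mathscr{P}(\widetilde{\M})=n-1$, which is exactly the assertion that $\M$ is \emph{connected}. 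But the proposition does not assume $\M$ connected, and Theorem~\ref{thm:relaxation-induces-subdivision} (as stated in Section~\ref{sec:polytopes}) only guarantees the two set-theoretic identities $\mathscr{P}(\M)\cup\mathscr{P}(\N_1)=\mathscr{P}(\widetilde{\M})$ and $\mathscr{P}(\M)\cap\mathscr{P}(\N_1)=\mathscr{P}(\N_2)$; it does not by itself promise a subdivision with the claimed maximal cells.

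If $\M$ is disconnected, the construction degenerates: one can check (and the paper does) that a disconnected $\M$ with a stressed subset $F$ of non-empty cusp must satisfy $\M\cong\U_{k-r,n-h}\oplus\U_{r,h}$, so that $\mathscr{P}(\M)=\mathscr{P}(\N_2)$ is a proper face of $\mathscr{P}(\N_1)=\mathscr{P}(\widetilde{\M})$. Then $\mathcal{S}^{\max}$ has only one element, so equation~\eqref{eq:weak-valuation-inclusion-exclusion} collapses to the tautology $f(\widetilde{\M})=f(\N_1)$, and your derivation of the displayed identity no longer applies. The identity you want is still (trivially) true in that case, since $f(\M)=f(\N_2)$, but your argument as written does not produce it. To close the gap you should split into the case where $\M$ is connected (your argument then goes through verbatim: $\mathscr{P}(\M)$ and $\mathscr{P}(\N_1)$ are two full-dimensional cells meeting in the common facet $\mathscr{P}(\N_2)$) and the degenerate case where $\M$ is disconnected (then $\M\cong\U_{k-r,n-h}\oplus\U_{r,h}$, $\widetilde{\M}\cong\LL_{r,k,h,n}$, and the identity reduces to a tautology).
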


\begin{proof}
    Let us first assume that the matroid $\M$ is disconnected. Assume that $E=[n]$ and $F=\{n-h+1,\ldots,n\}$ for simplicity. 
    By definition of being a stressed subset, the restriction $\M|_F \cong \U_{r,h}$ and contraction $\M/F = \U_{k-r,n-h}$ must both be uniform matroids. This implies that the bases of $\U_{k-r,n-h}\oplus\U_{r,h}$ are bases of $\M$, too.
    There is a single disconnected matroid that contains all these bases, i.e., $\M = \U_{k-r,n-h}\oplus\U_{r,h}$. 
    The matroid $\M$ has a two stressed subsets corresponding to the two connected components. The statement requires that the stressed set $F$ is of size $h$ and rank $r$ therefore $\Rel(\M,F) \cong \LL_{r,k,h,n}$ and the statement becomes trivial.

    Now let us assume that $\M$ is connected, then again the statement is a consequence of the subdivision induced by the relaxation of $F$, in the lines of Theorem~\ref{thm:relaxation-induces-subdivision}. Preserving the notation of that statement, we have that the set $\mathcal{S}$ consisting of all the faces of $\mathscr{P}(\M)$ and all the faces of $\mathscr{P}(\N_1)$ induces a subdivision of $\mathscr{P}(\widetilde{\M})$. We have only three internal faces which are given by $\mathcal{S}^{\operatorname{int}} = \left\{\mathscr{P}(\M), \mathscr{P}(\N_1), \mathscr{P}(\N_2)\right\}$. In particular, the proof follows from $f$ being a valuative invariant, so that can leverage the isomorphisms $\N_1\cong \LL_{r,k,h,n}$ and $\N_2\cong \U_{k-r,n-h}\oplus \U_{r,h}$.
\end{proof}

\begin{example}\label{ex:direct-sum-of-unif-is-particular-case}
    Proposition~\ref{prop:change-of-invariant-with-relaxation} tells us that evaluating any valuative invariant $f:\mathfrak{M}\to \A$ on the matroid $\U_{k-r,n-h}\oplus\U_{r,h}$ can be done by using the identity
    \[
    f(\U_{k-r,n-h}\oplus\U_{r,h}) = 
    f(\LL_{k-r,k,n-h,n}) + f(\LL_{r,k,h,n}) - f(\U_{k,n})
    \]
    which can be seen as a consequence of Theorem~\ref{thm:relaxation-induces-subdivision} with $\M=\LL_{k-r,k,n-h,n}$. 
    See also Example~\ref{ex:matroidal_subdivision} and Figure~\ref{fig:oct_subdivision} for the case $r=1$, $k=h=2$, and $n=4$.
\end{example}

We stated a characterization of elementary split matroids in Theorem~\ref{thm:elem-split-relaxation-yields-uniform}. This, when combined with the prior statement, yields the following result.

\begin{teo}\label{thm:main}
    Let $\M$ be an elementary split matroid of rank $k$ and cardinality $n$, and let $f$ be a valuative invariant. Then,
    \begin{align*}
     f(\M) &= f(\U_{k,n}) - \sum_{r,h} \uplambda_{r,h} \left(f(\LL_{r,k,h,n}) - f(\U_{k-r,n-h}\oplus \U_{r,h})\right)\\
     &= f(\U_{k,n}) - \sum_{r,h} \uplambda_{r,h} \left(f(\U_{k,n}) - f(\LL_{k-r,k,n-h,n})\right),
     \end{align*}
    where $\uplambda_{r,h}$ denotes the number of stressed subsets with non-empty cover of size $h$ and rank $r$.
\end{teo}

\begin{proof}
    Since $\M$ is elementary split, by relaxing its stressed subsets with non-empty cover $\{F_1,\ldots, F_m\}$ one after each other, we end up with a sequence
    \[
    \M = \M_0,\, \M_1,\, \ldots,\, \M_m \cong \U_{k,n}
    \]
    where $\M_i = \Rel(\M_{i-1}, F_i)$ for each $1\leq i\leq m$.
    It follows that for each $1\leq i\leq m$, we have by Proposition~\ref{prop:change-of-invariant-with-relaxation}
        \[ f(\M_i) = f(\M_{i-1}) + f(\LL_{r_i,k,h_i,n}) - f(\U_{k-r_i,n-h_i}\oplus \U_{r_i,h_i})\]
    where $r_i$ is the rank of the cyclic flat $F_i$ and $h_i=|F_i|$ the size of it. 
    Expanding the expressions recursively, we obtain a formula for $f(\M_m)$, namely
    \[ f(\U_{k,n}) = f(\M) + \sum_{r,h} \uplambda_{r,h} \left(f(\LL_{r,k,h,n}) - f(\U_{k-r,n-h} \oplus \U_{r,h})\right),\]
    from where the first of the two formulas of the statement follows. Example~\ref{ex:direct-sum-of-unif-is-particular-case} allows us to express $f(\U_{k-r,n-h}\oplus\U_{r,h})$ in terms of valuations of Schubert elementary split (i.e., cuspidal) matroids, which one can substitute into the first formula to get an expression in terms of a linear combination of evaluations of cuspidal matroids.
\end{proof}

In the case that our matroid $\M$ is sparse paving, we can simplify the previous expression to compute a valuative invariant $f$. This is because the only stressed subsets with non-empty cover of $\M$ are precisely the circuit-hyperplanes.

\begin{coro}\label{thm:main-for-sparse-paving}
    Let $\M$ be a sparse paving matroid of rank $k$ and cardinality $n$ and having exactly $\uplambda$ circuit-hyperplanes. Let $f$ be a valuative invariant. Then
    \[ f(\M) = f(\U_{k,n}) - \uplambda \left(f(\mathsf{T}_{k,n}) - f(\U_{1,n-k}\oplus\U_{k-1,k})\right).  \]
\end{coro}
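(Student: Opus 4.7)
The plan is to deduce this corollary directly from Theorem~\ref{thm:main} by computing what the parameters $\uplambda_{r,h}$ are for a sparse paving matroid. Since every paving matroid is elementary split (as noted in the discussion following Theorem~\ref{thm:elem-split-relaxation-yields-uniform}) and sparse paving matroids are paving, Theorem~\ref{thm:main} applies, so everything reduces to identifying the stressed subsets with non-empty cusp of $\M$ and the parameters $(r,h)$ they carry.

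The key step is to show that the only stressed subsets of $\M$ with non-empty cusp are precisely its $\uplambda$ circuit-hyperplanes, each contributing $(r,h)=(k-1,k)$. Assuming $\M$ is loopless and coloopless (the remaining trivial cases being handled separately), by Proposition~\ref{prop:cyclic-covering-iff-stressed} the stressed subsets with non-empty cusp coincide with the proper cyclic flats of $\M$ that cover $\varnothing$ and are covered by $E$. I would then argue that, in a sparse paving matroid, every proper cyclic flat is in fact a circuit-hyperplane: the paving hypothesis forces every flat of rank strictly less than $k-1$ to be independent (hence not cyclic), so any proper cyclic flat $F$ satisfies $\rk(F)=k-1$, i.e., it is a hyperplane; the dual-paving (sparse) hypothesis further forces every such hyperplane that is \emph{cyclic} (i.e., a union of circuits) to have size exactly $k$, because hyperplanes of size $k-1$ would be independent. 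Conversely every circuit-hyperplane is clearly a proper cyclic flat, is stressed (as $\M|_H\cong \U_{k-1,k}$ and $\M/H\cong \U_{1,n-k}$), and has non-empty cusp by Remark~\ref{rem:cusp-corank1}. Thus $\uplambda_{r,h}=0$ unless $(r,h)=(k-1,k)$, in which case it equals $\uplambda$.

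Plugging these values into Theorem~\ref{thm:main} and using the identification $\LL_{k-1,k,k,n}\cong \mathsf{T}_{k,n}$ recorded below Definition~\ref{def:cuspidal}, together with $\U_{k-r,n-h}\oplus\U_{r,h} = \U_{1,n-k}\oplus \U_{k-1,k}$ for $(r,h)=(k-1,k)$, yields the displayed formula. The loopless/coloopless reduction is the only step requiring minor care: a sparse paving matroid with a loop forces $k=0$ (all circuits must have size $\geq k$), and with a coloop forces $n=k$; in both boundary cases the corollary holds trivially because $\uplambda=0$ and $\M\cong\U_{k,n}$. The main (very mild) obstacle is the internal lemma that proper cyclic flats of a sparse paving matroid are exactly circuit-hyperplanes; this is the content of the second paragraph above and is the only piece of real matroid-theoretic work needed.
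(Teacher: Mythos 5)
Your proposal is essentially correct and follows the same strategy as the paper: specialize Theorem~\ref{thm:main} by identifying the stressed subsets with non-empty cusp of a sparse paving matroid as its circuit-hyperplanes, with $(r,h)=(k-1,k)$. The one small difference is that you reach this identification through the cyclic-flat characterization (Proposition~\ref{prop:cyclic-covering-iff-stressed}) whereas the paper argues more directly via Proposition~\ref{prop:cusp-empty}: a set has non-empty cusp iff it is dependent and non-spanning, and in a sparse paving matroid those are exactly the $k$-sets that are non-bases, which in turn are exactly the circuit-hyperplanes. The paper's route avoids invoking the loopless/coloopless hypothesis, which your route requires.

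The only genuine flaw is in your treatment of the degenerate cases. A sparse paving matroid with a loop forces $k\le 1$, not $k=0$: for $k=1$ a single loop is allowed (e.g.\ $\U_{1,n-1}\oplus\U_{0,1}$ is sparse paving), and that loop is itself a circuit-hyperplane, so $\uplambda=1$, not $0$, and $\M\not\cong\U_{1,n}$. Dually, a coloop forces $n-k\le 1$, with $\uplambda=1$ when $n=k+1$. The corollary still holds in these edge cases, but for a different reason than you state: the relevant cuspidal matroid $\mathsf{T}_{k,n}=\LL_{k-1,k,k,n}$ degenerates to $\U_{k,n}$ when $n-k\le 1$ or $k\le 1$ (cf.\ Remark~\ref{remark:cases-cuspidal-is-uniform} and the discussion after Definition~\ref{def:cuspidal}), so the parenthesized term $f(\mathsf{T}_{k,n})-f(\U_{1,n-k}\oplus\U_{k-1,k})$ collapses and the identity reduces to $f(\M)=f(\M)$. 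So the argument for the main (loopless, coloopless, $2\le k\le n-2$) case is fine; just replace your boundary reasoning with this observation.
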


\begin{proof}
    In a sparse paving matroid, being a set with non-empty cover is equivalent to being a non-basis, because all the subsets of size $k+1$ span the matroid and all the subsets of size $k-1$ are independent. 
    In particular, since for such a matroid we additionally know that the non-bases are exactly the circuit-hyperplanes (and are therefore stressed), we can simplify the sum of Theorem~\ref{thm:main}, because the only non-zero $\uplambda_{r,h}$ correspond to $r=k-1$ and $h=k$. Therefore,
    \[ f(\M) = f(\U_{k,n}) - \uplambda \left( f(\LL_{k-1,k,k,n}) - f(\U_{1,n-k}\oplus \U_{k-1,k})\right).\]
    Recall that we have an isomorphism $\LL_{k-1,k,k,n}\cong \mathsf{T}_{k,n}$, so the proof is complete.
\end{proof}

The moral in the preceding results is that if one knows how to compute a valuative invariant $f$ at cuspidal matroids, then one ``knows'' how to compute it for all elementary split matroids. The task is somewhat simpler if one is only interested in paving or  sparse paving matroids, as the amount of data needed is significantly smaller. 

\subsection{A subdivision into graphic matroids for certain corank 2 matroids}\label{sec:subdivision-rank2}

This subsection will provide a technical subdivision for certain matroids having corank $2$. These subdivisions will play a crucial role in the proof of Theorem~\ref{thm:kl-z-corank2}, which states an explicit formula for the Kazhdan--Lusztig and $Z$-polynomials for corank $2$ matroids. Since this will not be needed elsewhere, the reader may postpone the reading of this part until reaching Section~\ref{sec:kl-polys}.

For each pair of integers $a,b\geq 2$, let us denote by $\mathsf{C}_{a,b}$ the graphic matroid obtained by gluing two cycles of length $a$ and $b$ along a common edge (see Figure \ref{fig:two-cycles}); whenever $a$ or $b$ are equal to $2$, we understand that a cycle of length $2$ consists of two parallel edges joining a pair of vertices. The matroid $\mathsf{C}_{a,b}$ has cardinality $a+b-1$ and its rank is $a+b-3$. We may assume that the edges of $\mathsf{C}_{a,b}$ are labelled such that the numbers $1,\ldots,a$ are assigned to the edges of the cycle of length $a$, and the edges of the cycle of length $b$ are assigned the numbers $a,a+1,\ldots,a+b-1$.

    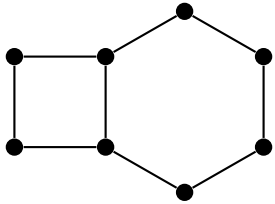
\begin{figure}
        \centering
		\begin{tikzpicture}  
		[scale=0.6,auto=center,every node/.style={circle, fill=black, inner sep=2.3pt},every edge/.append style = {thick}] 
	    \tikzstyle{edges} = [thick];
		\node (a1) at (0,0) {};  
		\node (a2) at (0,2)  {}; 
		\node (a3) at (1.7320,3)  {};  
		\node (a4) at (2*1.7320,2) {};  
		\node (a5) at (2*1.7320,0)  {};  
		\node (a6) at (2*1.7320/2,-1)  {};    
		\node (a7) at (-2,2) {};
		\node (a8) at (-2,0) {};
		\draw[edges] (a1) -- (a2); 
		\draw[edges] (a2) -- (a3);  
		\draw[edges] (a3) -- (a4);  
		\draw[edges] (a4) -- (a5);  
		\draw[edges] (a5) -- (a6);
		\draw[edges] (a6) -- (a1);
		\draw[edges] (a1) -- (a8);
		\draw[edges] (a2) -- (a7);
		\draw[edges] (a7) -- (a8);
		\end{tikzpicture} \caption{The graphic matroid $\mathsf{C}_{4,6}$.}\label{fig:two-cycles}
    \end{figure}

\begin{lemma}\label{lem:mat_poly_Cab}
    The base polytope of the matroid $\mathsf{C}_{a,b}$ is 
    \[\mathscr{P}(\mathsf{C}_{a,b}) = \left\{ x\in [0,1]^n : \sum_{i=1}^n x_i = n-2,\enspace a-2 \leq \sum_{i=1}^{a-1} x_i,\enspace 
    \sum_{i=1}^a x_i \leq a-1 \right\},\]
    where $n = a+b-1$.
\end{lemma}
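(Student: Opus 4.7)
The plan is to invoke Proposition~\ref{prop:polytope_ineq} after identifying the lattice of cyclic flats $\mathcal{Z}(\mathsf{C}_{a,b})$. First, I would observe that $\mathsf{C}_{a,b}$ is the cycle matroid of a connected graph on $a+b-2$ vertices with $a+b-1$ edges; in particular, it is loopless and coloopless and its rank is $a+b-3$. The two cycles give rise to two natural circuits $F_1 := \{1,\ldots,a\}$ and $F_2 := \{a,a+1,\ldots,a+b-1\}$, with $\rk(F_1) = a-1$ and $\rk(F_2) = b-1$.

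My first step would be to show that $F_1$ and $F_2$ are flats. Using that the rank in a graphic matroid is the number of vertices minus the number of connected components of the edge-induced subgraph, the subgraph on $F_1$ consists of one non-trivial component (the $a$-cycle on vertices $v_1,\ldots,v_a$) together with $b-2$ isolated vertices coming from the $b$-cycle. Every edge in $E\smallsetminus F_1$ has at least one endpoint among these isolated vertices, so adding any such edge strictly reduces the number of components and thus strictly increases the rank. Hence $\operatorname{cl}(F_1)=F_1$, and by symmetry $\operatorname{cl}(F_2)=F_2$. Since each $F_i$ is a single circuit, it is automatically cyclic.

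Next, I would argue that $F_1$ and $F_2$ are the only proper cyclic flats. Any cyclic set is a union of circuits of $\mathsf{C}_{a,b}$, and the only circuits are $F_1$, $F_2$ and the circuits of $\mathsf{C}_{a,b}$ supported on both cycles; the latter all contain $F_1\cup F_2 = E$ in their closure, so any proper cyclic flat must be $F_1$ or $F_2$. Consequently $\mathcal{Z}(\mathsf{C}_{a,b})$ is the diamond $\{\varnothing, F_1, F_2, E\}$, which confirms in passing that $\mathsf{C}_{a,b}$ is an elementary split matroid by Theorem~\ref{thm:kristof-equivalences}\eqref{it:car_el_split}.

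Finally I would apply Proposition~\ref{prop:polytope_ineq} to obtain
\[
\mathscr{P}(\mathsf{C}_{a,b}) = \left\{ x\in [0,1]^n : \sum_{i=1}^n x_i = a+b-3,\ \sum_{i=1}^{a} x_i \leq a-1,\ \sum_{i=a}^{a+b-1} x_i \leq b-1 \right\},
\]
and then rewrite the inequality coming from $F_2$ using the equality $\sum_{i=1}^n x_i = a+b-3$: subtracting from this identity gives $\sum_{i=a}^{a+b-1} x_i = (a+b-3) - \sum_{i=1}^{a-1} x_i$, so the bound $\sum_{i=a}^{a+b-1} x_i \leq b-1$ is equivalent to $\sum_{i=1}^{a-1} x_i \geq a-2$. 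This yields exactly the description in the statement. The only genuinely substantive step is verifying that there are no other proper cyclic flats, but this is immediate from the structure of the two-cycle graph.
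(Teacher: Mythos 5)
Your approach mirrors the paper's: identify the lattice of cyclic flats of $\mathsf{C}_{a,b}$ and then read off the inequality description via Proposition~\ref{prop:polytope_ineq} (the paper cites equation~\eqref{eq:independent-sets-from-cyclic-flats} instead, which is the same idea), finishing with the same algebraic rewriting of the $F_2$-inequality using the rank equality. The identification of the three circuits and the observation that the ``long'' circuit is spanning are both correct, and the final polytope description you obtain is correct.

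There is, however, a genuine gap in the step showing that $F_1$ and $F_2$ are flats. You argue that every edge in $E\smallsetminus F_1$ is incident to one of the $b-2$ isolated vertices, so adding it strictly increases the rank. When $b=2$ there are no isolated vertices, and $E\smallsetminus F_1$ consists of the single edge $a+1$ parallel to the shared edge $a$; adding it does not change the rank, so $F_1$ is in fact spanning and hence not a flat (its closure is $E$). Symmetrically, $F_2$ is not a flat when $a=2$, and when $a=b=2$ the matroid is just $\U_{1,3}$ with $\mathcal{Z}=\{\varnothing,E\}$. So your claim that $\mathcal{Z}(\mathsf{C}_{a,b})$ is always the diamond $\{\varnothing,F_1,F_2,E\}$ is false whenever $a=2$ or $b=2$. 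The lemma's statement still holds in those cases because the corresponding rank inequality $\sum_{i\in F}x_i\leq\rk(F)$ is valid for any subset $F$ and simply becomes redundant (as the paper observes for $a=b=2$), but your proof as written asserts an incorrect cyclic-flat lattice and a false flatness claim. You should either restrict the flatness argument to $a,b\geq 3$ and handle the boundary cases separately by noting the redundancy of the inequalities, or replace the flatness claim by the weaker (and sufficient) observation that the rank inequalities from $F_1$ and $F_2$ are valid regardless and, together with the equality and box constraints, cut out exactly $\mathscr{P}(\mathsf{C}_{a,b})$.
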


\begin{proof}
    The last two inequalities are redundant whenever $a=b=2$ and the claim holds in this case as $\mathsf{C}_{2,2}=\U_{1,3}$. Otherwise, the matroid $\mathsf{C}_{a,b}$ has exactly three circuits, namely
    $A=\{1,\ldots,a\}$, $B=\{a,\ldots,n\}$ and $\{1,\ldots,n\}\smallsetminus\{a\}$. They lead to the four cyclic flats $\varnothing$, $A$, $B$ and $A\cup B$. 
    While the base polytope of every rank $n-2$ matroid satisfies the first equation and is a $0/1$-polytope, the other two inequalities are a direct consequence of the expression \eqref{eq:bases-from-cyclic-flats} where we used that
    \[
    \sum_{i=a}^n x_i \leq b-1 \Longleftrightarrow
    \sum_{i=1}^{a-1} x_i \geq a-2. \qedhere
    \]
\end{proof}

\begin{prop}\label{prop:kl-subdivision}
    Let $n\geq 3$ and $r\leq n-3$. Then the collection 
    \[\mathcal{S} = \left\{\, F \; : \; F \text{ is a face of }\, \mathscr{P}(\mathsf{C}_{a,n+1-a}) \; \text{ for some } 2\leq a \leq n-r-1\right\}\]
    is a matroid subdivision of the matroid base polytope $\mathscr{P}(\LL_{r,n-2,r+1,n})$ of the corank $2$ matroid $\LL_{r,n-2,r+1,n}$.
    Moreover, the set $\mathcal{S}^{\text{\normalfont int}}$ consists exactly of the base polytopes of $\mathsf{C}_{a,n+1-a}$ and their pairwise intersections.
\end{prop}

\begin{proof} 
    If $r=n-3$ then $a=2$ and $\mathsf{C}_{2,n-1} = \LL_{n-3,n-2,n-2,n}$ which implies the claim for this case. From now on let us assume
    $r<n-3$ and $2\leq a < a' \leq n-r-1$.
    Lemma~\ref{lem:mat_poly_Cab} shows that $\mathscr{P}(\mathsf{C}_{a,n+1-a})\cap \mathscr{P}(\mathsf{C}_{a',n+1-a'})$ is empty whenever $a+1<a'$ and otherwise $a'=a+1$ and 
    \begin{equation}\label{eq:faces_sub}
    \mathscr{P}(\mathsf{C}_{a,n+1-a})\cap \mathscr{P}(\mathsf{C}_{a',n+1-a'}) = \left\{ x\in [0,1]^n : \sum_{i=1}^n x_i = n-2,\enspace \sum_{i=1}^a x_i = a-1 \right\} .\end{equation}
    This already proves that $\mathcal{S}$ is a polyhedral complex as by construction faces of cells are in $\mathcal{S}$ and intersections of cells are faces.
    Furthermore, comparing the polytopes in Lemma~\ref{lem:mat_poly_Cab} with those of Corollary~\ref{coro:cuspidal-polytope} implies that $\mathscr{P}(\mathsf{C}_{a,n+1-a})$ is contained in $\mathscr{P}(\LL_{r,n-2,r+1,n})$ and the only faces that are not in the boundary are of the form \eqref{eq:faces_sub}.
    This implies the claim on the inner faces. Moreover, the argument shows that all inner codimension-$1$ faces of the collection $\mathcal{S}$ are contained in two maximal cells. Hence the collection $\mathcal{S}$ covers the polytope $\mathscr{P}(\LL_{r,n-2,r+1,n})$ and thus the collection $\mathcal{S}$ is a subdivision of this polytope.
\end{proof}

The following is an immediate consequence of Proposition~\ref{prop:kl-subdivision}.

\begin{coro}\label{coro:valuative-invariant-corank2}
    Let $f:\mathfrak{M}\to \A$ be a valuative invariant. Then
    \[ f\left( \LL_{r,n-2,r+1,n}\right) = \sum_{a=2}^{n-r-1} f(\mathsf{C}_{a,n+1-a}) - \sum_{a=2}^{n-r-2} f(\U_{a-1,a}\oplus \U_{n-a-1, n-a}).\]
\end{coro}

\subsection{Towards a generalization of techniques} 

Let us now discuss briefly the following two reasonable questions that one may ask:
    \begin{enumerate}[(1)]
        \item Is there a version of Theorem~\ref{thm:main} for a larger class of matroids?
        \item Is it possible to generalize the notion of relaxation to an even larger class of subsets?
    \end{enumerate}

The na\"ive answer to the first question is ``yes'', and in fact it is something already hinted by Theorem~\ref{thm:schubert-basis}. 
Essentially, in the valuative group we can write any matroid $\M$ as an integer combination of Schubert matroids of the same cardinality and rank. Following Hampe \cite{hampe}, we can give an elegant combinatorial description of the coefficients of this linear combination, as we now explain.

\begin{defi}
    Let $\M$ be a matroid and let $\mathscr{Z}(\M)$ be its lattice of cyclic flats. The \emph{cyclic chain lattice} of~$\M$ is defined as the lattice $\mathcal{C}_{\mathscr{Z}}(\M)$ whose elements are all the chains of $\mathscr{Z}(\M)$ that contain the minimal $0_{\mathscr{Z}}$  and maximal $1_{\mathscr{Z}}$ element of the lattice $\mathscr{Z}(\M)$; and an additional top element, denoted by~$\widehat{\mathbf{1}}$.
\end{defi}

It is not difficult to show that $\mathcal{C}_{\mathscr{Z}}$ is indeed a graded lattice, whose meet is given by the set-intersection and the join is the set-union when the union is indeed a chain and $\widehat{\mathbf{1}}$ otherwise. Let us consider the M\"obius function $\mu$ of this poset. To each element $\mathrm{C}\in \mathcal{C}_{\mathscr{Z}}(\M)$ we can associate the number $\uplambda_{\mathrm{C}} = -\mu(\mathrm{C},\widehat{\mathbf{1}})$.
Furthermore, since each of these elements $\mathrm{C}$ is a chain of cyclic flats, there is a unique Schubert matroid $\mathsf{S}_{\mathrm{C}}$ whose lattice of cyclic flats coincides with $\mathrm{C}$. 
A generalization of Theorem~\ref{thm:main} that includes all loopless matroids is hence the following.

\begin{teo}[\cite{hampe}]\label{thm:hampe}
    Let $\M$ be a loopless matroid and let $f:\mathfrak{M}\to \A$ be a valuative invariant. Then,
    \[ f(\M) = \sum_{\substack{\mathrm{C}\in\mathcal{C}_{\mathscr{Z}}(\M)\\\mathrm{C} \neq \widehat{\mathbf{1}}}} \uplambda_{\mathrm{C}}\, f(\mathsf{S}_{\mathrm{C}}).\]
\end{teo}

As we briefly mentioned above in the last paragraph of Section \ref{sec:polytopes}, using this result it is possible to derive an alternative proof of Theorem~\ref{thm:main}. For an elementary split matroid all the resulting Schubert matroids $\mathsf{S}_{\mathrm{C}}$ are precisely the matroids $\LL_{r,k,h,n}$, and the coefficients $\uplambda_{\mathrm{C}}$ count the number of stressed subsets with non-empty cover of each rank and size. 

The reader may wonder why not to use this more general version in most of the calculations. Let us explain the reasons for this. On the one hand, using the general version forces us to be able to evaluate $f$ on an \emph{arbitrary} Schubert matroid. This is often too complicated and might be as difficult as evaluating $f$ on an arbitrary matroid. 
For instance, it is challenging to compute the Kazhdan--Lusztig or the $g$-polynomial of an arbitrary cuspidal matroid, let alone considering a general Schubert matroid. Hence, if we really want to use Hampe's result to do computations, we are often forced to restrict ourselves to some more tractable subclass of Schubert matroids.

On the other hand, the geometry behind Theorem~\ref{thm:hampe} is more complicated; there is not a \emph{subdivision} of $\mathscr{P}(\M)$ that explains the formula. Moreover, the fact that the numbers $\uplambda_{\mathrm{C}}$ can be positive or negative introduces an additional issue. For example, it would be very hard to obtain a result such as Lemma~\ref{lemma:bound-lambdarhs}.

The class of loopless and coloopless elementary split matroids is particularly tractable because, as was stated in Theorem~\ref{thm:kristof-equivalences}, any two proper flats are incomparable, i.e., the maximal chains of cyclic flats have length at most $3$; the class of Schubert matroids that we need to consider in this case are indexed by four parameters, $r$, $k$, $h$ and $n$. If we go beyond and consider the even larger class of matroids whose lattice of cyclic flats has maximal chains of length at most $4$, then the Schubert matroids appearing in Hampe's result will depend on six parameters instead of four.

Another advantage of the restriction to elementary split matroids is that we can understand the geometry of their polytopes. This will be apparent to the reader when we address the Kazhdan--Lusztig polynomial of a corank $2$ matroid leveraging the decompositions that we proved in Proposition~\ref{prop:kl-subdivision}. In that statement, we needed to rely on the subdivisions of the base polytope of $\LL_{r,n-2,r+1,n}$ and compute the invariant for the smaller pieces. In other words, sometimes the geometric interpretation of the polytope suggests a subdivision that the cyclic chain lattice does not foresee.

\section{A toolkit for matroid valuations}\label{sec:polytopes}
\noindent We interleave our discussion of elementary split matroids to take a look at general facts regarding matroid valuations. We will present two technical tools: a general family of valuations constructed from flags of sets in the matroid, and a strengthening of the convolution theorem of Ardila and Sanchez (see Theorem~\ref{thm:ardila-sanchez-strengthening} below). Since later we will be interested in discussing how to evaluate specific invariants on the class of elementary split matroids, we will often require to show first that they are indeed valuative, so the discussion in this section will provide us with adequate machinery.

\subsection{Valuations from flags of sets}

The proof of Theorem~\ref{thm:g-invariant-universal} in \cite{derksen-fink} is intertwined with the proof of the valuativeness of other very useful functions constructed from flags of sets. Let us recapitulate this, as in fact it provides a spanning set for the space of all valuations $\mathfrak{M}_E\to \mathbb{Z}$. 

Let us fix $E$, and denote by the number $n$ the size of $E$. Let $m\leq n$ be any non-negative integer. For each flag of sets $\varnothing\subseteq X_1\subsetneq X_2\subsetneq\cdots \subsetneq X_m\subseteq E$ and each weakly increasing sequence of numbers $0\leq r_1\leq \cdots \leq r_m \leq n$, consider the map $\mathrm{S}_{X,r}:\mathfrak{M}_E\to \mathbb{Z}$ defined by
    \begin{equation} \label{eq:def-s-derksen-fink}
    \mathrm{S}_{X,r}(\M) = \begin{cases} 1 & \text{if } \rk_\M(X_i) = r_i \text{ for } i=1,\ldots,m\\
    0 & \text{otherwise}.\end{cases}
    \end{equation}

\begin{teo}[{\cite[Proposition~5.3 \& Corollary~5.6]{derksen-fink}}]\label{thm:flag-sets-rank}
    The function $\mathrm{S}_{X,r}:\mathfrak{M}_E\to \mathbb{Z}$ described above is a valuation. Moreover, every valuation $f:\mathfrak{M}_E\to \mathbb{Z}$ can be written as an integer combination of functions $\mathrm{S}_{X,r}$ for flags $X$ of length $n=|E|$.
\end{teo}

The utility of the preceding statement must not be underestimated. As the reader will see below, we use it in an instrumental way to prove the valuativeness of several invariants appearing in Hodge theory. Moreover, Theorem~\ref{thm:ardila-sanchez-strengthening} appearing below can be seen as an application of this fact. Before focusing our attention there, let us state one more consequence of Theorem~\ref{thm:flag-sets-rank}. This establishes the valuativeness of a very useful family of functions.

\begin{teo}\label{thm:flags-flats-rank-valuation}
    Let $F_0\subsetneq F_1\subsetneq \cdots \subsetneq F_m\subseteq E$ be any fixed flag of sets. The map $\widehat{\upphi}_{F_0,\ldots,F_m}:\mathfrak{M}_E\to \mathbb{Z}$ defined via
    \begin{equation} \label{eq:fixed-flag-flats-valuation-arb-length}
        \widehat{\upphi}_{F_0,\ldots,F_m}(\M) = \begin{cases}
        1 & \text{if } F_i\in\mathcal{L}(\M) \text{ for all } 0\leq i\leq m,\\
        0 & \text{otherwise,}\end{cases}
    \end{equation}
    is a valuation. Moreover, if we consider any vector $(r_0,\ldots,r_m)$, the map $\upphi_{\substack{F_0,\ldots,F_m\\r_0,\ldots, r_m}}:\mathfrak{M}_E\to \mathbb{Z}$ defined by
    \begin{equation} \label{eq:fixed-flag-flats-valuation-arb-length-and-rank}
        \upphi_{\substack{F_0,\ldots,F_m\\r_0,\ldots, r_m}}(\M) = \begin{cases}
        1 & \text{if } F_i\in\mathcal{L}(\M) \text{ and $\rk(F_i)=r_i$ for all } 0\leq i\leq m,\\
        0 & \text{otherwise,}\end{cases}
    \end{equation}
    is a valuation too.
\end{teo}

\begin{proof}
    If we prove that the map $\upphi_{\substack{F_0,\ldots,F_m\\r_0,\ldots, r_m}}(\M)$ is valuative, then one can deduce that $\widehat{\upphi}_{F_0,\ldots,F_m}$ is a valuation, as the latter is a sum over several instances of the former. For an arbitrary flag of subsets $F_0\subsetneq F_1\subsetneq \cdots F_m\subsetneq E$, and numbers $r_0,\ldots,r_m$, consider the map $\mathrm{S}_{\substack{F_0,\ldots,F_m\\r_0,\ldots, r_m}}:\mathfrak{M}_E\to \mathbb{Z}$ defined by
    \begin{equation}\label{eq:chain-fixed-rank}
    \mathrm{S}_{\substack{F_0,\ldots,F_m\\r_0,\ldots, r_m}}(\M) = \begin{cases}
        1 & \text{if } \rk(F_i)=r_i \text{ for all } 0\leq i\leq m,\\
        0 & \text{otherwise.}\end{cases}
    \end{equation}
    This is of course a more precise way of presenting the maps of equation~\eqref{eq:def-s-derksen-fink}. Now, for each subset $A\subseteq E$ and each number $r$, consider the map $\mathrm{I}_{A,r} :\mathfrak{M}_E\to \mathbb{Z}$ defined by
    \[ \mathrm{I}_{A,r}(\M) = \begin{cases}
        1 & \text{if $A\in\mathcal{L}(\M)$ and $\rk(A) = r$}\\
        0 & \text{otherwise.}
    \end{cases}\]
     Notice that $A$ being a flat means that for every subset $A'\supsetneq A$ we have $\rk(A) < \rk(A')$. In particular, this implies that we can write
        \[\mathrm{I}_{A,r}(\M) = \sum_{A'\supseteq A} (-1)^{|A'\smallsetminus A|} \mathrm{S}_{\substack{A, A'\\r,r}}(\M).\]
    It follows that
        \begin{equation}\label{eq:prod-chains-fixed-rank} \upphi_{\substack{F_0,\ldots,F_m\\r_0,\ldots, r_m}}(\M) = \prod_{i=0}^m \mathrm{I}_{F_i,r_i} = \prod_{i=0}^m \sum_{G_i \supseteq F_i} (-1)^{|G_i \smallsetminus F_i|} \mathrm{S}_{\substack{F_i,G_i\\r_i,r_i}}(\M) = \sum \prod_{i=0}^m (-1)^{|G_i \smallsetminus F_i|} \mathrm{S}_{\substack{F_i,G_i\\r_i,r_i}}(\M),
        \end{equation}
    where the last sum on the right is over all choices of subsets $G_0,\ldots,G_m$ such that $G_i\supseteq F_i$ for each~$i$. Assume that for some $j\leq m-1$ we have that $G_j\not\subseteq F_{j+1}$ and, after fixing any linear ordering on $E$, choose the smallest element $x\in F_{j+1}\smallsetminus F_j$. If $x\in G_j$ then consider the set $G_j' = G_j\smallsetminus\{x\}$; otherwise, $x\not\in G_j$ and consider $G_j' = G_j\cup\{x\}$. We claim that the terms corresponding to the choice $(G_0,\ldots,G_m)$ and the choice $(G_0, \ldots, G_{j-1}, G'_j, G_{j+1},\ldots,G_m)$ cancel out, and this happens exactly once. Indeed, this follows directly from the fact that their signs differ and that \[\mathrm{S}_{\substack{F_j,G_j\\r_j,r_j}} = \mathrm{S}_{\substack{F_j,G'_j\\r_j,r_j}}.\]
    
    It follows from the above paragraph that we can assume that $G_i\subseteq F_{i+1}$ for every $i\leq m-1$.
    Now, observe that this allows us to simplify equation \eqref{eq:prod-chains-fixed-rank} as follows
    \[ \upphi_{\substack{F_0,\ldots,F_m\\r_0,\ldots, r_m}}(\M) = \sum (-1)^{\upepsilon(G_0,\ldots,G_m)} \mathrm{S}_{\substack{F_0,G_0,\ldots,F_m,G_m\\r_0,r_0,\ldots,r_m,r_m}}(\M),\]
    where $(-1)^{\upepsilon(G_0,\ldots,G_m)}$ is a sign depending on the $G_i$ and the sum is over all the choices of subsets $G_0,\ldots,G_m$ such that $F_0\subseteq G_0 \subseteq F_1 \subseteq \cdots \subseteq F_m\subseteq G_m$. This shows that our map $\upphi_{\substack{F_0,\ldots,F_m\\r_0,\ldots, r_m}}(\M)$ is a linear combination of valuations and is therefore a valuation.
\end{proof}

\begin{remark}
    A variation of Theorem~\ref{thm:flags-flats-rank-valuation} for arbitrary flags of cyclic flats can be found in \cite[Lemma~5.3]{fink-olarte} by Fink and Olarte. In fact the proof we just gave is strongly inspired by theirs. 
\end{remark}

\subsection{Convolutions of valuations}

Throughout the past decades it has been gradually realized that ideas from Hopf theory can be used to describe structural properties of valuations in matroid theory. For a more precise landscape, we suggest the reader to take a look at the work of Joni and Rota \cite{joni-rota}, Schmitt \cite{schmitt94}, and Aguiar and Ardila \cite{aguiar-ardila}. Recently, Ardila and Sanchez \cite{ardila-sanchez} used a Hopf-theoretic framework to establish a very handy result that explains in a succinct and straightforward way the valuativeness of several matroid functions.  

Let $\mathrm{R}$ be a ring. Let $f,g:\mathfrak{M}\to \mathrm{R}$ be two maps. The \emph{convolution} of $f$ and $g$ is the map $f\star g : \mathfrak{M}\to \mathrm{R}$ such that
        \[ (f\star g)(\M) = \sum_{S\subseteq E} f(\M|_S)\, g(\M/S)\]
    for every matroid $\M$ with ground set $E$.

Of course, the convolution operation is not commutative in general. Its importance stems from the fact that some invariants of matroids are obtained as convolutions of other easier invariants. We present examples for this approach in some of the sections below. 

\begin{remark}\label{rem:convolutions-loops}
    Many maps $g:\mathfrak{M}\to \mathrm{R}$ of matroids satisfy that $g(\M) = 0$ whenever $\M$ has loops. 
    An example is the characteristic polynomial $\chi:\mathfrak{M}\to \mathbb{Z}[t]$ assigning to each matroid~$\M$ its characteristic polynomial.
    In this case, the convolution of any other invariant $f$ (with or without that property) with $g$ depends only on the flats of $\M$. More precisely 
        \[ (f\star g)(\M) = \sum_{F\in \mathcal{L}(\M)} f(\M|_F)\, g(\M/F),\]
    because $\M/S$ is loopless if and only if $S$ is a flat. 
\end{remark}

One of the main results of Ardila and Sanchez establishes that the convolution of two valuations is again a valuation. Let us record this fact.

\begin{teo}[{\cite[Theorem~C]{ardila-sanchez}}]\label{thm:convolutions1}
    Let $\mathrm{R}$ be a ring. If $f,g:\mathfrak{M}\to \mathrm{R}$ are valuations, then $f\star g$ is a valuation too.
\end{teo}

In the remainder of this subsection we present an alternative proof of this result. Furthermore, we prove a strengthening that is of some independent interest. To the best of our knowledge, this stronger version is new, and exhibits manifestly the power of the result by Derksen and Fink presented in Theorem~\ref{thm:flag-sets-rank}. Let us mention that we were privately communicated an alternative proof by Nicholas Proudfoot, via a categorical approach. Proudfoot's proof will appear in \cite{elias-miyata-proudfoot-vecchi}.

\begin{teo}\label{thm:ardila-sanchez-strengthening}
   Consider two valuations $f,g : \mathfrak{M}\to \mathrm{R}$. For each set $A$, the map $f\star_A g:\mathfrak{M}\to \mathrm{R}$ defined by
        \[ (f\star_A g)(\M) := \begin{cases}
        f(\M|_A)\, g(\M/A) & \text{ if $A\subseteq E$,}\\
        0 & \text{ otherwise}.\end{cases}\]
    is a valuation. 
\end{teo}   

\begin{proof}
    As said above, all valuations $\mathfrak{M}_E\to \mathbb{Z}$ are integer combinations of valuations of the form $\mathrm{S}_{X,r}$ where $X$ is a maximal flag. In other words, we have:
        \begin{align*}
            f = \sum_{X,r} a_{X,r}\, \mathrm{S}_{X,r},\quad\text{ and }\quad
            g = \sum_{X,r} b_{X,r}\, \mathrm{S}_{X,r}\enspace ,
        \end{align*}
    where the sums run over all maximal flags $X$ and all possible sequences $r$, and the expressions $a_{X,r}$ and $b_{X,r}$ denote integer coefficients. To prove our statement, it suffices to reduce to the case:
         \begin{align*}
            f = a_{X,r}\, \mathrm{S}_{X,r},\quad \text{ and }\quad
            g = b_{X',r'}\, \mathrm{S}_{X',r'}\enspace .
        \end{align*}
    Let us denote $|A|$ by $a$. The map $\M\mapsto f(\M|_A)g(\M/A)$ can be described by considering the concatenations of a flag of length $\rk(\M|_A)$ of subsets $X_0\subseteq\cdots\subseteq X_{\rk(\M|_A)}=A$ of ranks $r_0\leq \cdots \leq r_a=\rk(\M|_A)$ followed by $A\sqcup X'_0\subseteq\cdots \subseteq A\sqcup X'_{\rk(\M/A)} = E$ having ranks $r_a+r'_0\leq \cdots \leq r_a+r'_{\rk(\M/A)}=r$.
    In particular, Theorem~\ref{thm:flag-sets-rank} tells that $\M\mapsto f(\M|_A)\cdot g(\M/A)$ is a valuation too. 
\end{proof}

Of course, as was said above, this proves Theorem~\ref{thm:convolutions1}. Each individual summand on the right-hand-side of the convolution is a valuation itself.

\section{Applications to basic valuative invariants}\label{sec:seven}
\noindent  The purpose of this section is to explore the consequences of Theorem~\ref{thm:main} for some basic valuative invariants. In some cases, in order to be able to apply it, we first need to show their valuativeness. This will be done by leveraging the technical tools we have presented in Section~\ref{sec:polytopes}. 

The general framework developed in Section~\ref{sec:stressed-subsets-relaxations}, together with its geometric counterpart, allow us to recover and unify results appearing in various sources in a straightforward way. This includes the case of volumes and Ehrhart polynomials in work of Lam and Postnikov \cite{lam-postnikov}, Ashraf \cite{ashraf-volume}, Hanely, Martin, McGinnis, Miyata, Vindas-Mel\'endez, and Yin \cite{hanely}, and Ferroni \cite{ferroni3}. 

We also address an invariant that will be useful later in this paper: the Tutte polynomial. We prove useful formulas for the Tutte polynomials of elementary split matroids that we believe may be of independent interest. Also, by making use of Theorem~\ref{thm:main} and the bounds in Section~\ref{sec:large-classes}, we settle negatively a log-concavity question raised by Matt Larson.

\subsection{The Volume}\label{sec:volume}

Our main goal is to present connections to known results in the literature \cite{ashraf-volume,ferroni2,hanely} which we then generalize. Another motivation is to illustrate to the reader the methods and usage of Theorem~\ref{thm:main}. We leverage this easy setting to present the technique in a transparent way.

Let us consider a matroid $\M$ on $E$. In the euclidean space $\mathbb{R}^E$ we have a lattice $\mathbb{Z}^E$ consisting of the integer combinations of the vectors $\{e_i\}_{i\in E}$. The intersection of this lattice with the hyperplane $\sum_{i \in E} x_i = \rk(\M)$ is a sublattice. Using this sublattice, we can assign to each $\M\in \mathfrak{M}_E$ of fixed rank a notion of volume.

\begin{defi}
    The map $\vol : \mfrak{M}_E \to \mathbb{R}$ is defined by
        \[ \vol(\M) = \vol(\mathscr{P}(\M)),\]
    where the right-hand-side is the volume of $\mathscr{P}(\M)$ with respect to the restriction of the ambient lattice to the hyperplane $\sum_{i\in E} x_i = \rk(\M)$.
\end{defi}

We make the explicit remark that if instead of restricting the lattice $\mathbb{Z}^E$ to the hyperplane $\sum_{i\in E} x_i = \rk(\M)$, we further restrict it to the affine span of $\mathscr{P}(\M)$ we have the notion of ``relative volume'' --- using that notion, volumes of disconnected matroids no longer vanish.

Throughout this section and the rest of the paper we will denote by $\mathfrak{S}_n$ the symmetric group on $n$ elements. If we write a permutation $\sigma$ using one-line notation, namely $\sigma=\sigma_1\sigma_2\cdots\sigma_n$, then a \emph{descent} of $\sigma$ is an index $i\in \{1,\ldots,n-1\}$ such that $\sigma_i > \sigma_{i+1}$. The number of descents of $\sigma$ is denoted by $\operatorname{des}(\sigma)$. The \emph{Eulerian number} $A_{n,k}$ counts all the permutations in $\mathfrak{S}_n$ with $k$ descents. That is,
    \begin{equation}\label{eq:def-eulerian-number} 
    A_{n,k} :=  \left|\left\{\sigma\in\mathfrak{S}_{n}: \operatorname{des}(\sigma) = k\right\}\right|.
    \end{equation}

\begin{example}\label{example-volume-uniform}
    A classical result attributed in \cite{stanleyeulerian} to Laplace implies that the normalized $(n-1)$-dimensional volume of the hypersimplex $\Delta_{k,n}$ is the Eulerian number $A_{n-1,k-1}$. In other words, we have 
        \[ \vol(\U_{k,n}) = \frac{1}{(n-1)!}\, A_{n-1,k-1}.\]
\end{example}

Volumes of matroids have been studied from different points of view. For example in \cite[Theorem~3.3]{ardila-benedetti-doker} Ardila, Benedetti, and Doker derived an explicit formula for the volume of any matroid polytope in terms of the (signed) $\beta$-invariant of the matroid and its minors. One drawback is that the resulting expression is difficult to handle even for small matroids. Their formula stems from the work of Postnikov on generalized permutohedra \cite[Theorem 9.3]{postnikov}, in which he calculated the volumes of generalized permutohedra.

Following a different path and building on techniques previously developed by Hampe \cite{hampe} within the study of an intersection ring for matroids, Ashraf \cite[Theorem~1.1]{ashraf-volume} proposed an alternative method of computing volumes of matroids. We aim to generalize this method.

In order to calculate the volume of an elementary split matroid by applying Theorem~\ref{thm:main}, we need first the volumes of cuspidal matroids. Fortunately, we can express these volumes in a compact way using permutations. If $\sigma\in\mathfrak{S}_n$ is the permutation $\sigma_1\cdots\sigma_n$ (in one-line notation), we denote by $\operatorname{des}(\sigma_1\cdots\sigma_{n-h})$ the number of indices $1\leq i \leq n-h-1$ such that $\sigma_i > \sigma_{i+1}$.

\begin{prop}
    The volume of the matroid $\LL_{r,k,h,n}$ is given by:
        \[ \vol(\LL_{r,k,h,n}) = \frac{1}{(n-1)!}\; \left| \left\{\sigma\in \mathfrak{S}_{n-1} : \operatorname{des}(\sigma) = k-1 \text{ and } \operatorname{des}(\sigma_1\cdots\sigma_{n-h}) < k -r\right\}\right |.\]
\end{prop}

\begin{proof}
    Notice that by Corollary~\ref{coro:cuspidal-polytope}, we have that the base polytope of $\LL_{r,k,h,n}$ is an ``alcoved polytope'' as in \cite{lam-postnikov}, i.e., it is defined by inequalities of the form $\alpha_{ij} \leq x_i + \cdots + x_j \leq \beta_{ij}$ for integral numbers $\alpha_{ij}$ and $\beta_{ij}$. In particular, the claimed formula follows directly from \cite[Proposition 6.1]{lam-postnikov}.
\end{proof}

\begin{example}\label{example:volume-minimal}
    The minimal matroid $\mathsf{T}_{k,n}$  with $k>0$ is isomorphic to $\LL_{k-1,k,k,n}$. Notice that using the preceding formula, it follows
        \[ \vol(\mathsf{T}_{k,n}) = \frac{1}{(n-1)!} \,\left|\left\{\sigma\in \mathfrak{S}_{n-1} : \operatorname{des}(\sigma) = k -1\text{ and } \operatorname{des}(\sigma_1\cdots\sigma_{n-k}) = 0\right\}\right|.\] 
    In other words we are counting permutations $\sigma=\sigma_1\cdots \sigma_{n-1}$ such that $\sigma_1<\cdots<\sigma_{n-k}$ and  $\sigma_{n-k}>\cdots>\sigma_{n-1}$. There are $\binom{n-2}{k-1}$ such permutations. All of them satisfy $\sigma_{n-k} = n-1$, and we can of course choose $\binom{n-2}{n-k-1} = \binom{n-2}{k-1}$ elements to be on the left and put the remaining elements on the right (their order is completely determined). Hence
        \[ \vol(\mathsf{T}_{k,n}) = \frac{1}{(n-1)!}\binom{n-2}{k-1}.\]
    This formula retrieves \cite[Corollary 4.2]{ferroni2}. 
\end{example}

Based on this valuative invariant we illustrate a straightforward application of Theorem~\ref{thm:main}.

\begin{prop}\label{prop:volume-for-split}
    Let $\M$ be an elementary split matroid of rank $k$ and cardinality $n$. Then
    \[ \vol(\M) = \frac{1}{(n-1)!} \left(A_{n-1,k-1} - \sum_{r,h} \uplambda_{r,h} \vol(\LL_{r,k,h,n}) \right),  \]
    where $\uplambda_{r,h}$ denotes the number of stressed cyclic flats of $\M$ of size $h$ and rank $r$.
\end{prop}

\begin{proof}
    This follows from Theorem~\ref{thm:main} in combination with Example \ref{example-volume-uniform} where we saw that the volume of the uniform matroid $\mathsf{U}_{k,n}$ is exactly $A_{n-1,k-1}$. Recall also that the volume of a disconnected matroid vanishes.
\end{proof}

When we restrict the above formula to paving matroids we recover \cite[Theorem 7.4]{hanely} and if we restrict further to sparse paving matroids we obtain the following formula by Ashraf.

\begin{coro}[{\cite[Theorem~1.2]{ashraf-volume}}]
    Let $\M$ be a sparse paving matroid of rank $k$, cardinality $n$, and having exactly $\uplambda$ circuit hyperplanes. Then
    \[ \vol(\M) = \frac{1}{(n-1)!} \left( A_{n-1,k-1} - \uplambda \,\binom{n-2}{k-1}\right).\]
\end{coro}

In \cite[Proposition 5.3]{lam-postnikov} Lam and Postnikov derived an explicit formula for the volume of arbitrary rank $2$ matroids without loops. Since the class of elementary split matroids contains all loopless rank $2$ matroids, we can use Proposition~\ref{prop:volume-for-split} to retrieve an equivalent version of Lam and Postnikov's expression.

\begin{prop}
    Let $\M$ be matroid of rank $2$ without loops. Then
    \[ \vol(\M) = \frac{1}{(n-1)!}\left(2^{n-1} - \sum_{h} \uplambda_{h} \sum_{i=n-h}^{n-1}\binom{n-1}{i}\right),
    \]
    where $\uplambda_h = \uplambda_{1,h}$ is the number of stressed hyperplanes of size $h$.
\end{prop}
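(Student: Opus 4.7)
The strategy is to apply Proposition~\ref{prop:volume-for-split}. This is available because any loopless matroid of rank $2$ is elementary split: if $\M$ is also coloopless, its proper cyclic flats are precisely the parallel classes of size at least~$2$, all of rank~$1$ and hence pairwise incomparable, so Theorem~\ref{thm:kristof-equivalences}\,\ref{it:car_el_split} applies; otherwise $\M$ has a coloop and is isomorphic to a direct sum of two uniform matroids, which is elementary split by definition. Since $k=2$, the only stressed cyclic flats with non-empty cusp have rank $r=1$, and Proposition~\ref{prop:volume-for-split} specialises to
\[
(n-1)!\,\vol(\M) \;=\; A_{n-1,1} \,-\, \sum_{h}\uplambda_{h}\cdot(n-1)!\,\vol(\LL_{1,2,h,n})\enspace .
\]

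The key ingredient is the volume of the cuspidal matroid $\LL_{1,2,h,n}$. By the formula stated in the preceding proposition, $(n-1)!\,\vol(\LL_{1,2,h,n})$ equals the number of permutations $\sigma\in \mathfrak{S}_{n-1}$ with $\operatorname{des}(\sigma)=1$ and $\sigma_1<\sigma_2<\cdots<\sigma_{n-h}$. Any such $\sigma$ places its unique descent at some position $d\in\{n-h,\ldots,n-2\}$, and the number of permutations of $[n-1]$ with descent set exactly $\{d\}$ is well known to equal $\binom{n-1}{d}-1$. Summing and adding in the trivial term at $d=n-1$ gives
\[
(n-1)!\,\vol(\LL_{1,2,h,n}) \;=\; \sum_{d=n-h}^{n-2}\binom{n-1}{d}-(h-1) \;=\; \sum_{i=n-h}^{n-1}\binom{n-1}{i}-h\enspace .
\]

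Substituting this together with the classical evaluation $A_{n-1,1}=2^{n-1}-n$ into the display above yields
\[
(n-1)!\,\vol(\M) \;=\; 2^{n-1} - n - \sum_{h}\uplambda_{h}\sum_{i=n-h}^{n-1}\binom{n-1}{i} + \sum_{h}\uplambda_{h}\,h\enspace ,
\]
and the claim follows from the elementary identity $\sum_h\uplambda_h\,h=n$, which records that each element of the ground set lies in exactly one parallel class. The main obstacle is a careful bookkeeping at $h=1$: this identity requires $\uplambda_{h}$ to be read as counting \emph{all} stressed hyperplanes of size~$h$, including singleton parallel classes. Such an extension is harmless on the left-hand side because $(n-1)!\,\vol(\LL_{1,2,1,n})=0$, in agreement with the vanishing $\sum_{i=n-1}^{n-1}\binom{n-1}{i}-1=0$ produced by our volume formula.
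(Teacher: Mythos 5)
Your proof is correct and follows essentially the same route as the paper: specialize Proposition~\ref{prop:volume-for-split} to rank $2$, compute $(n-1)!\,\vol(\LL_{1,2,h,n})$ by enumerating permutations of $[n-1]$ with one descent in positions $\{n-h,\ldots,n-2\}$, and finish with $A_{n-1,1}=2^{n-1}-n$ and $\sum_h\uplambda_h h = n$. Your counting step is somewhat cleaner — you invoke the classical fact that there are $\binom{n-1}{d}-1$ permutations of $[n-1]$ with descent set exactly $\{d\}$, whereas the paper re-derives the same total via an explicit two-parameter decomposition — and your care about the $h=1$ bookkeeping (extending the sum to singleton hyperplanes, noting $\vol(\LL_{1,2,1,n})=0$ and that the closed formula also vanishes there) is a welcome precision that the paper treats more casually.
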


\begin{proof}
    Observe that, in analogy to what we did in Example \ref{example:volume-minimal}, we have
    \[ \vol(\LL_{1,2,h,n}) = \frac{1}{(n-1)!}\,\left|\{\sigma\in \mathfrak{S}_{n-1}: \operatorname{des}(\sigma) = 1,\; \operatorname{des}(\sigma_1\cdots \sigma_{n-h}) = 0\}\right|.\]
    We are counting permutations that have only one descent. Assume that the only descent is at position~$i$, i.e., $\sigma_i > \sigma_{i+1}$. Notice that there are $\binom{n-1}{i}$ choose the first $i$ elements $\sigma_1<\cdots<\sigma_{i}$. The remaining $n-1-i$ elements must be sorted increasingly. The only case in which $\sigma_i < \sigma_{i+1}$ is precisely when $\sigma$ is the identity permutation. Thus, there are $\binom{n-1}{i} - 1$ permutations having descent set $\{i\}$. Thus we obtain
    \[ \vol(\LL_{1,2,h,n}) =\frac{1}{(n-1)!} \sum_{i=n-h}^{n-2}\left(\binom{n-1}{i} - 1\right).\]
    Hence it follows that
    \[ \vol(\LL_{1,2,h,n}) = \frac{1}{(n-1)!}\left(\sum_{i=n-h}^{n-2} \binom{n-1}{i} - (h-1)\right) = \frac{1}{(n-1)!}\left(\sum_{i=n-h}^{n-1} \binom{n-1}{i} - h\right).\]
    Notice that for a loopless matroid of rank $2$, the stressed subsets with non-empty cover are all of rank $1$. Using Proposition~\ref{prop:volume-for-split} we derive
    \[ \vol(\M)= \frac{1}{(n-1)!}\left(A_{n-1,1} - \sum_{h} \uplambda_{h} \left( \sum_{i=n-h}^{n-1}\binom{n-1}{i} - h\right)\right).\]
    Additionally, $A_{n-1,1} = 2^{n-1}-n$. Furthermore, all the hyperplanes in a rank $2$ loopless matroid are disjoint (because the only flat of smaller rank is the empty set) and form a partition of the ground set (because every element lies in a hyperplane), we have that $\sum_{h} \uplambda_{h} h = n$. Hence, the above formula reduces to the expression in the statement.
\end{proof}

\subsection{Ehrhart polynomials}\label{subsec:ehrhart}
\noindent
In this subsection we draw our attention to Ehrhart polynomials. We begin by recalling their definition.

\begin{teo}[\cite{Ehrhart}]
    Let $\mathscr{P}\subseteq\mathbb{R}^n$ be a lattice polytope of dimension $d$. The map
        \[ t \mapsto \left|t\mathscr{P}\cap \mathbb{Z}^n\right|,\]
    defined for each non-negative integer $t$, is the evaluation of a polynomial of degree $d$. 
\end{teo}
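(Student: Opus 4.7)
The plan is to prove Ehrhart's theorem in two stages: first for lattice simplices, and then for arbitrary polytopes via triangulation and inclusion--exclusion.

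For the simplex case, my approach is via the cone construction. Given a $d$-dimensional lattice simplex $\Delta$ with vertices $v_0,\ldots,v_d$, I would consider the cone
\[
C(\Delta) = \mathbb{R}_{\geq 0}\langle (v_0,1),\ldots,(v_d,1)\rangle \subseteq \mathbb{R}^{n+1},
\]
together with its half-open fundamental parallelepiped
\[
\Pi = \Big\{\textstyle \sum_{i=0}^d \lambda_i (v_i,1) : 0\leq \lambda_i < 1\Big\}.
\]
The key fact is that the vectors $(v_0,1),\ldots,(v_d,1)$ are linearly independent, so every lattice point of $C(\Delta)$ admits a unique representation as the sum of a lattice point of $\Pi$ and a non-negative integer combination of these generators. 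Slicing by the last coordinate yields a bijection between lattice points of $C(\Delta)$ at height $t$ and $t\Delta \cap \mathbb{Z}^n$, giving the generating function identity
\[
\sum_{t\geq 0} L_\Delta(t)\, z^t = \frac{\sum_{j=0}^d h_j^*\, z^j}{(1-z)^{d+1}},
\]
where $L_\Delta(t) := |t\Delta \cap \mathbb{Z}^n|$ and $h_j^*$ counts lattice points of $\Pi$ of last coordinate $j$. Since the numerator is a polynomial, expanding the right-hand side as a power series shows that $L_\Delta(t)$ agrees for all $t\geq 0$ with a polynomial in $t$ of degree at most $d$, whose leading coefficient equals $\vol(\Delta)>0$.

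For a general lattice polytope $\mathscr{P}$, the plan is to fix a triangulation $\mathscr{P} = \bigcup_{i=1}^m \Delta_i$ into lattice simplices, which exists for any lattice polytope. Applying inclusion--exclusion at each dilation factor $t$ gives
\[
L_\mathscr{P}(t) = \sum_{\varnothing \neq S \subseteq \{1,\ldots,m\}} (-1)^{|S|+1}\, L_{\bigcap_{i \in S}\Delta_i}(t),
\]
and each non-empty intersection is a common face of the triangulation, hence again a lattice simplex of dimension at most $d$. By the simplex case, each summand is a polynomial in $t$, so $L_\mathscr{P}(t)$ is a polynomial as well. Only the top-dimensional pieces contribute in degree $d$, and their leading coefficients add up to $\vol(\mathscr{P})>0$, forcing the degree to be exactly $d$.

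The main obstacle is establishing the unique decomposition property of $C(\Delta)$ in the simplex case: every lattice point of the cone must be written uniquely as a lattice translate, by a non-negative integer combination of the generators $(v_i,1)$, of a lattice point of $\Pi$. This relies on the affine independence of $v_0,\ldots,v_d$ in $\mathbb{R}^n$ promoting to linear independence of $(v_0,1),\ldots,(v_d,1)$ in $\mathbb{R}^{n+1}$, and then on checking that the half-open parallelepipeds form a partition of $C(\Delta)$. Once this tiling is secured, both the generating-function computation for simplices and the inclusion--exclusion passage to general polytopes are essentially formal.
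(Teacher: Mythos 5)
The paper cites this result to \cite{Ehrhart} and does not supply its own proof, so there is no in-paper argument to compare against. Your proposal is a correct instance of the standard generating-function proof: cone over a simplex, tiling of the cone by lattice translates of the half-open fundamental parallelepiped, power-series expansion of the rational function to get a polynomial on each simplex, and then triangulation plus inclusion--exclusion for a general lattice polytope. This is essentially the route followed in \cite{beck-robins}, which the paper points to as its Ehrhart-theory reference, so the approach is as canonical as it gets.

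One small point deserves tightening. When $d<n$ the Euclidean volume of $\mathscr{P}$ in $\mathbb{R}^n$ is zero, so the leading coefficient of $L_{\mathscr{P}}(t)$ is not $\vol(\mathscr{P})$ but the \emph{relative} (lattice-normalized) volume, computed with respect to the lattice that $\mathbb{Z}^n$ induces on the affine hull of $\mathscr{P}$. That quantity is still strictly positive, so the conclusion that $L_{\mathscr{P}}$ has degree exactly $d$ survives, but your write-up should say so, and the same remark applies in the simplex step: after translating $v_0$ to the origin, the cone construction should be carried out over the sublattice $\mathbb{Z}^n$ cut out by the linear span of $\Delta - v_0$, so that the generators $(v_i-v_0,1)$ and the parallelepiped $\Pi$ live in a full-rank lattice of rank $d+1$. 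With that adjustment the tiling and generating-function computation are exactly as you describe, and the inclusion--exclusion passage to general $\mathscr{P}$ is then formal as you note.
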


The rational polynomial defined by the above property is the \emph{Ehrhart polynomial} of the polytope~$\mathscr{P}$; we will denote it by $\ehr(\mathscr{P},t)\in\mathbb{Q}[t]$. We refer to \cite{beck-robins} for a detailed treatment of Ehrhart polynomials.

The Ehrhart polynomial counts lattice points in dilations of polytopes, whence an inclusion-exclusion argument reveals that it is a valuative invariant for matroids. We will write $\ehr(\M,t)$ as a shorthand of $\ehr(\mathscr{P}(\M),t)$. 
This invariant of matroids has been the focus of many articles due to an intriguing conjecture of De Loera, Haws and K\"oppe \cite[Conjecture~2]{deloera}. 

\begin{conj}[\cite{deloera}]
    For every matroid $\M$, the Ehrhart polynomial of $\mathscr{P}(\M)$ has positive coefficients.
\end{conj}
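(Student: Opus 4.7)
The plan is to use Theorem \ref{thm:main} to reduce Ehrhart positivity for an arbitrary matroid to positivity for a short list of ``building blocks''. Since the Ehrhart polynomial is a valuative invariant by Example \ref{ex:ehrhart}, Theorem \ref{thm:schubert-basis} tells us that it is determined on all matroids by its values on Schubert matroids, while on elementary split matroids Theorem \ref{thm:main} expresses it in terms of uniform matroids and cuspidal matroids alone.

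First, I would dispose of the hypersimplex case: Ehrhart positivity for $\mathscr{P}(\U_{k,n}) = \Delta_{k,n}$ is classical and can be seen, for example, from a unimodular triangulation, which forces every coefficient of $\ehr(\U_{k,n},t)$ to be a non-negative integer combination of products of binomials. Next I would compute $\ehr(\LL_{r,k,h,n},t)$ explicitly, exploiting that $\mathscr{P}(\LL_{r,k,h,n})$ is an alcoved polytope by Corollary \ref{coro:cuspidal-polytope}, so lattice-point counts in its dilations can be written as a compact combinatorial sum from which each coefficient is extractable. Theorem \ref{thm:main} then gives, for any elementary split matroid $\M$,
\[
    \ehr(\M,t) \;=\; \ehr(\U_{k,n},t) - \sum_{r,h}\uplambda_{r,h}\bigl(\ehr(\LL_{r,k,h,n},t) - \ehr(\U_{k-r,n-h},t)\,\ehr(\U_{r,h},t)\bigr),
\]
using that $\ehr$ is multiplicative under direct sums, and it would remain to prove that each coefficient of the right-hand side is non-negative.

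To extend from elementary split matroids to arbitrary matroids, I would invoke Theorem \ref{thm:series-parallel-span} to expand $\amathbb{1}_{\mathscr{P}(\M)}$ as a signed integer combination of indicator functions of direct sums of series--parallel matroids and loops. By multiplicativity of $\ehr$, the remaining task would be to establish positivity for series--parallel matroids, which I would attack by an induction that mirrors their construction via series and parallel extensions, tracking how each extension transforms the base polytope and its Ehrhart polynomial.

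The main obstacle is precisely the presence of signs in each of these reductions. The critical test case is that of sparse paving matroids, where Corollary \ref{thm:main-for-sparse-paving} reduces the problem to establishing the coefficient-wise inequality
\[
    \uplambda \cdot \bigl[\ehr(\mathsf{T}_{k,n},t) - \ehr(\U_{1,n-k}\oplus \U_{k-1,k},t)\bigr]_{t^i} \;\leq\; \bigl[\ehr(\U_{k,n},t)\bigr]_{t^i}
\]
for every degree $0 \leq i \leq n-1$, uniformly across all sparse paving matroids of rank $k$ and cardinality $n$. Since $\uplambda$ can be as large as $\tfrac{1}{n}\binom{n}{k}$ by Lemma \ref{lem:bound-ch-sparsepaving}, the hard part is to produce an inequality sharp enough to survive this worst case. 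My expectation is that this is \emph{the} decisive step: if it can be established by careful asymptotic or generating-function analysis of the two small Ehrhart polynomials on the left, the elementary split case follows, and the signed Schubert expansion from Theorem \ref{thm:series-parallel-span} then becomes the final hurdle; if it fails for some $(k,n,i)$, one obtains an explicit sparse paving counterexample to the conjecture with sufficiently many circuit-hyperplanes.
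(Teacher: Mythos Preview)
The statement you are trying to prove is a \emph{conjecture} that the paper cites only in order to explain that it is \textbf{false}. Immediately after stating it, the paper writes that ``it had been shown in \cite{ferroni3} that this conjecture fails in full generality'', and Section~\ref{sec:ehrhart} is devoted precisely to recovering those counterexamples via Theorem~\ref{thm:main}. So there is no ``paper's own proof'' to compare with; any purported proof is doomed.

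To your credit, you correctly isolate the decisive step: the coefficient-wise inequality
\[
\uplambda \cdot \bigl[\ehr(\mathsf{T}_{k,n},t) - \ehr(\U_{1,n-k}\oplus \U_{k-1,k},t)\bigr]_{t^i} \;\leq\; \bigl[\ehr(\U_{k,n},t)\bigr]_{t^i}
\]
for sparse paving matroids. The paper shows this is exactly where the conjecture breaks. Using Proposition~\ref{prop:ehrhart-minimal} one checks that $\ehr(\mathsf{T}_{k,n},t)-\ehr(\U_{1,n-k}\oplus\U_{k-1,k},t)=\ehr(\mathsf{T}_{k,n},t-1)$, which has \emph{non-negative} coefficients; hence each circuit-hyperplane strictly decreases every coefficient of $\ehr(\M,t)$. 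Plugging in $\uplambda=\tfrac{1}{n}\binom{n}{k}$ (attainable by Lemma~\ref{lem:bound-ch-sparsepaving}) makes the quadratic coefficient negative already for $k=3$, $n=3589$. Your own closing sentence --- ``if it fails for some $(k,n,i)$, one obtains an explicit sparse paving counterexample'' --- is precisely what happens.

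Two side remarks on the earlier steps, which would be problematic even if the conjecture were open. First, a unimodular triangulation of $\Delta_{k,n}$ yields non-negativity of the $h^*$-vector, not of the Ehrhart coefficients themselves; positivity for hypersimplices is a genuinely non-trivial result of \cite{ferroni1}. Second, the signed expansion of Theorem~\ref{thm:series-parallel-span} cannot propagate positivity: knowing that each series-parallel piece has positive Ehrhart coefficients says nothing about a signed integer combination of them. That route is a dead end regardless of the truth of the conjecture.
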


Despite several supporting results  \cite{postnikov, castillo-liu, castillo-liu2, jochemko-ravichandran}, including positive results for uniform matroids \cite{ferroni1}, minimal matroids \cite{ferroni2} and matroids of rank $2$ 
\cite{ferroni-jochemko-schroter}, it has been shown in \cite{ferroni3} that this conjecture is false in general.
We demonstrate in this section how one can recover Ferroni's counterexamples as well as further results on Ehrhart polynomials of (sparse) paving matroids by applying Theorem~\ref{thm:main} or Corollary~\ref{thm:main-for-sparse-paving}.  

In \cite{katzman} Katzman studied the Hilbert functions of certain algebras of Veronese type and derived an explicit formula for the Ehrhart polynomial of the hypersimplex $\Delta_{k,n}$.

\begin{teo}[\cite{katzman}]\label{katzman}
    The Ehrhart polynomial of the uniform matroid~$\U_{k,n}$ is given by
		\begin{equation} 
			\ehr(\U_{k,n},t) = \sum_{j=0}^{k-1} (-1)^j \binom{n}{j} \binom{(k-j)t+n-1-j}{n-1}\enspace .
		\end{equation}
\end{teo}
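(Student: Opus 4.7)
The strategy is a direct inclusion--exclusion on the defining inequalities of the hypersimplex, exploiting the fact that $\mathscr{P}(\U_{k,n}) = \Delta_{k,n}$ is cut out of the simplex $\{x_1+\cdots+x_n=k, \; x_i\ge 0\}$ by the upper-bound constraints $x_i\le 1$.

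First, I would translate the counting problem: the lattice points of the $t$-th dilate $t\Delta_{k,n}$ are the integer vectors $(x_1,\dots,x_n)$ with $0\le x_i\le t$ for all $i$ and $x_1+\cdots+x_n=kt$. Dropping the upper-bound constraints and counting non-negative integer solutions to $\sum x_i = kt$ gives the elementary stars-and-bars count $\binom{kt+n-1}{n-1}$.

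Next, I would enforce the upper bounds $x_i\le t$ by inclusion--exclusion. For a subset $S\subseteq[n]$, let $N_S(t)$ denote the number of non-negative integer solutions of $\sum x_i=kt$ satisfying $x_i\ge t+1$ for every $i\in S$. Substituting $y_i = x_i-(t+1)$ for $i\in S$ and leaving the other coordinates untouched, the shifted system reads $\sum y_i = kt - |S|(t+1)$, whose number of non-negative integer solutions is $\binom{kt-|S|(t+1)+n-1}{n-1}$ whenever this is non-negative, and $0$ otherwise. Since $N_S(t)$ depends only on $j=|S|$, grouping terms gives
\[
\ehr(\U_{k,n},t) \;=\; \sum_{j=0}^{\lfloor kt/(t+1)\rfloor} (-1)^j \binom{n}{j}\binom{(k-j)t+n-1-j}{n-1}.
\]

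Finally, I would show that this combinatorial sum coincides with the polynomial in the statement. For any integer $t\ge k$, the condition $kt-j(t+1)\ge 0$ is equivalent to $j\le k-1$, so the sum runs precisely over $j=0,\dots,k-1$ and each factor $\binom{(k-j)t+n-1-j}{n-1}$ is evaluated at an argument $\ge n-1$, where the combinatorial binomial agrees with the polynomial $\binom{x}{n-1}:=x(x-1)\cdots(x-n+2)/(n-1)!$. Hence the two sides of Katzman's formula agree on infinitely many integers $t$; since both are polynomials in $t$ (by Ehrhart's theorem for the left-hand side and by construction for the right-hand side), they are equal as polynomials.

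The argument is short and essentially routine; the only mild subtlety is not to confuse the combinatorial truncation (that arises from the sign of $kt-j(t+1)$) with the polynomial truncation at $j=k-1$ in the claimed identity, which is why I would pass through large $t$ and invoke the fact that a polynomial is determined by infinitely many values. No substantive obstacle is expected.
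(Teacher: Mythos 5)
The paper does not prove this result; it simply cites Katzman~\cite{katzman}, who derived the formula through an algebraic route, computing Hilbert functions of certain algebras of Veronese type. Your proof is correct and takes a genuinely different, elementary path: it reads off the lattice points of the dilated hypersimplex $t\Delta_{k,n}$ as non-negative integer solutions of $x_1+\cdots+x_n=kt$ with $x_i\le t$, drops the upper bounds to get $\binom{kt+n-1}{n-1}$ by stars and bars, and then restores them by inclusion--exclusion on the set of coordinates exceeding $t$, which after the shift $y_i=x_i-(t+1)$ yields the term $\binom{(k-j)t+n-1-j}{n-1}$ with multiplicity $\binom{n}{j}$ and sign $(-1)^j$. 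The only point that requires care is exactly the one you flag: the inclusion--exclusion sum naturally truncates at $j\le\lfloor kt/(t+1)\rfloor$, which agrees with the polynomial truncation at $j=k-1$ only for $t\ge k-1$; since for such $t$ every binomial argument is at least $n-1$ the combinatorial and polynomial binomials coincide, and two polynomials agreeing on infinitely many integers are equal. That resolution is sound. What Katzman's algebraic approach buys is a conceptual link to Veronese subrings and their Hilbert series; what your approach buys is a short, self-contained, purely polyhedral argument that needs nothing beyond Ehrhart's theorem and basic counting, and in particular would fit naturally into the spirit of the surrounding paper.
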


Although this formula is compact, proving that it yields a polynomial with positive coefficients is a challenging problem. 
Notice that it is an alternating sum, and that the variable $t$ appears inside a binomial coefficient that sometimes has negative terms when expanded. The main result in \cite{ferroni1} is a combinatorial formula for the coefficients of the above polynomial that shows manifestly that they are positive. In \cite[Theorem~3.8]{knauer-martinez-ramirez} Knauer, Mart\'inez-Sandoval, and Ram\'irez-Alfons\'in found a formula for the Ehrhart polynomial of a minimal matroid, in terms of Bernoulli numbers and symmetric functions evaluated at certain integers. Later, Ferroni \cite{ferroni2} provided the following formulas.

\begin{prop}[\cite{ferroni2}]\label{prop:ehrhart-minimal}
    The Ehrhart polynomial of the base polytope of the minimal matroid $\mathsf{T}_{k,n}:=\LL_{k-1,k,k,n}$ is given by
		\begin{align*}
		    \ehr(\mathsf{T}_{k,n},t) &=\sum_{j=0}^t \binom{n-k-1+j}{j}\binom{k-1+j}{j}
		    \\
		    &= \frac{1}{\binom{n-1}{k-1}} \binom{t+n-k}{n-k} \sum_{j=0}^{k-1}\binom{n-k-1+j}{j}\binom{t+j}{j}\enspace .
		\end{align*}
\end{prop}

The second of these two formulas reveals immediately that the coefficients of $\ehr(\mathsf{T}_{k,n},t)$ are always positive. Moreover, the shifted evaluation $\ehr(\mathsf{T}_{k,n},t-1)$ has non-negative coefficients. 

Now, having these compact formulas for $\ehr(\U_{k,n},t)$ and $\ehr(\mathsf{T}_{k,n},t)$ we can combine them with Corollary~\ref{thm:main-for-sparse-paving} and obtain the following proposition.

\begin{prop}[\cite{ferroni3}]\label{prop:ehrhart-sparsepaving}
    Let $\M$ be a sparse paving matroid of rank $k$ and cardinality $n$ having exactly $\uplambda$ circuit-hyperplanes. Then
    \[ \ehr(\M, t) = \ehr(\U_{k,n}, t) - \uplambda \ehr(\mathsf{T}_{k,n}, t-1).\]
\end{prop}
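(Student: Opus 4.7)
The plan is to apply Corollary~\ref{thm:main-for-sparse-paving} with $f = \ehr(-, t)$, which is a valuative invariant by the discussion in Example~\ref{ex:ehrhart}. This immediately yields
\[ \ehr(\M, t) = \ehr(\U_{k,n}, t) - \uplambda\left(\ehr(\mathsf{T}_{k,n}, t) - \ehr(\U_{1,n-k}\oplus \U_{k-1,k}, t)\right), \]
so the entire task reduces to verifying the identity
\[ \ehr(\mathsf{T}_{k,n}, t) - \ehr(\U_{1,n-k}\oplus \U_{k-1,k}, t) = \ehr(\mathsf{T}_{k,n}, t-1). \]

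First, I would compute $\ehr(\U_{1,n-k}\oplus \U_{k-1,k}, t)$ using multiplicativity of Ehrhart polynomials. The base polytope of $\U_{1,n-k}$ is the standard simplex in $\mathbb{R}^{n-k}$, whose $t$-th dilation contains $\binom{t+n-k-1}{n-k-1}$ lattice points. By duality invariance, $\ehr(\U_{k-1,k}, t) = \ehr(\U_{1,k}, t) = \binom{t+k-1}{k-1}$. Hence
\[ \ehr(\U_{1,n-k}\oplus \U_{k-1,k}, t) = \binom{t+n-k-1}{n-k-1}\binom{t+k-1}{k-1} = \binom{n-k-1+t}{t}\binom{k-1+t}{t}. \]

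Next, I invoke the first formula of Proposition~\ref{prop:ehrhart-minimal}, which expresses $\ehr(\mathsf{T}_{k,n}, t)$ as a telescoping-friendly sum
\[ \ehr(\mathsf{T}_{k,n}, t) = \sum_{j=0}^t \binom{n-k-1+j}{j}\binom{k-1+j}{j}. \]
Subtracting the analogous expression for $t-1$ kills all terms except the one with $j=t$, which is precisely the product computed above. This gives exactly
\[ \ehr(\mathsf{T}_{k,n}, t) - \ehr(\mathsf{T}_{k,n}, t-1) = \ehr(\U_{1,n-k}\oplus \U_{k-1,k}, t), \]
and substituting back into the formula from Corollary~\ref{thm:main-for-sparse-paving} yields the claim.

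There is no genuine obstacle here: the work is essentially a clean bookkeeping exercise once Corollary~\ref{thm:main-for-sparse-paving} and Proposition~\ref{prop:ehrhart-minimal} are in hand. The only mildly subtle point is recognizing that the first (non-closed) form of the minimal-matroid Ehrhart polynomial is the \emph{right} one to use, since its summand structure is tailored to produce the desired telescoping with $t\mapsto t-1$; the second, manifestly positive, form would obscure the cancellation.
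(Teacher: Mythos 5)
Your proof is correct and follows essentially the same route as the paper: apply Corollary~\ref{thm:main-for-sparse-paving}, compute $\ehr(\U_{k-1,k}\oplus\U_{1,n-k},t)$ via multiplicativity and duality invariance, and then use the first (summation) formula of Proposition~\ref{prop:ehrhart-minimal} to see the telescoping $\ehr(\mathsf{T}_{k,n},t)-\ehr(\mathsf{T}_{k,n},t-1)=\binom{n-k-1+t}{t}\binom{k-1+t}{t}$. Your closing remark about why the first, rather than the manifestly positive second, expression for $\ehr(\mathsf{T}_{k,n},t)$ is the right one is a nice bit of insight that the paper leaves implicit.
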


\begin{proof}
    By Corollary~\ref{thm:main-for-sparse-paving} we have that
    \[ \ehr(\M, t) = \ehr(\U_{k,n}, t) - \uplambda \left(\ehr(\mathsf{T}_{k,n}, t) - \ehr(\U_{k-1,k}\oplus \U_{1,n-k}, t)\right).\]
    As we mentioned before, the Ehrhart polynomial is multiplicative and invariant under taking duals. 
    Hence, combining that with Proposition~\ref{katzman}, we obtain
    \begin{align*}
        \ehr(\U_{k-1,k}\oplus \U_{1,n-k}, t) &= \ehr(\U_{k-1,k},t) \cdot \ehr(\U_{1,n-k},t)\\ 
        &= \ehr(\U_{1,k},t)\cdot \ehr(\U_{1,n-k},t)\\
        &= \binom{t+k-1}{k-1}\binom{t+n-k-1}{n-k-1}.
    \end{align*}
    Using the first of the two formulas for $\ehr(\mathsf{T}_{k,n},t)$ in Proposition~\ref{prop:ehrhart-minimal}, it is straightforward to conclude.
\end{proof}

Since $\ehr(\mathsf{T}_{k,n},t-1)$ has non-negative coefficients, the heuristic is that having more circuit-hyperplanes causes the coefficients of $\ehr(\M,t)$ to be smaller.  We can now benefit from Lemma~\ref{lem:bound-ch-sparsepaving}. 

\begin{coro}
    There exist matroid polytopes with Ehrhart polynomials having negative coefficients.
\end{coro}

\begin{proof}
    Our Corollary~\ref{coro:bound-circuit-hyperplanes-sparse-paving} guarantees that for each $k$ and $n$ we can find a sparse paving matroid of rank $k$, cardinality $n$ having at least $\uplambda=\frac{1}{n}\binom{n}{k}$ circuit-hyperplanes. Substituting the value $\uplambda = \frac{1}{n}\binom{n}{k}$ in the formula of  Proposition~\ref{prop:ehrhart-sparsepaving} shows that there are Ehrhart polynomials of base polytopes of sparse-paving matroids with negative coefficients. For example, the quadratic coefficient of the matroid obtained taking $n = 3589$ and $k=3$ is negative.
\end{proof}

The natural step now is to compute the Ehrhart polynomials of cuspidal matroids. This allows us to describe the Ehrhart polynomials of elementary split matroids by applying Theorem~\ref{thm:main}, and thus generalizes the results of Hanely, Martin, McGinnis, Miyata, Nasr, Vindas-Mel\'endez, and Yin \cite{hanely}. 

In \cite[Theorem~1.1]{fan-li} Fan and Li derived an intricate formula for the Ehrhart polynomial of an arbitrary Schubert matroid. Their formula is difficult to handle, unless the Schubert matroid under consideration is particularly well-structured. Fortunately, it suffices to our purposes, because when restricting to the cuspidal matroids, it reduces to a more tractable expression. 

\begin{lemma}[{\cite[Equation~(4.5)]{fan-li}}]\label{lemma:ehr-for-cuspidal}
    The Ehrhart polynomial of the matroid $\LL_{r,k,h,n}$ satisfies
    \begin{multline*}
     \ehr(\LL_{r,k,h,n},t) = \\
    \sum_{i=0}^{mt} \sum_{j=0}^{n-h}\sum_{\ell=0}^h (-1)^{j+\ell} \binom{n-h}{j}\binom{h}{\ell}\binom{(a-j)t + n-h-j+i-1}{n-h-1}\binom{(b-\ell)t+h-\ell-i-1}{h-1},
    \end{multline*}
    where $a = n-h-k+r$, $b=h-r$ and $m = \min(h,k)-r$. 
    Here we use the convention that $\binom{\alpha}{\beta}=0$ whenever $\alpha<\beta$ or $\alpha < 0$.
\end{lemma}

The variable $t$ appears in the upper limit of the indices in the first sum. This means that for the computation of the coefficients of $\ehr(\LL_{r,k,h,n},t)$ one has to use Lagrange's interpolation or another method. Moreover, Fan and Li conjecture that cuspidal matroids have Ehrhart polynomials with positive coefficients \cite[Conjecture~1.6]{fan-li}. This is related to another conjecture by Ferroni, Jochemko, and Schr\"oter \cite[Conjecture~6.1]{ferroni-jochemko-schroter} on positroids.

In \cite[Corollary~5.3]{hanely}, a simplification of the above formula is obtained in the case in which $r=k-1$, thus getting rid of the variable $t$ in the upper limit. 

\subsection{Tutte polynomials}\label{sec:tutte}
\noindent
Perhaps the most popular among all matroid invariants is the Tutte polynomial. This fact is not a coincidence, since the Tutte polynomial encodes several important statistics associated to a matroid, e.g., the number of bases, the number of independent sets, the number of spanning sets, etc. Furthermore, the Tutte polynomial has the property of being universal among all Tutte--Grothendieck invariants. See \cite{brylawski-oxley} and \cite{handbook-Tutte} for a thorough account of remarkable properties that Tutte polynomials have; in particular, we point out the chapter \cite[Chapter 32]{handbook-Tutte} by Falk and Kung, in which they describe how one obtains the Tutte polynomial as linear specialization of the $\mathcal{G}$-invariant with a focus on paving matroids.

\begin{defi}
    Let $\M$ be a matroid on $E$. The \emph{Tutte polynomial} of $\M$ is defined as the bivariate polynomial $T_{\M}(x,y)\in \mathbb{Z}[x,y]$ given by
    \[ T_{\M}(x,y) = \sum_{A\subseteq E} (x-1)^{\rk(E) - \rk(A)} (y-1)^{|A|-\rk(A)}.\]
\end{defi}

\begin{example}\label{ex:tutte_uniform}
    The Tutte polynomial of the uniform matroid $\U_{k,n}$ is given by
    \[ T_{\U_{k,n}}(x,y) = \sum_{i=1}^{k} \binom{n-i-1}{n-k-1} \, x^i + \sum_{i=1}^{n-k} \binom{n-i-1}{k-1}\,  y^i,\]
    whenever $1\leq k \leq n - 1$. If $k=0$ then $T_{\U_{0,n}}(x,y) = y^n$; if $k=n$, then $T_{\U_{n,n}}(x,y) = x^n$; see for instance \cite{merino-tutte}.
\end{example}

The Tutte polynomial behaves interchanges the variables when taking duals, i.e., $T_{\M^*}(x,y) = T_{\M}(y,x)$, and it is multiplicative, i.e., $T_{\M\oplus \N}(x,y) = T_{\M}(x,y)\cdot T_{\N}(x,y)$. Another important property that follows from the definition of the Tutte and the characteristic polynomial is that 
    \begin{equation}\label{eq:char-from-tutte}
    \chi_{\M}(t) =(-1)^{\rk(\M)} T_{\M}(1-t,0),
    \end{equation}
where $k$ is the rank of $\M$. 

There exist several proofs in the literature of the fact that the map $\mathfrak{M}\to \mathbb{Z}[x,y]$ given by $\M \mapsto T_{\M}(x,y)$ is a valuative invariant. For instance, see \cite[Lemma~3.4]{speyer-conjecture} or the more general version \cite[Theorem~5.4]{ardila-fink-rincon}. 
It is possible to recover the Tutte polynomial as a linear specialization of the $\mathcal{G}$-invariant, which of course shows that it is a valuation. However, using the convolution property of Ardila and Sanchez, it is possible to obtain yet another and rather compact proof of the valuativeness. We include this for the sake of completeness.

\begin{teo}
    The map $T : \mathfrak{M} \to \mathbb{Z}[x,y]$, which associates to each matroid its Tutte polynomial, is a valuative invariant.
\end{teo}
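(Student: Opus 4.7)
The plan is to exhibit the Tutte polynomial as a convolution of two valuative invariants and then invoke Corollary~\ref{thm:convolutions2}. The starting observation is purely algebraic: for any subset $A\subseteq E$ of a matroid $\M$ we have $\rk(E)-\rk(A)=\rk(\M/A)$ and $|A|-\rk(A)=\rk((\M|_A)^*)$. Substituting these identities into the definition of the Tutte polynomial rewrites it as
\[
T_{\M}(x,y)=\sum_{A\subseteq E}(y-1)^{\rk((\M|_A)^*)}\,(x-1)^{\rk(\M/A)},
\]
which is precisely the convolution $(f\star g)(\M)$ for the maps $f,g:\mathfrak{M}\to\mathbb{Z}[x,y]$ defined by $f(\N)=(y-1)^{\rk(\N^*)}$ and $g(\N)=(x-1)^{\rk(\N)}$.

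Next I would verify that $f$ and $g$ are valuative invariants with values in the ring $\mathbb{Z}[x,y]$. Both are invariants because the rank of a matroid and of its dual depend only on the isomorphism class. The fact that they are valuations was already addressed in Example~\ref{ex:rank-function}: on any matroid subdivision the rank function (and the corank function) is constant on all interior cells, and the required inclusion–exclusion relation on $\mathfrak{M}_E$ reduces to a topological identity for the Euler characteristic of the corresponding complex of internal faces, as explained in \cite{ardila-fink-rincon}. Thus $f$ and $g$ satisfy the hypotheses of Corollary~\ref{thm:convolutions2}.

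Applying that corollary, the convolution $f\star g$ is a valuative invariant, and the identity above shows $f\star g=T$, finishing the proof. The whole argument is essentially a one-line identification once the two ingredients are in place, so there is no serious obstacle; the only subtlety is the bookkeeping that turns the pair $(\rk(E)-\rk(A),\,|A|-\rk(A))$ into the pair $(\rk(\M/A),\,\rk((\M|_A)^*))$, which is what makes the sum fit the shape of a convolution rather than an arbitrary sum over subsets.
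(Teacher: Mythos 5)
Your proof is correct and follows essentially the same route as the paper: both express the Tutte polynomial as a convolution $f\star g$ of two rank-type valuative invariants and invoke Corollary~\ref{thm:convolutions2}. The only cosmetic difference is that the paper takes $f(\M)=y^{\rk(\M^*)}$, $g(\M)=x^{\rk(\M)}$, obtains $T_{\M}(x+1,y+1)$, and then substitutes $x\mapsto x-1$, $y\mapsto y-1$, whereas you build the shift into $f$ and $g$ from the start so that $f\star g=T$ directly.
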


\begin{proof}
    Consider the following two maps $f,g:\mathfrak{M} \to \mathbb{Z}[x,y]$ defined by
    \begin{align*}
        f(\M) = y^{\rk(\M^*)} \quad \text{ and } \quad
        g(\M) = x^{\rk(\M)}.
    \end{align*}
    As we mentioned in Example \ref{ex:rank-function} both $f$ and $g$ are valuative invariants. By Theorem~\ref{thm:convolutions1}, we have that their convolution,
        \[ (f\star g)(\M) = \sum_{A\subseteq E} y^{\rk((\M|_A)^*)} x^{\rk(\M/A)}\]
    is a valuative invariant. Notice that $\rk((\M|_A)^*) = |A|-\rk_{\M}(A)$, whereas $\rk(\M/A) = \rk(\M)-\rk_{\M}(A)$. Hence, we obtain
        \[ (f\star g)(\M) = T_{\M}(x+1,y+1),\]
    After a change of variables, we conclude that the Tutte polynomial is a valuative invariant.
\end{proof}

Our next goal is to provide a good way of computing Tutte polynomials for arbitrary elementary split matroids. As we did with the volume, a possible approach consists of first determining it for all cuspidal matroids. In fact, since these are particular lattice path matroids, it is possible to write down explicit formulas following the interpretation of the coefficients found in \cite[Theorem~5.4]{bonin-demier}. We follow another approach that leads to dealing with fewer subcases. We first investigate how the Tutte polynomial changes under a relaxation. As usual, our matroid $\M$ has rank $k$ and cardinality $n$, and the stressed subset $F$ with non-empty cover that we relax has rank $r$ and cardinality $h$.

\begin{prop}\label{prop:tutte-relax}
    Let $\M$ be a matroid, and let $F$ be a stressed subset with non-empty cover. Then the Tutte polynomial of the relaxation $\widetilde{\M}=\Rel(\M,F)$ satisfies
    \[ T_{\widetilde{\M}}(x,y)\enspace =\enspace T_{\M}(x,y)\, + \sum_{i=r+1}^h\sum_{j=0}^{k-r-1} \binom{h}{i}\binom{n-h}{j}\, \alpha_{r,k}^{i,j}(x,y)\enspace,\]
    where 
    \[\alpha_{r,k}^{i,j}(x,y) := 
    \begin{cases} (x-1)^{k-i-j}\left(1-((x-1)(y-1))^{i-r}\right) & \text{if $i+j\leq k$}\\
    (y-1)^{i+j-k}\left(1-((x-1)(y-1))^{k-r-j}\right) & \text{if $i+j > k$}.
    \end{cases}\]
    In particular, $T_{\widetilde{\M}}(x,y) - T_{\M}(x,y)$ is a multiple of $x+y-xy$.
\end{prop}

\begin{proof}
    By Proposition~\ref{prop:rank-relaxation}, we know that the rank functions of $\M$ and $\widetilde{\M}$ coincide on the sets $A$ with $|A\cap F|\leq \rk(F)$.
    Furthermore, Remark~\ref{rem:rank} tells us that, for the remaining subsets, $\widetilde{\rk}(A) = \min(|A|, k)$ and $\rk(A) = \min(k, r+|A\smallsetminus F|)$. Here we denote the rank function of $\widetilde{\M}$ by $\widetilde{\rk}:2^E\to \mathbb{Z}_{\geq 0}$.
    We may calculate the difference of the Tutte polynomials of the matroids by considering only sets where the rank functions potentially differ.
    \[
        T_{\widetilde{\M}}(x,y)- T_{\M}(x,y) = \sum_{\substack{A\subseteq E\\|A\cap F| \geq r + 1}} \left( (x-1)^{k - \widetilde{\rk}(A)}(y-1)^{|A|-\widetilde{\rk}(A)}-(x-1)^{k - \rk(A)}(y-1)^{|A|-\rk(A)}\right).
    \]
    Observe that all the subsets over which we are iterating are of the form $A = X\sqcup Y$ where $X \subseteq F$, $Y\subseteq E\smallsetminus F$ and $|X|\geq r+1$. Observe that both $\rk(A)$ and $\widetilde{\rk}(A)$ will depend only on the sizes of $X=A\cap F$ and $Y=A\smallsetminus F$. In particular, if we let $i = |X|$ and $j = |Y|$, we have 
    \begin{align*}
        (x-1)^{k - \widetilde{\rk}(A)}(y-1)^{|A|-\widetilde{\rk}(A)} &= (x-1)^{k - \min(i+j,k)}(y-1)^{i+j-\min(i+j,k)}
        \end{align*} and
        \begin{align*}
        (x-1)^{k - \rk(A)}(y-1)^{|A|-\rk(A)} &= (x-1)^{k-\min(k,r+j)}(y-1)^{i+j-\min(k,r+j)}.
    \end{align*}
    Further if $|Y|\geq k-r$, then $\rk(A) = k$ and obviously $\widetilde{\rk}(A) = k$, so that we are allowed to ignore the terms in the sum corresponding to $|Y|\geq k-r$. 
    Once we fix the sizes of $X$ and $Y$ as above, we have $\binom{h}{i}$ possibilities to choose $X\subseteq F$ and $\binom{n-h}{j}$ for $Y\subseteq E\smallsetminus F$. Further $i$ and $j$ range between $r+1$ and $h$ and between $0$ and $k-r-1$, respectively. 
    Whenever $j\leq k-r-1$ also $\min(k,r+j) = r+j$, hence we may simplify the second of the two equations. By putting all the pieces together, we obtain the identity of the statement.
\end{proof}

\begin{remark}
    When the stressed subset with non-empty cover $F$ is a hyperplane, the preceding formula reduces to \cite[Proposition~3.13]{ferroni-nasr-vecchi}, which therefore leads to a nice expression for the Tutte polynomial of paving matroids. Such a formula was stated by Brylawski in \cite[Proposition~7.9]{brylawski2}.
\end{remark}

Now, we can state how the Tutte polynomial of an arbitrary elementary split matroid looks like.

\begin{teo}\label{thm:tutte-elementary-split}
    Let $\M$ be an elementary split matroid of rank $k$ and cardinality $n$. The Tutte polynomial of $\M$ is given by
    \[ T_{\M}(x,y) = T_{\U_{k,n}}(x,y) - \sum_{r,h} \uplambda_{r,h}\sum_{i=r+1}^h\sum_{j=0}^{k-r-1} \binom{h}{i}\binom{n-h}{j}\, \alpha_{r,k}^{i,j}(x,y)\enspace,\]
    where $\alpha_{r,k}^{i,j}$ is defined as in Proposition~\ref{prop:tutte-relax}.
\end{teo}

As was observed by Brylawski in \cite{brylawski2}, a matroid $\M$ is paving if and only if all the monomials $x^iy^j$ for $i\geq 2$ and $j\geq 1$ of its Tutte polynomial have a vanishing coefficient. In other words, Tutte polynomials can be used in a straightforward way to decide whether a given matroid is paving. However, for elementary split matroids we have less structure. In general, for an elementary split matroid all the coefficients can be strictly positive simultaneously; for an example just consider the minimal matroid $\mathsf{T}_{k,n}$. Also, unlike for paving matroids, the Tutte polynomial cannot detect whether a matroid is or not elementary split, as the following example shows.

\begin{example}\label{ex:tutte-split}
    The two matroids $\M_1$ and $\M_2$ depicted in Figure~\ref{fig:counterexample-tutte} have rank $3$ and size $7$. The one on the left is elementary split, while the one on the right is not. Their Tutte polynomials agree, and are equal to:
        \[ T_{\M_1}(x,y) = T_{\M_2}(x,y) = y^{4} + x^{3} + x^{2} y + x y^{2} + 3 y^{3} + 3 x^{2} + 5 x y + 5 y^{2} + 3 x + 3 y.\]
    \begin{figure}[ht]
    \centering
	\begin{tikzpicture}  
	[scale=0.5,auto=center,every node/.style={circle,scale=0.8, fill=black, inner sep=2.7pt}] 
	\tikzstyle{edges} = [thick];
	\node[label=left:$1$] (a1) at (0,0) {};  
	\node[label=above:$2$] (a2) at (2/2,1/2)  {};
	\node[label=above:$3$] (a3) at (4/2,2/2)  {};  
	\node[label=above:$4$] (a4) at (8/2,4/2) {};
	\node[label=below:$5$] (a5) at (4/2,-2/2)  {};  
	\node[label=below:{$6$, $7$}] (a6) at (7.75/2,-4/2)  {}; 
	\node[] (a7) at (8.25/2,-4/2)  {}; 
	\draw[edges] (a1) -- (a2); 
	\draw[edges] (a2) -- (a3);  
	\draw[edges] (a3) -- (a4);  
	\end{tikzpicture} \qquad\qquad\qquad	\begin{tikzpicture}  
	[scale=0.5,auto=center,every node/.style={circle,scale=0.8, fill=black, inner sep=2.7pt}] 
	\tikzstyle{edges} = [thick];
	
	\node[label=left:$1$] (a1) at (0,0) {};  
	\node[label=right:$2$] (a2) at (4/2,0/2)  {};
	\node[label=above:$3$] (a3) at (4/2,2/2)  {};  
	\node[label=above:$4$] (a4) at (8/2,4/2) {};
	\node[label=below:$5$] (a5) at (4/2,-2/2)  {};  
	\node[label=below:{$6$, $7$}] (a6) at (7.75/2,-4/2)  {}; 
	\node[] (a7) at (8.25/2,-4/2)  {}; 
	
	\draw[edges] (a1) -- (a3); 
	\draw[edges] (a3) -- (a4);  
	\draw[edges] (a1) -- (a5);  
	\draw[edges] (a5) -- (a6);
        \draw[edges] (a3) -- (a5);
        
	\end{tikzpicture} 
 \caption{A split and a non-split matroid having the same Tutte polynomial.} \label{fig:counterexample-tutte}
\end{figure}
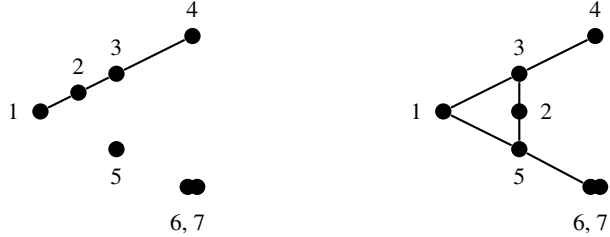    
\end{example}

Let us close this subsection by mentioning an interesting application of our machinery, and Theorem~\ref{thm:tutte-elementary-split}. The following settles negatively a conjecture on the log-concavity of certain evaluation of the Tutte polynomial, posed by Matt Larson during the BIRS workshop \emph{``Algebraic Aspects of Matroid Theory''} held in Banff, Canada in March 2023. 

\begin{coro}
    The polynomial $T_{\M}(x+1,x+1)$ does not have log-concave coefficients in general. Moreover, it is not unimodal in general.
\end{coro}

\begin{proof}
    For a sparse paving matroid the formula in Theorem~\ref{thm:tutte-elementary-split} simplifies drastically as $r=k-1$, $h=k$ implies that $i=k$ and $j=0$ in the above formula. Clearly,
        \[
            \alpha_{k-1,k}^{k,0}(x,y) = 1-(x-1)(y-1) = x+y-xy
        \]
        and thus
    \[
        T_{\M}(x,y) = T_{\U_{k,n}}(x,y) - \uplambda (x+y-xy)
    \]
    for a sparse paving matroid $\M$ of rank $k$ on $n$ elements with $\uplambda$ circuit-hyperplanes. In particular, Lemma~\ref{lem:bound-ch-sparsepaving} says that there is a matroid on $n=176$ elements, having rank $k=81$, and $\uplambda=\left\lfloor\tfrac{1}{176}\binom{176}{81}\right\rfloor$ circuit-hyperplanes. Its Tutte polynomial has the form:
    {\footnotesize
\begin{align*}
        T_{\M}(x+1,x+1)&= 
        3285987011079620145461965951500485419858828427672500\; +\\
        &\qquad 6617084784610995354941772132439688305063454012156950 \; x\; + \\
        &\qquad 6654590882901946448474136220360853789828324829830700 \; x^2\; + \\
        &\qquad 
        6654535137231641703627509311976459391294765151016000 \; x^3 \;+ \\
        &\qquad 6656646120873892602379298143301761305388688100978400 \; x^4 \;+ \;\cdots 
        \end{align*}
}
A direct inspection of the above few terms reveals that the sequence of coefficients is not unimodal, and therefore also not log-concave.
\end{proof}

\section{Invariants arising from flats and Hodge theory}\label{sec:eight}
\noindent In this section we address invariants that are related to the enumeration of flats. We show that the flag $f$-vector of the lattice of flats is valuative, which yields automatically the valuativeness of several useful specializations such as the Whitney polynomial and the chain polynomial. We also discuss the Hilbert series of Chow rings: we formulate a conjecture about their real-rootedness, and discuss several facts related to the problem of computing them.

\subsection{The flag \texorpdfstring{$f$}{f}-vector}

In \cite{bonin-kung} Bonin and Kung studied the $\mathcal{G}$-invariant of Definition~\ref{def:ginvar}. They proved that it is equivalent to knowing the \emph{catenary data} of a matroid. The catenary data of a matroid is a vector that enumerates all the maximal chains of flats of $\M$ according to the \emph{cardinality} of each of the flats appearing in it.

The goal now is to prove a related result, which will be of central use later in the paper to derive the valuativeness of other invariants, such as the chain polynomial of $\mathcal{L}(\M)$ or the Kazhdan--Lusztig polynomial of the matroid $\M$.

\begin{defi}
    The \emph{flag $f$-vector} of the lattice of flats $\mathcal{L}(\M)$ of a matroid $\M$ is the function $\alpha_{\M}:2^E\to \mathbb{Z}$ defined by
        \[ \alpha_{\M}(\{r_1,\ldots,r_s\}) = \#\{\text{flags }F_1\subseteq \cdots \subseteq F_s \text{ in }\mathcal{L}(\M): \rk(F_i) = r_i \text{ for each $i$}\}.\]
\end{defi}

Put more succinctly, the flag $f$-vector enumerates chains of flats according to the total length of the chain, and the rank of each of the flats in the chain. The central result that we will prove here is a straightforward yet important application of Theorem~\ref{thm:flags-flats-rank-valuation}. The main difference with Bonin and Kung's catenary data is that we do not make any reference to the \emph{sizes} of the flats in the chain, and our flags need not be maximal.

\begin{teo}\label{thm:flag-f-vector-valuative}
    The map associating to each matroid $\M$ the flag $f$-vector of $\mathcal{L}(\M)$ is a valuative invariant.
\end{teo}

\begin{proof}
    The flag $f$-vector is the sum over all families of subsets $F_0,\ldots,F_m$ and all choices of numbers $r_0,\ldots,r_m$ of the functions defined in equation \eqref{eq:fixed-flag-flats-valuation-arb-length-and-rank}. Since each summand is itself a valuation, the result follows.
\end{proof}

\begin{remark}
    It is possible to define an even finer invariant than the flag $f$-vector by taking into account both the ranks and sizes of the flats in each chain. With considerable effort, one could attempt to adapt the proof of Theorem~\ref{thm:flags-flats-rank-valuation}. We omit such laborious task here because we will not require a result at that level of generality.
\end{remark}

\subsection{Whitney polynomials}

An important family of statistics of the matroid $\M$ that are not captured by the Tutte polynomial are the Whitney numbers of the second kind. These numbers are denoted $W_0,\ldots, W_k$, where $W_i$ counts the number of rank $i$ flats of $\M$. Following Mason \cite{mason}, the rank generating function of the lattice of flats $\mathcal{L}(\M)$ will be referred to as the \emph{Whitney polynomial} of $\M$, and will be denoted by:
        \[ \mathrm{W}_{\M}(x) = \sum_{i=0}^k W_i\, x^i.\]
An immediate consequence of the valuativeness of the flag $f$-vector is that the Whitney polynomial is itself a valuative invariant.

\begin{coro}\label{coro:whitney-is-valuative}
    The map $\mathrm{W} : \mathfrak{M} \to \mathbb{Z}[t]$ assigning to each matroid $\M$ the polynomial $\mathrm{W}_{\M}(t)$ is a valuative invariant.
\end{coro}

A basic property of Whitney polynomials is that they behave multiplicatively under direct sums of matroids. An important motivation for studying these polynomials stems from a long standing conjecture by Rota \cite{rota-conjecture} which postulates that $\mathrm{W}_{\M}(t)$ has always unimodal coefficients. Some strengthenings by Mason \cite{mason} go beyond and conjecture the log-concavity. The most general family of inequalities for Whitney numbers of the second kind is provided by the main result of Braden, Huh, Matherne, Proudfoot, and Wang in \cite{bradenhuh}. They proved that $W_i\leq W_j$ whenever $0\leq i\leq j \leq k-i$. The proof of these inequalities relies on deep constructions related to the singular Hodge theory of matroids. 

In an attempt to construct potential counterexamples to the unimodality or log-concavity conjectures, it is plausible to approach the computation of Whitney polynomials of elementary split matroids.

Let us state now the general formula for the Whitney numbers when $\M$ is an arbitrary cuspidal matroid. We start with the following auxiliary result.
\begin{prop}
    The number of hyperplanes of the matroid $\LL_{r,k,h,n}$ is 
    \[
    \binom{h}{r-1}+\sum_{j=r}^h \binom{h}{j} \binom{n-h}{k-1-j} \enspace .
    \]
\end{prop}
\begin{proof}
    Let $E_1 = \{1,\ldots,n-h\}$ and $E_2 = [n]\smallsetminus E_1$. A set $S$ is a hyperplane of $\LL_{r,k,h,n}$ if and only if $\rank(S)=k-1$ and $S$ is closed. 
    Thus using Corollary~\ref{cor:rank_cusp} we get that the hyperplanes of $\LL_{r,k,h,n}$ are the sets $S$ of size $k-1$ intersecting $E_2$ in at least $r$ elements, and the sets $S=E_1\sqcup S'$ of size $n-h+r-1\geq k-1$ where $|S'|=r-1$. Counting the number of these sets leads to the desired formula.  
\end{proof}

\begin{prop}\label{prop:W-poly_cusp}
    The $\mathrm{W}$-polynomial of the matroid $\LL_{r,k,h,n}$ is given by
    \[ \mathrm{W}_{\LL_{r,k,h,n}}(t) = \sum_{i=0}^k W_i\, t^i,\]
    where the $i$-th Whitney number of the second kind of $\LL_{r,k,h,n}$ is
    \[ W_i = \binom{h}{r+i-k} + \sum_{j=r+i-k+1}^h \binom{h}{j}\binom{n-h}{i-j},\]
    for $0\leq i \leq k-1$, and $W_{k} = 1$. Here we use the convention that $\binom{a}{b}=0$ whenever $b<0$.
\end{prop}

\begin{proof}
    The number of flats of rank $k-i-1$ in the matroid $\LL_{r,k,h,n}$ is the number of hyperplanes of the $i$-th truncation $\mathrm{T}^i(\LL_{r,k,h,n})$. The truncation of the matroid $\LL_{r,k,h,n}$ is again a cuspidal matroid and, in fact, $\mathrm{T}^i(\LL_{r,k,h,n}) \cong \LL_{\max\{r-i,0\},k-i,h,n}$. Hence, using the preceding result, the formula of the statement follows.
\end{proof}

In particular, since the $\mathrm{W}$-polynomial is multiplicative, it is therefore easy to compute for a direct sum of uniform matroids. By using Theorem~\ref{thm:main}, we obtain an easy description of the Whitney numbers of the second kind of all elementary split matroids.

\begin{teo}\label{thm:whitney-for-split}
    Let $\M$ be an elementary split matroid of rank $k$ and cardinality $n$ and $\uplambda_{r,h}$ the number of stressed subsets with non-empty cover of rank $r$ and size $h$. Then
    \[ \mathrm{W}_{\M}(t) = \mathrm{W}_{\U_{k,n}}(t) - \sum_{r,h} \uplambda_{r,h} \left(\mathrm{W}_{\LL_{r,k,h,n}}(t)-\mathrm{W}_{\U_{k-r,n-h}}(t) \cdot \mathrm{W}_{\U_{r,h}}(t)\right).
    \]
    In other words, the Whitney numbers of the second kind are given by $W_k = 1$ and, for $0\leq i\leq k-1$,
    \[
    W_i = \binom{n}{i} - \sum_{\substack{r,h}} \uplambda_{r,h} \left(\sum_{j=r}^h \binom{h}{j}\binom{n-h}{i-j} - \binom{n-h}{i-r}\right).
    \]
\end{teo}
\begin{proof}
    The formula for $\mathrm{W}_{\M}(t)$ follows directly from Theorem~\ref{thm:main} and the fact that Whitney polynomials are multiplicative.
    To obtain the formula for the coefficients we expand the polynomial $\mathrm{W}_{\M}(t)$. We begin with the product $\mathrm{W}_{\U_{k-r,n-h}}(t) \cdot \mathrm{W}_{\U_{r,h}}(t)$ for which we have 
    \begin{align*}
    \mathrm{W}_{\U_{k-r,n-h}}(t) \cdot \mathrm{W}_{\U_{r,h}}(t) &=  \left( t^{k-r}+ \sum_{\ell=0}^{k-r-1}\binom{n-h}{\ell} t^\ell \right)
    \left(t^r + \sum_{j=0}^{r-1} \binom{h}{j} t^j\right)\\
    &= t^k 
    + \sum_{j=k-r}^{k-1} \binom{h}{r+j-k} t^j + \sum_{\ell=r}^{k-1} \binom{n-h}{\ell-r} t^\ell +
    \sum_{i=0}^{k-2} \sum_{j=r+i-k+1}^{\min\{i, r-1\}} \binom{h}{j}\binom{n-h}{i-j} t^i.
    \end{align*}
    Applying Proposition~\ref{prop:W-poly_cusp}, we see that the $i$-th term in the difference $\mathrm{W}_{\LL_{r,k,h,n}}(t)-\mathrm{W}_{\U_{k-r,n-h}}(t) \cdot \mathrm{W}_{\U_{r,h}}(t)$ is zero if $i=k$ and otherwise it is equal to
    \begin{align}\label{eq:W_diff}
    \binom{n-h}{i-r} - 
    \sum_{j=\min\{i+1, r\}}^h \binom{h}{j}\binom{n-h}{i-j} \enspace .
    \end{align}
    The formula of the statement follows from the fact that $[t^i]\mathrm{W}_{\U_{k,n}}(t) = \binom{n}{i}$ for all $i<k$, and that the expression in \eqref{eq:W_diff} vanishes whenever $r>i$. 
\end{proof}

At first glance this result might seem rather tautological, but notice that we are expressing the number of flats of each rank only in terms of the number of stressed subsets of each rank and size. 

We used Theorem~\ref{thm:whitney-for-split} to produce an extensive list of $\mathrm{W}$-polynomials of matroids. As said before, our motivation was that this idea appeared to be a plausible approach to try to construct potential counterexamples to Rota's conjecture. Despite the fact that we were not able to find any such counterexamples, we believe the above formulas are worth to be included here.

\subsection{The chain polynomial}\label{subsec:overview-on-problems}

The \emph{Bergman complex} of a matroid $\M$ is defined as the order complex of the poset of proper non-empty flats $\mathcal{L}(\M)\smallsetminus\{\widehat{0},\widehat{1}\}$. This complex will be subsequently denoted by $\Delta(\widehat{\mathcal{L}}(\M))$. For basic properties it possesses, we refer to Bj\"orner's book chapter \cite{bjorner}.

The most fundamental numerical invariant associated to any simplicial complex $\Delta$ is its \emph{$f$-vector}, $f(\Delta)=(f_0,f_1,\ldots,f_d)$, counting the faces of a fixed size $i$ 
    \[ f_i = \left|\{F\in \Delta: |F| = i\}\right|,\]
for each $i\geq 0$. The number $d$ denotes the size of the largest face or facet of $\Delta$. One can conveniently arrange these numbers into a polynomial, called the \emph{$f$-polynomial}, 
    \[ f_{\Delta}(x) = \sum_{i=0}^d f_i x^{d-i}.\]
In the particular case of Bergman complexes, the numbers $f_i$ count the number of chains of flats $\widehat{0}\subsetneq F_1 \subsetneq \cdots \subsetneq F_i \subsetneq \widehat{1}$. In particular, the $f$-polynomial of $\Delta(\widehat{\mathcal{L}}(\M))$ is commonly referred to as the \emph{chain polynomial} of $\M$.

Clearly, the chain polynomial of a matroid is a linear specialization of the flag $f$-vector of $\mathcal{L}(\M)$. Therefore, Theorem~\ref{thm:flag-f-vector-valuative} allows us to conclude that it is a valuative invariant.

\begin{coro}\label{teo:chain-poly-valuative}
    The map $f:\mathfrak{M}\to \mathbb{Z}[t]$ assigning to each non-empty loopless matroid $\M$ the chain polynomial $f_{\Delta(\widehat{\mathcal{L}}(\M))}(t)$, and to each matroid either having loops or being the zero polynomial.
\end{coro}

Our original motivation to look at this invariant paper was the work of Athanasiadis and Kalampogia-Evangelinou in which they propose the following intriguing conjecture.

\begin{conj}[\cite{athanasiadis-kalampogia}]\label{conj:chain-poly}
    For every matroid $\M$, the polynomial $f_{\Delta(\widehat{\mathcal{L}}(\M))}(x)$ is real-rooted.
\end{conj}

There is an explicit expression for $f_{\Delta(\widehat{\mathcal{L}}(\M))}(x)$ for an arbitrary uniform matroid $\M$. 
Such formula can be obtained by interpreting the proper part of $\mathcal{L}(\U_{k,n})$ as the poset of non-empty faces of the $(k-1)$-th skeleton of an $n$-dimensional simplex, and using the main results of  Brenti and Welker \cite{brenti-welker}. It follows from their results that for uniform matroids the chain polynomial is real-rooted (see also \cite[Section~2.3]{athanasiadis-kalampogia}). Before stating the formula, let us introduce some terminology. 
We will denote by $A_m(x)$ the $m$-th \emph{Eulerian polynomial}, defined via
    \begin{equation} \label{eq:eulerian-polynomial}
        A_m(x) = \sum_{i=0}^{m-1} A_{m,i}\, x^i,
    \end{equation}  
for each $m\geq 1$, where $A_{m,i}$ is the Eulerian number as defined in equation \eqref{eq:def-eulerian-number}. Also, we will define $A_0(x) = 1$.

\begin{prop}\label{prop:chain-poly-uniform}
    The chain polynomial of the uniform matroid $\U_{k,n}$ is given by
    \begin{align*}
        f_{\Delta(\widehat{\mathcal{L}}(\U_{k,n}))}(x) =  \sum_{j=0}^{k-1} \binom{n}{j} A_j(x+1)\cdot x^{k-1-j}.
    \end{align*}
\end{prop}

Observe that computing by definition the chain polynomial of a given matroid might result in a computationally expensive procedure. The number of chains in a matroid elements grows exponentially on the size of the ground set. Therefore, we feel encouraged to provide fast ways of computing chain polynomials by relying on the valuative property.

The preceding result suggests that similar formulas for cuspidal matroids might be within reach. However carrying out the computations appears to be a subtle and delicate task. Nonetheless, we can write a very concrete formula for paving matroids. The following result tells that this can be achieved provided that one preprocesses chain polynomials for two classes of matroids: uniform matroids and uniform matroids with an extra coloop.

\begin{coro}\label{coro:chain-paving}
    Let $\M$ be a paving matroid of rank $k$ and cardinality $n$. The chain polynomial of $\M$ is given by
    \[ f_{\Delta(\widehat{\mathcal{L}}(\M))}(x) = f_{\Delta(\widehat{\mathcal{L}}(\U_{k,n}))}(x) - \sum_{h=k}^n \uplambda_{h} \left(f_{\Delta(\widehat{\mathcal{L}}(\U_{k,h+1}))}(x)-f_{\Delta(\widehat{\mathcal{L}}(\U_{1,1}\oplus\U_{k-1,h}))}(x)\right) \] 
    where $\uplambda_h$ denotes the number of (stressed) hyperplanes of size $h$ in $\M$.
\end{coro}

The above formula follows from Theorem~\ref{thm:main} by noticing that $\operatorname{si}(\LL_{k-1,k,h,n}) \cong \U_{k,h+1}$ and that $\operatorname{si}(\U_{1,n-h}\oplus \U_{k-1,h})\cong \U_{1,1}\oplus \U_{k-1,h}$. Notice that both polynomials $f$ and $h$ are invariant under taking simplifications. Moreover, it is possible to use \cite[Theorem~5.5]{athanasiadis-kalampogia} to derive an explicit formula for $f_{\Delta(\widehat{\mathcal{L}}(\U_{k,n}\oplus\U_{1,1}))}(x)$, i.e., uniform matroids with an extra coloop.

We take the opportunity to mention that, by relying on a variation of the formula in Corollary~\ref{coro:chain-paving} along with properties of interlacing polynomials, Br\"and\'en and Saud-Maia-Leite \cite{branden-saud-personal-comm} proved that Conjecture~\ref{conj:chain-poly} holds for all paving matroids.

Experiments and the work of Hameister, Rao, and Simpson \cite{hameister-rao-simpson} suggest an intriguing connection between chain polynomials with the Hilbert series of the Chow ring, another invariant that we will explore in the next subsection. 

\subsection{Chow rings and Chow polynomials}

Within the resolution of the Heron--Rota--Welsh conjecture by Adiprasito, Huh, and Katz \cite{adiprasito-huh-katz}, a central role is played by the Chow ring of the matroid $\M$. This is a standard graded, Artinian, Gorenstein ring constructed from the lattice of flats $\mathcal{L}(\M)$. Following the terminology introduced in \cite{bradenhuh-semismall}, we will denote the Chow ring by $\uCH(\M)$. For the precise definition of $\uCH(\M)$ and some of its basic properties, we refer to our Appendix~\ref{appendix-chow}. 

Since the Chow ring $\uCH(\M)$ of a matroid $\M$ is a graded ring, one may ask what is the dimension of each component $\uCH^j(\M)$. This amounts to determining the \emph{Hilbert--Poincar\'e series} of $\uCH(\M)$, i.e., the series
    \begin{equation} \label{eq:hilbert-series}
        \uH_{\M}(x) := \operatorname{Hilb}(\uCH(\M), x) = \sum_{j=0}^{\infty} \dim_{\mathbb{Q}}(\uCH^j(\M))\, x^j.
    \end{equation}
Since $\uCH(\M)$ is finite-dimensional as a $\mathbb{Q}$-vector space, the Hilbert--Poincar\'e series defined above yields actually a polynomial of degree $k-1$. Let us point out that the article by Ferroni, Matherne, Stevens, and Vecchi \cite{ferroni-matherne-stevens-vecchi} was conceived and written in its entirety after the first version of the present paper was posted on the arXiv and circulated among the community. Although we will refer to some useful statements in \cite{ferroni-matherne-stevens-vecchi}, the presentation here is essentially self-contained. In consonance with \cite{ferroni-matherne-stevens-vecchi}, we will often refer to the polynomial $\uH_{\M}(x)$ as the \emph{Chow polynomial of $\M$}.

The departing point to understand this invariant is the following result, established by Feichtner and Yuzvinsky in \cite[p.~526]{feichtner-yuzvinsky}.

\begin{prop}[\cite{feichtner-yuzvinsky}]\label{prop:hilbert-chow}
    Let $\M$ be a loopless matroid. The Chow polynomial of $\M$ is given by
        \[ \uH_{\M}(x) = \sum_{\varnothing = F_0 \subsetneq F_1 \subsetneq \cdots \subsetneq F_m\subseteq E} \prod_{i=1}^m \frac{x ( 1 - x^{\rk(F_i)-\rk(F_{i-1})-1})}{1-x}.\]
    The sum is taken over all chains of flats starting at the bottom element, i.e., $\varnothing = F_0\subsetneq \cdots \subseteq F_m \subseteq E$ in $\mathcal{L}(\M)$ for every $1\leq m\leq k-1$.
\end{prop}

By the hard Lefschetz property we have the isomorphisms $\uCH^j(\M)\cong \uCH^{k-1-j}(\M)$. This allows us to conclude that the polynomial $\operatorname{Hilb}(\uCH(\M),x)$ is palindromic.
Moreover, by the hard Lefschetz property in $\uCH(\M)$ it follows that the map $\uCH^j\to\uCH^{j+1}$ sending $\upxi$ to $\ell \,\upxi$ is injective for all $j\leq \lfloor \tfrac{k-1}{2} \rfloor$. Thus, the sequence of coefficients of $\uH_{\M}(x)$ is unimodal. Let us take a look at the following concrete example.

\begin{example}
    Consider the Boolean matroid $\U_{n,n}$. As we mentioned  before, the Chow ring of $\U_{n,n}$ coincides with the Chow ring of the permutohedral variety $X_E$, for $E=[n]$.  It is a classical result that the $h$-vector of the permutohedron $\Pi(n)$ has as entries the Eulerian numbers $A_{n,0}, \ldots, A_{n,n-1}$. In particular, it follows that
        \[ \uH_{\U_{n,n}}(x) = \operatorname{Hilb}(\A(X_E), x) = A_n(x),\]
    where $A_n(x)$ is the Eulerian polynomial as defined in equation \eqref{eq:eulerian-polynomial}; see also \cite[Remark 2.11]{bradenhuh-semismall}.
    By \cite[Corollary 3.13]{hampe}, the dimension of $\uCH^j(\U_{n,n})$, i.e., the Eulerian number $A_{n,j}$, is precisely the number of loopless Schubert matroids of rank $n-j$ with ground set $[n]$.
\end{example}

\begin{teo}\label{thm:hilbert-valuative}
    The map $\mathfrak{M} \to \mathbb{Z}[x]$ assigning to each loopless matroid $\M$ the Hilbert--Poincar\'e series of $\uCH(\M)$, and to each matroid with loops the zero polynomial, is a valuative invariant.
\end{teo}

\begin{proof}
    Consider $E=[n]$. For each flag of subsets of the form $\varnothing = F_0\subsetneq F_1\subsetneq \cdots \subsetneq F_m \subseteq E$ and each list of non-negative integers $r_0,\ldots,r_m$, consider the map $\upzeta_{\substack{F_0,\ldots,F_m\\r_0,\ldots, r_m}}:\mathfrak{M}_E\to \mathbb{Z}[x]$ defined by
    \begin{equation}\label{eq:valuation-hilbert-chow}
        \upzeta_{\substack{F_0,\ldots,F_m\\r_0,\ldots, r_m}} = \begin{cases}
        \displaystyle\prod_{i=1}^m \frac{x ( 1 - x^{r_i-r_{i-1}-1})}{1-x} & \text{if } F_i\in\mathcal{L}(\M) \text{ and $\rk(F_i)=r_i$ for all } 0\leq i\leq m,\\
        0 & \text{otherwise.}\end{cases}
    \end{equation}
    Observe that $\upzeta_{\substack{F_0,\ldots,F_m\\r_0,\ldots, r_m}}:\mathfrak{M}_E\to \mathbb{Z}[x]$ is a constant multiple of the map $\upphi_{\substack{F_0,\ldots,F_m\\r_0,\ldots, r_m}}:\mathfrak{M}_E\to \mathbb{Q}$ of Theorem~\ref{thm:flags-flats-rank-valuation} and hence it is a valuation. Consider now the map $\upzeta:\mathfrak{M}_E\to \mathbb{Z}[x]$ defined as the sum of all the $\upzeta_{\substack{F_0,\ldots,F_m\\r_0,\ldots, r_m}}$ over all the flags of subsets $\varnothing= F_0\subsetneq F_1\subsetneq\cdots\subsetneq F_m\subsetneq E$ and all the list of integers $r_0,\ldots,r_m$. Observe that, by Proposition~\ref{prop:hilbert-chow}, the map $\upzeta:\mathfrak{M}_E\to \mathbb{Z}[x]$ is assigning to each matroid the Hilbert--Poincar\'e series of its Chow ring. Since it is a sum of valuations, it follows that $\upzeta$ is a valuation. 
\end{proof}

\begin{remark}
    Dotsenko conjectured \cite[Conjecture~2]{dotsenko} that Chow rings of matroids are Koszul. This was recently proved true by Mastroeni and McCullough \cite{mastroeni-mccullough}. In particular, the Koszulness of the Chow ring implies that the \emph{Poincar\'e series} of the Chow ring of a matroid, denoted by $\operatorname{Poin}(\uCH(\M), x)$, is determined by $\operatorname{Hilb}(\uCH(\M),x)$ via:
        \[ \operatorname{Poin}(\uCH(\M), x) = \frac{1}{\operatorname{Hilb}(\uCH(\M), -x)}.\]
    We refer to \cite{froberg} for the precise definitions of Koszulness, Poincar\'e series, and a proof of this equality. From this, the reader may check directly that the Poincar\'e series of the Chow ring cannot be a valuative invariant itself, although it is very close to being one.
\end{remark}

As a consequence of the valuativeness of Chow polynomials, we can use Theorem~\ref{thm:main} to get the following concrete expression for paving matroids.

\begin{teo}\label{thm:chow-polynomial-paving}
    Let $\M$ be a paving matroid of rank $k$ and cardinality $n$. Then,
    \[ \uH_{\M}(x) = \uH_{\U_{k,n}}(x) - \sum_{h\geq k}\uplambda_h \left(\uH_{\U_{k,h+1}}(x) - \uH_{\U_{k-1,h}\oplus \U_{1,1}}(x) \right),\]
    where $\uplambda_h$ denotes the number of stressed hyperplanes of size $h$ in $\M$.
\end{teo}

\begin{proof}
    Since the map $\M \mapsto \operatorname{Hilb}(\uCH(\M), x)$ is valuative by Theorem~\ref{thm:hilbert-valuative}, we can apply Theorem~\ref{thm:main}. In particular, since $\M$ is paving, all the stressed subsets with non-empty cover correspond to $r=k-1$. Hence, denoting by $\uplambda_h$ the number of these subsets having size $h$, i.e., the number of stressed hyperplanes of size $h$, we obtain
    \[\uH_{\M}(x)= \uH_{\U_{k,n}}(x) - \sum_{h}\uplambda_{h} \left(\uH_{\LL_{k-1,k,h,n}}(x) - \uH_{\U_{1,n-h}\oplus\U_{k-1,h}}(x) \right).\]
    We now use that the map $\M \mapsto \operatorname{Hilb}(\uCH(\M), x)$ is trivially invariant under simplification for loopless matroids, that is $\operatorname{si}(\LL_{k-1,k,h,n})\cong \U_{k,h+1}$ and $\operatorname{si}(\U_{1,n-h}\oplus\U_{k-1,h}) \cong \U_{1,1}\oplus \U_{k-1,h}$ which yields the statement.
\end{proof}

The reader may wonder to what extent the preceding formula can be applied in practice. To answer that question, let us start by mentioning what happens in the case of uniform matroids.

\begin{prop}[{\cite[Theorem~1.9]{ferroni-matherne-stevens-vecchi}}]\label{prop:chow-poly-uniform}
    The following identity holds:
    \[ \uH_{\U_{k,n}}(x) = \sum_{j=0}^{k-1} \binom{n}{j} d_j(x) (1+x+\cdots+x^{k-1-j}). \] 
\end{prop}

In the above statement $d_j(x)$ stand for the classical derangement polynomials.
Hameister, Rao and Simpson, \cite[Conjecture~6.2]{hameister-rao-simpson} conjectured and Ferroni, Matherne, Stevens and Vecchi proved in \cite[Corollary~3.17]{ferroni-matherne-stevens-vecchi} a different formula for the Chow polynomial of uniform matroids in terms of the $h$-polynomial of the Bergman complex of $\M$.

The case of uniform matroids with an extra coloop can be handled using a very special case of the semi-small decomposition of Braden, Huh, Matherne, Proudfoot, and Wang \cite[Theorem~1.2]{bradenhuh-semismall}. In this very special case, one should remove precisely the coloop, to obtain:
    \begin{equation}
        \uH_{\U_{k,n}\oplus\U_{1,1}}(x) = (1+x)\, \uH_{\U_{k,n}}(x) + x \sum_{j=1}^{k-1} \binom{n}{j}\, A_j(x)\, \uH_{\U_{k-j,n-j}}(x) .
    \end{equation}

We refer to \cite[Remark~3.30]{ferroni-matherne-stevens-vecchi} for more details about this formula. In particular, every term appearing in the formula of Theorem~\ref{thm:chow-polynomial-paving} can be computed effectively.

Let us also mention that in \cite[Theorem~5.1]{hameister-rao-simpson} a combinatorial description of the coefficients of $\uH_{\U_{k,n}}(x)$ was obtained in terms of certain statistics of permutations. On the other hand, Backman, Eur, and Simpson \cite[Theorem~3.3.8]{BackmanEurSimpson} proved an interpretation for the coefficients of the Chow polynomial in terms of relative nested quotients. A consequence of any of the two mentioned results is that $\uH_{\U_{n-1,n}}(x)$ coincides with the derangement polynomial $d_n(x)$. This is not obvious from Proposition~\ref{prop:chow-poly-uniform}, as putting $k=n-1$ yields a more complicated expression which also involves derangement polynomials; we refer to \cite[Corollary~4.2]{juhnke-murai-sieg} for an explanation of this surprising coincidence. In other words, for uniform matroids of corank $0$ and corank $1$, the Hilbert--Poincar\'e series of the Chow ring yields a real-rooted polynomial; for the proof of the real-rootedness of the Eulerian and the derangement polynomials and further generalizations, we point to \cite{gustafsson-solus}. We propose the following conjecture.

\begin{conj}\label{conj:hilbert-real-rooted}
    The Hilbert--Poincar\'e series of the Chow ring of any matroid is a real-rooted polynomial.
\end{conj}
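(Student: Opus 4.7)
The plan is to combine base cases with interlacing propagation through the subdivision machinery of the paper. First I would verify Conjecture~\ref{conj:hilbert-real-rooted} for all uniform matroids $\U_{k,n}$, beginning with the known facts that $\operatorname{Hilb}(\A(\U_{n,n}), x)$ is the real-rooted Eulerian polynomial and that $\operatorname{Hilb}(\A(\U_{n-1,n}), x)$ coincides with a derangement polynomial, also known to be real-rooted. For general $k$, my first attempt would be to establish the Hameister--Rao--Simpson identity of Conjecture~\ref{conj:hameister}, whose right-hand side features the chain polynomials $h_{\Delta(\widehat{\mathcal{L}}(\U_{i,n}))}(x)$; these are real-rooted by the Br\"and\'en--Saud-Maia-Leite result mentioned at the end of Section~\ref{sec:whitney-and-chain}. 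The key combinatorial input would then be to identify a common interlacer for the family $\{h_{\Delta(\widehat{\mathcal{L}}(\U_{i,n}))}(x)\}_{i=1}^{k}$ so that the alternating sum of Conjecture~\ref{conj:hameister} inherits real-rootedness.

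Next I would address the cuspidal matroids $\LL_{r,k,h,n}$. The Feichtner--Yuzvinsky formula of Proposition~\ref{prop:hilbert-chow} specializes dramatically here because, by Lemma~\ref{lemma:cyclic-flats-restriction}, the lattice of cyclic flats of a cuspidal matroid is a chain; this should lead to an explicit closed formula for $\operatorname{Hilb}(\A(\LL_{r,k,h,n}), x)$ in terms of uniform building blocks, to which the same interlacing strategy would apply. With the cuspidal case secured, Theorem~\ref{thm:main} expresses $\operatorname{Hilb}(\A(\M), x)$ for any elementary split matroid $\M$ as a signed linear combination of Hilbert series of cuspidal matroids and of direct sums $\U_{k-r,n-h} \oplus \U_{r,h}$; the latter must be handled by direct flag-of-flats enumeration, since $\A(\M_1 \oplus \M_2)$ is not the tensor product $\A(\M_1) \otimes \A(\M_2)$. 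Preserving real-rootedness through the signed sum in Theorem~\ref{thm:main} would require showing that the ``correction terms'' interlace the uniform main term in a compatible manner.

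The principal obstacle will be the passage from elementary split to arbitrary matroids. Real-rootedness is not a linear property, so the universality of $\mathcal{G}$ and the Schubert-matroid basis of Theorem~\ref{thm:schubert-basis} cannot be invoked in any direct linear-algebraic way; writing $\amathbb{1}_{\mathscr{P}(\M)}$ as an integer combination of indicator functions of Schubert matroids destroys positivity, let alone interlacing. A promising structural alternative is to exploit the semismall decomposition of Braden--Huh--Matherne--Proudfoot--Wang for $\A(\M)$, which expresses this algebra as a direct sum of shifted copies of Chow rings of proper minors indexed by matched pairs of flats. If one can show inductively on rank that each summand contributes a real-rooted polynomial and that these polynomials form an interlacing family (for instance, admitting a common interlacer, analogous to how descent-type polynomials interlace in shellable complexes), then the weighted sum giving $\operatorname{Hilb}(\A(\M), x)$ remains real-rooted. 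The Koszulness of $\A(\M)$ established by Mastroeni--McCullough provides additional algebraic rigidity via the reciprocal relationship with the Poincar\'e series, but converting this into honest root-theoretic statements is where the substantive difficulty will lie.
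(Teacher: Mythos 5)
The statement you are attempting to prove is labeled as a \emph{conjecture} in the paper, and the paper itself supplies no proof. The authors state Conjecture~\ref{conj:hilbert-real-rooted}, verify it computationally for sparse paving matroids with at most $30$ elements (modulo the unproved Conjecture~\ref{conj:hameister} and its extension in equation~\eqref{eq:formula-hilbert-extendend}), and record the observation of Reiner and Wang that the Hilbert--Poincar\'e series is $\gamma$-positive, which is strictly weaker than real-rootedness. So there is no proof in the paper to compare your attempt against, and your proposal should be read as a research plan, not as a proof.

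As a research plan it contains genuine gaps, several of which you acknowledge, but two are worth flagging concretely. First, you argue that Proposition~\ref{prop:hilbert-chow} ``specializes dramatically'' for cuspidal matroids because their lattice of cyclic flats is a chain. That proposition is a sum over chains in the full lattice of flats $\mathcal{L}(\M)$, not the lattice of cyclic flats; the lattice of flats of a cuspidal matroid is a large geometric lattice, and having a chain of cyclic flats does not collapse this sum. Second, and more seriously, your strategy for both the uniform case (via Conjecture~\ref{conj:hameister}) and the elementary split case (via Theorem~\ref{thm:main}) requires pushing real-rootedness through alternating or signed linear combinations. A common interlacer certifies real-rootedness of \emph{nonnegative} linear combinations; it gives no control over alternating sums such as the one in Conjecture~\ref{conj:hameister}, where the signs $(-1)^{k-i}$ are essential. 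This is exactly why valuativeness, a linear property, does not propagate root-theoretic properties: expressing $\operatorname{Hilb}(\A(\M), x)$ as a signed sum of Hilbert series of cuspidal pieces is not a route to real-rootedness without substantially stronger interlacing structure among all the terms, which is neither supplied nor likely to hold in the required generality. Your final paragraph correctly identifies the passage beyond elementary split matroids as out of reach by these methods and proposes the semismall decomposition as an alternative; that is a sensible direction, but it is entirely speculative as written and would itself require a new interlacing theorem about Chow rings of minors.
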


Notice that this assertion is a considerable strengthening of the unimodality of the coefficients of these polynomials, which follows from the hard Lefschetz property for the Chow rings. Other two polynomials for which this phenomenon is also conjectured are the Kazhdan--Lusztig polynomial \cite{gedeonsurvey} and the $Z$-polynomial \cite{proudfootzeta}. In addition, our conjecture bears a resemblance with Conjecture~\ref{conj:chain-poly}, which asserts the real-rootedness of the chain polynomial; the connection between these invariants deserves further study. 

We have verified that indeed the Hilbert--Poincar\'e series of the Chow rings of all sparse paving matroids with at most $40$ elements are real-rooted, thus supporting our Conjecture~\ref{conj:hilbert-real-rooted}. Furthermore, in the paper by Ferroni, Matherne, Stevens, and Vecchi \cite{ferroni-matherne-stevens-vecchi} it is proved that the polynomials $\uH_{\M}(x)$ are $\gamma$-positive; this was also proved independently by Botong Wang \cite{reiner-wang-personal-comm}. This constitutes a good piece of evidence backing up our Conjecture~\ref{conj:hilbert-real-rooted}.

We end this discussion by mentioning two more invariants that the reader may ask about: the Hilbert--Poincar\'e series of both the \emph{augmented} Chow ring, introduced in \cite{bradenhuh-semismall,bradenhuh} and of the \emph{conormal} Chow ring, defined in \cite{ardila-denham-huh}. A natural problem that one can consider is computing them for uniform matroids and proving their valuativeness. For the augmented Chow ring, this is done in \cite{ferroni-matherne-stevens-vecchi}. The case of the conormal Chow ring is more subtle, and its Hilbert--Poincar\'e series fails to be valuative \cite{larson-personal-comm}.

\section{Invariants from singular Hodge theory, K-theory, and beyond}\label{sec:nine}

\noindent In this section we address three families of invariants: first, the Kazhdan--Lusztig and $Z$-polynomials; second, the $g$-polynomial defined by Speyer; and third, the spectrum polynomial and a generalization by Denham. Each subsection can be read independently of the other two.

We rely on our framework to prove new results on these invariants. For the Kazhdan--Lusztig and $Z$-polynomials, we show in Theorem~\ref{thm:kl-z-corank2} a formula for the Kazhdan--Lusztig polynomial of corank $2$ matroids; we use it to prove a conjecture by Gedeon in this case. For the $g$-polynomial, we establish a compact formula for paving matroids in Theorem~\ref{teo:speyer-for-paving}, extending a result by Speyer on rank $2$ matroids; we use it to prove the non-negativity conjecture of Speyer \cite{speyer} in this case. For the spectrum and the Denham polynomial, we show that they are valuations; in addition, by providing an explicit formula for all split matroids we are able to construct Example~\ref{ex:same_spec}, answering a question posed by Kook, Reiner, and Stanton \cite{kook-reiner-stanton}.

\subsection{The Kazhdan--Lusztig polynomials and \texorpdfstring{$Z$}{Z}-polynomials} \label{sec:kl-polys}

In analogy to the Kazhdan--Lusztig polynomials associated to Bruhat intervals in Coxeter groups, Elias, Proudfoot and Wakefield introduced in \cite{elias-proudfoot-wakefield} a variation of the Kazhdan--Lusztig (KL) polynomials for matroids. This new theory has received considerable attention in the past few years. 

We will see how Theorem~\ref{thm:main} gives a unified explanation for the results of \cite{lee-nasr-radcliffe,ferroni-vecchi,ferroni-nasr-vecchi,karn-proudfoot-nasr-vecchi}. In addition, we will establish a new explicit formula for the Kazhdan--Lusztig polynomial of all corank $2$ matroids, which constitutes an original result.

\subsubsection{A quick review}  

Let us include a brief recapitulation of the definitions and relevant properties of the Kazhdan--Lusztig polynomials of matroids.

\begin{teo}\label{PM}
    There is a unique way to assign to each loopless matroid $\M$ a polynomial $P_{\M}(t) \in \mathbb{Z}[t]$ such that the following properties hold:
    \begin{enumerate}[\normalfont(i)]
        \item If $\rk(\M) = 0$, then $P_{\M}(t) = 1$.
        \item If $\rk(\M) > 0$, then $\deg P_{\M}(t) < \frac{1}{2} \rk(\M)$.
        \item For every matroid $\M$, the polynomial
            \[ Z_{\M}(t) := \sum_{F\in \mathcal{L}(\M)} t^{\rk(F)}\, P_{\M/F}(t)\]
        is palindromic.
    \end{enumerate}
\end{teo}

The polynomials $P_{\M}(t)$ and $Z_{\M}(t)$ are called the \emph{Kazhdan--Lusztig polynomial} and $Z$-polynomial of the matroid $\M$, respectively. These polynomials are multiplicative invariants, and do not change under taking simplifications if the matroid is loopless. For these and other basic properties, we refer to the articles \cite{elias-proudfoot-wakefield,proudfootzeta}. For matroids with loops, both of these invariants will be set to be zero by convention.

Apart from the resolution of the top-heavy conjecture, another of the main contributions of Braden, Huh, Matherne, Proudfoot and Wang in \cite{bradenhuh} is the construction of a fundamental object, called the \emph{intersection cohomology module} of $\M$, denoted $\IH(\M)$. A related construction is the stalk of the intersection cohomology at the empty flat, denoted $\IH(\M)_{\varnothing}$. For details and terminology we refer to their original paper. One of the central properties of these two objects is given by the following result.

\begin{teo}[{\cite[Theorem~1.9]{bradenhuh}}]
    The Hilbert--Poincar\'e series of $\IH(\M)$ is the $Z$-polynomial of $\M$. Also, the Hilbert--Poincar\'e series of $\IH(\M)_{\varnothing}$ is the Kazhdan--Lusztig polynomial of $\M$. In particular, both $P_{\M}(t)$ and $Z_{\M}(t)$ have non-negative coefficients.
\end{teo}

\subsubsection{The valuativeness for KL polynomials}

A useful result by Ardila and Sanchez \cite[Theorem~8.9]{ardila-sanchez} is that the Kazhdan--Lusztig polynomial is a valuative invariant. Their proof relies on a subtle induction argument. We give an alternative proof of this valuativeness below, by relying on a brute-force description of the Kazhdan--Lusztig coefficients in terms of entries of the flag $f$-vector. Moreover, we also include a proof of the valuativeness of the $Z$-polynomial, for the sake of future referencing.

\begin{teo}
    The assignments $\M \mapsto P_{\M}(t)$ and $\M\mapsto Z_{\M}(t)$ are valuative invariants.
\end{teo}

\begin{proof}
    A formula of Wakefield in \cite[Theorem~11]{wakefield} shows that each coefficient of the Kazhdan--Lusztig polynomial of $\M$ is an integer combination of \emph{flag Whitney numbers}. These are by definition entries of the flag $f$-vector of $\mathcal{L}(\M)$. Hence, Theorem~\ref{thm:flag-f-vector-valuative} yields the valuativeness of the Kazhdan--Lusztig polynomial. For the $Z$-polynomial it suffices to apply the convolution theorem by Ardila and Sanchez \ref{thm:convolutions1}, because the $Z$-polynomial is the convolution of the rank function and the Kazhdan--Lusztig polynomial, both of which are valuations (cf. Example~\ref{ex:rank-function}).
\end{proof}

\begin{remark}\label{remark:gamma}
    In general, to any palindromic polynomial $p(x)$ of degree $d$ having integer coefficients, one may associate a polynomial $\gamma(x)$ of degree $\lfloor\frac{d}{2}\rfloor$ having integer coefficients, and satisfying 
        \[ p(x) = \gamma\left(\tfrac{x}{(x+1)^2}\right)\cdot (1+x)^d.\]
    If the polynomial $\gamma(x)$ has nonnegative coefficients, one usually says that $p(x)$ is \emph{$\gamma$-positive}.
    Since $Z_{\M}(t)$ is indeed palindromic and has degree $k=\rk(\M)$, one can consider a polynomial $\gamma_{\M}(t)\in\mathbb{Z}[t]$ of degree $\lfloor\frac{\rk(\M)}{2}\rfloor$ and satisfying:
        \[ Z_{\M}(t) = \gamma_{\M}\left( \tfrac{t}{(t+1)^2}\right) \cdot (1+t)^{\rk(\M)}.\]
    It is not difficult to conclude that the polynomial $\gamma_{\M}(t)$ satisfies analogous properties. The valuativeness of the $Z$-polynomial implies that the map $\M \mapsto \gamma_{\M}(t)$ is a valuative invariant as well. 
\end{remark}

We want to warn the reader that the calculation of the Kazhdan--Lusztig and $Z$-polynomials, even for uniform matroids, is a difficult task. 
Using the definitions as they were stated, the computation of these polynomials with a computer is slow for matroids of rather small cardinality. 
It is a challenging open problem to find a combinatorial interpretation of their coefficients. To have a glimpse of the type of polynomial we are facing, we present the following example.

\begin{example}
    For uniform matroids we have the following closed formulas. As mentioned above, their derivation is non trivial and, in some cases, one needs to use an ``equivariant'' version of the Kazhdan--Lusztig polynomials to get an expression that later is simplified by applying binomial identities (see \cite{gedeon-proudfoot-young-equivariant}).
    We refer to \cite[Theorem~1.3]{gao-uniform} for further details. We have:
    \begin{align*}
      P_{\U_{k,n}}(t) &= \sum_{j=0}^{\lfloor\frac{k-1}{2}\rfloor}\left( \sum_{i=0}^{n-k-1} \frac{1}{k-j} \binom{n}{j}\binom{k+i-j}{i+j+1}\binom{i+j-1}{i}\right) t^j.
     \end{align*}
     Notice that in the definition of the $Z$-polynomial, we have summands of the form $t^{\rk(F)} P_{\M/F}(t)$ where $F$ is a flat of $\M$. In particular, for $\M = \U_{k,n}$ all the contractions are uniform matroids. Hence, an elementary counting argument yields
     \[ Z_{\U_{k,n}}(t) = t^k + \sum_{j=0}^{k-1} \binom{n}{j} \cdot t^j\cdot  P_{\U_{k-j,n-j}}(t).\]
\end{example}

Although their computation is complicated in general, given that all these invariants are valuative, our Theorem~\ref{thm:main} gives the following result.

\begin{teo}\label{thm:kl-for-split}
    Let $\M$ be an elementary split matroid of rank $k$ and cardinality $n$. Then,
    \begin{align*}
        P_{\M}(t) &= P_{\U_{k,n}}(t) - \sum_{r,h} \uplambda_{r,h} \left( P_{\LL_{r,k,h,n}}(t) - P_{\U_{k-r,n-h}}(t) P_{\U_{r,h}}(t)\right),\\
        Z_{\M}(t) &= Z_{\U_{k,n}}(t) - \sum_{r,h} \uplambda_{r,h} \left( Z_{\LL_{r,k,h,n}}(t) - Z_{\U_{k-r,n-h}}(t) Z_{\U_{r,h}}(t)\right),\\
        \gamma_{\M}(t) &= \gamma_{\U_{k,n}}(t) - \sum_{r,h} \uplambda_{r,h} \left( \gamma_{\LL_{r,k,h,n}}(t) - \gamma_{\U_{k-r,n-h}}(t) \gamma_{\U_{r,h}}(t)\right),
    \end{align*}
    where $\uplambda_{r,h}$ denotes the number of stressed subsets of rank $r$ and size $h$ that have non-empty cover.
\end{teo}

If one specializes the preceding Theorem~\ref{thm:kl-for-split} to a paving matroid $\M$, then the only stressed subsets with non-empty cover satisfy $r=k-1$. Thus the index of the sums appearing above depend only on the variable $h$. 
Furthermore, the simplification of a cuspidal matroid with $r=k-1$ is given by $\operatorname{si}(\LL_{k-1,k,h,n}) \cong \U_{k-1,h}$. 
This retrieves \cite[Theorem~1.4]{ferroni-nasr-vecchi}, so our Theorem~\ref{thm:kl-for-split} can be seen as a generalization of that result\footnote{Here we omitted the case of the \emph{inverse Kazhdan--Lusztig polynomial}, but this can be included with a similar reasoning.}.
Furthermore, observe that the proof in the paper by Ferroni, Nasr and Vecchi was carried out without relying on the valuativeness of these invariants and therefore is much more involved. 

\begin{remark}
    It is of high interest to produce formulas for all cuspidal matroids. The preceding theorem would immediately yield a handy expression of these invariants for any elementary split matroids. 
    Notice that for paving matroids this issue is handled by the arguments we mentioned in the prior paragraph. 
    In such a case, the only cuspidal matroids in the formulas are of the form $\LL_{k-1,k,h,n}$ and their  Kazhdan--Lusztig polynomials coincide with those of the uniform matroid $\U_{k-1,h}$. 
\end{remark}

There are only few matroids for which explicit formulas for the Kazhdan--Lusztig invariants $P_\M(t)$ and $Z_\M(t)$ are known. In addition to uniform matroids, there are formulas for some graphic matroids, e.g., fans, wheels, and whirls \cite{wheels-whirls}, thagomizer graphs and some particular bipartite graphs \cite{gedeon-thagomizer}, parallel connection graphs \cite{braden-deletion} and complete graphs \cite{karn-wakefield,ferroni-larson}.  
Explicit formulas for the Kazhdan--Lusztig polynomial of sparse paving matroids are presented in \cite{lee-nasr-radcliffe}, which are derived via an enumeration of certain (skew) Young tableaux. Such expressions and their analogues for the $Z$-polynomials for sparse paving matroids were found independently with another approach in \cite{ferroni-vecchi}. The framework in which the two mentioned articles derive their key results are particular instances of Theorem~\ref{thm:main}, along with properties that the discussed polynomials satisfy.

\subsubsection{Formulas for corank $2$ matroids}\label{sec:kl-corank2}

Since the class of all elementary split matroids contains all copaving matroids, in particular it contains the class of all corank $2$ matroids without coloops. Using some basic properties from the Kazhdan--Lusztig polynomials, we will prove explicit formulas that allow to compute easily $P_\M(t)$ for an arbitrary corank $2$ matroid. This is our next goal.

In \cite{braden-deletion}, Braden and Vysogorets derived formulas that write the Kazhdan--Lusztig and the $Z$-polynomial of a matroid in terms of their counterparts for the matroid obtained by deleting one element of the ground set. In particular, the expression they obtained can be used directly to compute the Kazhdan--Lusztig and the $Z$-polynomial of the matroids $\mathsf{C}_{a,b}$ that we introduced in Section~\ref{sec:subdivision-rank2}.

\begin{lemma}[{\cite[Theorem~3.2 \& Example~3.4]{braden-deletion}}]\label{lemma:kl-for-cab}
    The Kazhdan--Lusztig polynomial and the $Z$-polynomial of the matroid $\mathsf{C}_{a,b}$ are given by
    \begin{align*} 
        P_{\mathsf{C}_{a,b}}(t) &= P_{\U_{n-2,n-1}}(t) - t P_{\U_{a-2,a-1}}(t)P_{\U_{b-2,b-1}}(t),\\
        Z_{\mathsf{C}_{a,b}}(t) &= Z_{\U_{n-2,n-1}}(t).
    \end{align*}
    where $n = a + b - 1$.
\end{lemma}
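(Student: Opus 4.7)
The plan is to invoke Theorem 3.2 of Braden and Vysogorets \cite{braden-deletion}, specialized as in their Example 3.4, to the distinguished element $a$ that is the edge common to both cycles of $\mathsf{C}_{a,b}$. Two observations drive the proof: the identification of the deletion and contraction minors at $a$, and the verification that $a$ satisfies the combinatorial hypothesis that makes the Braden--Vysogorets formula assume the simple shape of Example 3.4.

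For the first observation, deleting $a$ merges the two arcs left over after removing the shared edge from each cycle into a single cycle of length $(a-1)+(b-1) = n-1$, so $\mathsf{C}_{a,b}\setminus a$ is the graphic matroid of an $(n-1)$-cycle, namely $\U_{n-2,n-1}$. Contracting $a$ identifies its two endpoints, producing two edge-disjoint cycles of lengths $a-1$ and $b-1$ joined at a vertex, and the corresponding matroid is the direct sum $\U_{a-2,a-1}\oplus\U_{b-2,b-1}$. The multiplicativity of the Kazhdan--Lusztig polynomial on direct sums (Proposition \ref{prop:properties-of-kl}(b)) then yields
\[
P_{\mathsf{C}_{a,b}/a}(t) \;=\; P_{\U_{a-2,a-1}}(t)\, P_{\U_{b-2,b-1}}(t)\enspace .
\]

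For the second observation, Example 3.4 of Braden--Vysogorets applies precisely when the deleted element sits inside a very restricted cyclic-flat configuration. From Lemma \ref{lem:mat_poly_Cab} the proper cyclic flats of $\mathsf{C}_{a,b}$ are exactly $A$ and $B$, and they satisfy $A\cap B=\{a\}$ and $A\cup B=E$; moreover, $a$ is neither a loop nor a coloop. This is exactly the setting in which the correction term in the deletion formula for the Kazhdan--Lusztig polynomial collapses to a single summand proportional to $P_{\mathsf{C}_{a,b}/a}$, with the factor of $t$ arising from the grading shift coming from the corank of the contracted element, while the correction term in the deletion formula for the $Z$-polynomial vanishes identically.

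The main obstacle is not conceptual but bookkeeping: one must match the combinatorial identification of the two minors to the hypotheses of Example 3.4 carefully, keeping track of the sign conventions and of the $t$-shift in Braden--Vysogorets' statement. Once the cyclic-flat computation of Lemma \ref{lem:mat_poly_Cab} is in place, and the multiplicativity on direct sums is invoked, both identities in the lemma follow by direct substitution, with no further matroid-theoretic ingredient required.
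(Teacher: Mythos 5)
Your proposal is correct and matches the paper's approach: the paper simply cites Braden--Vysogorets (Theorem 3.2 and Example 3.4), and you supply the verification that the citation applies by correctly identifying $\mathsf{C}_{a,b}\setminus a \cong \U_{n-2,n-1}$ and $\mathsf{C}_{a,b}/a \cong \U_{a-2,a-1}\oplus\U_{b-2,b-1}$, then invoking multiplicativity. One small caveat: the phrase ``the correction term \ldots collapses to a single summand proportional to $P_{\mathsf{C}_{a,b}/a}$'' is slightly loose terminology, since in Braden--Vysogorets' framework $-tP_{\M/e}$ is part of the base deletion formula while the actual error sum (the terms weighted by $\tau$) vanishes outright for parallel connections; but this phrasing does not affect the validity of the argument.
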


If we are willing to use Theorem~\ref{thm:kl-for-split} in order to calculate the Kazhdan--Lusztig and the $Z$-polynomial of an arbitrary corank $2$ matroid, the difficulty that we have to overcome is to evaluate them for all cuspidal matroids of corank $2$. 
By looking at Definition~\ref{def:cuspidal} we see that $0\leq k-r\leq n-h$ is required. The condition on the rank tells us that $k = n-2$, and hence $r\leq h \leq n-k+r = r + 2$. 
This leaves us with three possible values for $h$. If $h=r$ or $h=r+2$, then $\LL_{r,n-2,h,n}$ is either $\LL_{r,n-2,r,n} \cong \U_{n-2-r,n-r}\oplus \U_{r,r}$ by equation \eqref{eq:cuspidal-border-case} or $\LL_{r,n-2,r+2,n}\cong \U_{n-2,n}$ by Remark~\ref{remark:cases-cuspidal-is-uniform}, respectively.
The remaining case $h=r+1$ can be treated with Corollary~\ref{coro:valuative-invariant-corank2}.

Observe that the Kazhdan--Lusztig polynomial of a corank $2$ matroid with $\ell$ coloops does not change when contracting all coloops, whereas the $Z$-polynomial changes by a factor $(t+1)^{\ell}$. In particular, to simplify our computation, we may restrict our attention to coloopless matroids.

\begin{teo}\label{thm:kl-z-corank2}
    Let $\M$ be a corank $2$ matroid without coloops. Assume that for each $r$, $\uplambda_r$ denotes the number of stressed subsets of rank $r$ and size $r+1$ in $\M$. Then
    \begin{align*}
        P_{\M}(t) &= P_{\U_{n-2,n}}(t)-\\ 
        &\quad \sum_{r} \uplambda_r\left((n-r-2)P_{\U_{n-2,n-1}}(t) -  \sum_{a=2}^{n-r-1} P_{\U_{n-a-1,n-a}}(t)\left(P_{\U_{a-1,a}}(t)+t P_{\U_{a-2,a-1}}(t)\right) \right),\\
        Z_{\M}(t) &= Z_{\U_{n-2,n}}(t) - \sum_{r} \uplambda_r \left( (n-r-2)Z_{\U_{n-2,n-1}}(t) - \sum_{a=2}^{n-r-1} Z_{\U_{a-1,a}}(t)Z_{\U_{n-a-1,n-a}}(t)\right).
    \end{align*}
\end{teo}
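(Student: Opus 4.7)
The plan is to view $\M$ as an elementary split matroid, specialize Theorem~\ref{thm:kl-for-split}, and then compute the Kazhdan--Lusztig and $Z$-polynomials of the only cuspidal matroids that appear by means of the subdivision of Proposition~\ref{prop:kl-subdivision}. The cases where $\M$ has loops collapse to the loopless situation by multiplicativity (cf.\ Proposition~\ref{prop:properties-of-kl} and Theorem~\ref{thm:properties-zeta}), so I assume $\M$ is loopless and coloopless throughout. Since $\M^*$ has rank $2$ and is loopless, it is paving and thus elementary split by Theorem~\ref{thm:kristof-equivalences}; hence $\M$ is elementary split as well, as the class is closed under duality.

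The first key step is to determine which pairs $(r,h)$ can index a non-trivial stressed subset with non-empty cusp. By Proposition~\ref{prop:cyclic-covering-iff-stressed} these are precisely the proper cyclic flats of $\M$. Using Lemma~\ref{lemma:cyclic-flat-dual}, proper cyclic flats of $\M$ are complements of proper cyclic flats of $\M^*$, which in a rank $2$ loopless matroid are exactly the parallel classes of size $\geq 2$. A straightforward rank computation then shows that every proper cyclic flat $F$ of $\M$ satisfies $|F|=\rank(F)+1$. (For the excluded case $h=r+2$, one checks that every element outside $F$ would be a coloop of $\M/F\cong \U_{n-2-r,n-r-2}$, and by the formula $\rank_{\M/F}(T)=\rank_\M(F\cup T)-\rank_\M(F)$ this lifts to a coloop of $\M$, contradicting our assumption. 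The case $h=r$ contributes zero because $\LL_{r,n-2,r,n}\cong \U_{n-2-r,n-r}\oplus \U_{r,r}$, and multiplicativity makes the associated summand vanish.) So only the diagonal $\uplambda_r:=\uplambda_{r,r+1}$ matters, and Theorem~\ref{thm:kl-for-split} gives
\[
P_\M(t) = P_{\U_{n-2,n}}(t) - \sum_r \uplambda_r\Bigl(P_{\LL_{r,n-2,r+1,n}}(t) - P_{\U_{n-r-2,n-r-1}}(t)\,P_{\U_{r,r+1}}(t)\Bigr),
\]
and the analogous identity for $Z_\M(t)$.

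Next, Proposition~\ref{prop:kl-subdivision} provides a matroidal subdivision of $\mathscr{P}(\LL_{r,n-2,r+1,n})$ whose maximal cells are $\mathscr{P}(\mathsf{C}_{a,n+1-a})$ for $2\leq a\leq n-r-1$, and whose interior codimension one faces are base polytopes of $\U_{a-1,a}\oplus\U_{n-a-1,n-a}$. Applying valuativeness of $P$ and $Z$ (Proposition~\ref{prop:properties-of-kl} and Theorem~\ref{thm:properties-zeta}) together with Lemma~\ref{lemma:kl-for-cab} and multiplicativity yields
\[
P_{\LL_{r,n-2,r+1,n}}(t) \;=\; (n-r-2)\,P_{\U_{n-2,n-1}}(t) - t\!\!\sum_{a=2}^{n-r-1}\! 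P_{\U_{a-2,a-1}}\,P_{\U_{n-a-1,n-a}} - \sum_{a=2}^{n-r-2} P_{\U_{a-1,a}}\,P_{\U_{n-a-1,n-a}}\,,
\]
while for $Z$ the much cleaner identity $Z_{\mathsf{C}_{a,n+1-a}}(t)=Z_{\U_{n-2,n-1}}(t)$ gives
\[
Z_{\LL_{r,n-2,r+1,n}}(t) \;=\; (n-r-2)\,Z_{\U_{n-2,n-1}}(t) - \sum_{a=2}^{n-r-2} Z_{\U_{a-1,a}}\,Z_{\U_{n-a-1,n-a}}\,.
\]

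Finally, I will recombine. The only non-obvious bookkeeping step is to observe that the correction term $P_{\U_{n-r-2,n-r-1}}(t)\,P_{\U_{r,r+1}}(t)$ coming from Theorem~\ref{thm:kl-for-split} is exactly the $a=n-r-1$ summand of $\sum_{a}P_{\U_{a-1,a}}\,P_{\U_{n-a-1,n-a}}$, so subtracting it extends the upper limit of that sum from $n-r-2$ to $n-r-1$ and the two remaining sums merge into $\sum_{a=2}^{n-r-1}P_{\U_{n-a-1,n-a}}\bigl(P_{\U_{a-1,a}}+tP_{\U_{a-2,a-1}}\bigr)$; the identical reindexing works for $Z$. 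This produces the formulas in the statement. The only real obstacle is this last combinatorial reconciliation of the summation ranges; every other step is a direct invocation of results established earlier in the paper.
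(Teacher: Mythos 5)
Your proof is correct and follows essentially the same route as the paper: specialize Theorem~\ref{thm:kl-for-split} to the corank-$2$ case where only $h=r+1$ contributes, compute $P_{\LL_{r,n-2,r+1,n}}$ and $Z_{\LL_{r,n-2,r+1,n}}$ via the subdivision of Proposition~\ref{prop:kl-subdivision} and Lemma~\ref{lemma:kl-for-cab}, and absorb the direct-sum correction term as the $a=n-r-1$ summand to merge the two ranges. The only difference is cosmetic: you justify $h=r+1$ by dualizing to parallel classes of the rank-$2$ matroid $\M^*$, whereas the paper argues directly that $h=r$ forces independence and $h=r+2$ forces a coloop, but both observations are elementary and lead to the same conclusion.
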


\begin{proof}
    Let us apply Theorem~\ref{thm:kl-for-split}. Observe that in a corank $2$ matroid $\M$, all subsets with non-empty cover satisfy $r+1=h$, because $r=h$ implies being independent and $r+2=h$ implies that there are coloops in $\M$. In particular, this explains why the sum appearing in the formula of the statement depends only on $r$. Since the Kazhdan--Lusztig and the $Z$-polynomial are valuative invariants, we can use Corollary~\ref{coro:valuative-invariant-corank2} to compute them for the matroids $\LL_{r,n-2,r+1,n}$. We will indicate the proof for $P$, given that for $Z$ the statements are completely analogous. Using Lemma~\ref{lemma:kl-for-cab} we have
    \[
        P_{\LL_{r,n-2,r+1,n}}(t) = \sum_{a=2}^{n-r-1}\left( P_{\U_{n-2,n-1}}(t) - t P_{\U_{a-2,a-1}}(t)P_{\U_{n-a-1,n-a}}(t)\right) - \sum_{a=2}^{n-r-2} P_{\U_{a-1,a}}(t) P_{\U_{n-a-1,n-a}}(t).
    \]
    Notice that both sums have different upper limits. However, in the formula of Theorem~\ref{thm:main} we have to consider $f(\LL_{r,n-2,r+1,n}) - f(\U_{r,r+1}\oplus \U_{n-r-2,n-r-1})$. For the Kazhdan--Lusztig polynomial, this correspond to subtracting exactly the missing term in the second sum above. Grouping those accordingly leads to the expression of the statement. 
\end{proof}

In \cite{lee-nasr-radcliffe} a conjecture attributed to Gedeon asserts that if one fixes the rank $k$ and the cardinality $n$ of a matroid, then the uniform matroid $\U_{k,n}$ attains the coefficient-wise maximum Kazhdan--Lusztig polynomial; see also \cite[Conjecture 1.6]{karn-proudfoot-nasr-vecchi}. Analogous statements are expected to be true also for  $Z_{\M}(t)$ and $\gamma_{\M}(t)$ in general; some evidence supporting such conjectures is is that these are known facts for paving matroids \cite{ferroni-nasr-vecchi,karn-proudfoot-nasr-vecchi}. As a consequence of the preceding result, it is possible to prove by using properties of the Narayana and Motzkin numbers that this conjecture is true for the Kazhdan--Lusztig and the $Z$-polynomial for matroids of corank $2$. We will state our result and indicate the proof only for $Z_{\M}(t)$. For $P_{\M}(t)$, the proof gets slightly cumbersome as the terms that we must handle are more complicated. 

\begin{teo}
    Let $\M$ be a coloopless matroid of corank $2$ and cardinality $n$. Then $Z_{\M}(t)$ is coefficient-wise smaller than $Z_{\U_{n-2,n}}(t)$.
\end{teo}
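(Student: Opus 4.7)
The plan is to invoke the closed formula in Theorem~\ref{thm:kl-z-corank2} and reduce the coefficient-wise positivity of $Z_{\U_{n-2,n}}(t)-Z_{\M}(t)$ to an inequality between Narayana polynomials. Theorem~\ref{thm:kl-z-corank2} gives
\[
Z_{\U_{n-2,n}}(t)-Z_{\M}(t)\;=\;\sum_{r}\uplambda_{r}\,\Phi_{r}(t),\qquad
\Phi_{r}(t):=(n-r-2)\,Z_{\U_{n-2,n-1}}(t)-\sum_{a=2}^{n-r-1} Z_{\U_{a-1,a}}(t)\,Z_{\U_{n-a-1,n-a}}(t),
\]
and since each $\uplambda_{r}$ is non-negative, it suffices to show that $\Phi_{r}(t)$ has non-negative coefficients for every admissible $r$. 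The first step is to identify $Z_{\U_{k-1,k}}(t)$ explicitly: the lattice of flats of $\U_{k-1,k}$ consists of every subset of $[k]$ of size at most $k-2$, each of rank equal to its cardinality, together with $[k]$ as the top flat. Unpacking the definition of the $Z$-polynomial and using the standard evaluation of $P_{\U_{j-1,j}}(t)$ then yields $Z_{\U_{k-1,k}}(t)=N_{k}(t)$, where
\[
N_{k}(t)=\sum_{j=1}^{k}\frac{1}{k}\binom{k}{j}\binom{k}{j-1}\,t^{j-1}
\]
is the $k$-th Narayana polynomial.

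With this identification, $\Phi_{r}(t)=(n-r-2)\,N_{n-1}(t)-\sum_{a=2}^{n-r-1} N_{a}(t)\,N_{n-a}(t)$, in which the inner sum has exactly $n-r-2$ summands. Therefore non-negativity of $\Phi_{r}(t)$ follows once we prove the \emph{Narayana inequality}: for every $a$ with $1\leq a\leq n-1$,
\[
N_{n-1}(t)\;\geq\; N_{a}(t)\,N_{n-a}(t)\qquad\text{coefficient-wise.}
\]
To establish it we use the interpretation of $N(n,k)$ as the number of non-crossing partitions of $[n]$ with exactly $k$ blocks. Given a non-crossing partition $\pi_{1}$ of $[a]$ with $p+1$ blocks and a non-crossing partition $\pi_{2}$ of $\{a,a+1,\ldots,n-1\}$ with $q+1$ blocks (obtained by the obvious shift of the ground set of a partition counted by $N(n-a,q+1)$), one forms a partition $\pi$ of $[n-1]$ by keeping all blocks of $\pi_{1}$ and $\pi_{2}$ that do not contain $a$, and merging the two $a$-containing blocks into a single block of $\pi$. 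Every block of $\pi$ lies either in $[a-1]$, in $\{a+1,\ldots,n-1\}$, or contains the element $a$; hence $\pi$ is non-crossing, and it has $p+q+1$ blocks. The assignment $(\pi_{1},\pi_{2})\mapsto \pi$ is injective since $\pi_{1}$ and $\pi_{2}$ are recovered by restricting $\pi$ to $[a]$ and to $\{a,\ldots,n-1\}$ respectively.

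Summing over all decompositions $p+q=j$ produces an injection of the pairs enumerated by $[t^{j}]\,N_{a}(t)N_{n-a}(t)$ into the non-crossing partitions enumerated by $N(n-1,j+1)=[t^{j}]\,N_{n-1}(t)$, which establishes the Narayana inequality and hence $\Phi_{r}(t)\geq 0$ coefficient-wise for every $r$. The most delicate point of the argument is ensuring that the merged partition $\pi$ remains non-crossing; this boils down to the observation that, after the shift, the blocks of $\pi_{1}$ live entirely in $[a]$ and the blocks of $\pi_{2}$ live entirely in $\{a,\ldots,n-1\}$, so that no block spans $a$ without containing $a$ itself.
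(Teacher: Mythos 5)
Your reduction is identical to the paper's: you invoke Theorem~\ref{thm:kl-z-corank2}, observe that non-negativity of the $\uplambda_r$ reduces the claim to the coefficient-wise inequality $Z_{\U_{a-1,a}}(t)\,Z_{\U_{n-a-1,n-a}}(t) \preceq Z_{\U_{n-2,n-1}}(t)$, and then establish that inequality combinatorially. Where you diverge is in the final step: the paper proves the strictly stronger $\gamma$-level statement $\gamma_{\U_{a-1,a}}(t)\,\gamma_{\U_{n-a-1,n-a}}(t) \preceq \gamma_{\U_{n-2,n-1}}(t)$, using the known identification of $[t^i]\gamma_{\U_{k-1,k}}(t)$ with Motzkin paths of length $k-1$ having $i$ north-east steps and an injection by concatenation; you instead identify $Z_{\U_{k-1,k}}(t)$ itself with the Narayana polynomial $N_k(t)$ and prove $N_a(t)\,N_{n-a}(t)\preceq N_{n-1}(t)$ via an injection on non-crossing partitions that merges the two $a$-containing blocks. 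Both injections are correct and in fact reflect the same phenomenon at different levels (the $\gamma$-transform of $N_k$ is the Motzkin-path polynomial, and your non-crossing merge is essentially the unfolding of the paper's path concatenation under the standard bijection). The paper's route buys a stronger, self-contained conclusion ($\gamma$-domination, which is independently interesting in light of Remark~\ref{remark:gamma}); yours is more elementary in that it avoids passing to the $\gamma$-polynomial and the implicit appeal to the fact that the linear substitution $\gamma(t)\mapsto \gamma\bigl(\tfrac{t}{(1+t)^2}\bigr)(1+t)^d$ preserves coefficient-wise order. One small thing to tighten: the identification $Z_{\U_{k-1,k}}(t)=N_k(t)$ is asserted rather than proved; either expand the sum $Z_{\U_{k-1,k}}(t)=\sum_{j=0}^{k-2}\binom{k}{j}t^j P_{\U_{k-1-j,k-j}}(t)+t^{k-1}$ and verify it against the Narayana recurrence, or cite the Catalan/Narayana interpretation of the $Z$-polynomial of a Boolean-lattice truncation, since the present paper only records the equivalent Motzkin-path description of $\gamma_{\U_{k-1,k}}$.
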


\begin{proof}
    The claim will follow if we prove that  
    \begin{equation} \label{eq:convolution-narayana}
    Z_{\U_{a-1,a}}(t) \cdot Z_{\U_{n-a-1,n-a}}(t) \preceq Z_{\U_{n-2,n-1}}(t),
    \end{equation}
    where $\preceq$ denotes coefficient-wise inequality. This is because in the formula that we derived for $Z_{\M}$ we are subtracting from $Z_{\U_{n-2,n}}(t)$ sums that consist of $n-r-2$ terms, each of the form $Z_{\U_{n-2,n-1}}(t) - Z_{\U_{n-a-1,n-a}}(t)Z_{\U_{a-1,a}}(t)$, for $a=2,\ldots,n-r-1$.
    
    We will prove an even stronger fact: that the inequality in \eqref{eq:convolution-narayana} holds if we use the $\gamma$-polynomial instead.
    \begin{equation} \label{eq:convolution-motzkin}
    \gamma_{\U_{a-1,a}}(t) \cdot \gamma_{\U_{n-a-1,n-a}}(t) \preceq \gamma_{\U_{n-2,n-1}}(t),
    \end{equation}
    By \cite[Remark~5.10]{ferroni-nasr-vecchi}, we have the formula
    \[ [t^i] \gamma_{\U_{k-1,k}}(t) = \frac{1}{k-1-i} \binom{k-1-i}{i}\binom{k-1}{i+1}.\]
    for each $k\geq 1$. According to the OEIS \cite[A055151]{oeis}, we have
    \[ [t^i] \gamma_{\U_{k-1,k}}(t) = \left|\{\text{Motzkin paths from $(0,0)$ to $(k-1,0)$ with $i$ north-east steps}\}\right|,\]
    where a Motzkin path is a lattice path with steps of the form $+(1,0)$, $+(1,1)$ and $+(1,-1)$ that stays always above the $x$-axis. For example in \cite[Figure~3]{oste-vanderjeugt} one can see a picture of all the Motzkin paths from $(0,0)$ to $(4,0)$. There are two paths that have two north-east steps, six paths that have one north-east step, and there is only one path without north-east steps. This corresponds with $\gamma_{\U_{4,5}}(t) = 2t^2+6t+1$.
    Now, the inequality in equation \eqref{eq:convolution-motzkin} becomes immediate, because comparing the coefficients of degree $m$, we must prove that
    \[ \sum_{i=0}^m [t^i]\gamma_{\U_{a-1,a}}(t) \cdot [t^{m-i}]\gamma_{\U_{n-a-1,n-a}}(t) \leq [t^m] \gamma_{\U_{n-2,n-1}}(t).\]
    The right-hand-side is the number of Motzkin paths from $(0,0)$ to $(n-2,0)$ that have $m$ north-east steps. Each summand of the left-hand-side can be thought as enumerating a concatenation of a Motzkin path from $(0,0)$ to $(0,a-1)$ and a Motzkin path from $(a-1,n-2)$, in the first part having $i$ north-east steps, and in the second part having $m-i$ north-steps. Notice that this establishes an injection, since the number $a$ is fixed.
\end{proof}

The main open conjectures regarding the Kazhdan--Lusztig theory of matroids \cite[Conjecture~3.2]{gedeonsurvey} and \cite[Conjecture~5.1]{proudfootzeta} state that $P_{\M}(t)$ and $Z_{\M}(t)$ are real-rooted polynomials. Using the formulas of Theorem~\ref{thm:kl-z-corank2} and the assistance of a computer, we were able to show that.

\begin{teo}
    If $\M$ is a corank $2$ matroid on a ground set of size at most $80$, then $P_{\M}(t)$ and $Z_{\M}(t)$ are real-rooted.
\end{teo}

We end this section with a few remarks and suggestions of problems that may presumably be attacked using the techniques that we presented.

For matroids of rank $3$, being simple is equivalent to being paving. In particular, all of the cosimple matroids of corank $3$ are copaving and hence elementary split. We speculate that it might possible to extend the results in this section to this case. In particular, deriving a formula for all such matroids would be of particular interest, especially for the sake of a computer search of potential counterexamples to the real-rootedness conjectures.

\subsection{Covaluations and Speyer's polynomials}\label{sec:speyer}

Speyer introduced in \cite{speyer} an invariant for matroids that originates in the $K$-theoretic study of the Grassmannian. Although the definition is somewhat involved, in the end this invariant interacts nicely with matroidal subdivisions of the base polytope. This map is not a valuation, but it satisfies a similar property. Derksen and Fink called this property ``covaluative'' in \cite{derksen-fink}. In this section we first review this notion and derive an analog of our Theorem~\ref{thm:main} for covaluations. After that, we discuss the basic properties of the $g$-polynomial defined by Speyer, and we prove the following new results: a closed formula for the $g$-polynomial of arbitrary paving matroids, which we use to conclude the positivity of the coefficients of the $g$-polynomial of all sparse paving matroids.

\subsubsection{Covaluations and covaluative invariants} As we have explained in Section~\ref{sec:polytopes}, valuations are the maps that, roughly speaking, respect inclusion-exclusion under polytope subdivisions. If one disregards the signs appearing in equation~\eqref{eq:weak-valuation-inclusion-exclusion}, the resulting notion is that of \emph{weak covaluation}. Similarly, a notion of \emph{strong covaluation} can be defined by modifying Definition~\ref{def:strong-valuation} and asking for maps that factor through indicator functions of \emph{interiors} of polytopes. 

Derksen and Fink \cite{derksen-fink} proved that, for matroid polytopes, the notions of weak and strong covaluations agree. In particular, we will again drop the words `weak' and `strong' in front of covaluation. In fact, the following concise definition comprises all the preceding discussion.

\begin{defi}
    Let $\mathrm{A}$ be an abelian group. A map $f:\mathfrak{M}\to \mathrm{A}$ is said to be a \emph{covaluation} if the map $g(\M) := (-1)^{c(\M)} f(\M)$ is a valuation.
\end{defi}

In the above definition, $c(\M)$ denotes the number of connected components of the matroid $\M$. Of course, there are counterparts of Theorem~\ref{thm:main} and Corollary~\ref{thm:main-for-sparse-paving} for covaluations. A detail to take into account is that more care is needed when dealing with disconnected matroids.

\begin{teo}\label{thm:maincovaluative}
    Let $\M$ be a connected split matroid of rank $k$ and cardinality $n$, and let $f$ be a covaluative invariant then
    \[ f(\M) = f(\U_{k,n}) - \sum_{r,h} \uplambda_{r,h} \left(f(\LL_{r,k,h,n}) + f(\U_{r,h}\oplus \U_{k-r,n-h})\right),  \]
    where $\uplambda_{r,h}$ denotes the number of stressed subsets with non-empty cover of size $h$ and rank $r$.
\end{teo}

\begin{coro}\label{coro:main-for-sparse-paving-covaluative}
    Let $\M$ be a connected sparse paving matroid of rank $k$ and cardinality $n$ and having exactly $\uplambda$ circuit-hyperplanes. Let $f$ be a covaluative invariant, then
    \[ f(\M) = f(\U_{k,n}) - \uplambda \left(f(\mathsf{T}_{k,n}) + f(\U_{k-1,k}\oplus \U_{1,n-k})\right).  \]
\end{coro}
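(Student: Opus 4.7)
The plan is to derive this corollary as a direct specialization of Theorem~\ref{thm:maincovaluative} to the class of sparse paving matroids, mirroring the argument used to deduce Corollary~\ref{thm:main-for-sparse-paving} from Theorem~\ref{thm:main} in the valuative case.

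First, I would verify that a connected sparse paving matroid $\M$ of rank $k$ on $n$ elements is a connected split matroid, so that Theorem~\ref{thm:maincovaluative} applies. This is known and in fact sparse paving matroids form a subclass of paving matroids, which are elementary split by Corollary~\ref{cor:charact-paving} and the ensuing discussion. Next, I would identify exactly which stressed subsets with non-empty cusp $\M$ carries. In a sparse paving matroid, every subset of size $k-1$ is independent and every subset of size $k+1$ is spanning, so a subset $A$ has non-empty cusp only if its size and rank allow for non-bases of size $k$ inside it; this forces $|A| = k$ and $\rk(A) = k-1$, that is, $A$ is a circuit-hyperplane. Conversely every circuit-hyperplane is stressed with non-empty cusp. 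Therefore the only nonzero coefficient $\uplambda_{r,h}$ in Theorem~\ref{thm:maincovaluative} is $\uplambda_{k-1,k} = \uplambda$.

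With this, the sum in Theorem~\ref{thm:maincovaluative} collapses to a single term, yielding
\[
f(\M) = f(\U_{k,n}) - \uplambda\left(f(\LL_{k-1,k,k,n}) + f(\U_{k-1,k} \oplus \U_{1,n-k})\right).
\]
Finally, I would invoke the isomorphism $\LL_{k-1,k,k,n} \cong \mathsf{T}_{k,n}$ noted just after Definition~\ref{def:cuspidal} and the fact that the covaluative invariant $f$, being an invariant, is preserved under this isomorphism. Substituting this identification gives the formula in the statement.

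There is essentially no serious obstacle here: the entire content of the proof is already packaged in Theorem~\ref{thm:maincovaluative}, and the only thing to check is the combinatorial observation that the non-empty cusps of a sparse paving matroid are precisely its circuit-hyperplanes. The one subtle point worth flagging is the sign in the covaluative formula (a plus in front of $f(\U_{k-1,k}\oplus\U_{1,n-k})$ rather than the minus that appears in the valuative analog), which stems directly from the defining identity of weak covaluations where no $(-1)^{\dim\mathscr{P} - \dim\mathscr{P}_i}$ factor appears; once Theorem~\ref{thm:maincovaluative} is in place, this sign is automatic and requires no separate verification.
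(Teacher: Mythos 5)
Your proposal is correct and follows precisely the route the paper intends (the corollary is left without an explicit proof, but it is meant as the covaluative analogue of Corollary~\ref{thm:main-for-sparse-paving}, whose proof the paper does give): specialize Theorem~\ref{thm:maincovaluative} to the sparse paving case, observe that the only stressed subsets with non-empty cusp are the circuit-hyperplanes so that $\uplambda_{k-1,k}=\uplambda$ is the sole nonzero coefficient, and finish with the isomorphism $\LL_{k-1,k,k,n}\cong\mathsf{T}_{k,n}$. You also correctly flag the need to check that $\M$ is a connected split matroid before invoking Theorem~\ref{thm:maincovaluative}, which is exactly why the statement here carries the connectedness hypothesis that Corollary~\ref{thm:main-for-sparse-paving} did not.
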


\subsubsection{Speyer's polynomial} 
As mentioned before, a famous covaluative invariant is the $g$-polynomial, introduced by Speyer in \cite{speyer} and further studied in \cite{fink-speyer}. It has an intricate definition that originates in $K$-theory. However, by taking advantage of the results in our Appendix~\ref{appendix-series-parallel}, we can give a handy definition of the $g$-polynomial by means of the following characterization.

\begin{teo}
    There exists a unique invariant $g:\mathfrak{M}\to \mathbb{Z}[t]$, defined by $\M\mapsto g_{\M}(t)$ satisfying the following properties.
    \begin{enumerate}[\normalfont (a)]
        \item $g$ is covaluative.
        \item If $\M$ has loops or coloops, then $g_{\M}(t) = 0$.
        \item If $\M$ is loopless and coloopless and is isomorphic to a direct sum of $m$ series-parallel matroids, then $g_{\M}(t)=t^m$.
    \end{enumerate}
    The polynomial $g_\M(t)$ is Speyer's $g$-polynomial. 
\end{teo}

To the best of our knowledge, this is the first time that this statement appears as a theorem in the literature. The existence of an invariant satisfying all these properties is proved by Speyer in \cite{speyer}. However, in the arXiv version of \cite{speyer}\footnote{\url{https://arxiv.org/pdf/math/0603551.pdf}} (which differs substantially from the published version), Conjecture~11.3 only \emph{postulates the uniqueness} of such an invariant. This actually follows immediately from the results in our Appendix~\ref{appendix-series-parallel}.

There are several ways of deducing the following explicit expression for the $g$-polynomial of a uniform matroid. A possible proof is following the subdivisions of Proposition~\ref{prop:subdiv-lpm}, and we encourage the readers to try that approach by themselves.

\begin{prop}[{\cite[Proposition~10.1]{speyer}}]
    The $g$-polynomial of the uniform matroid $\U_{k,n}$, where $1\leq k\leq n-1$ is given by the polynomial
        \[ g_{\U_{k,n}}(t) = \sum_{i=1}^{k} \binom{n-i-1}{k-i}\binom{n-k-1}{i-1} \,t^i\enspace .\]
\end{prop}

Observe that this implies that $g_{\U_{n-1,n}}(t) = g_{\U_{1,n}}(t) = t$ for every $n\geq 2$. On the other hand, $g_{\U_{0,0}}(t)=1$ whereas $g_{\U_{0,n}}(t) = g_{\U_{n,n}}(t) = 0$ for $n>0$. 

Now let us state a short list of properties this invariant possesses. These and further results were proved in \cite{speyer}, \cite{derksen-fink} and \cite{fink-speyer}.

\begin{prop}\label{prop:properties-of-speyer}
    The $g$-polynomial of a matroid $\M$ has the following properties.
    \begin{enumerate}[\normalfont (a)]
        \item It is multiplicative, namely $g_{\M_1\oplus\M_2}(t) = g_{\M_1}(t) \cdot g_{\M_2}(t)$.
        \item It is invariant under taking duals, i.e., $g_{\M^*}(t) = g_{\M}(t)$.
        \item If $\M$ is loopless, then $g$ is invariant under simplification, namely $g_{\operatorname{si}(\M)}(t) = g_{\M}(t)$  .
    \end{enumerate}
\end{prop}

The following result provides a way of computing the $g$-polynomial for any paving matroid. This can be seen as a generalization of \cite[Proposition~10.3]{speyer}, which is an explicit formula for the $g$-polynomial of all loopless matroids of rank $2$ (recall that such matroids are in particular paving).

\begin{teo}\label{teo:speyer-for-paving}
    Let $\M$ be a connected paving matroid of rank $k$ and cardinality $n$. Then
    \[ g_{\M}(t) = g_{\U_{k,n}}(t) - \sum_{h = k}^{n-1} \uplambda_h \left( g_{\U_{k,h+1}}(t) + t\cdot g_{\U_{k-1,h}}(t)\right),\]
    where $\uplambda_h$ denotes the number of (stressed) hyperplanes of $\M$ of cardinality $h$.
\end{teo}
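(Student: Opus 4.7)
The plan is to apply Theorem~\ref{thm:maincovaluative} directly, exploiting the fact that the $g$-polynomial is a covaluation. First I would observe that since $\M$ is a connected paving matroid, it is in particular a connected split matroid (in fact elementary split), so the hypotheses of Theorem~\ref{thm:maincovaluative} are satisfied. For a paving matroid, the stressed subsets with non-empty cusp are exactly the stressed hyperplanes; equivalently, the only non-zero parameters $\uplambda_{r,h}$ occur with $r=k-1$, and the range of $h$ is $k\leq h\leq n-1$ (the lower bound comes from Remark~\ref{rem:cusp-corank1} which identifies $\cusp(H)=\binom{H}{k}$, so non-emptiness of the cusp forces $h\geq k$, and the upper bound from the hyperplane being non-spanning). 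Writing $\uplambda_h$ for $\uplambda_{k-1,h}$, Theorem~\ref{thm:maincovaluative} therefore specializes to
\[
g_{\M}(t) \;=\; g_{\U_{k,n}}(t) \;-\; \sum_{h=k}^{n-1} \uplambda_h\,\Bigl(g_{\LL_{k-1,k,h,n}}(t)\,+\,g_{\U_{k-1,h}\oplus \U_{1,n-h}}(t)\Bigr).
\]

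The next step is to simplify each of the two cuspidal/disconnected contributions using Proposition~\ref{prop:properties-of-speyer}. Since $\LL_{k-1,k,h,n}$ is loopless and satisfies $\operatorname{si}(\LL_{k-1,k,h,n})\cong \U_{k,h+1}$ (as already used in the paving specialization of Theorem~\ref{thm:whitney-for-split}), the invariance of $g$ under simplification of loopless matroids yields $g_{\LL_{k-1,k,h,n}}(t) = g_{\U_{k,h+1}}(t)$. For the second term I would use multiplicativity together with the identity $g_{\U_{1,m}}(t)=t$ for every $m\geq 1$, which follows from the fact that $\U_{1,m}$ is a connected series-parallel matroid (so $g_{\U_{1,m}}(t)=t$ by the very characterization of the $g$-polynomial). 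Since $h\leq n-1$ guarantees $n-h\geq 1$, this gives
\[
g_{\U_{k-1,h}\oplus \U_{1,n-h}}(t) \;=\; g_{\U_{k-1,h}}(t)\cdot g_{\U_{1,n-h}}(t) \;=\; t\cdot g_{\U_{k-1,h}}(t).
\]
Substituting these two simplifications into the displayed identity yields exactly the formula of the theorem.

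There is no real obstacle here; the argument is essentially an unpacking of Theorem~\ref{thm:maincovaluative} under the paving hypothesis, combined with the structural properties of $g$ that are already catalogued in Proposition~\ref{prop:properties-of-speyer}. The only small point requiring care is keeping track of which cuspidal and disconnected pieces actually appear — that is, confirming that the index range $k\leq h\leq n-1$ is correct and that the boundary case $h=n-1$ still produces a genuine $\U_{1,1}$ coloop factor whose $g$-polynomial equals $t$, so that the closed formula is uniformly valid across all admissible hyperplane sizes.
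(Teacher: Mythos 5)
Your proof is correct and follows exactly the paper's own argument: apply Theorem~\ref{thm:maincovaluative}, observe that for a paving matroid the only stressed subsets with non-empty cusp are hyperplanes (so $r=k-1$), and then simplify the two resulting terms via $\operatorname{si}(\LL_{k-1,k,h,n})\cong\U_{k,h+1}$, multiplicativity, and $g_{\U_{1,m}}(t)=t$. Your additional care about the index range $k\le h\le n-1$ and the boundary case $h=n-1$ (where the factor is $\U_{1,1}$, a coloop, still giving $g=t$) is accurate detail that the paper leaves implicit.
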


\begin{proof}
    Notice that by Theorem~\ref{thm:maincovaluative}, it suffices to consider the $g$-polynomials of the matroids $\LL_{r,k,h,n}$ and $\U_{r,h}\oplus \U_{k-r,n-h}$. Since $\M$ is assumed to be paving, in fact all the stressed cyclic flats that we can relax are hyperplanes. Hence it suffices to consider the $g$-polynomial of $\LL_{k-1,k,h,n}$ and $\U_{k-1,h}\oplus \U_{1,n-h}$. 
    The simplification of $\LL_{k-1,k,h,n}$ is the uniform matroid $\operatorname{si}(\LL_{k-1,k,h,n})\cong\U_{k,h+1}$. Hence
    \begin{align*}
        g_{\M}(t) &= g_{\U_{k,n}}(t) - \sum_{h} \uplambda_h \left( g_{\LL_{k-1,k,h,n}}(t) + g_{\U_{k-1,h}\oplus \U_{1,n-h}}(t)\right)\\
        &=g_{\U_{k,n}}(t) - \sum_{h} \uplambda_h \left( g_{\U_{k,h+1}}(t) + t\cdot g_{\U_{k-1,h}}(t)\right),
    \end{align*}
    where in the last step we used the properties listed in Proposition~\ref{prop:properties-of-speyer} along with the fact that $g_{\U_{1,h+1}}(t) = t$.
\end{proof}

\begin{example}
    Consider the complete graph on $4$ vertices, and its induced matroid, $\mathsf{K}_4$. As pointed out in \cite[p.~886]{speyer}, the base polytope of this matroid cannot be decomposed into base polytopes of series-parallel matroids. 
    The matroid $\mathsf{K}_4$ has rank $3$, cardinality $6$, is paving, and has exactly $4$ stressed hyperplanes, all of size $3$. In particular, Theorem~\ref{teo:speyer-for-paving} tells us
    \begin{align*}
        g_{\mathsf{K}_4}(t) &= g_{\U_{3,6}}(t) - 4\left(g_{\U_{3,4}}(t) + t\cdot g_{\U_{2,3}}(t)\right)\\
        &= \left(t^3+6t^2+6t\right) - 4 \cdot(t + t^2)\\
        &= t^3 + 2t^2 + 2t,
    \end{align*}
    which coincides with a calculation of Speyer which can be found in \cite[Figure 1]{speyer}.
\end{example}

\subsubsection{Towards Speyer's conjectures}

A natural question when looking at a subdivision of a matroid polytope is the number of internal faces of each dimension. For example, when looking at Figure~\ref{fig:oct_subdivision} we see a subdivision of $\U_{2,4}$ that has two internal faces of dimension $3$ and one of dimension $2$. A major open problem in the theory of matroid subdivisions is the so-called \emph{$f$-vector conjecture for tropical linear spaces}, posed by Speyer in \cite{speyer-conjecture}. That conjecture asserts the validity of certain upper bounds on the number of faces of each dimension.

Let us discuss a different (but closely related) conjecture by Speyer, that is in fact stronger than the $f$-vector conjecture.

\begin{conj}[\cite{speyer}]\label{conj:speyer2}
    For every matroid $\M$, the polynomial $g_{\M}(t)$ has positive coefficients.
\end{conj}

The above conjecture is known to hold true for matroids representable over a field of characteric~$0$. Such result is the crucial ingredient in proving the $f$-vector conjecture for subdivisions whose matroids are representable over a field of characteristic $0$. The proof in that setting relies on the validity of a deep result in algebraic geometry known as the Kawamata--Viehweg vanishing theorem. Using our Theorem~\ref{teo:speyer-for-paving} it is possible to show that for the class of sparse paving matroids this non-negativity property holds as well.

\begin{teo}
    If $\M$ is a sparse paving matroid then $g_{\M}(t)$ has positive coefficients.
\end{teo}

\begin{proof}
    Since the polynomial $g_\M$ is multiplicative, we may assume that the matroid $\M$ is connected. Furthermore, we use our usual notation, $k$ for the rank, $n$ for the cardinality of the ground set, and $\uplambda$ for the number of circuit-hyperplanes of $\M$. Since the $g$-polynomial is invariant under taking duals, and the dual of a sparse paving matroid is yet another sparse paving matroid, we might assume without loss of generality that $n-k \leq k$. 
    Moreover, since all matroids of rank or corank strictly smaller than $3$ are representable over a field of characteristic $0$, for which the $g$-polynomial is known to have positive coefficients, we are left with the cases $3\leq k \leq n-3$. Throughout the proof we thus assume that $3\leq n-k \leq k \leq n - 3$. With these assumptions the formula of Theorem~\ref{teo:speyer-for-paving} reduces to
    \[ g_{\M}(t) = g_{\U_{k,n}}(t) - \uplambda \left( g_{\U_{k,k+1}}(t) + t \cdot g_{\U_{k-1,k}}(t)\right) = g_{\U_{k,n}}(t) - \uplambda (t + t^2)
    \]
    as $g_{\U_{k,k+1}}(t) = g_{\U_{k-1,k}}(t)=t$.
    Thus, the only two relevant terms are the linear and the quadratic one. By Corollary~\ref{coro:bound-circuit-hyperplanes-sparse-paving} we have that $\uplambda \leq \frac{1}{k+1} \binom{n}{k}$. Further, we have
    \begin{align*}
        [t^1]g_{\U_{k,n}}(t) = \binom{n-2}{k-1} \quad \text{ and } \quad
        [t^2]g_{\U_{k,n}}(t) = \binom{n-3}{k-2}(n-k-1).
    \end{align*}
    Therefore, it suffices to prove the following two inequalities, under the assumption that $n-k\leq k$,
    \begin{equation} \label{eq:speyer-both}
    \frac{1}{k+1}\binom{n}{k} \leq \binom{n-2}{k-1} \enspace \text{ and } \enspace \frac{1}{k+1}\binom{n}{k} \leq \binom{n-3}{k-2}(n-k-1).
    \end{equation}
    The first of these two inequalities is equivalent to showing that
    \begin{equation}\label{eq:speyer-linear}
    \frac{1}{k+1} \leq \frac{\binom{n-2}{k-1}}{\binom{n}{k}} = \frac{k(n-k)}{n(n-1)}\enspace \text{ or, more succinctly,}\enspace n(n-1) \leq k(k+1)(n-k).\end{equation}
    Let us analyze some cases 
    \begin{itemize}
        \item If $k \leq n - 4$. Since we assumed that $n-k\leq k$, we have $n\leq 2k$. In particular, we get the chain of inequalities:
        \[n(n-1) \leq 2k(2k-1)  = 4k^2 - 2k \leq 4k(k+1) \leq k(k+1)(n-k).\]
        Which proves inequality \eqref{eq:speyer-linear} in this case.
        \item In the remaining case $k=n-3$ then the inequality in \eqref{eq:speyer-linear} becomes a (quadratic) inequality that only depends on $n$. The assumption $3\leq n-k \leq k\leq n$ implies that $n = k + (n-k) \geq 6$, and the inequality to prove \eqref{eq:speyer-linear} becomes $n(n-1) \leq (n-3)(n-2)3$ which is trivially true for $n\geq 6$.
    \end{itemize} 
    Now, let us turn our attention the second of the two inequalities in \eqref{eq:speyer-both}. The inequality to prove is a consequence of the easier fact that in general the quadratic coefficient of $g_{\U_{k,n}}(t)$ is larger than the linear one (this is not true when $k=2$ or $k=n-2$, but recall that we are assuming $3\leq n-k\leq k\leq n -3$). We want to prove the inequality
        \begin{equation} \label{eq:speyer-second}
        \binom{n-2}{k-1} \leq \binom{n-3}{k-2}(n-k-1).\end{equation}
    This is equivalent to $n-2 \leq (n-k-1)(k-1)$. Observe that since we are working under the assumption $3\leq n-k\leq k \leq n-3$, we obtain
    \[ n-2 \leq 2k - 2 = 2(k-1) \leq (n-k-1)(k-1). \]
    This proves \eqref{eq:speyer-second}, and completes the proof.
\end{proof}

\subsection{Denham's polynomials and Spectrum polynomials}\label{sec:spectrum}

In \cite{kook-reiner-stanton} Kook, Reiner and Stanton studied the combinatorial Laplacian of the independence complex of a matroid, as introduced by Friedman in \cite{friedman} for simplicial complexes. Kook, Reiner, and Stanton introduced an invariant called the \emph{spectrum polynomial} of a matroid $\M$, which carries the information of the spectra of the Laplacians for the independence complex of $\M$.

\begin{defi}
    The \emph{spectrum polynomial} of a matroid $\M$ is the bivariate polynomial $\operatorname{Spec}_{\M}(t,q) \in \mathbb{Z}[t,q]$ given by
    \[ \operatorname{Spec}_{\M}(t,q) = \sum_{\substack{F_1, F_2\in \mathcal{L}(\M)\\ F_1\subseteq F_2}} |\tilde{\chi}(\M|_{F_1})|\cdot |\mu_{\mathcal{L}(\M)}(F_1,F_2)|\cdot t^{\rk(F_2)}q^{|F_1|}\enspace . \]
\end{defi}

In the prior definition, $\tilde{\chi}(\M)$ stands for the reduced Euler characteristic of the independence complex of $\M$. It is not difficult to show that $|\tilde{\chi}(\M)|=T_{\M}(0,1)$. The reader should not confuse this with the characteristic polynomial $\chi_{\M}(t)$. On the other hand $\mu_{\mathcal{L}(\M)}(F_1,F_2)$ is just the classical M\"obius function on the lattice of flats of the matroid $\M$.

An important property of the spectrum polynomial is that it determines the characteristic polynomial. A long-standing question, posed by Kook, Reiner, and Stanton \cite[Question~2]{kook-reiner-stanton} is whether the spectrum polynomial determines the Tutte polynomial. In this subsection we will answer negatively to their question by constructing a pair of matroids having the same spectrum polynomial but different Tutte polynomials. There are two key ingredients that will be necessary to this end: first, the valuativeness of the spectrum polynomial proved in Corollary~\ref{coro:spectrum-valuative}; and second, the explicit formulas for the spectrum polynomial of cuspidal matroids of Lemma~\ref{lemma:spectrum-cuspidal}, which lead to a fast formula for split matroids in Theorem~\ref{thm:spectrum-split}. These ingredients allowed us to speed up the computation of spectrum polynomials of various matroids, enabling us to capture an example with the desired properties by an exhaustive search.

\subsubsection{Spectrum polynomials and Denham's polynomials are valuations}

In \cite{denham} Denham introduced a more general polynomial that has both the Tutte polynomial and the spectrum polynomial as specializations. 

\begin{defi}
    The \emph{Denham polynomial} of a matroid $\M$ on $E$ is a multivariate polynomial $\Phi_{\M} \in \mathbb{Z}[x,y,\mathbf{b}]$ where $\mathbf{b}=(b_i)_{i\in E}$. It is defined by
        \[ \Phi_{\M}(x,y,\mathbf{b}) = \sum_{\text{$F$ cyc.\ flat}} (-1)^{\rk(\M) - |F|} \chi_{\M/F}(-x)\cdot \chi_{(\M|_F)^*}(-y) \cdot b_F,\]
    where $b_F := \prod_{i\in F} b_i$.
\end{defi}

Observe that $\Phi$ is \emph{not} an invariant because its defining formula depends on how the ground set is labelled. As mentioned earlier, Denham's polynomial can be specialized so to obtain the spectrum polynomial.

\begin{prop}[\cite{denham}]
    The spectrum polynomial can be computed by:
    \begin{equation}\label{eq:formula-spectrum}
        \operatorname{Spec}_{\M}(t,q) = t^{\rk(\M)} \Phi_{\M}(t^{-1},0,q,\ldots,q).
    \end{equation}
\end{prop}

In particular, instead of proving the valuativeness of the spectrum polynomial, let us focus on the valuativeness of Denham's polynomial. Again, we rely on the convolution theorem by Ardila and Sanchez.

\begin{teo}
    Consider the abelian group $\mathbb{Z}[x,y,\mathbf{b}]$ where $\mathbf{b}=(b_1,b_2,\ldots)$ is an infinite vector of variables. Denham's polynomial $\Phi:\mathfrak{M}\to \mathbb{Z}[x,y,\mathbf{b}]$ is a valuation.
\end{teo}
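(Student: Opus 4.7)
The plan is to rewrite Denham's polynomial as a convolution of two valuations and then apply Theorem \ref{thm:convolutions1} of Ardila and Sanchez. First I would observe that although the sum in the definition of $\Phi_{\M}$ ranges over cyclic flats of $\M$, it can be extended to a sum over all subsets $A\subseteq E$ without changing its value: if $A$ is not a flat of $\M$, then $\M/A$ has a loop and so $\chi_{\M/A}(-x)=0$; while if $A$ is a flat but not cyclic, then $\M|_A$ has a coloop, hence $(\M|_A)^*$ has a loop and $\chi_{(\M|_A)^*}(-y)=0$. Factoring out the sign $(-1)^{\rk(\M)}$ this yields
\[
\Phi_{\M}(x,y,\mathbf{b}) \;=\; (-1)^{\rk(\M)}\sum_{A\subseteq E}(-1)^{|A|}\,\chi_{(\M|_A)^*}(-y)\,\chi_{\M/A}(-x)\,b_A\enspace .
\]

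Next I would introduce the two auxiliary maps $f,g:\mathfrak{M}\to\mathbb{Z}[x,y,\mathbf{b}]$ defined on a matroid $\N$ with ground set $E(\N)$ by
\[
f(\N) \;=\; (-1)^{|E(\N)|}\,\chi_{\N^*}(-y)\,b_{E(\N)}\quad\text{and}\quad g(\N)\;=\;\chi_{\N}(-x)\enspace ,
\]
so that $f(\M|_A)\,g(\M/A) = (-1)^{|A|}\chi_{(\M|_A)^*}(-y)\,b_A\,\chi_{\M/A}(-x)$ and consequently $\Phi_{\M}=(-1)^{\rk(\M)}(f\star g)(\M)$. In order to apply Theorem \ref{thm:convolutions1} it suffices to check that both $f$ and $g$ are valuations. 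For $g$ this is immediate from Proposition \ref{prop:char-poly-valuative} by specialising $t\mapsto -x$. For $f$, note that on each $\mathfrak{M}_E$ the factor $(-1)^{|E|}b_E$ is a constant, so the task reduces to showing that $\N\mapsto\chi_{\N^*}(-y)$ is a valuation on $\mathfrak{M}_E$; this holds because the involution $\N\mapsto\N^*$ is induced by the affine reflection $x\mapsto\mathbf{1}-x$ of $\mathbb{R}^E$, which sends matroid subdivisions of $\mathscr{P}(\M)$ to matroid subdivisions of $\mathscr{P}(\M^*)$ and hence pulls back any valuation to a valuation.

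Having $f\star g$ as a valuation by Theorem \ref{thm:convolutions1}, I would conclude by observing that the sign $(-1)^{\rk(\M)}$ is constant on every matroid subdivision, since all its cells share the rank of the ambient matroid; multiplication by this sign therefore preserves the inclusion-exclusion identity defining a valuation. The main (and mild) subtlety is the valuativeness of $\N\mapsto\chi_{\N^*}(-y)$. As an alternative to the duality argument above, one can directly imitate the convolution-style proof of Proposition \ref{prop:char-poly-valuative} with the roles of restriction and contraction dualized, expressing $\chi_{\N^*}(-y)$ as $(-1)^{|E(\N)|}(f_1\star g_1)(\N)$ for maps $f_1,g_1$ that are constant on every matroid subdivision and therefore trivially valuative. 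Note that because $f$ depends on the labelling of the ground set through $b_{E(\N)}$, it is essential to invoke Theorem \ref{thm:convolutions1} rather than the invariant-oriented Corollary \ref{thm:convolutions2}, which is consistent with the statement that $\Phi$ is a valuation and not a valuative invariant.
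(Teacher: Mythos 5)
Your proof is correct and uses the same essential strategy as the paper: extend the sum over cyclic flats to a sum over all subsets via the vanishing of the characteristic polynomial on matroids with loops, then express $\Phi$ as a convolution of two auxiliary maps and invoke Theorem~\ref{thm:convolutions1}. The one genuine difference is in the choice of those two maps. The paper takes $f_1(\M) = T_{\M}(0,1+y)\prod_{i\in E_\M} b_i$ and $f_2(\M) = T_{\M}(1+x,0)$, so that valuativeness of both factors drops out immediately from the known valuativeness of the Tutte polynomial (plus the observation that $\prod b_i$ is constant on any $\mathfrak{M}_E$); the signs $(-1)^{\rk}$ and $(-1)^{|A|}$ are then automatically absorbed by the relation $T_{\N}(1-t,0) = (-1)^{\rk(\N)}\chi_{\N}(t)$ and its dual. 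You instead work directly with $\chi_{\N^*}(-y)$ and $\chi_{\N}(-x)$, pulling a rank-dependent sign out front, which is fine but forces you to (a) argue separately that $(-1)^{\rk(\M)}$ is constant on subdivisions, and (b) justify that $\N\mapsto\chi_{\N^*}(-y)$ is a valuation via the affine reflection $x\mapsto\mathbf{1}-x$. Both of these are correct (and (b) is a useful general observation, since the reflection is a lattice-preserving affine isomorphism taking $\mathscr{P}(\M)$ to $\mathscr{P}(\M^*)$ and carrying matroid subdivisions to matroid subdivisions of the same dimensions), but the paper's Tutte-polynomial packaging sidesteps both steps. Your closing remark that one must use Theorem~\ref{thm:convolutions1} rather than Corollary~\ref{thm:convolutions2} because $f$ is not a matroid invariant is exactly right and worth keeping.
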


\begin{proof}
    Consider the maps $f_1, f_2:\mathfrak{M}\to \mathbb{Z}[x,y,\mathbf{b}]$ defined by
    \begin{align*}
        f_1(\M) = T_{\M}(0,1+y)\prod_{i\in E_{\M}} b_i\quad \text{ and } \quad
        f_2(\M) = T_{\M}(1+x, 0)
    \end{align*}
    where $E_\M$ is the ground set of the matroid $\M$.
    The Tutte polynomial is a strong valuation, hence the map $f_2$ is a strong valuation as well. 
    Furthermore, all matroids in a matroidal subdivision have a common ground set, thus $f_1$ is a strong valuation too. 
    By Theorem~\ref{thm:convolutions1}, it follows that the convolution $f:\mathfrak{M}\to \mathbb{Z}[x,y,\mathbf{b}]$ defined by
    \[ f(\M) = (f_1\star f_2)(\M)= \sum_{A\subseteq E} f_1(\M|_A)\cdot f_2(\M/A)\]
    is a strong valuation. We have:
    \begin{align*}
        f(\M) &= \sum_{A\subseteq E} f_1(\M|_A)\cdot f_2(\M/A)\\
        &= \sum_{A\subseteq E} T_{\M|_A}(0,1+y)\cdot T_{\M/A}(1+x,0)\cdot b_A\\
        &= \sum_{A\subseteq E} \left((-1)^{\rk((\M|_A)^*)} \chi_{(\M|_A)^*}(-y)\right)\cdot\left( 
        (-1)^{\rk(\M/A)}\chi_{\M/A}(-x)\right)\cdot b_A\\
        &=\sum_{\text{$F$ cyc. flat}} \left((-1)^{|F|-\rk(F)} \chi_{(\M|_F)^*}(-y)\right)\cdot\left( 
        (-1)^{\rk(\M)-\rk(F)}\chi_{\M/F}(-x)\right)\cdot b_F\\
        &=\Phi_{\M}(x,y,\mathbf{b}),
    \end{align*}
    where we used that $\chi_{(\M|_A)^*}(-y)=0$ if $\M|_A$ has coloops and $\chi_{\M/A}(-x)=0$ if $A$ is not a flat (cf. Remark~\ref{rem:convolutions-loops}). The proof is complete.
\end{proof}

\begin{coro}\label{coro:spectrum-valuative}
    The assignment $\mathfrak{M}\to \mathbb{Z}[t,q]$ given by $\M\longmapsto \operatorname{Spec}_{\M}(t,q)$ is a valuative invariant.
\end{coro}

\subsubsection{Spectrum polynomials for elementary split matroids}

In order to use Theorem~\ref{thm:main}, we require to compute first the spectrum polynomial of a uniform matroid. Since this computation has not been carried out explicitly in the literature, we include a short proof below.

\begin{prop} Let $0\leq k\leq n$.
    The spectrum polynomial of the uniform matroid $\U_{k,n}$ is given by
    \[ \operatorname{Spec}_{\U_{k,n}}(t,q) = \binom{n-1}{k}\, t^k (q^n-1) + \sum_{i=0}^k \binom{n}{i}\, t^i.\]
\end{prop}

\begin{proof}
    The only cyclic flats of the uniform matroid $\U_{k,n}$ are the empty set and its ground set whenever $0<k<n$, while the only cyclic flat of $\U_{0,n}$ is the ground set and of $\U_{n,n}$ is the empty set.
    Hence, combining the formula of equation \eqref{eq:formula-spectrum} with $\sum_{j=0}^k (-1)^j \binom{n}{j}=(-1)^k\binom{n-1}{k}$ and the following well-known formula for the characteristic polynomial
    \[\chi_{\U_{k,n}}(t) = \sum_{j=0}^{k-1} (-1)^j \binom{n}{j}\big(t^{k-j}-1\big)\] if $k>0$ and $\chi_{\U_{0,n}}= 1$ yields the result for all values of $k$.
\end{proof}

Kook, Reiner, and Stanton showed that the spectrum polynomial is multiplicative, i.e., 
\[\operatorname{Spec}_{\M_1\oplus \M_2}(t,q) = \operatorname{Spec}_{\M_1}(t,q)\cdot \operatorname{Spec}_{\M_2}(t,q).\]
In particular, if we want to get handy formulas for the spectrum polynomial of arbitrary elementary split matroids we should now focus on explicit formulas for all cuspidal matroids. Recall that in Section \ref{sec:tutte} we already stated explicit expressions for the Tutte polynomial of all such matroids.

\begin{lemma}\label{lemma:spectrum-cuspidal}
    The spectrum polynomial of the matroid $\LL_{r,k,h,n}$ is given by
    \[ \operatorname{Spec}_{\LL_{r,k,h,n}}(t,q) = t^k T_{\LL_{r,k,h,n}}(1+\tfrac{1}{t},0) + \binom{n-h-1}{k-r} t^kq^{n-h} T_{\U_{r,h}}(1+\tfrac{1}{t},0) + t^kq^n T_{\LL_{r,k,h,n}}(0,1).\]
\end{lemma}

\begin{proof}
    By equation \eqref{eq:formula-spectrum}, we know how to compute the spectrum polynomial in terms of the Denham polynomial, which is defined as a sum over the cyclic flats of the matroid. Recall that cuspidal matroids have only one proper cyclic flat. Hence, the following sum,
    \begin{align*}
        \operatorname{Spec}_{\LL_{r,k,h,n}}(t,q) = t^k \sum_{\text{$F$ cyc.\ flat}} (-1)^{k - |F|} \chi_{\LL_{r,k,h,n}/F}(-1/t)\cdot \chi_{(\M|_F)^*}(0) q^{|F|},
    \end{align*}
    consists of only three terms, each corresponding to the cyclic flats $\varnothing$, $\U_{k-r,n-h}$, and $E$. This explains the three terms appearing in the formula of the statement. Furthermore, observe that we can express everything in terms of the Tutte polynomial because $\chi_{\M}(t) = (-1)^{\rk(\M)} T_{\M}(1-t,0)$.
\end{proof}

Hence, Theorem~\ref{thm:main} yields the following explicit formula for the spectrum polynomial of an arbitrary elementary split matroid.

\begin{teo}\label{thm:spectrum-split}
    Let $\M$ be an elementary split matroid of rank $k$ and cardinality $n$. Then,
    \[ \operatorname{Spec}_{\M}(t,q) = \operatorname{Spec}_{\U_{k,n}}(t,q) - \sum_{r,h} \uplambda_{r,h} \left(\operatorname{Spec}_{\LL_{r,k,h,n}}(t,q) - \operatorname{Spec}_{\U_{r,h}}(t,q)\cdot \operatorname{Spec}_{\U_{k-r,n-h}}(t,q)\right),  \]
    where $\uplambda_{r,h}$ denotes the number of stressed cyclic flats of $\M$ of size $h$ and rank $r$.
\end{teo}

We used the preceding two formulas to do fast computations of spectrum polynomials for several matroids. One of the main motivations for that was the aforementioned question raised by Kook, Reiner, and Stanton in \cite[Question~2]{kook-reiner-stanton}. Although Denham \cite{denham} and Kook \cite{kook} were able to prove various recursive formulas for spectrum polynomials, they were not able to produce examples showing that the spectrum polynomial does not specialize to the Tutte polynomial. Our examples also show that it cannot be used to decide whether a matroid is or not elementary split --- observe that, similarly, Tutte polynomials cannot detect this property by Example~\ref{ex:tutte-split}.

\begin{example}\label{ex:same_spec}
    In Figure \ref{fig:counterexample-spectrum} we depict two matroids $\M_1$ and $\M_2$, of rank $3$ on $9$ elements. The matroid $\M_1$, on the left, is not elementary split. The matroid $\M_2$, on the right, is elementary split. Their spectrum polynomials $\operatorname{Spec}_{\M_1}(t,q)$ and $\operatorname{Spec}_{\M_2}(t,q)$ coincide and are equal to 
    \[
         30q^9t^3 + 6q^5t^3 + 6q^5t^2 + 10q^3t^3 + 12q^3t^2 + 2q^3t + 9t^3 + 15t^2 + 7t + 1,
    \]
    whereas their Tutte polynomials differ:
    \begin{align*}
        T_{\M_1}(x,y) &= y^{6} + 3 y^{5} + x^{2} y^{2} + 2 x y^{3} + 6 y^{4} + x^{3} + x^{2} y + 4 x y^{2} + 8 y^{3} + 4 x^{2} + 8 x y + 8 y^{2} + 4 x + 4 y,\\
        T_{\M_2}(x,y) &= y^{6} + 3 y^{5} + x^{2} y^{2} + x y^{3} + 6 y^{4} + x^{3} + x^{2} y + 6 x y^{2} + 9 y^{3} + 4 x^{2} + 7 x y + 7 y^{2} + 4 x + 4 y.
    \end{align*}
    This shows that the spectra of a matroid complex does not determine the Tutte polynomial nor detects the property of being elementary split.
    \begin{figure}[ht]
    \centering
	\begin{tikzpicture}  
	[scale=0.7,auto=center,every node/.style={circle,scale=0.8, fill=black, inner sep=2.7pt}] 
	\tikzstyle{edges} = [thick];
	
	
	\node[label=left:${2}$] (a1) at (210:2) {};
	\node[] (a1p) at (-.25,-1) {};
	\node[label=right:$5$] (a2) at (30:1)  {};
	\node[label=above:$4$] (a3) at (90:2)  {};  
	\node[label=left:$6$] (a4) at (150:1) {};
	\node[] (a5) at (0.25,-1)  {};  
	\node[label=below:${1,8,9}$] (a6) at (270:1)  {};
	\node[label=right:$3$] (a7) at (330:2) {};
	\node[label=left:$7$] (a0) at (0:0) {};
	
	\draw[edges] (a1) -- (a6);  
	\draw[edges] (a3) -- (a2);  
	\draw[edges] (a2) -- (a7);
	\draw[edges] (a7) -- (a6);
	\draw[edges] (a1) -- (a3);
	\draw[edges] (a1) -- (a2);
	\draw[edges] (a4) -- (a7);
	\draw[edges] (a3) -- (a6);
	\end{tikzpicture} \qquad\qquad\qquad	\begin{tikzpicture}  
	[scale=0.5,auto=center,every node/.style={circle,scale=0.8, fill=black, inner sep=2.7pt}] 
	\tikzstyle{edges} = [thick];
	
	\node[label=left:$1$] (a1) at (0,0) {};  
	\node[label=above:$2$] (a2) at (2/2,1/2)  {};
	\node[label=above:$3$] (a3) at (4/2,2/2)  {};
	\node[label=above:$4$] (a4) at (6/2,3/2) {};
	\node[label=above:$5$] (a5) at (8/2,4/2) {};
	\node[label=below:$6$] (a6) at (4/2,-2/2)  {};  
	\node[] (a7) at (7.35/2,-4/2)  {};    
	\node[label=below:${7,8,9}$] (a8) at (8.0/2,-4/2)  {};
	\node[] (a9) at (8.65/2,-4/2)  {};
	\draw[edges] (a1) -- (a2); 
	\draw[edges] (a2) -- (a3);  
	\draw[edges] (a3) -- (a5);  
	
	\end{tikzpicture}  \caption{The matroids $\M_1$ and $\M_2$ of Example~\ref{ex:same_spec}.}\label{fig:counterexample-spectrum}
\end{figure}
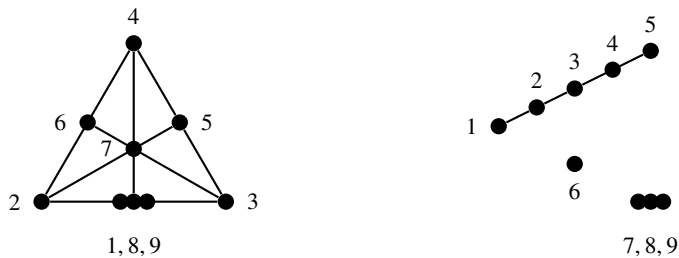
\end{example}

\subsection{Final considerations and more valuative invariants}\label{sec:last-section}
Along this paper we have addressed several invariants coming from various frameworks. We showed that the class of cuspidal matroids is often tractable enough to do computations, which leads to formulas for all elementary split matroids. Although this class is ``large'', there is still a considerable gap between understanding well an invariant for split matroid versus understanding it well for all matroids. Nonetheless, we believe that for the sake of feasible computations, testing conjectures, and producing data, this family of matroids stays at a sufficiently good and reasonable level of generality.

Besides the questions we raised above on the valuations we discussed, it is of interest to investigate many further invariants and to find formulas for them on cuspidal and general elementary split matroids.
Examples for this task are the following valuative invariants.

\begin{itemize}
    \item The quasisymmetric function defined by Billera, Jia and Reiner in \cite{billera-jia-reiner}.
    \item The volume polynomial of the Chow ring of a matroid, and the shifted-rank-volume invariant, both introduced by Eur \cite{eur-volume}.
    \item The motivic zeta functions introduced by Jensen, Kutler and Usatine \cite{jensen-kutler-usatine-motivic}. In particular, the topological zeta function studied by van der Veer in \cite{van-der-veer}.
    \item The (coarse) flag Hilbert--Poincar\'e series studied in \cite{kuhne-maglione} by K\"uhne and Maglione.
    \item The Chern--Schwarz--MacPherson cycles of a matroid, as presented in the work of L\'opez~de~Medrano, Rinc\'on and Shaw \cite{lopezdemedrano-rincon-shaw}.
    
\end{itemize}

Additionally, there are other recent invariants for which the problem of proving their valuativeness and trying to compute them for elementary split matroids would be of interest as well.

\begin{itemize}
    \item The universal cross ratio invariant, defined in \cite{baker-lorscheid}.
    \item The Gromov--Witten invariants of matroids introduced by Ranganathan and Usatine in \cite{ranganathan-usatine}.
    \item Invariants of the matroidal Schur algebras of Braden and Mautner \cite{braden-mautner}.
    \item Poset invariants of $\mathcal{L}(\M)$ such as the order polynomial or the zeta polynomial (see \cite[Section 3.12]{stanley-ec1}).
\end{itemize}

\appendix

\section{Series-parallel matroids span the valuative group of matroids}\label{appendix-series-parallel}
\noindent A particularly relevant class of matroids that behave well in terms of subdivisions are the lattice path matroids. Since the formal statements justifying the preceding assertion are better illustrated via an example, let us present the following.

\begin{example}\label{ex:split-lpm}
    Let $\M=\M[L,U]$ be the lattice path matroid of rank $4$ and cardinality $8$ defined by $L = \text{EEENNENN}$ and $U = \text{NNEENENE}$. Now consider its grid representation, depicted on the left of Figure \ref{fig:split-lpms}. We observe that there are two lattice points lying \emph{strictly between} $L$ and $U$; they are, respectively, $(1,1)$ and $(2,1)$. Let us choose one of these two points, say $p=(1,1)$. 
    There are two distinguished paths of our interest passing through $p$. On the one hand, we consider the ``lowest path'' $L_p$ between $L$ and $U$ that passes through $p$, depicted in purple on the right part of Figure \ref{fig:split-lpms}; and on the other hand we consider the ``highest path'' $U_p$  between $L$ and $U$ that passes through $p$, depicted in orange. 
    \begin{figure}[ht]
        \centering
        \begin{tikzpicture}[scale=0.55, line width=.9pt]

        \draw[line width=2.3pt,blue,line cap=round] (0,0)--(3,0) -- (3,2) -- (4,2) -- (4,4);
        \draw[line width=2.3pt,red,line cap=round] (0,0)--(0,2)--(2,2)--(2,2)--(2,3)--(3,3)--(3,4)--(4,4);
        \draw (0,0) grid (4,4);
        \end{tikzpicture}\qquad\qquad\qquad\qquad \begin{tikzpicture}[scale=0.55, line width=.9pt]

        \draw[line width=2.3pt,violet,line cap=round] (0,0)--(1,0) -- (1,1) -- (3,1) -- (3,2) -- (4,2) -- (4,4);
        \draw[line width=2.3pt,orange,line cap=round] (0,0)--(0,1)--(1,1)--(1,2)--(2,2)--(2,3)--(3,3)--(3,4)--(4,4);
        \draw (0,0) grid (4,4);
        \end{tikzpicture}  \caption{A lattice path matroid and two distinguished paths.}\label{fig:split-lpms}
    \end{figure}
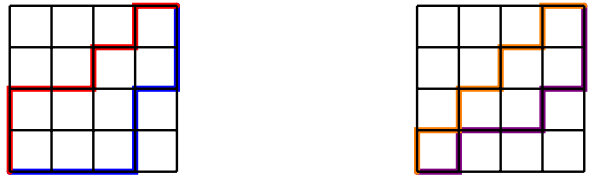
    In other words $L_p = \text{ENEENENN}$ and $U_p = \text{NENENENE}$. We construct the following three lattice path matroids $\M_1 = \M[L_p,U]$, $\M_2 = \M[L,U_p]$ and $\M_3 = \M[L_p,U_p]$. It is straightforward to verify that the polytope $\mathscr{P}(\M)$ admits a hyperplane split, which amounts to a subdivision $\mathcal{S}$ whose two maximal cells are $\mathscr{P}(\M_1)$ and $\mathscr{P}(\M_2)$. These two cells intersect along a common face, which is precisely $\mathscr{P}(\M_3)$.
\end{example}

The rigorous statement that justifies the validity of the previous example goes back to Chatelain and Ram\'irez-Alfons\'in \cite[Theorem~1 and Corollary~4]{ChatelainAlfonsin}. 

\begin{prop}\label{prop:subdiv-lpm}
    Let $\M=\M[L,U]$ be a lattice path matroid such that its representation using the rectangular grid has a lattice point $p$ lying strictly between $L$ and $U$. Consider $L_p$ and $U_p$ the minimal and maximal paths lying between $L$ and $U$ that pass through $p$. 
    Then there exists a matroid subdivision of the polytope $\mathscr{P}(\M)$ whose maximal cells are the two base polytopes of the matroids $\M[L_p, U]$ and $\M[L,U_p]$ which intersect along a common face that corresponds to the matroid $\M[L_p,U_p]$.
\end{prop}

Observe that the three lattice path matroids $\M[L_p,U]$, $\M[L,U_p]$ and $\M[L_p,U_p]$ arising from a decomposition as in the above result can be further subdivided, as long as they still possess at least one interior lattice point in their grid representations. Connected lattice path matroids with at least two elements which \emph{do not} possess such interior lattice points are named \emph{border strips} in \cite[Section 4.3]{Bidkhori} and \emph{snake matroids} in \cite{knauer-martinez-ramirez0}; we  stick to the latter terminology. 

\begin{example}
    Continuing Example \ref{ex:split-lpm}, one can see that the matroids $\M_1$ and $\M_3$ do not admit interior lattice points in their grid representations, but $\M_2$ does. By applying the same argument on $\M_2$, one deduces that the five lattice path matroids depicted in Figure \ref{fig:snake-decomposition} are the interior faces in a matroidal subdivision of $\mathscr{P}(\M)$. The three lattice path matroids of the first row yield the maximal faces of that subdivision because they are connected, i.e., they are snakes. The two matroids of the second row correspond to two interior faces, which come from matroids that are direct sums of exactly two snakes. Notice that Proposition~\ref{prop:subdiv-lpm} cannot be further applied in any of these five matroids, as their grid representations do not possess interior lattice points.
    \begin{figure}[ht]
        \centering
        \begin{tikzpicture}[scale=0.50, line width=.9pt]

        \draw[line width=2.3pt,violet,line cap=round] (0,0)--(1,0) -- (1,1)-- (2,1) -- (3,1) -- (3,2) -- (4,2) -- (4,4);
        \draw[line width=2.3pt,orange,line cap=round] (0,0)--(0,2)--(2,2)--(2,2)--(2,3)--(3,3)--(3,4)--(4,4);
        \draw (0,0) grid (4,4);
        \end{tikzpicture} \qquad\qquad\qquad\qquad \begin{tikzpicture}[scale=0.50, line width=.9pt]

        \draw[line width=2.3pt,violet,line cap=round] (0,0)--(2,0) -- (2,1) -- (3,1) -- (3,2) -- (4,2) -- (4,4);
        \draw[line width=2.3pt,orange,line cap=round] (0,0)-- (0,1) -- (1,1)--(1,2)--(2,2)--(2,3)--(3,3)--(3,4)--(4,4);
        \draw (0,0) grid (4,4);
        \end{tikzpicture} \qquad\qquad\qquad\qquad \begin{tikzpicture}[scale=0.50, line width=.9pt]

        \draw[line width=2.3pt,violet,line cap=round] (0,0)--(3,0) -- (3,2) -- (4,2) -- (4,4);
        \draw[line width=2.3pt,orange,line cap=round] (0,0)--(0,1)--(2,1)--(2,2)--(2,3)--(3,3)--(3,4)--(4,4);
        \draw (0,0) grid (4,4);
        \end{tikzpicture} \qquad\qquad\qquad\qquad \begin{tikzpicture}[scale=0.50, line width=.9pt]

        \draw[line width=2.3pt,violet,line cap=round] (0,0)--(1,0) -- (1,1) -- (3,1) -- (3,2) -- (4,2) -- (4,4);
        \draw[line width=2.3pt,orange,line cap=round] (0,0)--(0,1)--(1,1)--(1,2)--(2,2)--(2,3)--(3,3)--(3,4)--(4,4);
        \draw (0,0) grid (4,4);
        \end{tikzpicture}   \qquad\qquad\qquad\qquad \begin{tikzpicture}[scale=0.50, line width=.9pt]

        \draw[line width=2.3pt,violet,line cap=round] (0,0)--(2,0) -- (2,1) -- (3,1) -- (3,2) -- (4,2) -- (4,4);
        \draw[line width=2.3pt,orange,line cap=round] (0,0)--(0,1)--(2,1)--(2,2)--(2,3)--(3,3)--(3,4)--(4,4);
        \draw (0,0) grid (4,4);
        \end{tikzpicture}  \caption{The interior faces in a more refined subdivision of $\M$.}\label{fig:snake-decomposition}
    \end{figure}
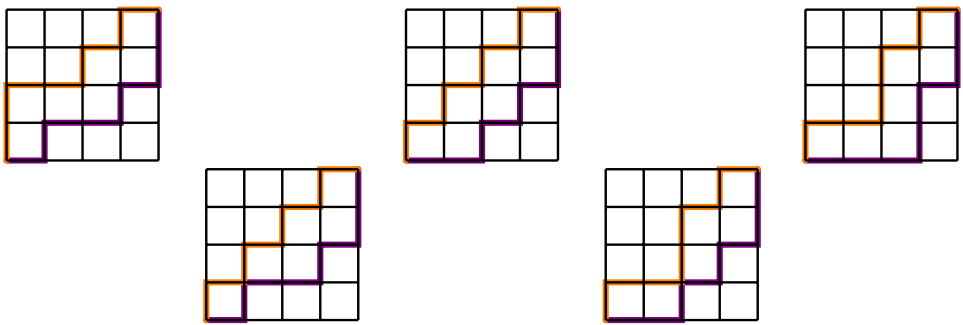
\end{example}

Generalizing the preceding construction, i.e., iterating the result in Proposition~\ref{prop:subdiv-lpm} on the smaller pieces in an arbitrary lattice path matroid, one obtains the following general result.

\begin{cor}[{\cite[Lemma~4.3.5]{Bidkhori}}]\label{cor:lpm-seriesparallel}
    Let $\M=\M[L,U]$ be a connected lattice path matroid with at least two elements. Then $\M$ is either itself a snake matroid, or there exists a subdivision $\mathcal{S}$ of $\mathscr{P}(\M)$ such that the polytopes in $\mathscr{S}^{\operatorname{int}}$ correspond to matroids that are isomorphic to direct sums of snakes.
\end{cor}

A property of snake matroids, stated for example in \cite[Theorem~2.2]{knauer-martinez-ramirez0}, is that they are graphic matroids. Moreover, the characterization of snake matroids by Knauer, Mart\'inez-Sandoval, and Ram\'irez-Alfons\'in as a graphic matroids leads to the following statement.

\begin{prop}\label{prop:snake}
    If a matroid $\M$ is isomorphic to a connected snake matroid, then it is a series-parallel matroid.
\end{prop}

Since there are several variations of the notion of series-parallel matroid, let us clarify that we are using the convention that series-parallel matroids are connected. Following Oxley \cite{oxley}, the class of series-parallel matroids are the matroids obtained by performing a sequence of series and parallel extensions to the matroid $\U_{1,2}$. By convention, we will additionally assume that $\U_{0,1}$ and $\U_{1,1}$ are series-parallel matroids.

We point out that not all series-parallel matroids are snakes, as can be revealed by the graphic matroid induced by a cycle of length $3$ in which each edge has a parallel copy. This yields a matroid of size $6$ and rank $2$ that is series-parallel but is not even isomorphic to a lattice path matroid. An immediate consequence of the preceding proposition is the following.

\begin{coro}
    Let $\M$ be a lattice path matroid. Then $\mathscr{P}(\M)$ admits a matroid subdivision in which all the interior faces are base polytopes of matroids which are direct sums of series-parallel matroids.
\end{coro}

By combining the preceding property together with Corollary~\ref{cor:lpm-seriesparallel} and Theorem~\ref{teo:schubert-is-lpm}, we obtain the following result.

\begin{teo}\label{thm:series-parallel-span}
    Let $\M$ be a matroid. The indicator function $\amathbb{1}_{\mathscr{P}(\M)}$ can be written as an integer combination of indicator functions of matroids that are direct sums of series-parallel matroids. 
\end{teo}

\begin{proof}
    By Theorem~\ref{thm:schubert-basis}, it suffices to show that if $\M$ is a Schubert matroid, then the statement holds. In such a case, Theorem~\ref{teo:schubert-is-lpm} guarantees that $\M$ is isomorphic to a certain lattice path matroid, and via relabelling the ground set, it will be enough to prove this statement for all lattice path matroids. 
    
    If the grid representation of $\M$ does not possess interior lattice points, then $\M$ is a direct sum of snake matroids, loops, and coloops. By Proposition~\ref{prop:snake} we know that a snake matroid is a series-parallel matroid. On the other hand, if there is some interior lattice point in the grid representation of $\M$, then by Proposition~\ref{prop:subdiv-lpm} we can find a subdivision of $\M$ which has three interior faces. Using the notation of that statement, it follows that:
        \[ \amathbb{1}_{\mathscr{P}(\M)} = \amathbb{1}_{\mathscr{P}(\M_1)} + \amathbb{1}_{\mathscr{P}(\M_2)} - \amathbb{1}_{\mathscr{P}(\M_3)}.\]
    Since all the $\M_i$, $i\in\{1,2,3\}$, are lattice path matroids that contain strictly less interior lattice points, by repeating this procedure inductively until we reach lattice path matroids without interior points (which are therefore direct sums of snakes) we may write $\amathbb{1}_{\mathscr{P}(\M)}$ as a signed integer combination of indicator functions of direct sums of series-parallel matroids.
\end{proof}

\begin{remark}
    As a consequence of Theorem~\ref{thm:series-parallel-span}, we have provided an answer to a problem posed by Billera, Jia and Reiner in \cite{billera-jia-reiner} (see Problem~8.6, and the paragraph thereafter). In particular, any matroid valuation is determined on all matroids only by specifying it on the direct sums of series-parallel matroids.
\end{remark}

\section{Valuations from Chow rings}\label{appendix-chow}
\noindent In this section we quickly review some of the basic properties of Chow rings of matroids, and recall the main result of Adiprasito, Huh, and Katz \cite{adiprasito-huh-katz} that these rings satisfy the K\"ahler package. We show that homomorphisms defined on the Chow ring of the permutohedral variety give rise to valuative invariants.

\subsection{Definition and K\"ahler package}

The notion of Chow ring of an atomic lattice was first introduced by Feichtner and Yuzvinsky \cite{feichtner-yuzvinsky}. Following Adiprasito, Huh, and Katz we present Chow rings of matroids by using the next definition.

\begin{defi}
    Let $\M$ be a loopless matroid. The Chow ring of $\M$ is the quotient
        \[
        \uCH(\M) = \mathbb{Q}[x_F : F\in\widehat{\mathcal{L}}(\M)]/{(I+J)},
        \]
    where $I = \left< x_{F_1} x_{F_2} \,:\, F_1,F_2 \in \widehat{\mathcal{L}}(\M) \text{ are incomparable}\right>$, $J = \left< \sum_{F\ni i} x_F - \sum_{F\ni j} x_F \,:\, i,j\in E\right>$, and 
    $\widehat{\mathcal{L}}(\M)=\mathcal{L}(\M)\smallsetminus \{\varnothing, E\}$ is the collection of proper non-empty flats of the matroid $\M$.
\end{defi}

If the loopless matroid $\M$ has rank $k$, then its Chow ring $\uCH(\M)$ is a graded ring, admitting a decomposition
    \begin{equation} \label{eq:grading-chow}
    \uCH(\M) = \bigoplus_{j=0}^{k-1} \uCH^j(\M).\end{equation}
Motivated from algebraic geometry, the elements of $\uCH^1(\M)$ are called the \emph{divisors}. Any divisor $\ell\in \uCH^1(\M)$ is a linear combination of the variables $x_F$ with $F\in\widehat{\mathcal{L}}(\M)$. In other words, 
    \[ \ell = \sum_{F\in \widehat{\mathcal{L}}(\M)} c_F\, x_F,\]
where $c_F\in \mathbb{Q}$ for each proper flat $F$. We say that $\ell$ is \emph{combinatorially ample} if the coefficients $c_F$ satisfy the \emph{strict submodularity property}. In other words, if the numbers $c_F$ are obtained as the restriction of a map $c:2^E\to \mathbb{Q}$ satisfying
    \[ c_{A_1} + c_{A_2} > c_{A_1\cap A_2} + c_{A_1\cup A_2},\]
for each pair of incomparable subsets $A_1,A_2\subseteq E$, and $c_{\varnothing} = c_E = 0$.

The top component $\uCH^{k-1}(\M)$ is a one-dimensional vector space. The \emph{degree map} is the (unique) map $\int_{\M} : \uCH^{k-1}(\M) \to \mathbb{Q}$ having the property that
\[\int_{\M} x_{F_1}\cdots x_{F_{k-1}} = 1\] 
for all maximal chains of flats $\varnothing\subsetneq F_1\subsetneq\cdots\subsetneq F_{k-1}\subsetneq E$.
For the sake of completeness, we include below the central result by Adiprasito, Huh and Katz.

\begin{teo}[{\cite[Theorem~1.4 \& Theorem~6.19]{adiprasito-huh-katz}}]\label{thm:kahler-package}
    Let $\M$ be a loopless matroid of rank $k$ and let $\ell\in \uCH^1(\M)$ be a combinatorially ample divisor. Then the following holds:
    \begin{enumerate}
        \item[{(\normalfont PD)}]\label{it:PD} For every $0\leq j\leq k-1$, the bilinear pairing $\uCH^j(\M) \times \uCH^{k-1-j}(\M)\to \mathbb{Q}$, defined by 
            \[ (\upeta,\upxi) \longmapsto \int_{\M} \upeta\,\upxi,\]
        is non-degenerate, i.e., the map $\uCH^j\to \operatorname{Hom}(\uCH^{k-1-j},\mathbb{Q})$ defined by $\upeta \mapsto \left(\upxi \mapsto \int_{\M} \upeta\,\upxi\right)$ is an isomorphism.
        \item[{(\normalfont HL)}]\label{it:HL} For every $0\leq j \leq \left\lfloor\frac{k-1}{2}\right\rfloor$, the map $\uCH^j(\M) \to \uCH^{k-1-j}(\M)$, defined by
            \[ \upxi \longmapsto \ell^{k-1-2j}\, \upxi,\]
        is an isomorphism.
        \item[{(\normalfont HR)}]\label{it:HR} For every $0\leq j \leq \left\lfloor\frac{k-1}{2}\right\rfloor$, the bilinear symmetric form $\uCH^j(\M)\times \uCH^j(\M) \to \mathbb{Q}$, defined by
            \[ (\upeta, \upxi) \longmapsto (-1)^j \int_{\M} \upeta\, \ell^{k-1-2j} \,\upxi, \]
        is positive definite when restricted to $\{\alpha \in \uCH^j(\M) : \ell^{k-2j} \alpha = 0\}$. 
    \end{enumerate}
\end{teo}

The first property is referred to as the \emph{Poincar\'e duality}, the second as the \emph{hard Lefschetz property} and the third one as the \emph{Hodge--Riemann relations}. The three of them constitute what is known as the \emph{K\"ahler package} for the Chow ring of $\M$.

\subsection{Valuations from Chow rings}

The Chow ring of a Boolean matroid, $\uCH(\U_{n,n})$, shows up as an example in toric geometry as it is the Chow ring of a permutohedral variety. More precisely, take $E=[n]$ and consider the \emph{permutohedron} $\Pi(E)$, defined as the $(n-1)$-dimensional polytope in $\mathbb{R}^E$ whose vertices are the permutations of the elements in $E$. 
The projective toric variety associated to $\Pi(E)$ is the \emph{permutohedral variety} that we denote by $X_E$. There is a canonical isomorphism $\uCH(\U_{n,n})\cong\A(X_E)$, where $\A(X_E)$ denotes the classical Chow ring defined in toric geometry, see Hampe \cite[Prop.~5.13]{hampe} (in that paper the ring $\A(X_E)$ appears as the ``intersection ring of matroids'', whereas he studies also the components $\uCH^{n-k}(\U_{n,n})$ of degree $n-k$ of $\uCH(\U_{n,n})$ which are generated by all loopless rank $k$ matroids on $E$).

Notice that all of the subsets of $E=[n]$ are flats of the Boolean matroid $\U_{n,n}$, and hence for each $F\subsetneq E$ we have a variable $x_F\in \uCH^1(\U_{n,n})$. 
In particular, when we deal with a second matroid $\M$ on the same ground set $E$, for every flat $F\in \widehat{\mathcal{L}}(\M)$ we often consider the variable $x_F$ in both of $\uCH(\M)$ and $\uCH(\U_{n,n})$; Lemma~\ref{lemma:ann_chow} below explains the precise connection between those two elements.

Let $\M$ and $\N$ be matroids on the same ground set $E$. We denote by $\M\vee\N$ the \emph{union} and by $\M\wedge\N$ the \emph{intersection} of these two matroids. That is 
$\M\vee\N$ is the matroid with independent sets 
    \[
    \mathscr{I}(\M\vee\N) = \left\{ I\cup J : I\in\mathscr{I}(\M) \text{ and } J\in\mathscr{I}(\N) \right\}
    \]
and the matroid $\M\wedge\N$ is equal to $(\M^*\vee\N^*)^*$.
Notice that the family of common independent sets
    \[
    \M\cap\N = \left\{ I\cap J : I\in\mathscr{I}(\M) \text{ and } J\in\mathscr{I}(\N) \right\}
    \]
is of interest in combinatorial optimization where the name ``intersection'' is often used for this family as well. However, this family is in general not the collection of independent sets of a matroid; see also \cite[Chapter 41]{SchrijverB}. 
Thus we will focus on the matroid $\M\wedge\N$ which can be seen as tropical intersection product of $\M$ and $\N$ (at the level of their Bergman fans) if the intersection has the correct rank.
Its independent sets are contained in $\M\cap\N$.

We begin by looking at the Chow ring $\uCH(\U_{n,n})$ and define
    \[
    x_E = -\sum_{\substack{F\ni i\\F\neq E}} x_F.
    \]
It is not difficult to see that this is in fact independent of the choice of $i\in E$ in the Chow ring. The following makes results of Hampe \cite{hampe} explicit and reformulates them in terms of the Chow ring of $\U_{n,n}$.

\begin{prop}\label{prop:can_chow_valuation}
    Let $E=[n]$. There is a unique valuative invariant $\Theta:\mfrak{M}_E\to \uCH(\U_{n,n})$ satisfying the following four properties:
    \begin{enumerate}[{\normalfont (i)}]
        \item\label{it:1} $\Theta(\U_{n,n})=1$.
        \item\label{it:0} $\Theta(\M) = 0$ whenever $\M$ has a loop.
        \item\label{it:h} $\Theta(\M) = -\sum_{F\supseteq C} x_F$ whenever $\M$ is of rank $n-1$, loopless, and its unique circuit is $C$.
        \item\label{it:prod} $\Theta(\M\wedge\N) = \Theta(\M)\cdot \Theta(\N)$ whenever $\M\wedge\N$ is loopless.
    \end{enumerate}
    Moreover, $\Theta(\M) \in \uCH^{n-k}(\U_{n,n})$ whenever $\M$ is loopless and of rank $k$.
\end{prop}


\begin{proof}
    Properties \eqref{it:1}, \eqref{it:0} and \eqref{it:h} define the map $\Theta$ for all matroids of corank $0$ and $1$.
    Property \eqref{it:prod} extends this map uniquely to loopless Schubert matroids as described in \cite[Section 5.1]{hampe}.
    Now \cite[Theorem 6.3]{derksen-fink} allows to extend the map $\Theta$ to all matroids. Moreover this extension is unique, and hence the map $\Theta$ is unique and well-defined, and by construction satisfies all the properties of the statement.
\end{proof}

\begin{remark}\label{rem:chow}
    The images of the corank one matroids in property \eqref{it:h} are exactly  the simplicial generators of Backman, Eur and Simpson \cite{BackmanEurSimpson}, and 
    the image $\Theta(\M)$ is the Bergman class of $\M$ under the canonical identification of the space of Minkowski weights, the Chow ring of the permutohedral variety and the Chow ring $\uCH(\U_{n,n})$; see \cite[Theorem~2.3.1]{BackmanEurSimpson}.
\end{remark}

In light of the preceding result, every group homomorphism $f:\uCH(\U_{n,n})\to \mathrm{G}$ to an abelian group $\mathrm{G}$ gives rise to a valuation $f \circ \Theta:\mathfrak{M}_E\to \mathrm{G}$ for $E=[n]$. The following is a useful technical result of Backman, Eur and Simpson.

\begin{lemma}[{\cite[Proposition~4.1.3 \& Theorem~4.2.1]{BackmanEurSimpson}}]\label{lemma:ann_chow}
    Let $\M$ be a loopless matroid on $E=[n]$ of rank $k$, and let $\Theta$ be the map from Proposition~\ref{prop:can_chow_valuation}. Then
    \[
    \uCH(\M)\cong \uCH(\U_{n,n})/\ann(\Theta(\M))
    \]
    where $\ann(\Delta) = \{\upxi\in\uCH(\U_{n,n}) : \upxi\,\Delta=0\}$.
    Also, the degree map $\int_\M:\uCH^{k-1}(\M)\to\mathbb{Q}$ agrees with the induced degree map $\upxi+\ann(\Theta(\M))\longmapsto\int_{\U_{n,n}}\Theta(\M)\,\upxi$. In other words, for every $\upxi\in \uCH^{k-1}(\M)$,
    \[ \int_{\M} \upxi = \int_{\U_{n,n}} \Theta(\M) \, \upxi.\]
\end{lemma}

For the interested reader, let us mention that it is possible to prove Theorem~\ref{thm:flags-flats-rank-valuation} in full by relying on the preceding lemma. We make this point to emphasize the usefulness of relying on properties of the map $\Theta$ described above. It was pointed out to us by Chris Eur that a proof along these lines would follow by \cite[Corollary~5.11]{eur-huh-larson}.


Our next result shows that many valuative invariants factor through the map $\Theta$. 

\begin{teo}
    Let $E=[n]$ and consider any valuative invariant $f:\mathfrak{M}_E\to \mathrm{G}$ having the property that $f(\M) = 0$ whenever $\M$ has loops. Then, there exists a (necessarily unique) homomorphism $\widehat{f}:\uCH(\U_{n,n})\to \mathrm{G}$ such that
        \[ f = \widehat{f}\circ \Theta,\]
    where $\Theta$ is the map defined in Proposition~\ref{prop:can_chow_valuation}. 
\end{teo}

\begin{proof}
    Fix the invariant $f:\mathfrak{M}_E\to \mathrm{G}$ as in the statement. By Theorem~\ref{thm:g-invariant-universal}, we know that there exists a homomorphism $\widetilde{f}:\mathbf{U}\to \mathrm{G}$ such that $f = \widetilde{f}\circ \mathcal{G}$. On the other hand, by \cite[Theorem~3.17]{hampe} the restriction of the $\mathcal{G}$-invariant to loopless matroids factors through the restriction of the map $\Theta:\mathfrak{M}_E\to \uCH(\U_{n,n})$ to loopless matroids, in other words, we have a map $\widehat{\mathcal{G}}:\uCH(\U_{n,n})\to \mathbf{U}$ such that $\mathcal{G} = \widetilde{\mathcal{G}}\circ \Theta$ for all matroids without loops. Putting all the pieces together, we obtain 
        \[ f = \widetilde{f} \circ \widetilde{\mathcal{G}} \circ \Theta,\]
    for loopless matroids, and hence for all matroids (notice that a matroid with loops is in the kernel of the map $\Theta$, and also in the kernel of $f$ by assumption). The conclusion of the statement follows by taking $\widehat{f} = \widetilde{f}\circ\widetilde{\mathcal{G}}$.
\end{proof}

\tocless{\section*{Acknowledgements}}
\noindent
We are grateful for the feedback and comments we received from our colleagues that helped to improve various aspects of this manuscript. We benefited from stimulating discussions and suggestions of 
Christos Athanasiadis,
Joseph Bonin,
Chris Eur,
Alex Fink,
Michael Joswig,
Matt Larson,
Vic Reiner,
Jos\'e Samper,
Mario Sanchez
and Lorenzo Vecchi. We also thank two anonymous referees for their thorough comments and suggestions on earlier versions of our manuscript.

\bibliographystyle{amsalpha0}
\bibliography{bibliography}

\end{document}